\newcommand{\old}[1]{{}}
\newcommand{\bb}{\mathbb}
\newcommand{\R}{\bb R}
\newcommand{\N}{\bb N}
\DeclareMathOperator\lin{lin}
\DeclareMathOperator\rec{rec}
\DeclareMathOperator\supp{supp}
\DeclareMathOperator\intr{int}
\DeclareMathOperator\cl{cl}
\DeclareMathOperator\cone{cone}
\def\Item$#1${\item $\displaystyle#1$
   \hfill\refstepcounter{equation}(\theequation)}
\theoremstyle{definition}
\newtheorem{prop}{Proposition}[section]
\newtheorem{theorem}[prop]{Theorem}
\newtheorem{lemma}[prop]{Lemma}
\newtheorem{claim}[prop]{Claim}
\newtheorem{corollary}[prop]{Corollary}
\newtheorem{definition}[prop]{Definition}
\newtheorem{remark}[prop]{Remark}
\newtheorem{example}[prop]{Example}
\newtheoremstyle{TheoremNum}
        {\topsep}{\topsep}              
        {\itshape}                      
        {}                              
        {\bfseries}                     
        {.}                             
        { }                             
        {\thmname{#1}\thmnote{ \bfseries #3}}
    \theoremstyle{TheoremNum}
    \newtheorem{theorem-pre}{Theorem}
\numberwithin{equation}{section}
\def\st{\mid}
\title{Projection: A Unified Approach to Semi-Infinite Linear Programs and Duality in Convex Programming}
\author{Amitabh Basu \\
Kipp Martin \\
Christopher Thomas Ryan}
\begin{document}

\maketitle

\abstract{Fourier-Motzkin elimination is a projection algorithm for solving finite linear programs.  We extend Fourier-Motzkin elimination  to  semi-infinite linear programs  which are linear programs with finitely many variables and infinitely many constraints. Applying projection leads to new characterizations of important properties for primal-dual pairs of semi-infinite programs such as  zero duality gap, feasibility, boundedness, and solvability. Extending the Fourier-Motzkin elimination procedure to semi-infinite linear programs yields a new classification of variables that is used to determine the existence of duality gaps. In particular, the existence of what the authors term dirty variables can lead to duality gaps. Our approach has interesting applications in finite-dimensional convex optimization. For example, sufficient conditions for a zero duality gap, such as the Slater constraint qualification, are reduced to guaranteeing that there are no dirty variables. This leads to completely new proofs of such sufficient conditions for zero duality.}

\begingroup
\let\clearpage\relax
\section{Introduction}\label{s:introduction}

Duality is an important theoretical and practical topic in optimization.   In order to better understand the structure of an optimization problem (called the primal), and design solution algorithms, it is often useful to consider its dual (or duals). 
A key determinant of the usefulness of the dual is the \emph{duality gap} which is  the difference between the optimal value of a primal and the optimal value of the dual. Establishing that the primal and dual have \emph{zero duality gap} is particularly desirable and is a subject of intense study throughout the field of optimization.

Linear programming is a perfect example. Every linear program has a well-understood dual with the  simple property that when the primal is feasible with bounded optimal value, there is zero duality gap. Moreover, optimal solutions to both the primal and dual are guaranteed to exist. For more general problems, additional conditions are needed to establish zero duality gap and the existence of an optimal solution. 

Much research has focused on \emph{sufficient} conditions for zero duality gap. Possibly the most well-known sufficient condition for zero duality gap is the \emph{Slater constraint qualification} for convex programming. Slater's condition states that when there is feasible point that strictly satisfies all the convex constraints of the primal convex program (sometimes called a \emph{Slater point}) there is zero duality gap. ``Slater-like" conditions are also prevalent in conic programming, where the existence of interior points to the dual conic program guarantees a zero duality gap (see for instance, Gartner and Matous\'ek~\cite{gartner-matousek}). 
Less well-known is the duality theory of \emph{semi-infinite linear programs}. These are linear optimization problems with a finite number of variables and possibly infinitely many constraints. For surveys covering theory, applications and algorithms for semi-infinite linear programming see Goberna and L\'opez \cite{goberna2002linear}, Hettich and Kortanek \cite{hettich1993semi} and L\'opez~\cite{lopez2012stability}. In this paper we use  the duality theory of semi-infinite linear programs to understand the duality of both convex and conic programs. In semi-infinite linear programming, a variety of sufficient conditions for zero duality gap have been introduced (see for example, Anderson and Nash~\cite{anderson-nash}, Charnes, Cooper and Kortanek~\cite{charnes1963duality}, Duffin and Karlovitz~\cite{duffin-karlovitz65}, Goberna and L\'opez~\cite{goberna1998linear}, Karney~\cite{karney81}, Kortanek~\cite{kortanek1974classifying}, and Shapiro~\cite{shapiro2009semi}). We provide an alternate and unifying approach to duality in semi-infinite linear programs. 

We extend Fourier-Motzkin elimination (projection) \cite{fourier26,motzkin36} to semi-infinite systems of linear inequalities as a method to study duality. Taking the dictum expressed by Duffin and Karlovitz~\cite{duffin-karlovitz65} of ``the desirability of omitting topological considerations" to its logical conclusion, our approach does not rely on the theory of topological vector spaces or convex analysis. Instead, our approach combines simple aggregation of pairs of linear inequalities using nonnegative multipliers (an algebraic operation) with simple analysis on the reals, $\R$.  Applying Fourier-Motzkin elimination to  a semi-infinite linear  program reveals important properties about the semi-infinite linear program that can only be obtained through this elimination (or projection)  process.   In particular,  Fourier-Motzkin elimination  reveals the existence of what the authors term ``dirty'' variables.  Dirty variables  are necessary for the existence of a  duality gap. The dirty variable characterization also has important implications for finite dimensional problems.   For example, sufficient conditions for a zero duality gap in a finite-dimensional convex optimization problem, such as the Slater constraint qualification, are reduced to guaranteeing that there are no dirty variables in an appropriately defined semi-infinite linear program. To the author's knowledge, using Fourier-Motzkin elimination to study study semi-infinite linear programs is new. Blair~\cite{blair1974extension} employed a Fourier-Motzkin elimination technique to extend a result by Jersoslow and Kortanek~\cite{jeroslow1971semi} on the feasibility of semi-infinite linear systems. However, their systems were over the ordered field $\R(M)$ obtained by appending the reals $\R$ with a transcendental number $M$ larger than every real. By contrast, we consider systems over the reals $\R$. This allows us to study optimality and duality of semi-infinite linear programs. 

The extension of Fourier-Motzkin elimination to semi-infinite linear programs involves subtleties that do not arise in  standard Fourier-Motzkin theory where the number of inequalities is finite. Sections~\ref{s:fm-elim}~and~\ref{s:silp-classification}  provide a cogent framework for analyzing  semi-infinite linear programs.\old{This gives rise to new results on primal and dual feasibility, boundedness, solvability and duality that are summarized in Table~\ref{table:summary}.} Applying projection leads to new characterizations of important properties for primal-dual pairs of semi-infinite programs such as zero duality gap, feasibility and boundedness, and solvability.  These results can be leveraged to provide new proofs of some classical results in finite-dimensional conic linear programs and convex optimization.  See    Section \ref{s:application-conic}  and  Section \ref{s:application-convex}, respectively. Application of the results from Section~\ref{s:silp-classification} to the generalized Farkas' theorem for semi-infinite linear programs is given in Section~\ref{s-application-farkas-minkowski}.
Additional  sufficient conditions for zero duality gap in semi-infinite linear programs are in Section~\ref{s:app-countable} of the Electronic Companion. 

\paragraph{Notation.} Let $Y$ be a vector space. The \emph{algebraic dual} of $Y$, denoted $Y'$, is the set of linear functionals with domain $Y$. Let $\psi \in Y'$.   The evaluation of $\psi$ at $y$ is denoted by  $ \langle y, \psi \rangle$;  that is, $ \langle y, \psi \rangle = \psi(y)$.

Let $P$ be a convex cone in $Y$. A convex cone $P$ is {\em pointed} if and only if $P \cap -P = \left\{0\right\}$. A pointed convex cone $P$ in $Y$ defines a vector space ordering $\succeq_P$ of $Y$, with $y \succeq_P y'$ if  $y - y' \in P$. 
The \emph{dual cone} of $P$ is $P' = \left\{\psi \in Y : \langle y, \psi \rangle \ge 0 \text{ for all } y \in P\right\}$.
Elements of $P^{\prime}$ are called {\it positive linear functionals} in $Y$.
A cone $P$ is  {\em reflexive} if $P'' = P$ under the natural embedding of $Y \hookrightarrow Y''$. 

Let $A$ be a linear mapping  from vector space  $X$ to vector space  $Y$. The \emph{algebraic adjoint} $A' : Y' \to X'$ is defined by $A'(\psi) = \psi\circ A$ and satisfies $ \langle x, A'(\psi) \rangle = \langle A(x), \psi \rangle$ where $\psi \in Y'$ and $x \in X$. 

Given any set $I$, $2^I$ denotes the power set of $I$ and $\R^I$ denotes the vector space of real-valued functions $u$ with domain $I$, i.e., $u : I \to \R$. For $u \in \R^I$ the \emph{support} of $u$ is the set $\supp(u) = \left\{i \in I : u(i) \neq 0\right\}$. The subspace $\R^{(I)}$ are those functions in $\R^I$ with finite support. Let $\ge$ denote the standard vector space ordering on $\R^I$. That is, $u \ge v$ if and only if $u(i) \ge v(i)$ for all $i \in I$. The subspace $\R^{(I)}$ inherits this ordering. Let $\R_+^I$ (resp. $\R_+^{(I)}$) denote the pointed cone of $u \in \R^I$ (resp. $u \in \R_+^{(I)}$) with $u\ge 0$. 
Using the standard embedding of $\R^{(I)}$ into $(\R^I)'$ for $u \in \R^I$ and $v \in \R^{(I)}$,  write $\langle u, v \rangle = \sum_{i \in I} u(i)v(i)$. The latter sum is well-defined since $v$ has finite support.
For all $h \in I,$  define a function $e^h \in \R^I$ by $e^{h}(i) = 1$ if $i = h$, and $e^{h}(i) = 0$ if $i \neq h$. When $I = \{1,2, \ldots, n\}$,  $\R^I$  is  $\R^n$ and  $e^1, e^2, ... e^n$  correspond to the standard unit vectors of $\R^n$.

The optimal value of an optimization problem ($*$) is denoted by $v(*).$ If the objective of $(*)$ is a supremum and the problem is (i) unbounded then set $v(*) = \infty$ or (ii) infeasible then set $v(*) = -\infty$. Conversely, if the objective of $(*)$ is an infimum and the problem is (i) unbounded then set $v(*) = - \infty$ or (ii) infeasible then set $v(*) = \infty$.

\paragraph{Our results.} The main topic of study is the  semi-infinite program
\begin{align*}
\begin{array}{rl}
\qquad  \inf_{x\in \R^n} & c^\top x \\
 \textrm{s.t.} & \sum_{k = 1}^n a^k(i)x_k \geq b(i) \quad \text{ for } i\in I
\end{array}\tag{\text{SILP}}
\end{align*}
where $I$ is an arbitrary (potentially infinite) index set, $c \in \R^n$, and $b, a^k \in \R^I$ for $k = 1, \dots, n$, and its \emph{finite support dual}
\begin{align*}\label{eq:FDSILP}
\begin{array}{rrl}
\sup &\sum_{i \in I} b(i) v(i)&\\
   {\rm s.t.} & \sum_{i \in I} a^{k}(i) v(i)&=c_k  \quad \text{ for } k = 1, \ldots, n \\
& v&\in \R_+^{(I)}. 
\end{array}\tag{\text{FDSILP}}
\end{align*}
Our main results on this primal-dual pair are summarized in Table~\ref{table:summary} (see page \pageref{table1_page}).   These include a sufficient condition for primal solvability (Theorem~\ref{theorem:primal-solvability}) and characterizations of both dual solvability (Theorem~\ref{theorem:dual-solvability}) and zero duality gap (Theorem~\ref{theorem:zero-duality-gap}). Here, zero duality gap means $v(\ref{eq:SILP}) = v(\ref{eq:FDSILP})$ when \eqref{eq:SILP} is feasible.

We identify a special class of semi-infinite linear programs, termed \emph{tidy} semi-infinite linear programs, where zero duality gap is guaranteed to hold (Theorem~\ref{theorem:all-clean-system}). The name \emph{tidy} comes from the fact that the Fourier-Motzkin elimination procedure eliminates (or ``cleans up") all primal decision variables. In our terminology, there are no ``dirty" decision variables.  

\theoremstyle{definition}
\newtheorem*{theorem:all-clean-system}{Theorem~\ref{theorem:all-clean-system}}
\begin{theorem:all-clean-system}
If \eqref{eq:SILP} is feasible and tidy then
\begin{enumerate}[(i)]
\item \eqref{eq:SILP} is   solvable,
\item \eqref{eq:FDSILP} is feasible and bounded,
\item there is a zero duality gap for the primal-dual pair \eqref{eq:SILP} and \eqref{eq:FDSILP}.
\end{enumerate} 
\end{theorem:all-clean-system}


The theory of tidy semi-infinite linear programs is leveraged to obtain new proofs of important duality results in conic and convex programming. In the main text of this manuscript (Section~\ref{s:application-convex}), we discuss convex programming. Section~\ref{s:application-conic} in the appendix (electronic companion) contains the discussion for conic programs. We consider the following general convex program
\begin{align*}\label{eq:CP}
\begin{array}{rcl}
\max_{x\in \R^n} \qquad f(x) && \\
{\rm s.t.} \qquad g_i(x) &\ge& 0 \quad \text{ for } i = 1, \ldots, p \\
x  & \in  & \Omega
\end{array}\tag{\text{CP}}
\end{align*}
where $f(x)$ and $g_i(x)$ for $i = 1, \ldots, p$ are concave functions, and $\Omega$ is a closed, convex set. 
Define the Lagrangian function $L(\lambda) :=  \max \{ f(x) + \sum_{i=1}^p\lambda_i g_i(x)\, : \, x \in \Omega  \}.$ The Lagrangian dual is
\begin{align*}\label{eq:LD}
\inf_{\lambda \geq 0} L(\lambda). \tag{\text{LD}}
\end{align*}
The following is a well-known key result in finite-dimensional convex programming.
\theoremstyle{definition}
\newtheorem*{thm:slater-convex}{Theorem~\ref{thm:slater-convex}}
\begin{thm:slater-convex}[Slater's theorem for convex programs]
Suppose the convex program~\eqref{eq:CP} is feasible and bounded. Moreover, suppose there exists an $x^* \in \Omega$ such that $g_i(x^*) > 0$ for all $i=1, \ldots, p$. Then there is zero duality gap between the convex program~\eqref{eq:CP} and its Lagrangian dual~\eqref{eq:LD}. Moreover, there exists a $\lambda^* \geq 0$ such that $v(\ref{eq:LD}) = L(\lambda^*)$, i.e., the Lagrangian dual is solvable.
\end{thm:slater-convex}

We provide a completely new proof of this classical result in Section~\ref{s:application-convex}. Our proof uses the fact that the \emph{Slater point} $x^*$ corresponds to a useful constraint in the semi-infinite linear program representing the Lagrangian dual. The structure of this constraint  is used to show the tidiness of the system. By  Theorem~\ref{theorem:bounded-zero-duality-gap}, this  implies zero duality gap and dual solvability. 

Beyond these results in conic and convex programming, the method of projection is used to elegantly prove several foundational results for semi-infinite linear programs. 
These results  include the generalized Farkas' theorem for infinite systems of linear inequalities (see our Theorem~\ref{thm:gen-farkas-minkowski} and Theorem 3.1 in Goberna and L\'opez~\cite{goberna1998linear}). Our proof does not rely on the theory of separating hyperplanes and thus does not mimic known proofs. Goberna and L\'opez use this result as the main tool for deriving their own set of necessary and sufficient conditions for zero duality in semi-infinite linear programs. Thus, our methodology can, in principle, be used as an alternate starting point to derive their results. Other authors have also given characterizations of properties of primal-dual pairs of semi-infinite linear programs. A comparison is given in Section~\ref{ss:summary}.
Additional results on the finite approximability of semi-infinite linear programs are in Section~\ref{s:app-countable} of the Electronic Companion. 

\section{Fourier-Motzkin elimination}\label{s:fm-elim}

In this section we  extend  Fourier-Motzkin elimination to semi-infinite linear systems.  For background on Fourier-Motzkin elimination applied to finite linear systems see Fourier~\cite{fourier26},  Motzkin~\cite{motzkin36}, and Williams~\cite{williams86}. In this section, Fourier-Motzkin elimination is used to characterize the feasibility and boundedness of semi-infinite systems of linear inequalities. In addition,  useful properties are shown about the Fourier-Motzkin multipliers which appear while aggregating constraints.

Consider the semi-infinite linear system 
\begin{equation}\label{eq:initialSystem}
a^1(i)x_1 + a^2(i)x_2 + \cdots + a^n(i)x_n  \geq b(i) \quad\text{ for } i\in I
\end{equation}
where $I$ is an arbitrary index set. Denote the set of $(x_1,\dots,x_n) \in \R^n$ that satisfy these inequalities by $\Gamma$. 
The projection of  $\Gamma$  into the subspace of $\R^{n}$ spanned by $\{e^j\}_{j=2}^n$ is
\begin{eqnarray}
P(\Gamma; x_{1}) := \{(x_2, x_3, \ldots, x_n) \in \R^{n-1} \, : \,  \exists x_1 \in \R\; \textrm{ s.t. }\,  (x_1 , x_2, \dots, x_n) \in \Gamma\}.  \label{eq:defineProjection}
\end{eqnarray}
Under certain conditions, the projection $P(\Gamma ;x_{1})$ is characterized by aggregating inequalities in the original system.  
Define the sets 
\begin{equation}\label{eq:H+H-}
\begin{array}{c}
\mathcal{H}_+(k) := \{i \in I \st a^k(i) > 0\} \\
\mathcal{H}_-(k) := \{i \in I \st a^k(i) < 0\} \\
\mathcal{H}_0(k) := \{i \in I \st a^k(i) = 0\}
\end{array}
\end{equation}
based on the coefficients of variable $x_{k}$ in (\ref{eq:initialSystem}).

For now, assume $\mathcal{H}_+(1)$ and $\mathcal{H}_-(1)$ are both nonempty. As in the finite case, eliminate variable $x_{1}$ by adding all possible pairs of inequalities with one inequality in $\mathcal{H}_+(1)$ and the other from $\mathcal{H}_-(1)$. Since there are potentially infinitely many constraints this may involve aggregating an infinite number of pairs.  The resulting system is
\begin{align} 
\sum_{k=2}^n a^k(i) x_k & \geq b(i) & & \text{ for } i \in \mathcal{H}_0(1) \label{eq:new-system-1} \\ 
\sum_{k=2}^n \left(\dfrac{a^k(p)}{a^1(p)}- \dfrac{ a^k(q)}{a^1(q)}\right) x_k  &  \geq   \dfrac{b(p)}{a^1(p)} - \dfrac{b(q)}{ a^1(q)} & & \text{ for } p \in \mathcal{H}_+(1) \text{ and } q \in \mathcal{H}_-(1).
\label{eq:new-system-2}
\end{align}
Denote the set of $(x_2, \dots, x_n) \in \R^{n-1}$ that  satisfy the constraints in \eqref{eq:new-system-1}-\eqref{eq:new-system-2} by $FM(\Gamma; x_1)$.

\begin{remark}\label{rem:initial_remark}
One way to view the inequalities \eqref{eq:new-system-2} is the following : pick a pair $(p,q)$ of inequalities with $p \in \mathcal{H}_+(1)$ and $q \in \mathcal{H}_-(1)$.  Then form a new constraint by multiplying  the first constraint by $\frac{1}{a^1(p)}$, multiplying the second constraint by $-\frac{1}{a^1(q)},$ and adding them together. This ``eliminates'' $x_1$ from this pair of constraints. Of course, one can achieve this by choosing any common multiple of $\frac{1}{a^1(p)}$ and $-\frac{1}{a^1(q)}$ as the multipliers prior to adding them together, and achieve a ``scaled'' inequality describing the same halfspace (with $x_1$ ``eliminated'').\hfill $\triangleleft$
\end{remark}

A key result is that the inequalities in \eqref{eq:new-system-1}-\eqref{eq:new-system-2} describe the projected set $P(\Gamma; x_{1})$.

\begin{theorem}\label{theorem:FM-elim}
If  $\mathcal{H}_+(1)$ and $\mathcal{H}_-(1)$  are both nonempty, then  $P(\Gamma; x_{1})  = FM(\Gamma; x_1)$. 
\end{theorem}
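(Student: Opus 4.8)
The plan is to prove the set equality $P(\Gamma; x_1) = FM(\Gamma; x_1)$ by establishing the two inclusions separately. This is the natural approach for the finite case, and the goal is to check that the argument survives the passage to an infinite index set $I$.

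\textbf{The inclusion $P(\Gamma; x_1) \subseteq FM(\Gamma; x_1)$.} First I would take a point $(x_2,\dots,x_n) \in P(\Gamma; x_1)$, so there is some $x_1 \in \R$ with $(x_1,x_2,\dots,x_n) \in \Gamma$, i.e., this full vector satisfies every inequality indexed by $i \in I$. For $i \in \mathcal{H}_0(1)$ the inequality \eqref{eq:new-system-1} is literally one of the original inequalities (the $x_1$ coefficient is zero), so it holds. For a pair $(p,q)$ with $p \in \mathcal{H}_+(1)$, $q \in \mathcal{H}_-(1)$, I divide the $p$-th original inequality by $a^1(p) > 0$ (inequality direction preserved) and divide the $q$-th original inequality by $a^1(q) < 0$ (inequality direction reversed), then add; the $x_1$ terms cancel and the result is exactly \eqref{eq:new-system-2}. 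This direction is purely algebraic and the infiniteness of $I$ causes no difficulty, since each constraint of the new system involves only finitely many (one or two) old constraints.

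\textbf{The inclusion $FM(\Gamma; x_1) \supseteq P(\Gamma; x_1)$, equivalently $FM(\Gamma; x_1) \subseteq P(\Gamma; x_1)$.} This is the substantive direction. Fix $(x_2,\dots,x_n) \in FM(\Gamma; x_1)$; I must produce a single real $x_1$ making $(x_1,\dots,x_n) \in \Gamma$. Rewriting each original inequality by solving for $x_1$: for $p \in \mathcal{H}_+(1)$ it says $x_1 \ge \frac{1}{a^1(p)}\bigl(b(p) - \sum_{k=2}^n a^k(p) x_k\bigr) =: \alpha_p$, and for $q \in \mathcal{H}_-(1)$ it says $x_1 \le \frac{1}{a^1(q)}\bigl(b(q) - \sum_{k=2}^n a^k(q) x_k\bigr) =: \beta_q$. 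The constraints \eqref{eq:new-system-2}, rearranged, state precisely that $\alpha_p \le \beta_q$ for every such pair $(p,q)$. Hence $\sup_{p \in \mathcal{H}_+(1)} \alpha_p \le \inf_{q \in \mathcal{H}_-(1)} \beta_q$. The key point — and the only place where the infinite index set matters — is that I need these $\sup$ and $\inf$ to be finite reals (not $\pm\infty$), so that some $x_1$ lies between them. Because $\mathcal{H}_+(1)$ and $\mathcal{H}_-(1)$ are both nonempty (the hypothesis), pick any $p_0 \in \mathcal{H}_+(1)$ and $q_0 \in \mathcal{H}_-(1)$: then $\alpha_{p_0} \le \beta_q$ for all $q$ gives $\inf_q \beta_q \ge \alpha_{p_0} > -\infty$, and $\alpha_p \le \beta_{q_0}$ for all $p$ gives $\sup_p \alpha_p \le \beta_{q_0} < +\infty$. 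So $-\infty < \sup_p \alpha_p \le \inf_q \beta_q < +\infty$, and any $x_1$ in the nonempty closed interval $[\sup_p \alpha_p, \inf_q \beta_q]$ works; it satisfies all $\mathcal{H}_+(1)$ and $\mathcal{H}_-(1)$ inequalities, and the $\mathcal{H}_0(1)$ inequalities hold because they coincide with \eqref{eq:new-system-1}.

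\textbf{Main obstacle.} In the finite case one simply takes $\max_p \alpha_p$ and $\min_q \beta_q$; with infinitely many constraints these become $\sup$ and $\inf$, which a priori could fail to be attained or could be infinite. The crucial observation that resolves this is that nonemptiness of \emph{both} $\mathcal{H}_+(1)$ and $\mathcal{H}_-(1)$ forces the $\sup$ to be bounded above (by any single $\beta_{q_0}$) and the $\inf$ to be bounded below (by any single $\alpha_{p_0}$) — so although the extrema need not be attained, the interval between them is a genuine nonempty bounded interval of reals, which is all we need. I would state this boundedness argument carefully, as it is exactly the subtlety that the paper flags as distinguishing the semi-infinite case; everything else is a direct transcription of the classical proof.
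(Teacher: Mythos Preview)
Your proposal is correct and follows essentially the same approach as the paper: both arguments rewrite the original constraints as lower bounds $\alpha_p$ and upper bounds $\beta_q$ on $x_1$ and observe that the aggregated inequalities \eqref{eq:new-system-2} are precisely the conditions $\alpha_p \le \beta_q$, so a suitable $x_1$ exists iff these hold. The paper packages this as a single chain of equivalences and simply remarks that the key step ``holds because both $\mathcal{H}_+(1)$ and $\mathcal{H}_-(1)$ are nonempty,'' whereas you unpack that remark by explicitly bounding $\sup_p \alpha_p$ and $\inf_q \beta_q$ via a fixed $\beta_{q_0}$ and $\alpha_{p_0}$; this is the same content, just spelled out more carefully.
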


\begin{proof}
Since  $\mathcal{H}_+(1)$ and $\mathcal{H}_-(1)$  are both nonempty,  
$$
\begin{array}{rl}
& (x_2, x_3, \ldots, x_n) \in  P(\Gamma; x_{1})\\
\Leftrightarrow & \exists x_1 \in \R\,\textrm{ such that }\,  a^1(i)x_1 + a^2(i)x_2 + \ldots + a^n(i)x_n \geq b(i) \,\textrm{ for }\, i \in I \\
\Leftrightarrow & \exists x_1 \in \R\,\textrm{ such that }\left\{ \begin{array}{l} \sum_{k=2}^n a^k(i) x_k \geq b(i) \quad \forall i \in \mathcal{H}_0 \textrm{ and } \\ x_1 \geq \frac{b(p)}{a^1(p)} - \sum_{k=2}^n \frac{a^k(p)}{a^1(p)}x_k, \; \forall p \in \mathcal{H}_+(1)\,\textrm{ and }\\ x_1 \leq \frac{b(q)}{a^1(q)} - \sum_{k=2}^n \frac{a^k(q)}{a^1(q)}x_k, \; \forall q \in \mathcal{H}_-(1) \end{array}\right\}\\
\Leftrightarrow & \left\{ \begin{array}{l} \sum_{k=2}^n a^k(i) x_k \geq b(i) \quad \forall i \in \mathcal{H}_0 \textrm{ and } \\ \frac{b(p)}{a^1(p)} - \sum_{k=2}^n \frac{a^k(p)}{a^1(p)}x_k \leq  \frac{b(q)}{a^1(q)} - \sum_{k=2}^n \frac{a^k(q)}{a^1(q)}x_k   \quad \forall p \in \mathcal{H}_+(1), \forall q \in \mathcal{H}_-(1) \end{array}\right\}\\
\Leftrightarrow & (x_2, x_3, \ldots, x_n) \in FM(\Gamma; x_1).
\end{array}
$$
Note that the second to last equivalence holds because both $\mathcal{H}_+(1)$ and $\mathcal{H}_-(1)$ are nonempty. 
\end{proof}

Equally as important to our theory is how ``dual information" is accrued during the process of elimination. The following result captures the essence of this idea.

\begin{corollary}\label{cor:FM-elim} If  $\mathcal{H}_+(1)$ and $\mathcal{H}_-(1)$  are both nonempty, then there exist an index set $\tilde{I}$ and $u^h \in \R_{+}^{(I)}$ for $h \in \tilde{I}$ such that the projection $P(\Gamma; x_{1})$ is 
$$
P(\Gamma ;x_{1}) = \{ (x_2, \ldots, x_n) \st \tilde{a}^2(h)x_2 + \cdots + \tilde{a}^n(h)x_n \geq \tilde{b}(h) \text{ for } h \in  \tilde{I}\}
$$ 
where $\tilde{b}, \tilde{a}^2, \ldots, \tilde{a}^n \in \R^{\tilde{I}}$ are given by
\begin{itemize}
\item[(i)] $\tilde{b}(h) = \langle b, u^h \rangle$ for all $h \in \tilde{I}$,

\item[(ii)] $\tilde{a}^k(h) = \langle a^k, u^h\rangle$ for all $k = 2, \ldots, n$ and $h \in \tilde{I}$,

\item[(iii)] $\langle a^1, u^h \rangle = 0$ for all $h \in \tilde{I}$.
\end{itemize}
\end{corollary}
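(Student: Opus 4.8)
The plan is to read off the multipliers directly from the Fourier–Motzkin step described just before the corollary, and to package them as elements of $\R^{(I)}_+$. By Theorem~\ref{theorem:FM-elim} we already know that $P(\Gamma;x_1) = FM(\Gamma;x_1)$, so it suffices to exhibit, for each inequality of the system \eqref{eq:new-system-1}--\eqref{eq:new-system-2}, a finitely-supported nonnegative multiplier vector $u^h$ on $I$ such that aggregating the original constraints in \eqref{eq:initialSystem} with weights $u^h$ produces exactly that inequality, and such that the $x_1$-coefficient of the aggregate vanishes, i.e.\ $\langle a^1, u^h\rangle = 0$. The index set $\tilde I$ will be the disjoint union $\mathcal{H}_0(1) \sqcup \bigl(\mathcal{H}_+(1)\times\mathcal{H}_-(1)\bigr)$, matching the two families of constraints in \eqref{eq:new-system-1} and \eqref{eq:new-system-2}.

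For $h \in \mathcal{H}_0(1)$, the corresponding constraint in \eqref{eq:new-system-1} is just the original constraint indexed by $h$, so I take $u^h = e^h$; this has support $\{h\}$, is nonnegative, satisfies $\langle a^k, u^h\rangle = a^k(h)$ for all $k$ and $\langle b, u^h\rangle = b(h)$, and $\langle a^1, u^h\rangle = a^1(h) = 0$ since $h \in \mathcal{H}_0(1)$. For $h = (p,q)$ with $p \in \mathcal{H}_+(1)$ and $q \in \mathcal{H}_-(1)$, Remark~\ref{rem:initial_remark} tells us the constraint in \eqref{eq:new-system-2} is obtained by scaling constraint $p$ by $\tfrac{1}{a^1(p)} > 0$, scaling constraint $q$ by $-\tfrac{1}{a^1(q)} > 0$, and adding; so I set $u^h = \tfrac{1}{a^1(p)}\,e^p - \tfrac{1}{a^1(q)}\,e^q$, which has support $\{p,q\}$ (or $\{p\}$ if $p=q$ cannot happen since the sign sets are disjoint) and nonnegative entries since $a^1(p)>0$ and $a^1(q)<0$. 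A direct computation gives $\langle a^1, u^h\rangle = \tfrac{a^1(p)}{a^1(p)} - \tfrac{a^1(q)}{a^1(q)} = 1 - 1 = 0$, and $\langle a^k, u^h\rangle = \tfrac{a^k(p)}{a^1(p)} - \tfrac{a^k(q)}{a^1(q)}$, $\langle b, u^h\rangle = \tfrac{b(p)}{a^1(p)} - \tfrac{b(q)}{a^1(q)}$, which are precisely $\tilde a^k(h)$ and $\tilde b(h)$ as they appear in \eqref{eq:new-system-2}.

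Having defined $\tilde a^k(h) := \langle a^k, u^h\rangle$ and $\tilde b(h) := \langle b, u^h\rangle$, items (i), (ii), (iii) hold by construction, and the description of $P(\Gamma;x_1)$ as the solution set of $\sum_{k=2}^n \tilde a^k(h)x_k \ge \tilde b(h)$, $h \in \tilde I$, is exactly the restatement of $FM(\Gamma;x_1)$ that Theorem~\ref{theorem:FM-elim} identifies with $P(\Gamma;x_1)$. I do not expect any serious obstacle here: the content of the corollary is essentially bookkeeping on top of Theorem~\ref{theorem:FM-elim}, and the only points needing a word of care are (a) confirming that each $u^h$ genuinely lies in $\R^{(I)}_+$ rather than merely $\R^I_+$ — which is immediate since each has support of size at most two — and (b) noting that the aggregation identities extend from finite to infinite index sets only in the benign sense that $\langle a^k, u^h\rangle$ is a finite sum because $u^h$ has finite support, so no convergence issue arises. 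If anything deserves to be flagged as the ``main'' subtlety, it is simply making explicit that the freedom in choosing the multipliers (the ``common multiple'' remark) does not affect the half-space described, so the particular normalization chosen above is without loss of generality.
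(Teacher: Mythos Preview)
Your proof is correct and follows essentially the same approach as the paper: invoke Theorem~\ref{theorem:FM-elim} to identify $P(\Gamma;x_1)$ with $FM(\Gamma;x_1)$, set $\tilde I = \mathcal{H}_0(1) \cup (\mathcal{H}_+(1)\times\mathcal{H}_-(1))$, and define $u^h = e^h$ for $h \in \mathcal{H}_0(1)$ and $u^h = \tfrac{1}{a^1(p)}e^p - \tfrac{1}{a^1(q)}e^q$ for $h = (p,q)$. Your version is slightly more explicit in verifying that each $u^h$ has finite support and nonnegative entries, and in noting that $p \neq q$ since $\mathcal{H}_+(1)$ and $\mathcal{H}_-(1)$ are disjoint, but the substance is identical.
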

\begin{proof}
By Theorem~\ref{theorem:FM-elim}, $P(\Gamma; x_{1})  = FM(\Gamma; x_1)$. We show ƒ$FM(\Gamma; x_1)$ has the required representation.  Since $\mathcal{H}_+(1)$ and $\mathcal{H}_-(1)$  are both nonempty,     take  $\tilde{I} = \mathcal{H}_0(1) \cup (\mathcal{H}_+(1) \times \mathcal{H}_-(1)) $. For each $h \in \mathcal{H}_0(1)$, take $u^h \in \R_{+}^{(I)}$ as the function with  value $1$ at $h$ and $0$ otherwise. For each $h = (p, q) \in \mathcal{H}_+(1) \times \mathcal{H}_-(1)$, take $u^h\in \R_{+}^{(I)}$ as the function $u^h : I \rightarrow \R$ defined by 
$$
u^h(i) = \left\{ \begin{array}{cr} \frac{1}{a^1(p)}, &  \text{ when } i =  p  \\  -\frac{1}{a^1(q)}, & \text{ when } i = q \\ 0, & \textrm{otherwise}. \end{array}\right.
$$ 
Now define $\tilde{b}, \tilde{a}^2, \ldots, \tilde{a}^n$ using the equations from (i) and (ii) in the statement of the corollary. The proof is then complete by observing that $FM(\Gamma; x_1) = \{ (x_2, \ldots, x_n) \st \tilde{a}^2(h)x_2 + \cdots + \tilde{a}^n(h)x_n \geq \tilde{b}(h) \text{ for } h \in \tilde{I}\}$ with these definitions.
\end{proof}

Below is a formal statement of Fourier-Motzkin elimination,   which  applies the above procedure sequentially for each variable.

\noindent \hrulefill
\begin{center}
\textsc{Fourier-Motzkin Elimination Procedure}
\end{center}

\noindent \textbf{Input}:   A semi-infinite linear inequality system
\begin{equation*}
a^1(i)x_1 + a^2(i)x_2 + \cdots + a^n(i)x_n  \geq b(i) \quad \text{ for } i\in I.
\end{equation*}

\noindent \textbf{Output}:   A  semi-infinite linear inequality system
\begin{eqnarray}\label{eq:output-system}
\tilde{a}^{\ell}(h) x_{\ell} + \tilde{a}^{\ell + 1}(h) x_{\ell + 1} + \cdots + \tilde{a}^{n}(h) x_{n} \ge \tilde{b}(h) \quad \text{ for } h \in \tilde{I}  \label{eq:dirty1}
\end{eqnarray}
with $\tilde I \subseteq 2^I$ and $\tilde a^k \in \R^{\tilde I}$.
\old{such that for every $k = \ell, \ldots, n$, $\tilde a^k \in \R^{\tilde I}$  is not the zero vector, i.e., $\tilde a^k(h) \neq 0$ for at least one $h \in \tilde I$. }The variables $x_\ell, \dots, x_n$\old{ are  dirty and} form a subset of the variables of the input system relabeled according to a permutation $\pi : \left\{1,\dots, n\right\} \rightarrow \left\{1, \dots, n\right\}$. 
We allow $\ell \in \left\{1, \dots, n, n+1\right\},$ interpreting $\ell = n+1$ to mean \old{there are no dirty variables and thus }that the left-hand side is zero.  
We also output a set of vectors $\{u^h \in \R^{(I)}_+ : h \in \tilde I\}$\old{, satisfying $\langle a^k, u^h\rangle = 0$ for all $h \in \tilde I$ and every $k = 1, \ldots, \ell-1$, and $\langle a^k, u^h \rangle = \tilde a^k(h)$ for all $h \in \tilde I$ and every $k = \ell, \ldots, n$}.
\vskip 5pt
\old{
\noindent \textbf{Procedure}:
\vskip 5pt
\begin{enumerate}[1.]
\item \textsc{Initialization}.
$M \leftarrow \left\{1,\dots, n\right\}$, $\tilde I \leftarrow I$, $\tilde a^k \leftarrow a^k$ for all $k \in M$, $\tilde b \leftarrow b$, and $j \leftarrow 1$. 
\vskip 5pt
\item \textsc{Elimination.} \label{step:elimination}
\begin{enumerate}[a.]
\item Determine the sets $\mathcal{H}_+(j)$, $\mathcal{H}_-(j)$ and $\mathcal{H}_0(j)$ given in \eqref{eq:H+H-}.
\item If $\mathcal{H}_+(j) \cup \mathcal{H}_-(j) \neq \emptyset$ but one of $\mathcal{H}_+(j)$ and $ \mathcal{H}_-(j)$ is empty 
go to step \ref{step:incremement-j}.
\item $\tilde I \leftarrow \mathcal{H}_0(j) \cup \left[\mathcal{H}_+(j) \times \mathcal{H}_-(j)\right]$ and $M \leftarrow M \setminus \{j\}$. If $M = \emptyset$ go to step~\ref{step:update-b}.
\item For each $k \in M$ define $\hat a^k : \tilde I \rightarrow \R$ by
\begin{align*}
\hat a^k(h) :=   
\left\{
\begin{array}{cl}
\tilde a^k(h) & \text{ for } h \in \mathcal H_0(j) \\
\frac{\tilde a^k(p)}{\tilde a^{j}(p)}+ \frac{\tilde a^k(q)}{\tilde a^{j}(q)} & \text{ for } h =(p,q) \in \mathcal{H}_+(j) \times \mathcal{H}_-(j)
\end{array} 
\right.
\end{align*}
and set $\tilde a^k \leftarrow \hat a^k$.
\item Define $\hat b : \tilde I \rightarrow \R$ by
\begin{align*}
\hat b(h) :=   
\left\{
\begin{array}{cl}
\tilde b(h) & \text{ for } h \in \mathcal H_0(j) \\
\frac{\tilde b(p)}{\tilde a^{j}(p)} + \frac{\tilde b(q)}{\tilde a^{j}(q)} & \text{ for } h =(p,q) \in \mathcal{H}_+(j) \times \mathcal{H}_-(\sigma(j))
\end{array} 
\right.
\end{align*}
and set $\tilde b \leftarrow \hat b$. \label{step:update-b}
\item $j \leftarrow j + 1$ \label{step:incremement-j}
\end{enumerate}
\vskip 5pt
\item \textsc{Termination:}
\vskip 3pt
\noindent If $j = n + 1$ then go to step~
\ref{step:output}. Else, go to step~\ref{step:elimination}.
\vskip 5pt
\item \textsc{Output formatting:} \label{step:output}
\vskip 3pt
\noindent
Upon termination $M$ is either empty or, for some $\ell \in \{1, \dots, L\}$, can be written $M = \left\{m_1, \dots, m_{\ell-1}\right\}$ where $m_i \in \left\{1,\dots, n\right\}$ with $m_i \le m_j$ for $i \le j$. 
\vskip 5pt
\begin{enumerate}[a.] 
\item If $M = \emptyset$, output the system
\begin{equation*}
0  \geq \tilde b(h) \quad \text{ for all } h\in \tilde I.
\end{equation*}
\item Else if $M \neq \emptyset$, relabel the variables and columns: $x_{m_i} \leftarrow x_{n - (\ell -1) + i}$  and $\tilde a^{m_i} \leftarrow \tilde a^{n - (\ell -1) + i}$  for $i = 1, \dots, \ell-1$. This defines the permutation $\pi$ described in the output. Now, construct the system
\begin{equation*}
\tilde a^\ell(h)x_\ell + \tilde a^{\ell + 1}(h)x_{\ell+1} + \ldots + \tilde a^n(i)x_n  \geq \tilde b(h) \quad \text{ for all } h\in \tilde I.
\end{equation*} 
\end{enumerate}
\end{enumerate}
\noindent \hrulefill
\vskip 10pt}
\noindent \textbf{Procedure}:
\vskip 5pt
\begin{enumerate}[1.]
\item \textsc{Initialization}:
$\mathcal D \leftarrow \left\{1,\dots, n\right\}$, $\tilde I \leftarrow \left\{ \left\{i\right\} \st i \in I \right\}$, $\tilde a^k(\{i\}) \leftarrow a^k(i)$ for all $i \in I$ and $k \in \mathcal D$, $\tilde b \leftarrow b$, and $j \leftarrow 1$.  For each $h \in \tilde I = I$, set $u^h  \leftarrow  e^{h}$. 

\vskip 5pt
\item \textsc{Elimination:} \label{step:elimination}
While  ($j \leq n$) do:
\begin{enumerate}[a.]
\item Define  the sets $\mathcal{H}_+(j)$, $\mathcal{H}_-(j)$ and $\mathcal{H}_0(j)$ as follows.
\[
\begin{array}{c}
\mathcal{H}_+(j) := \{h \in \tilde I \st \tilde{a}^j(h) > 0\} \\
\mathcal{H}_-(j) := \{h \in \tilde I \st \tilde{a}^j(h) < 0\} \\
\mathcal{H}_0(j) := \{h \in \tilde I \st \tilde{a}^j(h) = 0\}
\end{array}
\]
\item If $\mathcal{H}_+(j) \neq \emptyset$ and $\mathcal{H}_-(j) \neq \emptyset$ do:
 \begin{enumerate}[(i)] \item Set $\tilde I \leftarrow \mathcal{H}_0(j) \cup \left\{p \cup q \st p \in \mathcal{H}_+(j),\, q \in \mathcal{H}_-(j) \right\}$
 and $\mathcal D \leftarrow \mathcal D \setminus \{j\}$. 
\item For each $k \in \mathcal D$ define $\hat a^k : \tilde{I} \to \R$ by
\begin{align*}
\hat a^k(h) :=   
\left\{
\begin{array}{cl}
\tilde a^k(h) & \text{ for } h \in \mathcal H_0(j) \\
\frac{\tilde a^k(p)}{\tilde a^{j}(p)}- \frac{\tilde a^k(q)}{\tilde a^{j}(q)} & \text{ for } h = p \cup q \text{ where } p \in   \mathcal{H}_+(j),\, q \in \mathcal{H}_-(j)
\end{array} 
\right.
\end{align*}

\item For each $h \in \tilde I$, define $\hat u^h \in \R^{(I)}_+$ by
\begin{align*}
\hat u^h :=   
\left\{
\begin{array}{cl}
 u^h & \text{ for } h \in \mathcal H_0(j) \\
\frac{1}{\tilde a^{j}(p)} u^p- \frac{1}{\tilde a^{j}(q)} u^q & \text{ for } h = p \cup q \text{ where } p \in   \mathcal{H}_+(j),\, q \in \mathcal{H}_-(j)\end{array} 
\right.
\end{align*}
\item For each $k \in \mathcal D$, set $\tilde a^k \leftarrow \hat a^k$.  For each $h \in \tilde I$, set $u^h  \leftarrow \hat{u}^{h}$. 

\item Define $\hat b : \tilde I \rightarrow \R$ by
\begin{align*}
\hat b(h) :=   
\left\{
\begin{array}{cl}
\tilde b(h) & \text{ for } h \in \mathcal H_0(j) \\
\frac{\tilde b(p)}{\tilde a^{j}(p)} - \frac{\tilde b(q)}{\tilde a^{j}(q)} & \text{ for } h = p \cup q \text{ where } p \in   \mathcal{H}_+(j),\, q \in \mathcal{H}_-(j)
\end{array} 
\right.
\end{align*}
and set $\tilde b \leftarrow \hat b$. 
\label{step:update-b}
\end{enumerate}
end do. 

\item If $\mathcal{H}_+(j) \cup \mathcal{H}_-(j) = \emptyset$ then set $\mathcal D \leftarrow \mathcal D \setminus \{j\}$. 

\item $j \leftarrow j + 1$. \label{step:incremement-j}
\end{enumerate}
end do.
\vskip 5pt
\item \textsc{Output formatting:} \label{step:output}  Upon termination $\mathcal D$ is either empty or, for some $\ell \in \{1, \dots, n\}$, can be written $\mathcal D = \left\{d_1, \dots, d_{n-\ell+1}\right\}$ where $d_i \in \left\{1,\dots, n\right\}$ with $d_i \le d_j$ for $i \le j$. \old{These are the indices of the dirty variables, before relabeling. }
Let $\overline{\mathcal D} = \left\{1,\dots,n\right\} \setminus \mathcal{D} = \{ \bar d_1, \ldots, \bar d_{\ell-1} \}$ where $\bar d_i \in \left\{1,\dots, n\right\}$ and $\bar d_i \le \bar d_j$ for $i \le j$. \old{These are the indices of clean variables before relabeling. }
In other words, $\ell-1$ variables were eliminated and the remaining $n-\ell+1$ variables indexed by the indices in $\mathcal D$ are not eliminated.
\vskip 5pt
\begin{enumerate}[a.] 
\item If $\mathcal D = \emptyset$, output the system
\begin{equation*}
0  \geq \tilde b(h) \quad \text{ for } h\in \tilde I.
\end{equation*}
\item  Else if $\mathcal D \neq \emptyset$, reassign the indices in $\mathcal D$ by $d_i \leftarrow \ell -1 + i$ for $i = 1, \ldots, n - \ell + 1.$ If $\overline{\mathcal D}$ is nonempty, reassign the indices in $\overline{\mathcal D}$ by $\bar d_i \leftarrow i$ for $i = 1, \dots, \ell -1$. This defines the permutation $\pi$ described in the output. 
Now, construct the system
\begin{equation*}
\tilde a^\ell(h)x_\ell + \tilde a^{\ell + 1}(h)x_{\ell+1} + \ldots + \tilde a^n(h)x_n  \geq \tilde b(h) \quad \text{ for } h\in \tilde I.
\end{equation*} 
\end{enumerate}
\end{enumerate}
\noindent \hrulefill
\vskip 10pt
\begin{remark}
In the above procedure, $\tilde I$ is redefined in every iteration but remains a subset of $2^I$; in particular, a family of finite subsets of $I$. The domain $\tilde I$ of the functions $\tilde a^k$ are redefined correspondingly. In contrast, the domain $I$ of the functions $u^h$ for $h \in \tilde I$ is unchanged throughout. The superscript $h$ is the support of $u^h$.  \hfill $\triangleleft$
\end{remark}

Examples~\ref{ex:clean-unbounded}, \ref{example:not-primal-optimal}, \ref{example:primal-solvable}, \ref{example:primal-infeasible-dual-solvable} and \ref{ex:no-slater-duality} and Remark~\ref{rem:boundedness} below will illustrate various aspects of the Fourier-Motzkin elimination procedure.  

\begin{definition}[Clean and dirty variables]  At the end of the  Fourier-Motzkin procedure, the  variables  $x_1, \ldots, x_{\ell-1}$ are called {\em clean} variables and the variables $x_\ell, \ldots, x_n$ are called {\em dirty} variables.  Thus, a dirty variable is one that the Fourier-Motzkin procedure could not eliminate and a clean variable is one that the procedure could eliminate.
\end{definition}

\begin{definition}[Canonical form]\label{def:canonical} A semi-infinite linear system~\eqref{eq:initialSystem} is said to be in \emph{canonical form} if the permutation $\pi$ output by the Fourier-Motzkin elimination is the identity permutation. 
\end{definition}

\begin{lemma}\label{lemma:canonical} For every semi-infinite linear system, there exists a permutation of the variables that  puts it into canonical form. Moreover, if one applies the Fourier-Motzkin procedure to the original system and to the permuted system, they result in the same system of inequalities in the output. 
\end{lemma}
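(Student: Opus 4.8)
The plan is to take the required permutation to be the very permutation $\pi$ that the Fourier--Motzkin procedure outputs on the given system, and then to show that re-running the procedure on the $\pi$-relabelled system reproduces the original computation step for step, with the clean variables simply moved to the first coordinates. Concretely, run the procedure on \eqref{eq:initialSystem}, recording the clean indices $\overline{\mathcal D}=\{\bar d_1<\cdots<\bar d_{\ell-1}\}$, the dirty indices $\mathcal D=\{d_1<\cdots<d_{n-\ell+1}\}$, the output index set $\tilde I\subseteq 2^I$, the functions $\tilde a^{k}$ ($k\in\mathcal D$) and $\tilde b$, and the multipliers $\{u^h:h\in\tilde I\}$. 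If $\overline{\mathcal D}=\emptyset$ or $\mathcal D=\emptyset$ the output permutation is the identity (vacuously in the latter case) and the system is already canonical, so assume $\emptyset\neq\overline{\mathcal D}\subsetneq\{1,\dots,n\}$; then $\pi(\bar d_i)=i$ and $\pi(d_i)=\ell-1+i$. Relabel the variables of \eqref{eq:initialSystem} by $\pi$ to get a system $\mathcal{S}'$ in variables $z_1,\dots,z_n$, where $z_m$ carries the column $a^{\pi^{-1}(m)}$; thus $z_1,\dots,z_{\ell-1}$ are copies of the originally clean variables $x_{\bar d_1},\dots,x_{\bar d_{\ell-1}}$ in their original order and $z_\ell,\dots,z_n$ are the originally dirty variables $x_{d_1},\dots,x_{d_{n-\ell+1}}$ in their original order. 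One must then show that Fourier--Motzkin applied to $\mathcal{S}'$ outputs the identity permutation and produces exactly the same output system (and multipliers) as on \eqref{eq:initialSystem}.

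The technical ingredient is a monotonicity property of the elimination step: if at some stage a variable has all its coefficients of a single sign and is not identically zero --- call it \emph{dirty at that stage} --- then it is still dirty after any one further elimination. This is read off the update $\hat a^{k}(p\cup q)=\tfrac{\tilde a^{k}(p)}{\tilde a^{j}(p)}-\tfrac{\tilde a^{k}(q)}{\tilde a^{j}(q)}$ used when eliminating $x_j$: since $\tilde a^{j}(p)>0$ and $-\tilde a^{j}(q)>0$, the two summands carry the signs of $\tilde a^{k}(p)$ and $\tilde a^{k}(q)$, so if every $\tilde a^{k}$ is $\geq 0$ then every $\hat a^{k}$ is $\geq 0$; and picking an index where $\tilde a^{k}$ is strictly positive and pairing it appropriately with an element of $\mathcal H_+(j)$ or $\mathcal H_-(j)$ (or using $\hat a^{k}=\tilde a^{k}$ on $\mathcal H_0(j)$) shows $\hat a^{k}$ is strictly positive somewhere; the nonpositive case is symmetric. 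Iterating, a variable that the $\mathcal{S}$-run finds dirty when it processes it remains dirty in every subsequent system, in particular in the output system.

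Now compare the two runs. The procedure alters the current system only when it eliminates a variable; processing a dirty variable, or one whose column is identically zero, leaves the system unchanged. Hence in the $\mathcal{S}$-run the system present after iterations $j=1,\dots,\bar d_i$ is precisely the result of eliminating, in increasing index order, those of $x_1,\dots,x_{\bar d_i}$ that are clean with nonzero column when reached. An induction on $i=0,1,\dots,\ell-1$ then shows that after the $\mathcal{S}'$-run has processed $z_1,\dots,z_i$, its current system, its index set, and its multipliers agree with those of the $\mathcal{S}$-run after iterations $j=1,\dots,\bar d_i$, up to the relabelling $\pi$; the inductive step uses only that whether $x_{\bar d_{i+1}}$ is eliminated or deleted, and the outcome of eliminating it, depend solely on its column in the current system, which matches $z_{i+1}$'s column by the inductive hypothesis, and likewise for the index set and multipliers. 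Taking $i=\ell-1$ and using $\bar d_{\ell-1}=\max\overline{\mathcal D}$ (so the $\mathcal{S}$-run's remaining iterations only process dirty variables and leave the system fixed), after the $\mathcal{S}'$-run processes $z_1,\dots,z_{\ell-1}$ its current system is the $\pi$-relabelling of the $\mathcal{S}$-run's output system $\sum_{i=1}^{n-\ell+1}\tilde a^{d_i}(h)\,z_{\ell-1+i}\geq \tilde b(h)$, $h\in\tilde I$, with multipliers $\{u^h\}$. Finally, each $x_{d_i}$ was dirty when the $\mathcal{S}$-run processed it and every later $\mathcal{S}$-run elimination is of a variable other than $x_{d_i}$, so by the monotonicity ingredient $x_{d_i}$ is still dirty in the $\mathcal{S}$-run's output system; hence, as the $\mathcal{S}'$-run proceeds to process $z_\ell,\dots,z_n$ in turn --- none of them changing the system --- each $z_{\ell-1+i}$ is found dirty and is not eliminated. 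Consequently the $\mathcal{S}'$-run's clean set is $\{1,\dots,\ell-1\}$ and its dirty set is $\{\ell,\dots,n\}$, its output permutation is the identity, and its output system is $\sum_{i=1}^{n-\ell+1}\tilde a^{d_i}(h)\,x_{\ell-1+i}\geq\tilde b(h)$ for $h\in\tilde I$, which is exactly the $\mathcal{S}$-run's output after its relabelling $x_{d_i}\mapsto x_{\ell-1+i}$. Both assertions follow.

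I expect the main obstacle to be making this last induction genuinely rigorous: precisely formulating ``agreement up to relabelling'' for the triple consisting of the current system, the index set $\tilde I\subseteq 2^I$, and the multipliers $u^h\in\R^{(I)}_+$, and verifying that a single elimination step preserves such agreement. That, together with a careful proof of the monotonicity-of-dirtiness claim and its iteration, carries all the content; everything else is bookkeeping.
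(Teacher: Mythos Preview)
Your proposal is correct and identifies the same permutation as the paper---namely, the permutation $\pi$ output by the Fourier--Motzkin procedure itself. The paper's proof is a single sentence: ``The permutation output by the Fourier-Motzkin procedure is one such desired permutation.'' Everything else you wrote---the monotonicity-of-dirtiness argument, the induction comparing the two runs, the careful bookkeeping on index sets and multipliers---is work the paper simply omits as self-evident. So your approach is not genuinely different; it is the paper's one-line argument with all the implicit details filled in. If anything, your version is more honest about what actually needs to be checked, particularly the preservation of dirtiness under subsequent eliminations, which is the only place where something could conceivably go wrong.
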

\begin{proof}
The permutation output by the Fourier-Motzkin procedure is one such desired permutation.
\end{proof}

\begin{remark}\label{remark:canonical}
In light of Lemma~\ref{lemma:canonical},  we may assume without loss of generality, that semi-infinite linear systems are always given in canonical form before applying  the Fourier-Motzkin elimination procedure. There may exist multiple permutations of the variables which put a given semi-infinite system into canonical form. Moreover, two different permutations may lead to systems in canonical form with a different number of clean and dirty variables. However, if a permutation reveals a dirty variable then at least one dirty variable will exist in every permutation. For details see Theorem~\ref{theorem:invariance-of-cleanliness} in the Electronic Companion. 
For our purposes, the permutation of variables does not effect any of our results. Any permutation that puts the semi-infinite linear system into a canonical form will suffice. \hfill $\triangleleft$
\end{remark}

\begin{definition}\label{def:finite-support-notation}
The finite support element,  $u^h$ for every $h\in \tilde I$,   that is generated by the Fourier-Motzkin elimination procedure    is called a   \emph{Fourier-Motzkin elimination multiplier}, or simply a \emph{multiplier}.  
\end{definition}

The key property of the Fourier-Motzkin elimination procedure is that it characterizes geometric projections. For $\ell \le n$ define 
\begin{align*}
P(\Gamma;x_1, \dots, x_{\ell-1}) := \{(x_\ell, \ldots, x_n) \in \R^{n-\ell + 1} \, : \,  \exists x_1, \dots, x_{\ell - 1}\; \textrm{ s.t. }\,  (x_1, \dots, x_{\ell-1}, x_\ell, \dots, x_n) \in \Gamma\}.
\end{align*}

\begin{theorem}\label{theorem:FM-elim-succ}
Apply the Fourier-Motzkin elimination procedure with input inequality system \eqref{eq:initialSystem} to produce output system \eqref{eq:output-system}. For all $h \in \tilde I$,   the finite-support multipliers $u^h \in \R_{+}^{(I)}$  generated by the  Fourier-Motzkin procedure satisfy
\begin{itemize}
\item[(i)] $\tilde{b}(h) = \langle b, u^h \rangle$,
\item[(ii)] $\tilde{a}^k(h) = \langle a^k, u^h\rangle$ for all $k = \ell, \ldots, n$, and
\item[(iii)] $\langle a^k, u^h \rangle = 0$ for all $k=1, \ldots, \ell-1$.
\end{itemize}
In addition, if  not  all variables are eliminated, and in the output system \eqref{eq:output-system}  $\ell \le n$, then 
\begin{align*}
P(\Gamma; x_1, \ldots, x_{\ell-1}) = \{ (x_\ell, \ldots, x_n) \st \eqref{eq:output-system} \text{ holds}\}.
\end{align*}
\end{theorem}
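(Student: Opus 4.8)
The plan is to prove both assertions of Theorem~\ref{theorem:FM-elim-succ} simultaneously by induction on the number of iterations of the while loop of the Fourier--Motzkin elimination procedure, that is, on the index $j$ that has just been processed. The right bookkeeping device is a loop invariant. Write $\mathcal D$ for the current set of not-yet-eliminated indices and $C := \{1,\dots,j\}\setminus\mathcal D$ for the clean indices processed so far (note that $\mathcal D$ may still contain indices $\le j$ whenever exactly one of $\mathcal H_+(k), \mathcal H_-(k)$ was empty). I claim the procedure maintains, after processing $x_j$: for every $h\in\tilde I$, one has $\tilde b(h)=\langle b,u^h\rangle$, $\tilde a^k(h)=\langle a^k,u^h\rangle$ for every $k\in\mathcal D$, and $\langle a^k,u^h\rangle=0$ for every $k\in C$; moreover $u^h\in\R^{(I)}_+$ with $\supp(u^h)=h$; and, geometrically, the current inequality system describes the projection of $\Gamma$ obtained by eliminating the coordinates indexed by $C$.

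The base case $j=0$ (after \textsc{Initialization}) is immediate: $\tilde I=\{\{i\}:i\in I\}$, $u^{\{i\}}=e^i$, $\tilde a^k(\{i\})=a^k(i)=\langle a^k,e^i\rangle$, $\tilde b(\{i\})=b(i)=\langle b,e^i\rangle$, $C=\emptyset$, and the system is \eqref{eq:initialSystem} itself, i.e.\ $\Gamma$ (the empty projection). For the inductive step, suppose the invariant holds after processing $x_{j-1}$ and consider the iteration processing $x_j$. If $\mathcal H_+(j)$ and $\mathcal H_-(j)$ are not both nonempty, the data $\tilde a^k,\tilde b,u^h$ are left unchanged: either $j$ stays in $\mathcal D$ (exactly one of $\mathcal H_\pm(j)$ nonempty) and there is nothing new to verify, or $\mathcal H_+(j)=\mathcal H_-(j)=\emptyset$, in which case $\tilde a^j(h)=0=\langle a^j,u^h\rangle$ for all $h$ by the inductive hypothesis, $j$ moves into $C$, and — since $x_j$ appears in no constraint — eliminating its coordinate does not alter the describing system, so the geometric clause persists.

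The substantive case is $\mathcal H_+(j)\neq\emptyset$ and $\mathcal H_-(j)\neq\emptyset$. Here $j\in\mathcal D$, and the updated multipliers are $\hat u^h=u^h$ for $h\in\mathcal H_0(j)$ and $\hat u^h=\frac{1}{\tilde a^j(p)}u^p-\frac{1}{\tilde a^j(q)}u^q$ for $h=p\cup q$. Since $\tilde a^j(p)>0$ and $\tilde a^j(q)<0$, both coefficients are positive, so $\hat u^h\in\R^{(I)}_+$, and because there is no cancellation among nonnegative vectors, $\supp(\hat u^h)=p\cup q=h$ (using $\supp(u^p)=p$, $\supp(u^q)=q$ from the inductive hypothesis). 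Expanding $\langle\cdot,\hat u^h\rangle$ by linearity and substituting $\tilde b(p)=\langle b,u^p\rangle$, $\tilde a^k(p)=\langle a^k,u^p\rangle$ (valid for $k\in\mathcal D$, in particular for $k=j$ and for $k\in\mathcal D\setminus\{j\}$), etc., gives at once $\hat b(h)=\langle b,\hat u^h\rangle$, $\hat a^k(h)=\langle a^k,\hat u^h\rangle$ for $k\in\mathcal D\setminus\{j\}$, and $\langle a^j,\hat u^h\rangle=\frac{\tilde a^j(p)}{\tilde a^j(p)}-\frac{\tilde a^j(q)}{\tilde a^j(q)}=0$ (and $\langle a^j,\hat u^h\rangle=\tilde a^j(h)=0$ for $h\in\mathcal H_0(j)$); for $k\in C$ the identities $\langle a^k,u^p\rangle=\langle a^k,u^q\rangle=0$ give $\langle a^k,\hat u^h\rangle=0$. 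So the first batch of invariants is restored with $C$ enlarged by $j$. For the geometric clause, apply Theorem~\ref{theorem:FM-elim} (equivalently Corollary~\ref{cor:FM-elim}) to the \emph{current} system in the variable $x_j$ — legitimate since $x_j$ is present and $\mathcal H_+(j),\mathcal H_-(j)$ are both nonempty — to conclude that the updated system describes $P(\,\cdot\,;x_j)$ of the current solution set; composing with the inductive hypothesis and using that iterated projections compose (eliminating coordinates one at a time, in any order, equals eliminating the set at once) yields the geometric clause for the enlarged $C$.

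At termination $j=n+1$, so $\mathcal D$ is exactly the set of dirty indices, $C=\{1,\dots,n\}\setminus\mathcal D$ the clean ones, and $|C|=\ell-1$. The \textsc{Output formatting} step applies the permutation $\pi$ carrying the clean indices to $\{1,\dots,\ell-1\}$ and the dirty indices to $\{\ell,\dots,n\}$; this only relabels the variables and columns of the (unchanged) system and leaves the $u^h$, which live on $I$, untouched. Reading the loop invariant through this relabeling gives precisely (i)--(iii), and when $\ell\le n$ the equality $P(\Gamma;x_1,\dots,x_{\ell-1})=\{(x_\ell,\dots,x_n):\eqref{eq:output-system}\text{ holds}\}$. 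I expect the main obstacle to be purely organizational: getting the loop invariant to account correctly for the dirty variables with small index that the procedure processes but cannot eliminate — which forces the bookkeeping to be stated in terms of $\mathcal D$ and its complement rather than ``the first $j$ variables'' — and cleanly separating the cosmetic final relabeling from the substantive elimination steps. The algebraic identities for the multipliers and the projection-composition fact are then routine.
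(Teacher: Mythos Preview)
Your proof is correct and follows the same inductive approach as the paper, which simply says ``since the system is in canonical form, the result follows from recursively applying Corollary~\ref{cor:FM-elim}.'' Your loop-invariant formulation is more careful---explicitly handling the zero-column case, the dirty-variable pass-through, and the final relabeling rather than invoking the canonical-form reduction of Remark~\ref{remark:canonical}---but the underlying argument is the same recursive application of the single-variable elimination step.
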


\begin{proof}
If $\ell = 1$, then only Step 2d. of the Fourier-Motzkin elimination procedure is executed and the original system remains unchanged so $\tilde{I} = I,$   $\tilde{a}^{k} = a^{k},$ $k = 1, \ldots, n$ and $\tilde{b} = b.$   Based on the initialization step,  $u^{h} = e^{h}$  for $h \in \tilde{I}$ and (i)-(iii) follow.  If $\ell \ge 2,$ since the system is in canonical form, the result follows from recursively applying Corollary~\ref{cor:FM-elim}.
\end{proof}

\begin{corollary}[Clean projection]\label{cor:clean-projection}
Let \eqref{eq:initialSystem} be a semi-infinite linear system and let $1 \le M < \min\{\ell, n\}$ where $\ell$ is the index of the first dirty variable in the output system \eqref{eq:output-system}. Suppose $M$ iterations of Step 2 of the Fourier-Motzkin elimination procedure yields the system (recall~\eqref{eq:initialSystem} is assumed to be in canonical form)
\begin{equation}\label{eq:residual-system}
\tilde a^{M+1}(h)x_{M+1} + \tilde a^{M + 2}(h)x_{M+2} + \cdots + \tilde a^n(i)x_n  \geq \tilde b(h) \quad \text{ for } h\in \tilde I.
\end{equation} 
Then $P(\Gamma; x_1, \ldots, x_{M}) = \{ (x_{M+1}, \ldots, x_n) \st \eqref{eq:residual-system} \text{ holds}\}$.
\end{corollary}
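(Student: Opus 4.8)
The plan is to observe that Corollary~\ref{cor:clean-projection} is essentially a special case of Theorem~\ref{theorem:FM-elim-succ}, and the only thing to check is that running the full Fourier-Motzkin procedure and stopping after $M$ iterations of Step~2 produces exactly the same intermediate system as a ``truncated'' run would, and that the projection statement of Theorem~\ref{theorem:FM-elim-succ} can be localized to the first $M$ variables. Since $M < \min\{\ell, n\}$ and the system is in canonical form, the first $M$ variables $x_1, \dots, x_M$ are all clean, i.e., for each $j = 1, \dots, M$ the procedure actually performs the elimination substep 2b (both $\mathcal H_+(j)$ and $\mathcal H_-(j)$ are nonempty at the moment variable $x_j$ is processed); otherwise $x_j$ would be dirty, contradicting $j < \ell$. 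Hence the first $M$ iterations genuinely eliminate $x_1, \dots, x_M$ and yield a system of the form~\eqref{eq:residual-system} together with multipliers $u^h \in \R^{(I)}_+$, $h \in \tilde I$.

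First I would invoke Corollary~\ref{cor:FM-elim} (or equivalently, apply Theorem~\ref{theorem:FM-elim-succ} to the ``residual'' setting) recursively $M$ times: after iteration~$1$, Corollary~\ref{cor:FM-elim} gives a new index set and multipliers so that the new system describes $P(\Gamma; x_1)$; treating that system as the input to iteration~$2$, Corollary~\ref{cor:FM-elim} again describes the projection of $P(\Gamma;x_1)$ along $x_2$, which equals $P(\Gamma; x_1, x_2)$; and so on. The composition of projections identity $P(P(\Gamma; x_1,\dots,x_{j-1}); x_j) = P(\Gamma; x_1, \dots, x_j)$ is immediate from the definition of $P(\cdot\,;\cdot)$ as an existential projection. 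After $M$ steps this yields $P(\Gamma; x_1, \dots, x_M) = \{(x_{M+1}, \dots, x_n) \st \eqref{eq:residual-system} \text{ holds}\}$, which is exactly the claim.

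The one technical point worth spelling out — and the only place the hypothesis $M < \min\{\ell, n\}$ is used — is that at each of the first $M$ iterations the sets $\mathcal H_+(j)$ and $\mathcal H_-(j)$ (relative to the current system) are both nonempty, so that Corollary~\ref{cor:FM-elim} applies at that step; this is precisely the statement that $x_1, \dots, x_M$ are clean variables, which holds by definition of $\ell$ as the index of the first dirty variable and the assumption that the system is in canonical form. I would also note that the intermediate systems and multipliers produced by the first $M$ iterations of the full procedure coincide with those one would obtain by applying Theorem~\ref{theorem:FM-elim-succ} to the sub-problem in variables $x_1, \dots, x_M$ with $x_{M+1}, \dots, x_n$ treated as parameters, since the procedure's actions on iterations $1$ through $M$ do not depend on anything that happens in iterations $M+1, \dots, n$.

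I do not expect any serious obstacle here: the corollary is a straightforward unwinding of Theorem~\ref{theorem:FM-elim-succ} restricted to the clean portion of the variable ordering. The only thing to be careful about is bookkeeping — making sure the recursion on Corollary~\ref{cor:FM-elim} is set up so that the ``$x_1$'' being eliminated at stage~$j$ is the current first variable $x_j$, and that canonical form guarantees no relabeling interferes with the first $M$ coordinates before iteration $M$ completes.
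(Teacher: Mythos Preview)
Your proposal is correct and takes the same approach as the paper, whose entire proof reads ``Follows from a finite number of applications of Corollary~\ref{cor:FM-elim}.'' One small correction to your justification: your claim that each clean variable $x_j$ must have both $\mathcal H_+(j)$ and $\mathcal H_-(j)$ nonempty is not quite right, since Step~2c of the procedure also removes $j$ from $\mathcal D$ (hence marks $x_j$ clean) when \emph{both} sets are empty; in that degenerate case $x_j$ simply does not appear in the current system and the projection along $x_j$ is trivially described by the same inequalities, so the recursive argument still goes through.
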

\begin{proof}
Follows from a finite number of applications of Corollary~\ref{cor:FM-elim}.
\end{proof}

Partition the index set $\tilde I$  in \eqref{eq:output-system},  into two sets $H_1 := \{ h \in \tilde I : \tilde a^k(h) = 0 \textrm{ for all } k \in \{\ell, \ldots, n\}\}$ and $H_2 := \tilde I \setminus H_1$. Rewrite~\eqref{eq:dirty1} as
\begin{eqnarray}
0 &\ge&  \tilde{b}(h) \quad \text{ for } h\in H_1\label{eq:defineI1} \\
\tilde{a}^{\ell}(h) x_{\ell} + \tilde{a}^{\ell+1}(h) x_{\ell+1} + \cdots + \tilde{a}^{n}(h) x_{n} &\ge& \tilde{b}(h)\quad \text{ for } h \in H_{2}.  \label{eq:defineI2}
\end{eqnarray}
If $H_2 = \emptyset$ (that is, $\ell = n+1$), then  system  \eqref{eq:defineI1}-\eqref{eq:defineI2} is a \emph{clean} system. Otherwise,  if $H_2 \neq \emptyset$, \eqref{eq:defineI1}-\eqref{eq:defineI2} is a \emph{dirty} system.   In a dirty  system, for any $k\in \{\ell, \ldots, n\}$, either $\tilde a^k(h) \ge 0$ for all $h\in H_2$, or $\tilde a^k(h) \le 0$ for all $h \in H_2$. Moreover, $\sum_{k=\ell}^n | \tilde a^k(h)| > 0$ for $h \in H_2$.  

\begin{definition}\label{def:x-delta}
Given a dirty system \eqref{eq:defineI1}-\eqref{eq:defineI2} 
and a real number $\delta \ge 0$, let $x(\delta;\ell)$ denote the tuple $(\bar x_\ell, \dots, \bar x_n)$ where for each $k \in \{ \ell, \dots, n\}$, $\bar x_k = \delta$ if $\tilde a^k(h) \ge 0$ for all $h \in H_2$ and $\bar x_k = - \delta$ otherwise. Let $x_k(\delta;\ell)$ denote the $k$th entry of $x(\delta;\ell)$.
\end{definition}

\begin{remark}\label{remark:finite_fm}
When $I$ is a finite set, the concept of a dirty variable is unnecessary.  In the finite case, there is always a value of $\delta$ such that $x(\delta, \ell)$ is a feasible solution to~\eqref{eq:defineI2}.  It is therefore legitimate to drop the constraints indexed by $H_{2}$ from further consideration.  Therefore, when implementing the Fourier-Motzkin procedure in the finite case, if variable $x_{k}$ is dirty, then one would drop all the constraints $h$ for which $\tilde{a}^{k}(h) > 0$ (or $\tilde{a}^{k}(h) < 0$).\hfill $\triangleleft$
\end{remark}

\begin{theorem}[Feasibility]\label{theorem:feasible}
Applying  Fourier-Motzkin elimination to  \eqref{eq:initialSystem}  results in system~\eqref{eq:defineI1}-\eqref{eq:defineI2}.  If $H_2 \neq \emptyset$ then the system is feasible (i.e. $\Gamma$ is nonempty) if and only if 
\begin{multicols}{2}
\begin{itemize}
\item[(i)] $\tilde{b}(h)  \le 0 \text{ for all } h \in H_{1}$, and
\item[(ii)] $ \sup_{h \in H_{2}} \tilde{b}(h) / \sum_{k=\ell}^{n} | \tilde{a}^{k}(h)| < \infty.$
\end{itemize}
\end{multicols}
\noindent Moreover, if $H_2 = \emptyset$ then $\Gamma$ is nonempty if and only if (i) holds. 
\end{theorem}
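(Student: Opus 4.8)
The plan is to invoke Theorem~\ref{theorem:FM-elim-succ} to turn the question ``is $\Gamma$ nonempty?'' into ``is the output system \eqref{eq:defineI1}--\eqref{eq:defineI2} consistent?'', and then to exhibit an explicit feasible point when (i)--(ii) hold and an explicit uniform bound when $\Gamma \neq \emptyset$. Throughout I would use the trivial fact that a projection $P(\Gamma; x_1,\dots,x_{\ell-1})$ is nonempty if and only if $\Gamma$ is nonempty, which is immediate from the definition of projection. I split into the case $H_2 \neq \emptyset$ (equivalently $\ell \le n$) and the case $H_2 = \emptyset$ (equivalently $\ell = n+1$).

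Suppose first $H_2 \neq \emptyset$. By Theorem~\ref{theorem:FM-elim-succ}, $P(\Gamma; x_1,\dots,x_{\ell-1})$ is exactly the solution set of \eqref{eq:defineI1}--\eqref{eq:defineI2}, so $\Gamma \neq \emptyset$ iff that system has a solution $(\bar x_\ell,\dots,\bar x_n)$. For the ``if'' direction, assume (i) and (ii), set $\mu := \sup_{h\in H_2} \tilde b(h)/\sum_{k=\ell}^n|\tilde a^k(h)|$ (finite by (ii)) and $\delta := \max\{\mu,0\}$. I would substitute the point $x(\delta;\ell)$ of Definition~\ref{def:x-delta} into \eqref{eq:defineI2}: by the single-sign property of dirty columns together with the sign convention in Definition~\ref{def:x-delta}, the $k$-th term of the left-hand side equals $|\tilde a^k(h)|\,\delta$ for every $h \in H_2$, so the left-hand side collapses to $\delta\sum_{k=\ell}^n|\tilde a^k(h)|$, which is $\ge \mu\sum_{k=\ell}^n|\tilde a^k(h)| \ge \tilde b(h)$ since $\delta \ge \mu$ and $\sum_{k=\ell}^n|\tilde a^k(h)| > 0$ on $H_2$; constraints \eqref{eq:defineI1} hold by (i). Hence $x(\delta;\ell)$ is feasible and $\Gamma \neq \emptyset$. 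For the ``only if'' direction, given a solution $(\bar x_\ell,\dots,\bar x_n)$, condition (i) is immediate from \eqref{eq:defineI1}, and with $R := \max_{\ell\le k\le n}|\bar x_k|$ the chain $\tilde b(h) \le \sum_{k=\ell}^n \tilde a^k(h)\bar x_k \le R\sum_{k=\ell}^n|\tilde a^k(h)|$, divided by the positive number $\sum_{k=\ell}^n|\tilde a^k(h)|$, gives $\tilde b(h)/\sum_{k=\ell}^n|\tilde a^k(h)| \le R$ for all $h\in H_2$, which is (ii).

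Now suppose $H_2 = \emptyset$, so $\ell = n+1$ and the output is merely $0 \ge \tilde b(h)$ for $h \in \tilde I = H_1$; consistency of this system is exactly (i), so I must show $\Gamma \neq \emptyset$ is equivalent to (i). If $\bar x \in \Gamma$, then for each $h$ I pair the nonnegative function $\sum_{k=1}^n a^k\bar x_k - b$ with the nonnegative, finitely supported multiplier $u^h$ and use parts (i) and (iii) of Theorem~\ref{theorem:FM-elim-succ} (with $\ell-1 = n$) to get $0 \le \langle \sum_{k=1}^n a^k\bar x_k - b,\, u^h\rangle = -\tilde b(h)$, i.e.\ $\tilde b(h)\le 0$. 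For the converse I use completeness of elimination: apply Corollary~\ref{cor:clean-projection} with $M = n-1$ to identify $P(\Gamma; x_1,\dots,x_{n-1})$ with the solution set of the residual one-variable system \eqref{eq:residual-system}, i.e.\ $\tilde a^n(h)x_n \ge \tilde b(h)$, and then perform the last elimination step on $x_n$ — invoking Theorem~\ref{theorem:FM-elim} when $x_n$ is eliminated, and noting that nothing changes when column $n$ is already zero — to conclude this one-variable system is consistent exactly when $\tilde b(h)\le 0$ for all $h$; the cases $n \le 1$ are checked directly.

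The one place I expect genuine friction is the bookkeeping in the $\ell = n+1$ case: matching the residual right-hand side $\tilde b$ produced after $n-1$ elimination steps with the right-hand side of the output system, and separating the sub-cases ``last variable eliminated'' versus ``last column identically zero''. By contrast, the substantive content — the case $H_2 \neq \emptyset$ — rests entirely on the single observation that $x(\delta;\ell)$ turns each dirty constraint into the scalar inequality $\delta\sum_{k=\ell}^n|\tilde a^k(h)| \ge \tilde b(h)$.
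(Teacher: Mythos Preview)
Your proposal is correct and follows essentially the same route as the paper's proof. In the $H_2 \neq \emptyset$ case your argument is identical: reduce via Theorem~\ref{theorem:FM-elim-succ} to the projected system, plug in $x(\delta;\ell)$ for sufficiency, and bound by $R = \max_k |\bar x_k|$ for necessity. In the $H_2 = \emptyset$ case the paper does exactly what you describe---look at the residual one-variable system in $x_n$ via Corollary~\ref{cor:clean-projection}, split into $\mathcal H_0(n)$, $\mathcal H_+(n)$, $\mathcal H_-(n)$, and read off a feasible $x_n$ from the final inequalities $\tilde b(h)\le 0$---so the bookkeeping friction you anticipate is real but not substantive, and your multiplier calculation for the forward direction is just an explicit unpacking of the paper's ``consequence'' remark.
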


\begin{proof}
If $H_2 \neq \emptyset$, then $\Gamma$ is nonempty if and only if $P(\Gamma; x_1, \ldots, x_{\ell-1})$ is nonempty. By Theorem~\ref{theorem:FM-elim-succ},   $P(\Gamma; x_1, \ldots, x_{ \ell-1})$ is defined by~\eqref{eq:defineI1}-\eqref{eq:defineI2}. Therefore, it suffices to show \eqref{eq:defineI1}-\eqref{eq:defineI2} has a feasible solution if and only if conditions (i) and (ii) hold. Since (i) and \eqref{eq:defineI1} are equivalent it remains to show \eqref{eq:defineI2} holds if and only if (ii) holds. 

($\Longrightarrow$) 
For all $h \in H_2$, $\tilde b(h) \leq \sum_{k=\ell}^n \tilde a^k(h)\bar x_k \leq \sum_{k=\ell}^n |\tilde a^k(h)| |\bar x_k| \leq \delta (\sum_{k=\ell}^n |\tilde a^k(h)|)$.   This implies  for every $h \in H_2$,  $\tilde b(h)/\sum_{k=\ell}^n |\tilde a^k(h)| \leq \delta < \infty$ and this gives condition (ii).

($\Longleftarrow$) 
Assume (ii) holds. Thus, there exists a  
$\delta \geq \max\{0, \sup_{h \in H_2} \tilde{b}(h) / \sum_{k=\ell}^{n} | \tilde{a}^{k}(h)|\}.$
We show $x(\delta;\ell)$ satisfies \eqref{eq:defineI2}.  
For any $h\in H_2$, $\sum_{k =\ell}^n \tilde a^k(h) x_k(\delta;\ell) = \delta(\sum_{k=\ell}^n |\tilde a^k(h)|) \geq \tilde b(h)$, where the last inequality follows from the fact that $\delta \geq \sup_{h \in H_{2}} \tilde{b}(h) / \sum_{k=\ell}^{n} | \tilde{a}^{k}(h)|.$ 

Now consider the case $H_2 = \emptyset$. If the inequalities in the original system hold (that is, $\Gamma \neq \emptyset$) then the inequalities $0 \ge \tilde b(h)$ for $h \in H_1$ must also hold, since these inequalities are consequences of the original system. Thus, (i) holds.\old{This is the case only if $\tilde b(h) \le 0$.} Conversely, suppose $\tilde b(h) \le 0$ for all $h \in H_1$. Now, just before $x_n$ is eliminated in the Fourier-Motzkin elimination procedure ($x_n$ must be eliminated since $H_2 = \emptyset$) the system stored in the algorithm (after a scaling as stated in Remark~\ref{rem:initial_remark}) is
\begin{eqnarray} 
0 & \ge & \hat b(h) \text{ for } h \in \mathcal H_0(n) \label{this}\\
x_n &\ge&  \hat{b}(h') \text{ for } h' \in \mathcal H_+(n) \label{that}\\
-x_n &\ge&  \hat{b}(h'') \text{ for } h'' \in \mathcal H_-(n). \label{the-other}
\end{eqnarray}
When $x_n$ is eliminated, system \eqref{eq:defineI1}-\eqref{eq:defineI2} is derived with $\tilde b(h) =  \hat{b}(h') + \hat{b}(h'')$ where $h = (h',h'')$ for $h' \in  \mathcal H_+(n)$ and $h'' \in \mathcal H_-(n)$. By hypothesis, $\tilde b(h) \le 0$ for all $h \in H_1$ and  this implies  $\hat{b}(h') \le -\hat{b}(h'')$.   Then there exists an $x_n$ such that $\hat{b}(h') \le x_n \le - \hat{b}(h'')$ for all $h' \in \mathcal H_+(n)$ and $h'' \in \mathcal H_-(n)$ and  this  $x_n$ that satisfies \eqref{that} and \eqref{the-other}. Note that \eqref{this} holds by hypothesis since $\mathcal H_0(n) \subseteq H_1$. Thus, \eqref{this}-\eqref{the-other} is a feasible system. By Corollary~\ref{cor:clean-projection} this system is the projection $P(\Gamma; x_1, \dots, x_{n-1})$. Thus,  $P(\Gamma; x_1, \dots, x_{n-1})$ is nonempty and therefore $\Gamma$ is nonempty. 
\end{proof}

\begin{remark}\label{x-delta-feasible}
In the  proof of Theorem~\ref{theorem:feasible}  it was shown  that when $\Gamma$ is nonempty and 
$\delta \geq \max\{0, \sup_{h \in H_{2}} \tilde{b}(h) / \sum_{k=\ell}^{n} | \tilde{a}^{k}(h)|\},$ 
the tuple $x(\delta;\ell)$ as defined in Definition~\ref{def:x-delta} is feasible to \eqref{eq:defineI1}-\eqref{eq:defineI2} and thus can be extended to a feasible vector in $\Gamma$. This fact is used below. \hfill $\triangleleft$
\end{remark}

We next characterize the boundedness of the feasible set $\Gamma$.

\begin{theorem}[Boundedness]\label{theorem:region-boundedness} If \eqref{eq:initialSystem} defines a nonempty bounded set $\Gamma$ then, after applying the Fourier-Motzkin elimination procedure, the resulting system~\eqref{eq:defineI1}-\eqref{eq:defineI2} has  $H_2 = \emptyset$.
\end{theorem}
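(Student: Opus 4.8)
The plan is a proof by contradiction: assume $\Gamma$ is nonempty and bounded but $H_2 \neq \emptyset$, and then exhibit points of $\Gamma$ with arbitrarily large norm. The engine is the parametrized family $x(\delta;\ell)$ from Definition~\ref{def:x-delta} together with the feasibility analysis already carried out in Theorem~\ref{theorem:feasible}.

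First, since $\Gamma$ is nonempty and $H_2 \neq \emptyset$, Theorem~\ref{theorem:feasible} applies and in particular its condition~(ii) holds, so the quantity $\delta_0 := \max\{0,\ \sup_{h \in H_2} \tilde b(h)/\sum_{k=\ell}^n |\tilde a^k(h)|\}$ is finite. By Remark~\ref{x-delta-feasible} (equivalently, by the $(\Longleftarrow)$ direction in the proof of Theorem~\ref{theorem:feasible}), for every $\delta \ge \delta_0$ the tuple $x(\delta;\ell)$ satisfies the system \eqref{eq:defineI1}-\eqref{eq:defineI2}. By Theorem~\ref{theorem:FM-elim-succ}, this system describes exactly $P(\Gamma; x_1,\dots,x_{\ell-1})$ (when $\ell = 1$ this projection is $\Gamma$ itself), so $x(\delta;\ell) \in P(\Gamma; x_1,\dots,x_{\ell-1})$ for all $\delta \ge \delta_0$.

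Second, I unpack the projection to return to $\Gamma$. Because $H_2 \neq \emptyset$ we have $\ell \le n$, so the tuple $x(\delta;\ell)$ has at least one coordinate and, by Definition~\ref{def:x-delta}, every coordinate equals $\pm\delta$; hence $\|x(\delta;\ell)\| \ge \delta$. By the definition of $P(\Gamma;x_1,\dots,x_{\ell-1})$, each $x(\delta;\ell)$ is the image of some point $x \in \Gamma$, and since a projection onto a coordinate subspace never increases the Euclidean norm (i.e. $\|x\|^2$ equals $\|(x_1,\dots,x_{\ell-1})\|^2$ plus $\|(x_\ell,\dots,x_n)\|^2$), we get $\|x\| \ge \|x(\delta;\ell)\| \ge \delta$. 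Letting $\delta \to \infty$ produces points of $\Gamma$ of unbounded norm, contradicting the boundedness of $\Gamma$. Hence $H_2 = \emptyset$.

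The only subtlety I anticipate is the bookkeeping around the degenerate case $\ell = 1$, where no clean variables were eliminated and ``$P(\Gamma; x_1,\dots,x_{\ell-1})$'' must be read as $\Gamma$; the argument is unaffected since $x(\delta;1)$ is then directly a feasible point of \eqref{eq:initialSystem}. Everything else is a direct combination of Theorem~\ref{theorem:feasible}, Remark~\ref{x-delta-feasible}, and Theorem~\ref{theorem:FM-elim-succ}, with no separating-hyperplane or topological input needed.
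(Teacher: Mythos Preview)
Your proof is correct, and it takes a more direct route than the paper. The paper's proof is a single-line appeal to Theorem~\ref{theorem:clean-lineality-equivalence} in the Electronic Companion, which characterizes clean systems via the geometric condition $\rec(\Gamma) = \lin(\Gamma)$; since a nonempty bounded $\Gamma$ has $\rec(\Gamma) = \lin(\Gamma) = \{0\}$, cleanliness (hence $H_2 = \emptyset$) follows. Your argument instead stays entirely within Section~\ref{s:fm-elim}: you exploit Remark~\ref{x-delta-feasible} to push $x(\delta;\ell)$ into the projection for all large $\delta$, lift back to $\Gamma$, and contradict boundedness by letting $\delta \to \infty$. This is more elementary and self-contained---it needs none of the appendix machinery about conic/lineality index sets---while the paper's route embeds the result in a more general equivalence (rec\,$=$\,lin iff clean) that is independently useful elsewhere (e.g.\ in Theorem~\ref{theorem:bounded-characterization} and the discussion of permutation invariance). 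Both approaches are short; yours is the natural ``direct'' argument, and the paper's is the natural ``structural'' one.
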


\begin{proof}
The result follows from Theorem~\ref{theorem:clean-lineality-equivalence} in the electronic companion, because in this case $\rec(\Gamma) = \lin(\Gamma) = \left\{0\right\}$.
\end{proof}

\begin{example}\label{ex:clean-unbounded}
The opposite implication in Theorem~\ref{theorem:region-boundedness} does not hold in general. For example, consider the linear system $-x_1 -x_2 \geq 0, x_1 + x_2 \geq 0$. The feasible region is the unbounded line $x_1 + x_2 = 0$; but $H_2$ is empty when applying the Fourier-Motzkin elimination procedure because the output is the degenerate system $0 \geq 0$. \hfill $\triangleleft$
\end{example}

Theorem \ref{theorem:all-rays}  below provides a very useful property about Fourier-Motzkin elimination multipliers that  plays a pivotal role in  establishing duality results in Section~\ref{s:dual-results}.  

\begin{theorem}\label{theorem:all-rays} 
Applying  Fourier-Motzkin elimination  to  \eqref{eq:initialSystem} gives  \eqref{eq:output-system}. Let $\bar u \in \R^{(I)}_+$ such that $\langle a^k, \bar u \rangle = 0$ for  $k = 1, \ldots, M$ with $\ell - 1 \le M \le n$.   Then, there exists a nonempty finite index set $\bar I \subseteq \tilde I$ such that for all $h \in \bar I$ the Fourier-Motzkin multipliers $u^h$ satisfy $\langle a^k , u^h \rangle = 0$ for $k = 1, \ldots, M$. Moreover, there exist scalars $\lambda_{h} \ge 0$ for $h \in \bar I$ so that $\overline{u} = \sum_{h \in \bar I}\lambda_h u^h$. 
\end{theorem}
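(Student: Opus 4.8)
The plan is to track, throughout the Fourier–Motzkin procedure, the cone of ``surviving'' nonnegative combinations of the original inequalities whose aggregate has zero coefficient on $x_1, \ldots, x_M$, and to show that at termination this cone is exactly the finitely generated cone $\cone\{u^h : h \in \bar I\}$ for a suitable finite $\bar I \subseteq \tilde I$. The key structural fact is that one step of elimination of a variable $x_j$ is itself a Fourier–Motzkin (projection) operation on the cone of multipliers: the new multipliers $\hat u^h$ are either old multipliers $u^h$ (for $h \in \mathcal H_0(j)$) or the specific nonnegative combinations $\frac{1}{\tilde a^j(p)}u^p - \frac{1}{\tilde a^j(q)}u^q$ for $(p,q) \in \mathcal H_+(j) \times \mathcal H_-(j)$. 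So the heart of the argument is a lemma: \emph{if $\{v^h : h \in \tilde I\}$ is a family of vectors in $\R^{(I)}_+$ and $\bar u \in \R^{(I)}_+$ lies in $\cone\{v^h\}$ with $\langle a^j, \bar u\rangle = 0$, then $\bar u \in \cone\{\hat v^h : h \in \tilde I'\}$, where $\hat v^h$ are the post-elimination multipliers for variable $x_j$.} This is just the statement that the projection of a finitely/arbitrarily generated cone along the linear functional $\langle a^j, \cdot\rangle$ is generated by the pairwise FM combinations of the generators — precisely the cone-version of Theorem~\ref{theorem:FM-elim} / Corollary~\ref{cor:FM-elim}, but applied in multiplier space rather than variable space.

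First I would set up the induction on the number $j$ of elimination steps performed (equivalently, on $M - (\ell-1)$, peeling off one clean variable at a time). The base case is $\bar u$ already expressed via the initial multipliers $u^h = e^h$: any $\bar u \in \R^{(I)}_+$ is trivially $\sum_{i \in \supp(\bar u)} \bar u(i) e^i$, a nonnegative combination over the finite set $\bar I = \supp(\bar u)$, and the conditions $\langle a^k, u^h\rangle = 0$ are vacuous for $k$ in the (empty) range when no variable has been eliminated. For the inductive step, suppose after eliminating $x_1, \ldots, x_{j-1}$ we have written $\bar u = \sum_{h \in J} \lambda_h u^h$ with $\lambda_h > 0$, $J$ finite, and $\langle a^k, u^h\rangle = 0$ for $k < j$ and all $h \in J$ (using Theorem~\ref{theorem:FM-elim-succ}(iii) to know the multipliers annihilate $a^1,\dots,a^{j-1}$). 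Since also $\langle a^j, \bar u\rangle = 0$ and $\langle a^j, u^h\rangle = \tilde a^j(h)$, I split $J$ into $J_+ = J \cap \mathcal H_+(j)$, $J_- = J \cap \mathcal H_-(j)$, $J_0 = J \cap \mathcal H_0(j)$; the constraint $\sum_{h \in J_+}\lambda_h \tilde a^j(h) + \sum_{h \in J_-}\lambda_h \tilde a^j(h) = 0$ says the positive mass $\sum_{h\in J_+}\lambda_h \tilde a^j(h)$ equals the negative mass. A standard balancing/transportation argument then expresses $\sum_{h \in J_+ \cup J_-}\lambda_h u^h$ as a nonnegative combination of the pair-multipliers $\hat u^{p \cup q} = \frac{1}{\tilde a^j(p)}u^p - \frac{1}{\tilde a^j(q)}u^q$ over $(p,q) \in J_+ \times J_-$, while the $J_0$ part passes through unchanged as $\hat u^h = u^h$. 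This rewrites $\bar u$ over the post-step multipliers, and these automatically satisfy $\langle a^k, \cdot\rangle = 0$ for $k \le j$ by Theorem~\ref{theorem:FM-elim-succ}. After $M - (\ell - 1)$ steps — i.e., once all of $x_1, \ldots, x_M$ have been eliminated (note $M \ge \ell - 1$ guarantees this many eliminations actually occur) — we obtain $\bar u = \sum_{h \in \bar I}\lambda_h u^h$ with $\bar I \subseteq \tilde I$ finite and nonempty (nonempty because $\bar u$ need not be zero, and even if $\bar u = 0$ we may take any singleton), and by Theorem~\ref{theorem:FM-elim-succ}(iii) each $u^h$ satisfies $\langle a^k, u^h\rangle = 0$ for $k = 1, \ldots, \ell-1$; one more appeal to the annihilation property at intermediate stages gives $\langle a^k, u^h \rangle = 0$ for $k = 1, \ldots, M$ as required.

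The main obstacle is the balancing step: converting the single scalar equation ``positive mass $=$ negative mass'' into an explicit nonnegative combination of the pair-multipliers $\hat u^{p\cup q}$. The clean way is an induction on $|J_+| + |J_-|$: pick any $p \in J_+$ and $q \in J_-$, let $\mu = \min\{\lambda_p \tilde a^j(p),\, -\lambda_q \tilde a^j(q)\} > 0$, subtract $\mu\,\hat u^{p\cup q}$ (which carries coefficient $\tfrac{\mu}{\tilde a^j(p)}$ on $u^p$ and $\tfrac{-\mu}{\tilde a^j(q)} > 0$ wait — sign check: $\hat u^{p\cup q} = \frac{1}{\tilde a^j(p)}u^p - \frac{1}{\tilde a^j(q)}u^q$ and $\tilde a^j(q) < 0$, so $-\frac{1}{\tilde a^j(q)} > 0$, good) from $\sum \lambda_h u^h$; this zeroes out one of $p, q$ from the list, preserves nonnegativity of all remaining coefficients, and preserves the balance equation, so the induction closes. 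The only subtlety is bookkeeping the finitely many indices and making sure $\bar I$ ends up a subset of the \emph{final} $\tilde I$ (which it does, since each surviving pair $p \cup q$ and each surviving $J_0$-element is a member of $\tilde I$ at the next stage, and this is preserved through all later elimination steps because later steps only ever relabel or recombine — they never delete a needed index from the support description). I would state the balancing step as a small self-contained lemma to keep the induction readable.
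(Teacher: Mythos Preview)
Your approach is correct and takes a genuinely different inductive route from the paper's. The paper inducts on $n$, dropping the last column and splitting into the cases $M<n$ (where the multipliers for the reduced $(n-1)$-column system coincide with those of the full system, so the induction hypothesis applies directly) versus $M=n$ (where the transportation argument is invoked once, on the final column). You instead induct forward on the elimination step~$j$, applying the balancing argument at every stage. Your route is more uniform and avoids the paper's column-dropping bookkeeping; the paper's route has the minor advantage of invoking transportation only once rather than at each step. The balancing content is identical in both---your greedy $\min$-matching on $|J_+|+|J_-|$ is equivalent to the paper's appeal to feasibility of a balanced transportation LP.

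One point you should make explicit rather than leave implicit. For $\ell \le j \le M$ the variable $x_j$ is dirty: one of $\mathcal H_\pm(j)$ is empty, the FM procedure leaves $\tilde I$ and the multipliers unchanged, and no pair-multipliers $\hat u^{p\cup q}$ are produced. In that case your balance equation (with, say, $J_-=\emptyset$ and all $\lambda_h>0$) forces $J_+=\emptyset$, so the step reduces to restricting $J$ to $J_0$. This restriction is precisely what delivers $\langle a^k, u^h\rangle = 0$ for $\ell \le k \le M$; Theorem~\ref{theorem:FM-elim-succ}(iii) by itself only covers $k \le \ell-1$. Your phrasing ``once all of $x_1,\ldots,x_M$ have been eliminated'' and the count ``$M-(\ell-1)$ steps'' obscure this, since dirty variables are not eliminated. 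Say instead that you process $x_1,\ldots,x_M$ in order, eliminating the clean ones via balancing and restricting to $J_0$ for the dirty ones; then note that $\tilde I$ is unchanged by steps $\ell,\ldots,n$, so your final $\bar I$ automatically sits inside the terminal $\tilde I$.
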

\begin{proof}  Proceed by induction on $n$.   First prove the inductive step on $n$ and then the $n=1$ step.  We assume the result is true for an $n-1$ variable system and show that this implies the result is true for an $n$ variable system. Apply Fourier-Motzkin elimination to the $n-1$ variable  system 
\begin{equation}\label{eq:drop-a-variable}
a^1(i)x_1 + a^2(i)x_2 + \cdots + a^n(i)x_{n-1}  \geq b(i) \quad  \text{ for } i\in I,
\end{equation}
obtained by dropping the last column in system~\eqref{eq:initialSystem}.  The result is
\begin{eqnarray}
\hat{a}^{\ell_{n-1}}(h) x_{\ell_{n-1}} + \hat{a}^{\ell_{n-1} + 1}(h) x_{\ell_{n-1} + 1} + \cdots + \hat{a}^{n-1}(h) x_{n-1} \ge \hat{b}(h) \quad \text{ for } h \in \hat{I}
\end{eqnarray}
where $\ell_{n-1}$ denotes the first index of the dirty variables in the Fourier-Motzkin elimination output.  There are two cases to consider.

\vskip 7pt

\noindent\underline{\em Case 1: $M < n$.} 
Variable $\ell - 1$ is the last clean variable in~\eqref{eq:initialSystem}. The assumption that  $M < n$, together with the theorem hypothesis that $\ell - 1 \le M,$  implies  $\ell - 1 < n$ so  the last clean variable  in~\eqref{eq:initialSystem} is strictly less than variable $n$.   Then the last clean variable in~\eqref{eq:drop-a-variable} is the same as the last clean variable in~\eqref{eq:initialSystem}.     
This implies Fourier-Motzkin elimination applied to both systems yields identical multiplier vectors.  We invoke the induction hypothesis for the $n-1$ variable system~\eqref{eq:drop-a-variable}.  For this to be valid, all the hypotheses for the $n-1$ system must hold. Denote by $M_{n-1}$  the value of $M$  and $\ell_{n-1}$ the value of $\ell$ when the induction hypothesis is applied to~\eqref{eq:drop-a-variable}.  Since $\ell-1 \leq M < n$ and the index of the last clean variable for~\eqref{eq:initialSystem} is the same as the last clean variable for~\eqref{eq:drop-a-variable}, it is valid to set $M_{n-1} = M$ and  $\ell_{n -1} - 1 = \ell - 1$.  Because   Fourier-Motzkin  elimination applied to both systems yields identical multiplier vectors, the induction hypothesis implies that the Fourier-Motzkin multipliers also satisfy  the requirements of the theorem for the $n$ variable system.

\vskip 7pt

\noindent\underline{\em Case 2: $M = n.$}  In this case  $\langle a^{k}, \overline{u} \rangle = 0$ for $k = 1, \ldots, n.$  Therefore it is valid to apply the induction hypothesis to the $n-1$ variable system~\eqref{eq:drop-a-variable}   with $M_{n-1} = n -1$ and $\ell_{n-1} = \min\{\ell, n\}$. Then  there exist a finite index set $\left\{1,\dots, t\right\}  = \overline{I} \subseteq \hat I$ and multipliers $w^j$ such that $ \langle a^k, w^j \rangle = 0$ for all $k=1,\dots,  n-1$ and $j = 1, \dots, t$ and  scalars $\hat \alpha_j > 0$ such that 
\begin{align}\label{eq:express-bar-u}
\begin{array}{c}
\bar u = \sum_{j=1}^t \hat \alpha_j w^j.
\end{array}
\end{align} 
The multipliers  $w^j$, $j = 1, \ldots, t$, are used  to show that column $n$ is clean in~\eqref{eq:initialSystem} and that  $\overline{u}$ is a nonnegative combination of multipliers that result from eliminating this last column $n.$

  By Theorem~\ref{theorem:FM-elim-succ}, the scalars $ \langle a^n, w^j \rangle$ are among the coefficients on $x_n$ before that variable is processed  when Fourier-Motzkin elimination  is applied to  \eqref{eq:initialSystem}. We claim that either (i) $\langle a^n, w^j \rangle = 0$ for $j = 1, \dots, t$ or (ii) there exist $j^+,j^- \in \left\{1, \dots, t\right\}$ such that $ \langle a^n, w^{j^+} \rangle > 0$ and $ \langle a^n, w^{j^-} \rangle < 0$. This follows since conditions (i) and (ii) are exhaustive, indeed $0 = \langle a^n, \bar u \rangle = \sum_{j=1}^t \hat \alpha_j \langle a^n, \ w^j \rangle$ for $\hat \alpha_j > 0$ and so if $\langle a^n, w^j \rangle \ge 0$ for $j = 1,\dots, t$ (similiarly $\langle a^n, w^j \rangle \le 0$ for $j = 1,\dots, t$) then $\langle a^n, w^j \rangle = 0$ for $j = 1, \dots, t$. 

If  (i) holds, and $\langle a^n, w^j \rangle = 0$ for $j = 1, \dots, t,$ then $\langle a^k, w^j \rangle = 0$ for $j = 1, \dots, t$, $k = 1,\ldots, n$; thus $w^j$ for $j=1, \ldots, t$ are Fourier-Motzkin multipliers when Fourier-Motzkin is applied to \eqref{eq:initialSystem}, and $\bar u = \sum_{j=1}^t \hat \alpha_j w^j$ and Case 2  is proved.

If (ii) holds then  $x_n$ is a clean variable with respect to the system produced during the Fourier-Motzkin procedure before variable $x_n$ is processed: it has both a positive coefficient $ \langle a^n, w^{j^+} \rangle > 0$ and a negative coefficient $ \langle a^n, w^{j^-} \rangle < 0$.   

Define three sets $J^+$, $J^-$ and $J^0$ where $j \in J^+$ if $\langle a^n, w^j \rangle > 0$, $j \in J^-$ if $\langle a^n, w^j \rangle < 0$ and $j \in J^0$ if $\langle a^n, w^j \rangle = 0$.  In case  (ii) both $J^+$ and $J^-$ are nonempty. As discussed in case (i), for $j \in J^0$, $w^j$ is already a Fourier-Motzkin multiplier which satisfies $ \langle a^k, w^j  \rangle = 0$ for $k = 1, \dots, M$ and so they meet the specifications of the theorem.  Now consider the $w^j$ for $ j \in J^+$ and $j \in J^-$.  Each pair of $(j^+,j^-) \in J^+ \times J^-$  yields a final Fourier-Motzkin multiplier which is a conic combination of $w^{j^+}$ and $w^{j^-}$. In order to simplify the analysis, normalize the  $w^{j}$ so that $ \langle a^n, w^j \rangle = 1$ for $j \in J^+$ and $ \langle a^n, w^j \rangle = -1$ for $j \in J^-$. Let  $\alpha_{j}$ be the multipliers after the corresponding scaling of $\hat{\alpha}_j$ for $j \in J^+ \cup J^-$. With this scaling, from Step 2.b.(iii) of the Fourier-Motzkin procedure, the $u^{j^+ j^-} = w^{j^+} + w^{j^-}$ for all $(j^+,j^-) \in J^+ \times J^-$ are among the Fourier-Motzkin elimination multipliers for the full system.  It suffices to show  that there exist multipliers $\theta_{j^+j^-}$ such  that
\begin{align}\label{eq:sum-condition}
\bar u = \sum_{j \in J^0} \hat \alpha_j w_j + \sum_{j^+\in J^+} \sum_{j^-\in J^-} \theta_{j^+j^-} u^{j^+ j^-}
\end{align}
and 
\begin{align}\label{eq:zero-condition}
\langle a^k,     u^{j^+ j^-} \rangle  =  \langle a^k, w^{j^+} + w^{j^-} \rangle = 0 \text{ for } k = 1, \dots, M.
\end{align}  
 Condition \eqref{eq:zero-condition} follows since $\langle a^k, w^j \rangle = 0$ for $k = 1, \dots, M-1$ and $ \langle a^n, w^{j^+} \rangle = - \langle a^n, w^{j^-} \rangle = 1$ for all $j^+ \in J^+$ and $j^- \in J^-$. 

To establish \eqref{eq:sum-condition} consider a transportation linear program with supply  nodes indexed by $J^{+}$ and demand nodes indexed by $J^{-}.$ Each supply node $j \in J^{+}$  has supply $\alpha_{j}.$  Each demand node $j \in J^{-}$  has demand $-\alpha_{j}.$  Since
\begin{eqnarray*}
0 = \langle a^{n}, \bar u \rangle = \langle a^{n}, \sum_{j \in J^0}  \alpha_j w^j + \sum_{j \in J^+ \cup J^-} \alpha_j w^j \rangle = \sum_{j \in J^+ \cup J^-} \alpha_{j} \langle a^{n},  w^{j} \rangle      =  \sum_{j \in J^{+}} \alpha_{j}   - \sum_{j \in J^{-}} \alpha_{j}  
\end{eqnarray*}
total supply is equal to total demand.  Therefore the transportation problem has a feasible solution  $\theta_{j^{+}, j^{-}}$ which is the flow from supply node $j^{+}$ to demand node $j^{-}.$  This feasible flow satisfies $\sum_{j^{-} \in J^{-}} \theta_{j^{+}, j^{-}}  = \alpha_{j^{+}}$ for $j^{+} \in J^{+}$ and 
$\sum_{j^{+} \in J^{+}} \theta_{j^{+}, j^{-}}  = \alpha_{j^{-}}$ for $j^{-} \in J^{-}$. 
and so
\begin{eqnarray*}
\sum_{j^{+} \in J^{+}} \sum_{j^{-} \in J^{-}} \theta_{j^{+}, j^{-}} u^{j^{+},j^{-}} &=& \sum_{j^{+} \in J^{+}} \sum_{j^{-} \in J^{-}} \theta_{j^{+}, j^{-}} (w^{j^{+}} + w^{j^{-}}) \\
&=& \sum_{j^{+} \in J^{+}} \sum_{j^{-} \in J^{-}} \theta_{j^{+}, j^{-}} w^{j^{+}} +  \sum_{j^{+} \in J^{+}} \sum_{j^{-} \in J^{-}} \theta_{j^{+}, j^{-}} w^{j^{-}} \\
&=&  \sum_{j^{+} \in J^{+}} \alpha_{j^{+}}w^{j^{+}} +  \sum_{j^{-} \in J^{-}}  \alpha_{j^{-}}w^{j^{-}}.
\end{eqnarray*}
Combining this with \eqref{eq:express-bar-u} yields \eqref{eq:sum-condition}.

Next, consider the case $n = 1$. By hypothesis, this forces $M=1$, i.e., $\langle a^1, \overline{u}\rangle = 0$.  
If the coefficient of $x_1$ is zero for all constraints indexed by $\supp(\overline{u})$, then the Fourier-Motzkin procedure initialization step gives multiplers  $w^j  = e^{j}$, $j \in \supp(\overline{u})$.  Then  $\overline{u} = \sum_{j \in \supp(\overline{u})}\overline{u}(j) w^j$. Otherwise, if variable $x_{1}$ has nonzero coefficients in the system indexed by $\supp(\overline{u})$, it follows that variable $x_{1}$ has both positive and coefficients in this system, since $\overline{u}$ is nonegative and $\langle a^1, \overline{u} \rangle$. Define the usual multiplier vector for each pair of positive and negative coefficients. Again, assume without the loss, the rows are scaled such that the positive coefficients are 1 and the negative coefficients -1.  Create a transportation problem  as above where each node has supply $\overline{u}_{j}$ if $j$ corresponds to a row with +1, or demand $-\overline{u}_{j}$ corresponds to a row with a -1.  Solving this  transportation problem, and using the same logic as before, gives the coefficients $\theta_{j^{+}, j^{-1}}$ to be used on the multiplier vectors $u^{j^{+},j^{-}}$ in order to generate $\overline{u}.$ 
\end{proof}
\section{Solvability and duality theory using projection}\label{s:silp-classification}

\subsection{The projected system}\label{s:projected-system}

The  semi-infinite linear program
\begin{align*}\label{eq:SILP}
\begin{array}{rl}
  \qquad  \inf_{x\in \R^n} & c^\top x \\
 \textrm{s.t.} & a^1(i)x_1 + a^2(i)x_2 + \cdots + a^n(i)x_n \geq b(i) \quad \text{ for } i\in I
\end{array}\tag{\text{SILP}}
\end{align*}
is the primal problem.  Reformulate \eqref{eq:SILP} as
\begin{eqnarray}  \qquad \inf \phantom{-  c_{1} x_{1} -  c_{2} x_{2} - \cdots - c_{n} x_{n} + } z  && \label{eq:initial-system-obj}   \\
\textrm{s.t.}  \quad -  c_{1} x_{1} -  c_{2} x_{2} - \cdots - c_{n} x_{n} + z   &\ge&   0   \label{eq:initial-system-obj-con} \\
  a^{1}(i) x_{1} + a^{2}(i) x_{2} + \cdots + a^{n}(i) x_{n} \phantom{ + z} &\ge&  b(i)\quad \text{ for } i \in I.  \label{eq:initial-system-con}
\end{eqnarray}
Let $\Lambda \subseteq \R^{n+1}$ denote the set of $(x_1, \dots, x_n, z)$ that satisfy \eqref{eq:initial-system-obj-con}-\eqref{eq:initial-system-con}. Consider $z$ as the $(n+1)$st variable and  constraint \eqref{eq:initial-system-obj-con} as the $0$th constraint in the system. For this to make sense we assume without loss of generality that $0$ is not an element of $I$. 

Applying  Fourier-Motzkin elimination procedure  to  the input system \eqref{eq:initial-system-obj-con}-\eqref{eq:initial-system-con} gives the output system \eqref{eq:output-system}, rewritten as
\begin{equation}\label{eq:J_system} 
\begin{array}{rcl}
0 &\ge&  \tilde{b}(h), \quad  h \in I_{1} \\
\tilde{a}^{\ell}(h) x_{\ell} + \tilde{a}^{\ell+1}(h) x_{\ell+1} + \cdots + \tilde{a}^{n}(h) x_{n} \phantom{+z} &\ge& \tilde{b}(h), \quad h \in I_{2} \\
z    &\ge&   \tilde{b}(h), \quad  h \in I_{3} \\
\tilde{a}^{\ell}(h) x_{\ell} + \tilde{a}^{\ell+1}(h) x_{\ell+1} + \cdots + \tilde{a}_{n}(h) x_{n} +z &\ge& \tilde{b}(h), \quad h \in I_{4}
\end{array}
\end{equation}
where $I_1$, $I_2$, $I_3$ and $I_4$ are disjoint with $\tilde I = I_1 \cup \cdots \cup I_4$. Note that $z$ can never be eliminated, so system \eqref{eq:J_system} is always dirty and $I_3 \cup I_4 \neq \emptyset$. This formatting also assumes that every time a constraint involving $z$ was aggregated, a multiplier of $1$ is used. This can always be achieved by Remark~\ref{rem:initial_remark}. It is possible that all other variables can be eliminated when $I_2 = I_4 = \emptyset$ (that is, $\ell = n+1$).  By construction, $|\sum_{k = \ell}^n\tilde{a}^k(h)| > 0$ for all $h \in I_2 \cup I_4$.

It is worth noting that the $\tilde a^k(h)$ and $\tilde b(h)$ in this section are different from those in Section~\ref{s:fm-elim}. Indeed, by including the constraint \eqref{eq:initial-system-obj-con} and enforcing the rule that a coefficient of $1$ on $z$ is maintained, the resulting output system will be different than if the Fourier-Motzkin elimination procedure was undertaken on 
\eqref{eq:initial-system-con} alone.

By Theorem~\ref{theorem:FM-elim-succ}, system \eqref{eq:J_system} describes the projection $P(\Lambda;x_1, \dots, x_{\ell-1})$ (recall  the assumption that the system of inequalities \eqref{eq:initial-system-obj-con}-\eqref{eq:initial-system-con} is in canonical form). 
Therefore, to solve \eqref{eq:SILP} it suffices to consider the optimization problem 
\begin{equation}\label{eq:new-problem}
\begin{array}{rl} 
\inf_{z,x_{\ell},\dots, x_{n}} & z \\
\textrm{s.t.} & \eqref{eq:J_system}.
\end{array}
\end{equation}
A further step (Lemma~\ref{lemma:z-projection}) is to examine the geometric projection of $\Lambda$ onto the $z$-variable space   in terms of the data from the output system \eqref{eq:J_system}. It is easier to characterize the boundedness and solvability of~\eqref{eq:SILP} in  this one-dimensional space.

As mentioned in the introduction, other authors have made  systematic study of semi-infinite programming duality using machinery other than Fourier-Motzkin (see, for instance, Goberna and L\'opez \cite{goberna1998linear} and Kortanek \cite{kortanek1974classifying}). We are not the first to provide characterizations of zero duality gap, dual solvability, etc. However, the content of our characterizations are new. We refer to the specifics of system \eqref{eq:J_system}, which has not previously appeared in the literature. A brief comparison of our results with those extant in the literature can be found in Section~\ref{ss:summary}.

\subsection{Primal results}\label{s:primal-results}

\subsubsection{Primal feasibility}\label{s:primal-feasibility}

Feasibility of \eqref{eq:SILP} is determined by looking at the constraints indexed by  $I_1, I_2, I_3$ and $I_4$.

\begin{theorem}[Primal Feasibility]\label{theorem:primalFeasible}\eqref{eq:SILP} is feasible if and only if 
\begin{multicols}{2}
\begin{enumerate}[(i)]
\item $\tilde{b}(h) \le 0$ for all $h \in I_{1}$,
\item \(\displaystyle \sup_{h \in I_2} \frac{\tilde{b}(h)}{\sum_{k=\ell}^{n} |\tilde{a}^{k}(h)|}  < \infty,\) 
\item \(\displaystyle \sup_{h \in I_3} \tilde b(h) < \infty\), 
\item \(\displaystyle \sup_{h \in I_4} \frac{\tilde{b}(h)}{\sum_{k=\ell}^{n} |\tilde{a}^{k}(h)|+1} < \infty \).
\end{enumerate}
\end{multicols}
\end{theorem}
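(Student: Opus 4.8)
The plan is to reduce primal feasibility of \eqref{eq:SILP} to feasibility of the projected system \eqref{eq:J_system}, and then apply the feasibility criterion already established in Theorem~\ref{theorem:feasible}. Since $z$ can never be eliminated, system \eqref{eq:J_system} is always dirty (the analogue of $H_2$ is $I_2 \cup I_3 \cup I_4 \neq \emptyset$, in fact $I_3 \cup I_4 \neq \emptyset$), so we are squarely in the ``dirty'' case of the earlier theory. The key observation is that \eqref{eq:SILP} is feasible if and only if $\Lambda$ is nonempty (a feasible $x$ for \eqref{eq:SILP} extends to $(x, z) \in \Lambda$ by taking $z$ large, and conversely), and by Theorem~\ref{theorem:FM-elim-succ} the set $\Lambda$ is nonempty if and only if the projected set $P(\Lambda; x_1, \dots, x_{\ell-1})$ described by \eqref{eq:J_system} is nonempty.

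First I would make precise the dictionary between the index sets: $I_1$ plays the role of $H_1$ (rows with all dirty coefficients zero), while $I_2, I_3, I_4$ together play the role of $H_2$. The four conditions (i)--(iv) are then exactly the two conditions of Theorem~\ref{theorem:feasible} specialized to this structured system. Condition (i) is identical to condition (i) of Theorem~\ref{theorem:feasible}. For the ``$H_2$'' condition $\sup_{h} \tilde b(h)/\sum |\tilde a^k(h)| < \infty$, I would split the supremum over $I_2 \cup I_3 \cup I_4$ into three separate suprema: over $I_2$ the dirty coefficient vector is $(\tilde a^\ell(h), \dots, \tilde a^n(h), 0)$ in the $(n-\ell+2)$-tuple of dirty variables $(x_\ell, \dots, x_n, z)$, giving denominator $\sum_{k=\ell}^n |\tilde a^k(h)|$ (condition (ii)); over $I_3$ the coefficient vector is $(0,\dots,0,1)$, giving denominator $1$, so the quotient is just $\tilde b(h)$ (condition (iii)); over $I_4$ the coefficient vector is $(\tilde a^\ell(h), \dots, \tilde a^n(h), 1)$, giving denominator $\sum_{k=\ell}^n |\tilde a^k(h)| + 1$ (condition (iv)). A finite supremum over the union is equivalent to a finite supremum over each piece, which gives the equivalence of the conjunction (ii)--(iv) with the single condition from Theorem~\ref{theorem:feasible}.

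The one subtlety I would flag as the main thing requiring care — though it is minor — is handling the degenerate cases. If $I_2 = I_4 = \emptyset$ (i.e. $\ell = n+1$, all the original variables $x_1, \dots, x_n$ were eliminated), the dirty system is in the single variable $z$; conditions (ii) and (iv) are vacuous (empty suprema, conventionally $-\infty < \infty$), and Theorem~\ref{theorem:feasible} still applies since $I_3 \neq \emptyset$ means the analogue of $H_2$ is nonempty. Similarly if one or more of $I_1, I_2, I_4$ is empty the corresponding condition is vacuously true. Since $I_3 \cup I_4 \neq \emptyset$ always, we never fall into the ``$H_2 = \emptyset$'' branch of Theorem~\ref{theorem:feasible}, so only the first part of that theorem is invoked. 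Assembling these observations: $\Lambda \neq \emptyset \iff$ system \eqref{eq:J_system} is feasible $\iff$ conditions (i)--(iv) hold, and $\Lambda \neq \emptyset \iff$ \eqref{eq:SILP} is feasible, completing the proof. I do not anticipate any genuine obstacle here; the work is entirely in correctly matching \eqref{eq:J_system} to the hypotheses of Theorem~\ref{theorem:feasible} and tracking the empty-set conventions.
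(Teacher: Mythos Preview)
Your proposal is correct and takes essentially the same approach as the paper: reduce to Theorem~\ref{theorem:feasible} applied to system~\eqref{eq:J_system} with $H_1 = I_1$ and $H_2 = I_2 \cup I_3 \cup I_4$, then read off (i)--(iv) by splitting the $H_2$-supremum over the three pieces. The paper's proof is a single sentence invoking exactly this identification; your additional care with the empty-set conventions is fine but not strictly needed.
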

 
\begin{proof}
The result follows directly from applying Theorem \ref{theorem:feasible} to the dirty system \eqref{eq:J_system} with $H_1 = I_1$ and $H_2 = I_2 \cup I_3 \cup I_4$. \end{proof}

Corollary~\ref{cor:primalFeasible} below states some consequences of primal feasibility for (SILP) which are useful later. The proof is analogous to the proof of Theorem~\ref{theorem:feasible}. First introduce the function 
\begin{align}\label{eq:f_delta}
\omega(\delta) := \sup_{h \in I_4} \big\{\tilde{b}(h) - \delta \sum_{k = \ell}^n |\tilde{a}^k(h)| \big\}
\end{align}
that is used throughout the paper. Note $\omega$ can take values in the extended reals. If  $I_4 = \emptyset$ then $\omega(\delta) = -\infty$. However, we show in the following corollary that if \eqref{eq:SILP} is feasible then $\omega(\delta)$ cannot diverge to $\infty$.
Observe $\omega$ is a nonincreasing function of $\delta$ since $\sum_{k = \ell}^n |\tilde{a}^k(h)| \ge 0$. 

\begin{corollary}\label{cor:primalFeasible}
If  \eqref{eq:SILP} is feasible then
\begin{enumerate}[(i)]
\item \(\displaystyle \delta_2 := \sup_{h \in I_2} \frac{\tilde{b}(h)}{ \sum_{k=\ell}^{n} |\tilde{a}^{k}(h)|}  < \infty,\)
\item \(\displaystyle \delta_3 := \sup_{h \in I_3} \tilde b(h) < \infty,\)
\item \(\displaystyle \lim_{\delta \to \infty} \omega(\delta) < \infty, \)
\end{enumerate}
\begin{itemize}
\item[(iv)] $(x(\bar\delta; \ell), \bar z) \in P(\Lambda;x_1, \dots, x_{\ell - 1})$ for all $\bar \delta, \bar z \in \R$ such that $\bar\delta \ge \max \left\{0,\delta_2 \right\}$ and $\bar z \ge \max\{\delta_3, \omega(\bar\delta)\}$. Moreover, by conditions i), ii) and iii) above, at least one such pair $(\bar \delta, \bar z)$ of real number exists.
\end{itemize}
\end{corollary}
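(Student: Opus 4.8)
The plan is to derive each conclusion directly from the feasibility characterization in Theorem~\ref{theorem:primalFeasible}, mimicking the argument in the proof of Theorem~\ref{theorem:feasible}. First, parts (i) and (ii) are immediate: if \eqref{eq:SILP} is feasible, then conditions (ii) and (iii) of Theorem~\ref{theorem:primalFeasible} say precisely that $\delta_2 < \infty$ and $\delta_3 < \infty$. For part (iii), I would observe that for each $h \in I_4$ and each $\delta \ge 0$,
\[
\tilde b(h) - \delta \sum_{k=\ell}^n |\tilde a^k(h)| \le \tilde b(h),
\]
so that $\omega(\delta) \le \sup_{h \in I_4} \tilde b(h)$. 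But condition (iv) of Theorem~\ref{theorem:primalFeasible} gives $\sup_{h\in I_4} \tilde b(h)/(\sum_{k=\ell}^n |\tilde a^k(h)| + 1) < \infty$; since $\sum_{k=\ell}^n |\tilde a^k(h)| + 1 \ge 1$, we need a little care when $\tilde b(h) > 0$. The cleaner route is to pick any $\bar\delta \ge \max\{0,\delta_2\}$ large enough that $\bar\delta \ge \sup_{h\in I_4}\tilde b(h)/(\sum_{k=\ell}^n |\tilde a^k(h)|+1)$ (possible by Theorem~\ref{theorem:primalFeasible}(ii),(iv)) and note that then $\tilde b(h) - \bar\delta\sum_{k=\ell}^n |\tilde a^k(h)| \le \bar\delta$ for all $h \in I_4$, whence $\omega(\bar\delta) \le \bar\delta < \infty$; combined with monotonicity of $\omega$ this gives $\lim_{\delta\to\infty}\omega(\delta) < \infty$ (when $I_4 = \emptyset$, $\omega \equiv -\infty$ and there is nothing to prove).

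For part (iv), I would verify that the candidate point $(x(\bar\delta;\ell),\bar z)$ satisfies system \eqref{eq:J_system}, which by Theorem~\ref{theorem:FM-elim-succ} describes $P(\Lambda; x_1,\dots,x_{\ell-1})$. The constraints indexed by $I_1$ hold by Theorem~\ref{theorem:primalFeasible}(i). For $h \in I_2$, exactly as in the proof of Theorem~\ref{theorem:feasible}, $\sum_{k=\ell}^n \tilde a^k(h)x_k(\bar\delta;\ell) = \bar\delta \sum_{k=\ell}^n |\tilde a^k(h)| \ge \tilde b(h)$ because $\bar\delta \ge \delta_2$. For $h \in I_3$, the constraint reads $\bar z \ge \tilde b(h)$, which holds since $\bar z \ge \delta_3 = \sup_{h\in I_3}\tilde b(h)$. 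For $h \in I_4$, the constraint reads $\sum_{k=\ell}^n \tilde a^k(h)x_k(\bar\delta;\ell) + \bar z \ge \tilde b(h)$, i.e. $\bar z \ge \tilde b(h) - \bar\delta\sum_{k=\ell}^n |\tilde a^k(h)|$; taking the supremum over $h \in I_4$ this is exactly $\bar z \ge \omega(\bar\delta)$, which holds by hypothesis. Finally, the existence of at least one such pair follows: by (i) choose $\bar\delta = \max\{0,\delta_2\} < \infty$, and then by (ii) and (iii) the quantity $\max\{\delta_3,\omega(\bar\delta)\}$ is finite, so any $\bar z$ at least that large works.

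I do not expect any serious obstacle here; the statement is essentially a repackaging of Theorem~\ref{theorem:primalFeasible} together with the sign bookkeeping already carried out in Theorem~\ref{theorem:feasible}. The one point requiring mild attention is the interaction between the ``$+1$'' in condition (iv) of Theorem~\ref{theorem:primalFeasible} and the definition of $\omega$, and the edge case $I_4 = \emptyset$ (where $\omega \equiv -\infty$); both are handled by the choice of $\bar\delta$ above. One should also remark that the function $x(\bar\delta;\ell)$ is well-defined because system \eqref{eq:J_system} is dirty (the coefficient vectors $\tilde a^k$ restricted to $I_2\cup I_4$ are sign-definite), which is exactly the setting in which Definition~\ref{def:x-delta} applies.
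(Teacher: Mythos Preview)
Your proposal is essentially correct and follows the same route as the paper: parts (i)--(ii) are immediate from Theorem~\ref{theorem:primalFeasible}; your argument for (iii) is exactly the forward direction of the paper's Claim~\ref{claim:sup-lim-equivalence} (the paper proves the full equivalence, but only this direction is needed here); and your verification in (iv) is the same constraint-by-constraint check as the paper's.

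There is one small slip in your existence argument for (iv). You set $\bar\delta = \max\{0,\delta_2\}$ and then invoke (iii) to claim $\omega(\bar\delta) < \infty$. But (iii) only gives $\lim_{\delta\to\infty}\omega(\delta) < \infty$, which does not force $\omega(\bar\delta) < \infty$ at that particular $\bar\delta$. For instance, with $I_2 = \emptyset$ (so $\max\{0,\delta_2\} = 0$) and $I_4$ indexed by $m \in \N$ with $\tilde b(h_m) = \sum_{k=\ell}^n |\tilde a^k(h_m)| = m$, condition (iv) of Theorem~\ref{theorem:primalFeasible} holds, yet $\omega(0) = +\infty$. The fix is already in your own proof of (iii): use the $\bar\delta$ you constructed there, which by design satisfies both $\bar\delta \ge \max\{0,\delta_2\}$ and $\omega(\bar\delta) \le \bar\delta < \infty$. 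The paper handles this the same way, writing only that since $\omega$ is nonincreasing with finite limit, \emph{some} admissible pair $(\bar\delta,\bar z)$ exists, without committing to the minimal choice of $\bar\delta$.
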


\begin{proof}
Conditions i)-ii) follow immediately from Theorem~\ref{theorem:primalFeasible}. Condition iii) follows from the claim below and condition iv) of Theorem~\ref{theorem:primalFeasible}.

\begin{claim}\label{claim:sup-lim-equivalence}
\( \displaystyle
\sup_{h \in I_4} \frac{\tilde{b}(h)}{\sum_{k=\ell}^{n} |\tilde{a}^{k}(h)| + 1}  < \infty
\iff
\lim_{\delta \rightarrow \infty} \omega(\delta) < \infty. \) 
\end{claim}
\begin{proof}[Proof of Claim]\renewcommand{\qedsymbol}{} 
($\Longrightarrow$) Let $\bar \delta = \sup_{h \in I_4} \tilde{b}(h) / (\sum_{k=\ell}^{n} |\tilde{a}^{k}(h)| + 1) < \infty$. This implies $\bar \delta \geq \tilde{b}(h) / (\sum_{k=\ell}^{n} |\tilde{a}^{k}(h)| + 1)$ for every $h \in I_4$. Rearranging, $\bar\delta  (\sum_{k=\ell}^{n} |\tilde{a}^{k}(h)| + 1) \geq \tilde{b}(h)$, which implies $\bar\delta \geq \tilde{b}(h) - \bar\delta  (\sum_{k=\ell}^{n} |\tilde{a}^{k}(h)|)$ for all $h \in I_4$. Thus,   $\bar\delta \geq \sup\{ \tilde{b}(h) - \bar\delta  (\sum_{k=\ell}^{n} |\tilde{a}^{k}(h)|) \, : \,  \,  h \in I_{4}   \}= \omega(\bar\delta)$. Thus, $\infty > \bar\delta \geq \omega(\bar\delta)$ and since $\omega(\delta)$ is a nonincreasing function, this yields $\lim_{\delta \to \infty} \omega(\delta) < \infty$.

($\Longleftarrow$) Since $\lim_{\delta\to\infty} \omega(\delta) < \infty$ and $\omega(\delta)$ is a nonincreasing function, there exists a $\bar \delta < \infty$ such that $\bar\delta \geq \omega(\bar \delta)$. Indeed, having $w(\delta) > \delta$ for all $\delta$ contradicts $\lim_{\delta\to\infty} \omega(\delta) < \infty$. 
Since $\omega(\delta)$ is nonincreasing in $\delta$, $\omega(\bar\delta)\le \omega(\hat\delta)=c\le \bar\delta$.  Now, because $\bar \delta \ge \omega(\bar \delta)$ it follows $\bar\delta \geq \sup\{ \tilde{b}(h) - \bar\delta  (\sum_{k=\ell}^{n} |\tilde{a}^{k}(h)|) \, : \,  \,  h \in I_{4}   \}$. Hence, $\bar\delta \geq \tilde{b}(h) - \bar\delta  (\sum_{k=\ell}^{n} |\tilde{a}^{k}(h)|)$ for $h \in I_4$. Rearranging, $\bar\delta \geq \tilde{b}(h) / (\sum_{k=\ell}^{n} |\tilde{a}^{k}(h)| + 1)$ for $h \in I_4$ and so $\infty > \bar\delta \geq \sup_{h \in I_4} \tilde{b}(h) / (\sum_{k=\ell}^{n} |\tilde{a}^{k}(h)| + 1)$.\quad $\dagger$
\end{proof}

Prove condition iv) in the statement of the corollary by verifying  that the constraints indexed by $I_1, I_2, I_3$ and $I_4$ are satisfied by $(x(\bar\delta; \ell), \bar z) \in P(\Lambda;x_1, \dots, x_{\ell - 1})$ when $\bar\delta \ge \max \left\{0,\delta_2 \right\}$ and $\bar z \ge \max\{\delta_3, \omega(\bar\delta)\}$. Since $\delta_2, \delta_3$ and $\lim_{\delta \to \infty} \omega(\delta)$ are all finite, and $\omega(\delta)$ is a nonincreasing function, there exists at least one such pair $(\bar\delta, \bar z)$ of real numbers.

Since \eqref{eq:SILP} is feasible, the constraints in $I_1$ are satisfied by condition i) in Theorem~\ref{theorem:primalFeasible}.  By definition, 
$
\delta_2 \ge \tilde{b}(h) / \sum_{k=\ell}^{n} |\tilde{a}_{k}(h)|
$
for all $h \in I_2$,
which implies
$
 \bar\delta \sum_{k=\ell}^{n} |\tilde{a}_{k}(h)| \ge \tilde{b}(h)
$
for all $h \in I_2$. 
Since $\sum_{k=\ell}^{n} \tilde{a}_{k}(h)x_k(\bar\delta;\ell) = \bar\delta \sum_{k=\ell}^{n} |\tilde{a}_{k}(h)|$ by construction of $x(\bar\delta;\ell)$, $(x(\bar\delta; \ell), \bar z)$ satisfies the constraints indexed by $I_2$ in \eqref{eq:J_system}. 

Since $\bar z \ge \delta_3$, all the constraints indexed by $I_3$ are satisfied. Finally, since $\bar z \geq \omega(\bar\delta)$, 
$
\bar z \ge \sup_{h \in I_4} \{\tilde b(h) - \bar \delta \sum_{k = \ell}^n |a^k(h)| \}
$
and so for all $h \in I_4$,
$\bar z + \sum_{k = \ell}^n a^k(h)x_k(\bar\delta;\ell) = \bar z  + \bar \delta \sum_{k = \ell}^n |a^k(h)|  \ge \tilde b(h).$ Conclude $(x(\bar\delta; \ell), \bar z))$ satisfies the constraints indexed by $I_4$, and therefore feasible to \eqref{eq:J_system}. Thus, $(x(\bar\delta; \ell), \bar z)) \in P(\Lambda;x_1, \dots, x_\ell)$ by Theorem~\ref{theorem:FM-elim-succ}.\end{proof}

\subsubsection{Primal boundedness}\label{s:primal-boundedness}

To establish boundedness and solvability, we start by giving a characterization of the closure of the projection of the feasible region described by  \eqref{eq:J_system} onto the $z$-variable space.

\begin{lemma}\label{lemma:z-projection}
Assume \eqref{eq:SILP} is feasible and  applying  Fourier-Motzkin elimination  to \eqref{eq:initial-system-obj-con}-\eqref{eq:initial-system-con} gives  \eqref{eq:J_system}. Let $P(\Lambda; x_1, \dots, x_n)$ denote the projection of $\Lambda$ into the $z$-variable space. Then, the closure of $P(\Lambda; x_1, \dots, x_n)$ is  given by the  system of inequalities
\begin{align}
z & \ge \sup_{h \in I_3} \tilde{b}(h) \label{eq:sup-I3}\\
z & \ge \lim_{\delta \rightarrow \infty} \omega(\delta) .\label{eq:lim-f-delta} 
\end{align}
\end{lemma}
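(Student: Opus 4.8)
The plan is to characterize the closure of the one-dimensional projection $P(\Lambda; x_1, \dots, x_n)$ by a two-sided argument: first show every point satisfying \eqref{eq:sup-I3}--\eqref{eq:lim-f-delta} lies in the closure, then show every point of $P(\Lambda; x_1, \dots, x_n)$ satisfies those two inequalities. Throughout I would work with the output system \eqref{eq:J_system} and the fact, from Theorem~\ref{theorem:FM-elim-succ}, that \eqref{eq:J_system} describes $P(\Lambda; x_1, \dots, x_{\ell-1})$; projecting that further onto $z$ gives $P(\Lambda; x_1, \dots, x_n)$. The function $\omega(\delta)$ from \eqref{eq:f_delta} is the right bookkeeping device: for a fixed choice $x = x(\delta;\ell)$ of the dirty variables (Definition~\ref{def:x-delta}), the constraints in $I_4$ force $z \ge \tilde b(h) - \delta\sum_{k=\ell}^n|\tilde a^k(h)|$ for all $h$, i.e. $z \ge \omega(\delta)$, while the $I_3$ constraints force $z \ge \sup_{h\in I_3}\tilde b(h)$ independently of the dirty variables.

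For the inclusion $\supseteq$ (modulo closure): take $\bar z$ with $\bar z \ge \sup_{h\in I_3}\tilde b(h)$ and $\bar z \ge \lim_{\delta\to\infty}\omega(\delta)$. Since $\omega$ is nonincreasing (noted after \eqref{eq:f_delta}) and has finite limit (Corollary~\ref{cor:primalFeasible}(iii), using primal feasibility), for each $\varepsilon > 0$ there is a $\delta$ large enough that $\omega(\delta) \le \bar z + \varepsilon$; also taking $\delta \ge \delta_2$ I can invoke Corollary~\ref{cor:primalFeasible}(iv) to conclude that $(x(\delta;\ell), \bar z + \varepsilon) \in P(\Lambda; x_1, \dots, x_{\ell-1})$, hence $\bar z + \varepsilon \in P(\Lambda; x_1, \dots, x_n)$. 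Letting $\varepsilon \to 0$ puts $\bar z$ in the closure. (One should double-check the boundary value $\bar z$ itself: it need not be attained, which is exactly why the statement is about the \emph{closure}.)

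For the inclusion $\subseteq$: let $\bar z \in P(\Lambda; x_1,\dots,x_n)$, so there exist $\bar x_\ell,\dots,\bar x_n$ with $(\bar x, \bar z)$ satisfying \eqref{eq:J_system}. The $I_3$ constraints immediately give $\bar z \ge \tilde b(h)$ for all $h \in I_3$, hence $\bar z \ge \sup_{h\in I_3}\tilde b(h)$, yielding \eqref{eq:sup-I3}. For \eqref{eq:lim-f-delta}, the $I_4$ constraints give $\bar z \ge \tilde b(h) - \sum_{k=\ell}^n \tilde a^k(h)\bar x_k \ge \tilde b(h) - \sum_{k=\ell}^n|\tilde a^k(h)|\,|\bar x_k| \ge \tilde b(h) - \delta^*\sum_{k=\ell}^n|\tilde a^k(h)|$ for any $\delta^* \ge \max_k |\bar x_k|$, so $\bar z \ge \omega(\delta^*) \ge \lim_{\delta\to\infty}\omega(\delta)$, using monotonicity of $\omega$. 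Then I would argue that the resulting set is closed (it is an intersection of two half-lines in $\R$, or a single half-line), so the two inclusions together identify $\cl P(\Lambda; x_1,\dots,x_n)$ with the set defined by \eqref{eq:sup-I3}--\eqref{eq:lim-f-delta}.

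The main obstacle I anticipate is the $\supseteq$ direction at the exact boundary and the degenerate cases: when $I_4 = \emptyset$ (so $\omega \equiv -\infty$ and \eqref{eq:lim-f-delta} is vacuous) or $I_3 = \emptyset$, and the subtlety that $\omega(\delta)$ may approach its limit without attaining it, so $P(\Lambda;x_1,\dots,x_n)$ itself might be an open ray $(\,\lim_{\delta\to\infty}\omega(\delta),\infty)$ rather than a closed one — which is precisely why the lemma is stated for the closure and why Corollary~\ref{cor:primalFeasible}(iv) is phrased with strict-enough slack. Handling these cases cleanly, and making sure the feasibility hypothesis is used exactly where Corollary~\ref{cor:primalFeasible} needs it (to guarantee $\delta_2, \delta_3$ finite and $\lim\omega < \infty$, so that the half-lines are nonempty and the description is not vacuous), is where I would be most careful.
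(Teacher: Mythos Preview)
Your proposal is correct and follows essentially the same approach as the paper: both directions rely on Corollary~\ref{cor:primalFeasible}(iv) together with the monotonicity of $\omega$ to manufacture feasible $(x(\delta;\ell), z)$ pairs, and on the $I_4$-constraint estimate $\bar z \ge \omega(\delta^*)$ with $\delta^* = \max_k|\bar x_k|$ for the reverse inclusion. Your $\varepsilon$-perturbation in the $\supseteq$ direction unifies what the paper treats as two separate cases ($\bar z > \lim_{\delta\to\infty}\omega(\delta)$ versus $\bar z = \lim_{\delta\to\infty}\omega(\delta)$), and your appeal to closedness of the half-line in the $\subseteq$ direction replaces the paper's direct sequence argument on $\cl P(\Lambda;x_1,\dots,x_n)$; these are organizational rather than substantive differences.
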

\begin{proof}
Since \eqref{eq:SILP} is feasible, conditions ii) and iii) in Corollary~\ref{cor:primalFeasible} imply that $\sup_{h \in I_3} \tilde{b}(h) < \infty$ and $\lim_{\delta \to \infty} \omega(\delta) < \infty$. Let $\delta_2$ and $\delta_3$ be as defined in i)-ii) of Corollary~\ref{cor:primalFeasible}. 

First, we suppose $\bar z$ satisfies \eqref{eq:sup-I3}-\eqref{eq:lim-f-delta} and show $\bar z \in \cl(P(\Lambda; x_1, \dots, x_n))$. 
%
%
Consider the following two exhaustive cases.
\vskip 5pt
\noindent\underline{\em Case 1: $\bar z   > \lim_{\delta \to \infty} \omega(\delta)$.} There exists a $\hat \delta \in \R$ such that $\bar z > \omega(\hat\delta)$. Choose $\bar\delta \geq \max\{0, \hat \delta, \delta_2\}.$   By~\eqref{eq:sup-I3}, $\bar z \ge \sup_{h \in I_3} \tilde{b}(h) = \delta_{3}$. Also, $\bar z >  \omega(\hat\delta) \geq \omega(\bar\delta)$ since $\omega(\delta)$ is nonincreasing. Thus, $(x(\bar\delta; \ell), \bar z)$ satisfies the hypotheses of condition iv) of Corollary~\ref{cor:primalFeasible}.    Therefore  $(x(\bar \delta;\ell), \bar z) \in P(\Lambda; x_1, \ldots, x_{\ell-1})$ and this implies $\bar z \in P(\Lambda; x_1, \ldots, x_n)$.
\vskip 5pt
\noindent\underline{\em Case 2: $\bar z = \lim_{\delta \to \infty} \omega(\delta) $.} Since $\omega(\delta)$  nonincreasing in $\delta$, there exists a sequence of real numbers $(\bar \delta_m)_{m\in \N}$ such that for every $m \in \N$, $\bar \delta_m \geq \max\{0,\delta_2\}$ and $z_m := \omega(\bar \delta_m) \to \bar z$. Since $\omega(\delta)$ is nonincreasing and $\overline{z}$ satisfies~\eqref{eq:sup-I3},    $z_m  = \omega(\bar \delta_m) \geq  \lim_{\delta \to \infty} \omega(\delta)  = \bar z\geq \sup_{h \in I_3} \tilde{b}(h)$. Hence $z_m \ge \max \{\delta_3, \omega(\bar \delta_m)\}$ and  by Corollary~\ref{cor:primalFeasible}(iv), $(x(\bar \delta_m;\ell),z_m) \in P(\Lambda;x_1,\dots,x_{\ell-1})$. Therefore  $z_m \in P(\Lambda; x_1, \ldots, x_n)$ and $z_m \to \bar z$. This implies  $\bar z \in \cl(P(\Lambda; x_1, \ldots, x_n))$.
\vskip 5pt

Conversely, we let $\bar z \in \cl (P(\Lambda; x_1, \dots, x_n))$ and show $\bar z$ satisfies \eqref{eq:sup-I3} and \eqref{eq:lim-f-delta}. Since $z \in \cl (P(\Lambda; x_1, \dots, x_n))$ there exists a sequence $z_m \in P(\Lambda; x_1, \dots, x_n)$ where $z_m \rightarrow \bar z$. Since $z_m \in P(\Lambda; x_1, \dots, x_n)$ there exists an $x^m = (x^m_\ell, \dots, x^m_n)$ such that $(x^m, z_m)$ satisfies the constraints of system \eqref{eq:J_system}. This  implies $z_m \ge \sup_{h \in I_3} \tilde{b}(h)$. Since $z_m \rightarrow \bar z$, conclude $\bar z \ge \sup_{h \in I_3} \tilde{b}(h)$.

Also, since $(x^m, z_m)$ satisfies \eqref{eq:J_system}, 
$z_m \ge \sup_{h \in I_4}\{\tilde b(h) - \sum_{k = \ell}^n a^k(h)x^m_k\}$. Letting $\bar \delta_m  = \max_{k = \ell, \dots, n} |x^m_k|$  gives   $z_m \ge \sup_{h \in I_4}\{\tilde b(h) - \sum_{k = \ell}^n a^k(h)x^m_k\}  \ge \sup_{h \in I_4}\{\tilde b(h) - \bar\delta_m \sum_{k = \ell}^n |a^k(h)|\} = \omega(\bar \delta_m)$.   
Thus, $z_m \ge \omega(\bar \delta_m) \ge \lim_{\delta \rightarrow \infty} \omega(\delta)$ for all $m$, where the last inequality holds since $\omega(\delta)$ is nonincreasing. Since $z_m \rightarrow \bar z$, conclude $\bar z \ge \lim_{\delta \rightarrow \infty} \omega(\delta)$. Hence $\bar z$ is a feasible solution to system \eqref{eq:sup-I3}-\eqref{eq:lim-f-delta}.\end{proof}

By Lemma~\ref{lemma:z-projection},   if~\eqref{eq:SILP} is feasible, then  its optimal value  is found by solving the optimization problem
\begin{equation}\label{eq:newer-problem}
\begin{array}{rl} 
\inf_{z} & z \\
\textrm{s.t.} & \eqref{eq:sup-I3}-\eqref{eq:lim-f-delta}.
\end{array}
\end{equation}
This follows because the optimal value of a continuous objective function over a convex feasible region is the same the optimal value of that objective when optimized over the closure of the region. The next two results follow directly from this observation.
\begin{lemma}\label{lemma:primal-optimal-value}
If  \eqref{eq:SILP} is  feasible then 
$v(\ref{eq:SILP}) = \max\big\{  \sup_{h \in I_3} \tilde{b}(h) ,   \lim_{\delta \rightarrow \infty} \omega(\delta) \big\}.
$
\end{lemma}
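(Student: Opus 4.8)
The plan is to read off the value directly from the reduction established in Lemma~\ref{lemma:z-projection}. Since \eqref{eq:SILP} is feasible, the discussion following Lemma~\ref{lemma:z-projection} shows that $v(\ref{eq:SILP})$ equals the optimal value of problem~\eqref{eq:newer-problem}, because $\Lambda$ projects (via Theorem~\ref{theorem:FM-elim-succ}) onto a set whose $z$-coordinate projection $P(\Lambda;x_1,\dots,x_n)$ has closure exactly the feasible region of \eqref{eq:newer-problem}, and infimizing the continuous objective $z$ over a set or over its closure yields the same value. So it suffices to compute the optimal value of \eqref{eq:newer-problem}.

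Problem~\eqref{eq:newer-problem} is the trivial one-dimensional program $\inf\{z : z \ge \sup_{h\in I_3}\tilde b(h),\ z \ge \lim_{\delta\to\infty}\omega(\delta)\}$. First I would note that, by Corollary~\ref{cor:primalFeasible}(ii)--(iii) (using feasibility of \eqref{eq:SILP}), both $\sup_{h\in I_3}\tilde b(h)$ and $\lim_{\delta\to\infty}\omega(\delta)$ are finite, so the feasible region of \eqref{eq:newer-problem} is a nonempty closed half-line $[\,m,\infty)$ where $m := \max\{\sup_{h\in I_3}\tilde b(h),\ \lim_{\delta\to\infty}\omega(\delta)\}$ (with the convention that if $I_3=\emptyset$ the first term is $-\infty$, and recall $\omega(\delta)=-\infty$ if $I_4=\emptyset$; at least one of $I_3,I_4$ contributes a finite value since $I_3\cup I_4\neq\emptyset$, but even if both reduce to $-\infty$ the formula reads $m=-\infty$, matching an unbounded primal). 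The infimum of $z$ over $[\,m,\infty)$ is attained at $z=m$, giving $v(\eqref{eq:newer-problem}) = m$, and hence $v(\ref{eq:SILP}) = \max\big\{\sup_{h\in I_3}\tilde b(h),\ \lim_{\delta\to\infty}\omega(\delta)\big\}$.

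There is essentially no obstacle here; the lemma is a direct corollary of Lemma~\ref{lemma:z-projection} plus the elementary observation about optimizing a continuous function over a set versus its closure. The only point requiring a word of care is the bookkeeping for the degenerate cases $I_3=\emptyset$ or $I_4=\emptyset$, so that the $\max$ and the $\sup$/$\lim$ are interpreted consistently with the extended-reals conventions fixed in the Notation paragraph; once that is flagged, the identification $v(\ref{eq:SILP}) = v(\eqref{eq:newer-problem}) = m$ is immediate.
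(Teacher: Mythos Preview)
Your proposal is correct and follows exactly the route the paper takes: the paper simply states that Lemma~\ref{lemma:primal-optimal-value} (and Theorem~\ref{theorem:primal-not-unbounded}) ``follow directly from this observation,'' meaning Lemma~\ref{lemma:z-projection} plus the fact that infimizing a continuous objective over a convex set agrees with infimizing over its closure. Your extra bookkeeping for the degenerate cases $I_3=\emptyset$ and $I_4=\emptyset$ is a welcome elaboration (though note Corollary~\ref{cor:primalFeasible} only gives upper bounds, not finiteness, which you correctly acknowledge in your parenthetical).
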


\begin{theorem}[Primal boundedness] \label{theorem:primal-not-unbounded} A feasible \eqref{eq:SILP} is bounded if and only if $I_3 \neq \emptyset$ or $\lim_{\delta \rightarrow \infty} \omega(\delta) > -\infty$. 
\end{theorem}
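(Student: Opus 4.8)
The statement is an immediate consequence of Lemma~\ref{lemma:primal-optimal-value}, so the plan is simply to unpack that formula. Recall that since \eqref{eq:SILP} is assumed feasible, it is \emph{unbounded} precisely when $v(\ref{eq:SILP}) = -\infty$ (the infeasible case $v(\ref{eq:SILP}) = +\infty$ is excluded), so ``bounded'' is equivalent to $v(\ref{eq:SILP}) > -\infty$. By Lemma~\ref{lemma:primal-optimal-value}, $v(\ref{eq:SILP}) = \max\big\{\sup_{h \in I_3}\tilde b(h),\ \lim_{\delta \to \infty}\omega(\delta)\big\}$, and a maximum of two extended-real quantities exceeds $-\infty$ if and only if at least one of the two quantities exceeds $-\infty$. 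Hence feasible \eqref{eq:SILP} is bounded iff $\sup_{h \in I_3}\tilde b(h) > -\infty$ or $\lim_{\delta \to \infty}\omega(\delta) > -\infty$.

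It remains only to observe that $\sup_{h \in I_3}\tilde b(h) > -\infty$ if and only if $I_3 \neq \emptyset$. If $I_3 = \emptyset$, then $\sup_{h \in I_3}\tilde b(h) = -\infty$ by the usual convention for the supremum of the empty set. Conversely, if $I_3 \neq \emptyset$, pick any $h_0 \in I_3$; since $\tilde b \colon \tilde I \to \R$ is real-valued, $\sup_{h \in I_3}\tilde b(h) \ge \tilde b(h_0) > -\infty$. Substituting this equivalence into the displayed characterization gives exactly the claim: feasible \eqref{eq:SILP} is bounded iff $I_3 \neq \emptyset$ or $\lim_{\delta \to \infty}\omega(\delta) > -\infty$.

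There is essentially no obstacle here; the only points requiring a moment's care are the bookkeeping on extended-real conventions (the empty-supremum convention and the fact that each individual value $\tilde b(h)$ is finite, which together pin down when the $I_3$-term is $-\infty$) and the reminder that boundedness of a feasible infimum problem is the negation of $v(\ref{eq:SILP}) = -\infty$. All of the substantive work — the formula for $v(\ref{eq:SILP})$ — has already been carried out in Lemma~\ref{lemma:primal-optimal-value}.
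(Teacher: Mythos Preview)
Your proof is correct and follows essentially the same approach as the paper: both invoke Lemma~\ref{lemma:primal-optimal-value} and then observe that $\sup_{h\in I_3}\tilde b(h)>-\infty$ is equivalent to $I_3\neq\emptyset$. The paper phrases the argument via the contrapositive (unbounded iff both terms equal $-\infty$), whereas you phrase it directly (bounded iff at least one term exceeds $-\infty$), but this is a purely cosmetic difference.
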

\begin{proof}
By contrapositive in both directions. By Lemma~\ref{lemma:primal-optimal-value}, $v(\ref{eq:SILP}) = -\infty$ if and only if $\max\{  \sup_{h \in I_3} \tilde{b}(h), \lim_{\delta \rightarrow \infty} \omega(\delta) \} = -\infty$ if and only if $\sup_{h \in I_3} \tilde{b}(h) = -\infty$ and $\lim_{\delta \rightarrow \infty} \omega(\delta) = -\infty$. Note that $\sup_{h \in I_3} \tilde{b}(h) = -\infty$ if and only if $I_3 = \emptyset$. 
\end{proof}

\subsubsection{Primal solvability}\label{s:primal-solvability}

An instance of \eqref{eq:SILP} is solvable if the infimum value of  its objective is attained. Note that an optimal solution $v(\ref{eq:SILP})$ may exist to \eqref{eq:newer-problem} even though an optimal solution to \eqref{eq:SILP} does not exist (see for instance Example~\ref{example:not-primal-optimal} below). This is due to the fact that \eqref{eq:newer-problem} is an optimization problem over the \emph{closure} of the projection $P(\Lambda; x_1, \dots, x_n)$, and hence an optimal solution to \eqref{eq:new-problem} may exist in the closure but not the projection itself. Thus, the solution may not ``lift" to an optimal solution of \eqref{eq:SILP}.   A sufficient condition for when this ``lifting" can occur is given in Theorem~\ref{theorem:primal-solvability}. 

\begin{theorem}[Primal solvability]\label{theorem:primal-solvability}
If \eqref{eq:SILP} is feasible and 
$\sup_{h \in I_3} \tilde{b}(h) >  \lim_{\delta\to\infty} \omega(\delta)$, then \eqref{eq:SILP} has an optimal solution with value $v(\ref{eq:SILP}) =\sup_{h \in I_3} \tilde{b}(h)$.
\end{theorem}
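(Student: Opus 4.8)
The plan is to combine Lemma~\ref{lemma:primal-optimal-value} with the feasibility characterization for the projected system in Lemma~\ref{lemma:z-projection}, and then argue that the optimal value $v(\ref{eq:SILP}) = \sup_{h \in I_3}\tilde b(h)$ is actually attained in the projection $P(\Lambda; x_1,\dots,x_n)$ itself (not merely its closure), so that it lifts back through the Fourier-Motzkin elimination to an optimal $x \in \R^n$. The hypothesis $\sup_{h\in I_3}\tilde b(h) > \lim_{\delta\to\infty}\omega(\delta)$ is exactly what is needed to place the optimal $z$-value strictly above the ``bad'' bound coming from $I_4$, which is the source of non-attainment in general.

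First I would set $z^* := \sup_{h \in I_3}\tilde b(h)$. Since \eqref{eq:SILP} is feasible, Corollary~\ref{cor:primalFeasible}(iii) gives $\lim_{\delta\to\infty}\omega(\delta) < \infty$, and Corollary~\ref{cor:primalFeasible}(ii) gives $\delta_2 := \sup_{h\in I_2}\tilde b(h)/\sum_{k=\ell}^n|\tilde a^k(h)| < \infty$, and $\delta_3 = z^* < \infty$ by (i)/(iii) of Corollary~\ref{cor:primalFeasible}. By Lemma~\ref{lemma:primal-optimal-value}, $v(\ref{eq:SILP}) = \max\{z^*, \lim_{\delta\to\infty}\omega(\delta)\} = z^*$ because $z^* > \lim_{\delta\to\infty}\omega(\delta)$ by hypothesis. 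So it remains only to produce a genuine feasible $x\in\R^n$ for \eqref{eq:SILP} with $c^\top x = z^*$; equivalently, a point $(x_\ell,\dots,x_n, z^*)$ satisfying system \eqref{eq:J_system}, which by Theorem~\ref{theorem:FM-elim-succ} lifts to a feasible solution of \eqref{eq:initial-system-obj-con}-\eqref{eq:initial-system-con}, and the $z$-coordinate of that lift equals $c^\top x$ by construction of \eqref{eq:initial-system-obj-con}.

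To produce that point I would invoke Corollary~\ref{cor:primalFeasible}(iv) with $\bar z = z^*$. Since $z^* > \lim_{\delta\to\infty}\omega(\delta)$, there is some $\hat\delta$ with $z^* > \omega(\hat\delta)$; pick $\bar\delta \ge \max\{0, \hat\delta, \delta_2\}$, so that (using $\omega$ nonincreasing) $\bar\delta \ge \delta_2$ and $z^* > \omega(\hat\delta) \ge \omega(\bar\delta)$, and also $z^* = \delta_3 \ge \omega(\bar\delta)$, hence $z^* \ge \max\{\delta_3, \omega(\bar\delta)\}$. Then Corollary~\ref{cor:primalFeasible}(iv) yields $(x(\bar\delta;\ell), z^*) \in P(\Lambda; x_1,\dots,x_{\ell-1})$, i.e., $(x(\bar\delta;\ell), z^*)$ satisfies \eqref{eq:J_system}. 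Lifting through Theorem~\ref{theorem:FM-elim-succ} gives a feasible $x$ for \eqref{eq:SILP} with objective value $z^* = v(\ref{eq:SILP})$, so \eqref{eq:SILP} is solvable with optimal value $\sup_{h\in I_3}\tilde b(h)$. (One should also handle the degenerate case $\ell = n+1$, where there are no dirty variables besides $z$; then the $x(\bar\delta;\ell)$ tuple is empty and the argument is the same, just with \eqref{eq:J_system} reducing to the inequalities on $z$ alone.)

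The main obstacle is really just bookkeeping: making sure the strict inequality $\sup_{h\in I_3}\tilde b(h) > \lim_{\delta\to\infty}\omega(\delta)$ is used precisely to satisfy the ``$\bar z \ge \max\{\delta_3,\omega(\bar\delta)\}$'' hypothesis of Corollary~\ref{cor:primalFeasible}(iv) with $\bar z$ equal to the optimal value itself — this is the step that fails when the two quantities are equal, which is exactly the non-attainment phenomenon flagged before the theorem (cf. Example~\ref{example:not-primal-optimal}). Everything else — finiteness of $\delta_2,\delta_3,\lim\omega$, and the lift through Fourier-Motzkin — is already packaged in Corollary~\ref{cor:primalFeasible} and Theorem~\ref{theorem:FM-elim-succ}, so no new estimates are needed.
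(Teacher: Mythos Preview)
Your proposal is correct and follows essentially the same approach as the paper: both invoke Lemma~\ref{lemma:primal-optimal-value} to identify $v(\ref{eq:SILP}) = \sup_{h\in I_3}\tilde b(h)$, then use the strict inequality to choose a $\bar\delta$ large enough that $\omega(\bar\delta) < z^*$, and finally apply Corollary~\ref{cor:primalFeasible}(iv) to place $(x(\bar\delta;\ell), z^*)$ in $P(\Lambda; x_1,\dots,x_{\ell-1})$ and lift back. One small imprecision: the lift gives $z^* \ge c^\top x$ (from \eqref{eq:initial-system-obj-con}), not equality by construction; equality then follows because $c^\top x \ge v(\ref{eq:SILP}) = z^*$ for any feasible $x$.
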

\begin{proof}
Let $z^* = v(\ref{eq:SILP})$. Since \eqref{eq:SILP} is feasible, by part (ii) of Corollary~\ref{cor:primalFeasible} it follows that $\infty > \sup_{h \in I_3} \tilde{b}(h) >  \lim_{\delta \rightarrow \infty} \omega(\delta)$. Moreover, by Lemma~\ref{lemma:primal-optimal-value}, $z^* = \sup_{h \in I_3}\tilde{b}(h)$. Let $\delta_2$ be as defined in Corollary~\ref{cor:primalFeasible}. Since $\omega(\delta)$ is a nonincreasing function, there exists a $\delta^* \geq \max\{0, \delta_2\}$ such that $\omega(\delta^*) < \sup_{h \in I_3} \tilde{b}(h) = z^*$. Then, $(x(\delta^*; \ell), z^*)$ satisfies the hypotheses of condition iv) in Corollary~\ref{cor:primalFeasible} and so $(x(\delta^*; \ell), z^* ) \in P(\Lambda; x_1, \ldots, x_\ell)$, showing that there exists a feasible point $(x_1, \dots, x_n, z)$ in $\Lambda$ where $z = z^*$. Thus there is a feasible point for \eqref{eq:SILP} with value $z^* = v(\ref{eq:SILP})$.\end{proof}

In light of the previous result, one may ask whether primal solvability holds when $\lim_{\delta \rightarrow \infty} \omega(\delta) = \sup_{h \in I_3} \tilde{b}(h)$.
The following two examples demonstrate that such problems can be either solvable or not solvable.

\begin{example}\label{example:not-primal-optimal}
Consider the following instance of \eqref{eq:SILP}
\begin{align}\label{eq:not-primal-optimal}
\begin{array}{rcl}
\inf x_{1} \phantom{ + x_{2} + \ \ } && \\
x_{1} + \tfrac{1}{t^{2}}x_{2} \,&\ge& \tfrac{1}{t^{2}} + \tfrac{1}{t} \quad \text{ for } t \ge 1 \\
          x_1 \phantom{ + \tfrac{1}{t^{2}}x_{2} \ \,} & \ge & 0.
\end{array}
\end{align}
Applying Fourier-Motzkin elimination  to
\begin{align}\label{eq:full-system-example}
\begin{array}{rcl}
-x_1 \phantom{ + \tfrac{1}{t^{2}} x_{2}} + z &\ge& 0 \\
\phantom{-}x_1 + \tfrac{1}{t^{2}} x_{2}  \phantom{ + z} &\ge& \tfrac{1}{t^{2}} + \tfrac{1}{t}\quad \text{ for } t \ge 1 \\
x_1 \phantom{ + \tfrac{1}{t^{2}} x_{2} + z} & \ge & 0
\end{array}
\end{align}
yields (by eliminating $x_1$)
\begin{equation}\label{eq:project-out-x1}
\begin{array}{rcl}
\tfrac{1}{t^{2}} x_{2} + z  &\ge& \tfrac{1}{t^{2}} + \tfrac{1}{t}\quad \text{ for } t \ge 1 \\ 
\phantom{\frac{1}{t^{2}} x_{2} +}  z & \ge & 0.
\end{array}
\end{equation}
The only $I_3$ constraint is $z \ge 0$  so $\sup_{h \in I_3} \tilde{b}(h) = 0$. Note that for $\delta \ge 3/2,$
\begin{eqnarray*}
\omega(\delta) = \sup_{t \ge 1} \left\{ \tfrac{1}{t^{2}} + \tfrac{1}{t}  -   \tfrac{\delta}{t^{2}} \right\} =   \sup_{t \ge 1} \left\{ \tfrac{(1 - \delta)}{t^{2}} + \tfrac{1}{t}  \right\}   = \tfrac{1}{4(\delta - 1)}. 
\end{eqnarray*}
When $\delta \ge 1$ and $t \neq 0$, the  function $\tfrac{(1 - \delta)}{t^{2}} + \tfrac{1}{t}$  is  concave and quadratic in $\frac{1}{t}.$ The supremum is attained by $t^{*} = -2(1 - \delta)$.   When  $\delta \ge 3/2$,   $t^{*} \ge 1$ and substituting  the optimal value of $t^{*}$ into $\tfrac{(1 - \delta)}{t^{2}} + \tfrac{1}{t}$ gives $\frac{1}{4(\delta -1)}.$
Clearly, $\lim_{\delta \rightarrow \infty} \omega(\delta) =    0 = \sup_{h \in I_3} \tilde{b}(h)$ and so by Lemma~\ref{lemma:primal-optimal-value} the optimal value is $0$. 

However, for $z = 0$ the system \eqref{eq:project-out-x1} has no possible feasible assignment for $x_2$. Indeed, for any proposed $\bar x_2$ take $t \ge \bar x_2$. This implies $\tfrac{1}{t^{2}} \bar x_{2} + 0  \le \tfrac{1}{t} < \tfrac{1}{t^2} + \frac{1}{t}$, which means $(\bar x_2,0)$ is infeasible to \eqref{eq:project-out-x1} and  the primal is not solvable.  \hfill $\triangleleft$
\end{example}

\begin{example}\label{example:primal-solvable}
Consider the following instance of \eqref{eq:SILP}
\begin{eqnarray*}
\inf\  x_1  \phantom{+ \tfrac{1}{i}x_2}  && \\
\phantom{\inf\ } x_1 \phantom{+ \tfrac{1}{i}x_2} &\ge& 0 \\
\phantom{\inf\  \tfrac{1}{i}x_1} - x_2&\ge& -1 \\
\phantom{\inf\ } x_1 - \tfrac{1}{i}x_2 &\ge& 0 \quad \text{ for $i = 3, 4, \dots$}
\end{eqnarray*}
Applying  Fourier-Motzkin elimination (after introducing the $z - x_1$ constraint) to
yields (after projecting out $x_1$)
\begin{eqnarray*}
- x_2 \phantom{ + z}  &\ge& -1 \\
 \phantom{+ 2x_2 + } z &\ge& 0 \\
 - \tfrac{1}{i}x_2 + z &\ge& 0 \quad \text{ for $i = 3, 4, \dots$}
\end{eqnarray*}
Observe $I_3 = \left\{1\right\}$ and  $\sup_{h \in I_3} \tilde{b}(h) = 0$. 
Note  
$
\omega(\delta)  = \sup \big\{\tilde b(h) - \delta \sum_{h \in I_4} |\tilde a^k(h)| \, \, : \, \, h \in I_4 \big\} \\
           = \sup \left\{0 - \delta / h \, \, : \, \, h = 3, 4, \dots \right\} = 0.
$ 
Thus, $\lim_{\delta \rightarrow \infty} \omega(\delta) = 0 = \sup_{h \in I_3} \tilde{b}(h)$. By Lemma~\ref{lemma:primal-optimal-value}, this implies $v(\ref{eq:SILP}) = 0$ and this value is obtained for the feasible solution $x_1 = x_2 = 0$ and  the primal is solvable. \hfill $\triangleleft$
\end{example}

\subsection{Dual results}\label{s:dual-results}

The next step is to develop a duality theory for \eqref{eq:SILP} using Fourier-Motzkin elimination. 
The standard dual problem in the semi-infinite linear programming literature (see for instance \cite{charnes1963duality})  is the  finite support (Haar) dual
introduced in Section~\ref{s:introduction} and reproduced here for convenience.
\begin{align*}
\begin{array}{rrll}
\sup &\sum_{i \in I} b(i) v(i) & & \\
   {\rm s.t.} & \sum_{i \in I} a^{k}(i) v(i) &= c_k & \text{ for } k = 1, \ldots, n \\
& v &\in \R_+^{(I)} &
\end{array}\tag{\text{FDSILP}}
\end{align*}

In this section, we characterize when \eqref{eq:FDSILP} is  feasible, bounded, and solvable. Later in Section~\ref{section:duality-gap} we characterize when there is zero duality gap between \eqref{eq:SILP} and \eqref{eq:FDSILP}; that is, $v(\ref{eq:SILP}) = v(\ref{eq:FDSILP})$. 

In the remainder of this section, assume Fourier-Motzkin elimination has been applied to \eqref{eq:initial-system-obj-con}-\eqref{eq:initial-system-con} yielding \eqref{eq:J_system}. Our attention turns to the multipliers  generated in Step 2.b.(iii) of the Fourier-Motzkin elimination procedure. These multipliers generate  solutions to  \eqref{eq:FDSILP}. 

First a small, but important, distinction. 
The multipliers $u^h$ generating  \eqref{eq:J_system} are real-valued functions defined on the set $\left\{0\right\} \cup I$ where the inequality \eqref{eq:initial-system-obj-con} has  index $0$. However, solutions to \eqref{eq:FDSILP} are real-valued functions defined only on $I$. Thus, it is useful to work with the restriction $v^h: I \to \R$ of $u^h$ to $I$. That is, $v^h(i) = u^h(i)$ for $i \in I$.  Conversely, given a function $v : I \to \R$ and a real number $v_0$, let $u = (v_0, v)$ denote the \emph{extension} of $v$ onto the index set $\left\{0\right\} \cup I$ where $u(0) = v_0$ and $u(i) = v(i)$ for all $i \in I$.  Lemma \ref{lemma:feasible-FDSILP}  gives basic properties of $v^h$ that are used later.

\begin{lemma}\label{lemma:feasible-FDSILP}
If  Fourier-Motzkin elimination is applied to \eqref{eq:initial-system-obj-con}-\eqref{eq:initial-system-con}  yielding \eqref{eq:J_system}, then 
\begin{enumerate}[(i)]
\item for every $h \in I_1 \cup I_2 \cup I_3 \cup I_4$, $\tilde b(h) = \langle b, v^h \rangle$. \label{item:b-property}
\item for $h \in I_1$, $u^h(0) = 0$ and $v^h$ is a recession direction for the feasible region of \eqref{eq:FDSILP}.  \label{item:I1-property}
\item for $h \in I_2$, $u^h(0) = 0$ and $v^h$ satisfies $\sum_{i \in I} a^{k}(i) v^h(i) = 0$ for $k = 1, \ldots, \ell-1$, and $\sum_{i \in I} a^{k}(i) v^h(i) = \tilde{a}^k(h)$ for $k=\ell, \ldots, n$. \label{item:I2-property}
\item for $h \in I_3$, $u^h(0) = 1$ and $v^h$ is a feasible solution to \eqref{eq:FDSILP}, and \label{item:I3-property}
\item for $h \in I_4$, $u^h(0) = 1$ and $v^h$ satisfies $\sum_{i \in I} a^{k}(i) v^h(i) - c_k = 0$ for $k = 1, \ldots, \ell-1$, and $\sum_{i \in I} a^{k}(i) v^h(i) - c_k = \tilde{a}^k(h)$ for $k=\ell, \ldots, n$. \label{item:I4-property}
\end{enumerate}
\end{lemma}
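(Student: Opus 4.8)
The plan is to obtain all five statements from a single application of Theorem~\ref{theorem:FM-elim-succ} to the augmented system \eqref{eq:initial-system-obj-con}-\eqref{eq:initial-system-con}, viewed as an $(n+1)$-variable semi-infinite system over the index set $\{0\} \cup I$ with $z$ as the last variable (so Theorem~\ref{theorem:FM-elim-succ} is invoked with $n$ replaced by $n+1$). Record the augmented data: the coefficient vector of $x_k$ (for $k \le n$) is the function $\hat a^k$ with $\hat a^k(0) = -c_k$ and $\hat a^k(i) = a^k(i)$ for $i \in I$; the coefficient vector of $z$ is $\hat a^{n+1}$ with $\hat a^{n+1}(0) = 1$ and $\hat a^{n+1}(i) = 0$ for $i \in I$; and the right-hand side $\hat b$ has $\hat b(0) = 0$ and $\hat b(i) = b(i)$. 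The key preliminary observation is that $z$ is never eliminated: initially only \eqref{eq:initial-system-obj-con} carries a nonzero coefficient on $z$, equal to $+1$, and every aggregation involving a $z$-bearing constraint again yields a positive coefficient on $z$ (normalized to $1$), so the set $\mathcal H_-$ for the $z$-column is empty at every stage. Hence $z$ is a dirty variable, the dirty variables of the augmented system are exactly $x_\ell, \dots, x_n$ together with $z$ as in the display \eqref{eq:J_system}, and (since \eqref{eq:initial-system-obj-con}-\eqref{eq:initial-system-con} is assumed to be in canonical form) Theorem~\ref{theorem:FM-elim-succ} applies.

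That theorem gives, for every $h \in \tilde I = I_1 \cup I_2 \cup I_3 \cup I_4$: $\tilde b(h) = \langle \hat b, u^h\rangle$; $\langle \hat a^k, u^h\rangle = 0$ for $k = 1, \dots, \ell - 1$; and $\langle \hat a^k, u^h\rangle = \tilde a^k(h)$ for $k = \ell, \dots, n+1$, where $\tilde a^{n+1}(h)$ is the output coefficient of $z$ in row $h$. Splitting off the $0$-coordinate in each inner product rewrites these in terms of $v^h$: since $\hat b(0) = 0$ we get $\tilde b(h) = \langle b, v^h\rangle$, which is (\ref{item:b-property}); the $z$-column identity gives $u^h(0) = \tilde a^{n+1}(h)$, which by the normalization convention of Remark~\ref{rem:initial_remark} equals $0$ when $h \in I_1 \cup I_2$ and $1$ when $h \in I_3 \cup I_4$; and for $k \le n$ we get $\langle a^k, v^h\rangle = c_k\, u^h(0)$ when $k < \ell$ and $\langle a^k, v^h\rangle = c_k\, u^h(0) + \tilde a^k(h)$ when $\ell \le k \le n$.

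What remains is a case split according to which $I_j$ contains $h$, using also that rows in $I_1 \cup I_3$ have $\tilde a^k(h) = 0$ for all $\ell \le k \le n$ whereas rows in $I_2 \cup I_4$ do not. For $h \in I_1$, $u^h(0) = 0$ and $\langle a^k, v^h\rangle = 0$ for every $k$, so $v^h \in \R_+^{(I)}$ solves the homogeneous system and is a recession direction of the feasible set of \eqref{eq:FDSILP}, giving (\ref{item:I1-property}). For $h \in I_2$, $u^h(0) = 0$ and the displayed identities are precisely (\ref{item:I2-property}). For $h \in I_3$, $u^h(0) = 1$ and $\langle a^k, v^h\rangle = c_k$ for all $k$, so $v^h$ is feasible for \eqref{eq:FDSILP}, giving (\ref{item:I3-property}). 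For $h \in I_4$, $u^h(0) = 1$ and $\langle a^k, v^h\rangle - c_k$ equals $0$ for $k < \ell$ and $\tilde a^k(h)$ for $\ell \le k \le n$, which is (\ref{item:I4-property}). I do not anticipate a real obstacle; the only delicate point is the bookkeeping around the $z$-column — that it is the $(n+1)$st variable, that it forces the augmented system to be dirty, and that its output coefficient is pinned to $0$ or $1$ by the normalization — and once that is fixed the lemma is a direct unpacking of Theorem~\ref{theorem:FM-elim-succ}.
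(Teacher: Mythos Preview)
Your proposal is correct and follows essentially the same approach as the paper: both argue by applying Theorem~\ref{theorem:FM-elim-succ} to the augmented system \eqref{eq:initial-system-obj-con}--\eqref{eq:initial-system-con} and then splitting off the $0$-coordinate of each inner product to pass from $u^h$ to $v^h$. The paper only writes out the $I_3$ case explicitly and leaves the others implicit, whereas you carry the bookkeeping through all five parts; your more careful treatment of the $z$-column (identifying $u^h(0)$ with the output coefficient $\tilde a^{n+1}(h)$ and reading off its value from the structure of \eqref{eq:J_system}) is exactly the mechanism the paper relies on but does not spell out.
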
 
\begin{proof}
We establish part 
\eqref{item:I3-property} only. 
The constraints indexed by $I_3$ must involve $z$ and so the multipliers $u^h$ for $h \in I_3$ must have $u^h(0) > 0$. Assume $u^h(0) = 1$, which is without loss by Remark~\ref{rem:initial_remark}. By Theorem~\ref{theorem:FM-elim-succ}(ii), for $h \in I_3$, $0 = \langle (-c_k, a^k), u^h \rangle = -c_k + \langle a^k, v^h \rangle$ for all $k = 1, \dots, n$. This implies $v^h$ satisfies the equality constraints of \eqref{eq:FDSILP}.  In addition, $u^h \ge 0$ implies $v^h \ge 0$ and $v^h$ is a feasible solution to \eqref{eq:FDSILP}. \end{proof}

\subsubsection{Dual feasibility}\label{s:dual-feasibility}

The next two subsections relate dual feasibility and boundedness to properties of the projected system \eqref{eq:J_system}. Theorem~\ref{theorem:all-rays} and Lemma~\ref{lemma:feasible-FDSILP} play pivotal roles in the proofs. 

\begin{theorem}[Dual Feasibility]\label{theorem:dual-infeasibility} 
\eqref{eq:FDSILP}  is feasible if and only if $I_3 \neq \emptyset.$
\end{theorem}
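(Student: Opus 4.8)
The plan is to prove both directions using the finite-support multipliers $u^h$ generated by Fourier-Motzkin elimination, together with the structural results already established. The easy direction is ($\Leftarrow$): if $I_3 \neq \emptyset$, pick any $h \in I_3$; by Lemma~\ref{lemma:feasible-FDSILP}\eqref{item:I3-property}, the restriction $v^h$ is a feasible solution to \eqref{eq:FDSILP}, so \eqref{eq:FDSILP} is feasible. This is immediate and requires no further work.

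For the hard direction ($\Rightarrow$), I would argue the contrapositive: assume $I_3 = \emptyset$ and show \eqref{eq:FDSILP} is infeasible. Suppose for contradiction that $\bar v \in \R_+^{(I)}$ is feasible for \eqref{eq:FDSILP}, i.e., $\sum_{i \in I} a^k(i)\bar v(i) = c_k$ for $k = 1, \dots, n$. Form the extension $\bar u = (1, \bar v)$ on $\{0\} \cup I$, using the index $0$ for the objective constraint \eqref{eq:initial-system-obj-con} whose coefficient vector is $(-c_1, \dots, -c_n, 1)$ in the variables $(x_1, \dots, x_n, z)$. Then $\bar u \in \R_+^{(\{0\}\cup I)}$ and one checks that $\langle (-c_k, a^k), \bar u\rangle = -c_k + \langle a^k, \bar v\rangle = 0$ for each $k = 1, \dots, n$; that is, $\bar u$ annihilates the coefficient vectors of the first $n$ variables $x_1, \dots, x_n$ of the system \eqref{eq:initial-system-obj-con}-\eqref{eq:initial-system-con}. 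Since $z$ is always a dirty variable (as noted in Section~\ref{s:projected-system}, $z$ can never be eliminated), the number of clean variables is at most $n$, i.e., $\ell - 1 \le n$; applying Theorem~\ref{theorem:all-rays} with $M = n$ (here $M = n \ge \ell - 1$ holds) yields a nonempty finite set $\bar J \subseteq \tilde I$ and scalars $\lambda_h \ge 0$ with $\bar u = \sum_{h \in \bar J}\lambda_h u^h$, where each $u^h$ satisfies $\langle (-c_k, a^k), u^h\rangle = 0$ for $k = 1, \dots, n$.

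The key observation is then to read off the $0$th coordinate: $1 = \bar u(0) = \sum_{h \in \bar J}\lambda_h u^h(0)$. By Lemma~\ref{lemma:feasible-FDSILP}, $u^h(0) = 0$ for $h \in I_1 \cup I_2$ and $u^h(0) = 1$ for $h \in I_3 \cup I_4$. Since $I_3 = \emptyset$ by assumption, any $h \in \bar J$ with $u^h(0) \neq 0$ must lie in $I_4$. But for $h \in I_4$, Lemma~\ref{lemma:feasible-FDSILP}\eqref{item:I4-property} gives $\sum_i a^k(i)v^h(i) - c_k = \tilde a^k(h)$ for $k = \ell, \dots, n$, and $\sum_{k=\ell}^n|\tilde a^k(h)| > 0$ by the construction recorded after \eqref{eq:J_system}; equivalently $\langle a^k, u^h\rangle$ for some $k \in \{\ell,\dots,n\}$ is nonzero, contradicting $\langle (-c_k, a^k), u^h\rangle = 0$ for all $k = 1,\dots,n$. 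Hence no $h \in \bar J$ can have $u^h(0) \neq 0$, forcing $\sum_{h\in\bar J}\lambda_h u^h(0) = 0 \neq 1$, a contradiction. Therefore \eqref{eq:FDSILP} is infeasible, completing the proof.

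The main obstacle is getting the bookkeeping of Theorem~\ref{theorem:all-rays} exactly right: one must verify that the hypothesis $\ell - 1 \le M \le n$ is satisfied with $M = n$ (which follows because $z$ is dirty, so at least one of the original $n$ variables could in principle still be dirty, but in any case $\ell - 1 \le n$), and that the multipliers $u^h$ from Theorem~\ref{theorem:all-rays} are precisely the Fourier-Motzkin multipliers indexed by $\tilde I = I_1 \cup I_2 \cup I_3 \cup I_4$ so that Lemma~\ref{lemma:feasible-FDSILP} applies to them. Once the identification of $\bar u$ as a nonnegative combination of these specific multipliers is secured, the contradiction via the $0$th coordinate is short.
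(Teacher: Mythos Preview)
Your proposal is correct and takes essentially the same approach as the paper's proof: both directions use Lemma~\ref{lemma:feasible-FDSILP} for the easy implication, and for the hard implication both form the extension $\bar u = (1,\bar v)$, invoke Theorem~\ref{theorem:all-rays} with $M=n$ on the $(n{+}1)$-variable system \eqref{eq:initial-system-obj-con}--\eqref{eq:initial-system-con}, and then read off the $0$th coordinate. The only cosmetic difference is that the paper argues directly (the orthogonality conditions $\langle(-c_k,a^k),u^h\rangle=0$ for $k=1,\dots,n$ immediately force $\bar I\subseteq I_1\cup I_3$, so the $1$ in the first component forces $\bar I\cap I_3\neq\emptyset$), whereas you frame it as a contrapositive and separately rule out $I_4$; the content is the same.
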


\begin{proof} 
($\Longrightarrow$) 
If \eqref{eq:FDSILP} is feasible, there is a $\overline{v} \ge 0$ with finite support such that $\sum_{i \in I} a_{k}(i) \overline{v}_{i} = c_k,  k = 1, \ldots, n$   
 and this implies  $\langle (-c_{k}, a^{k}), (1, \overline{v}) \rangle = 0, k = 1, \ldots, n$. Then, by applying
Theorem~\ref{theorem:all-rays} to \eqref{eq:initial-system-obj-con}-\eqref{eq:initial-system-con} with $M=n$, there exist a finite index set $\bar{I}  \subseteq (I_{1} \cup I_{3})$ and multipliers $u^h : \left\{0\right\} \cup I \to \R$ for $h \in \bar I$ such that 
\[
\begin{array}{rcl}
(1, \overline{v})  & = &  \sum_{h \in \bar{I}} \lambda_{h} u^h \\
& = & \sum_{h \in \bar{I} \cap I_{1} } \lambda_{h} u^h + \sum_{h \in \bar{I} \cap I_{3} } \lambda_{h} u^h \\
& = & \sum_{h \in \bar{I} \cap I_{1}} \lambda_{h} (0, v^{h}) + \sum_{h \in \bar{I} \cap I_{3}} \lambda_{h} (1, v^{h})
\end{array}
\]
where $\lambda_{h} \ge 0$ for all $h \in \bar{I}$ and $v^h$ is the restriction of $u^h$ onto $I$. The third equality follows from Lemma~\ref{lemma:feasible-FDSILP}\eqref{item:I1-property}~and~\eqref{item:I3-property}. 
Now, the $1$ in the first component of $(1, \overline{v})$ implies that $\bar{I} \cap I_{3}$ cannot be empty, and hence $I_{3}$ cannot be empty. 

\vskip 3pt

\noindent($\Longleftarrow$) 
Take any $u^h$ with $h \in I_3$. By Lemma~\ref{lemma:feasible-FDSILP}\eqref{item:I3-property}, $v^h$ is a feasible solution to \eqref{eq:FDSILP}.    
\end{proof}

\subsubsection{Dual boundedness}\label{s:dual-boundedness}

To characterize dual boundedess,  first establish weak duality.

\begin{lemma}[Weak Duality]\label{lemma:weak-duality} Suppose $\tilde{b}(h) \leq 0$ for all $h \in I_1$.
If $\overline{v}$ is a feasible dual solution to problem \eqref{eq:FDSILP} then   
\begin{itemize}

\item[(i)]  there exists an $\bar h \in I_3$ such that $\tilde b(\bar h) \ge    \langle b, \overline{v}  \rangle$,

\item[(ii)]     $ \langle b, \overline{v} \rangle$  is a lower bound on the optimal solution value of \eqref{eq:SILP}. 
\end{itemize}
\end{lemma}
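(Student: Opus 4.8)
The plan is to prove (i) and (ii) separately, with (ii) following easily from (i) together with Lemma~\ref{lemma:primal-optimal-value}. For part (i), let $\overline{v}$ be a feasible solution to \eqref{eq:FDSILP}, so that $\overline{v} \in \R_+^{(I)}$ and $\sum_{i \in I} a^k(i)\overline{v}(i) = c_k$ for $k = 1, \ldots, n$. Passing to the extension $u := (1, \overline{v})$ on the index set $\{0\} \cup I$, this is equivalent to $\langle (-c_k, a^k), u \rangle = 0$ for $k = 1, \ldots, n$, i.e., $u$ annihilates all $n+1$ columns of the system \eqref{eq:initial-system-obj-con}-\eqref{eq:initial-system-con} except possibly the $z$-column, and in fact since $z$ appears only in \eqref{eq:initial-system-obj-con} with coefficient $1$, we have $\langle (\text{$z$-column}), u \rangle = u(0) = 1$. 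The key tool is Theorem~\ref{theorem:all-rays} applied to \eqref{eq:initial-system-obj-con}-\eqref{eq:initial-system-con} with $M = n$ (the number of original variables $x_1, \dots, x_n$; note $z$ is the $(n+1)$st variable and is never eliminated, so $\ell - 1 \le n = M$). This yields a nonempty finite index set $\bar I \subseteq \tilde I = I_1 \cup I_2 \cup I_3 \cup I_4$ and scalars $\lambda_h \ge 0$ with $u = \sum_{h \in \bar I} \lambda_h u^h$, where each $u^h$ satisfies $\langle a^k, u^h \rangle = 0$ (in the extended sense, i.e. annihilating the $x_k$-columns) for $k = 1, \ldots, n$.

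Next I would argue that $\bar I$ can be taken inside $I_1 \cup I_3$. Indeed, by Lemma~\ref{lemma:feasible-FDSILP}, for $h \in I_2$ the multiplier $v^h$ satisfies $\sum_i a^k(i) v^h(i) = \tilde a^k(h)$ for $k = \ell, \ldots, n$, and since $h \in I_2$ means $\sum_{k=\ell}^n |\tilde a^k(h)| > 0$, at least one of these is nonzero — contradicting that $u^h$ annihilates all the $x_k$-columns. The same reasoning rules out $I_4$. So $\bar I \subseteq I_1 \cup I_3$, and using Lemma~\ref{lemma:feasible-FDSILP}\eqref{item:I1-property} ($u^h(0) = 0$ for $h \in I_1$) and \eqref{item:I3-property} ($u^h(0) = 1$ for $h \in I_3$), the $0$th coordinate of $u = \sum_{h \in \bar I} \lambda_h u^h$ reads $1 = \sum_{h \in \bar I \cap I_3} \lambda_h$. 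In particular $\bar I \cap I_3 \ne \emptyset$. Now project onto the $b$-coordinates: $\langle b, \overline{v} \rangle = \langle b, u \rangle = \sum_{h \in \bar I} \lambda_h \langle b, u^h \rangle = \sum_{h \in \bar I} \lambda_h \tilde b(h)$ by Lemma~\ref{lemma:feasible-FDSILP}\eqref{item:b-property}. Split the sum over $\bar I \cap I_1$ and $\bar I \cap I_3$; by hypothesis $\tilde b(h) \le 0$ for $h \in I_1$ and $\lambda_h \ge 0$, so $\langle b, \overline{v}\rangle \le \sum_{h \in \bar I \cap I_3} \lambda_h \tilde b(h) \le \big(\max_{h \in \bar I \cap I_3} \tilde b(h)\big) \sum_{h \in \bar I \cap I_3} \lambda_h = \max_{h \in \bar I \cap I_3} \tilde b(h)$, using $\sum \lambda_h = 1$ and $\lambda_h \ge 0$. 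Taking $\bar h \in \bar I \cap I_3$ attaining this maximum proves (i).

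For part (ii), from (i) we get $\langle b, \overline{v}\rangle \le \tilde b(\bar h) \le \sup_{h \in I_3} \tilde b(h) \le \max\{\sup_{h \in I_3} \tilde b(h), \lim_{\delta \to \infty}\omega(\delta)\} = v(\ref{eq:SILP})$, where the last equality is Lemma~\ref{lemma:primal-optimal-value} (applicable since \eqref{eq:SILP} is feasible; note that the standing hypothesis $\tilde b(h) \le 0$ on $I_1$ together with finiteness of the relevant suprema — which one gets from dual feasibility forcing $I_3 \ne \emptyset$, hence primal feasibility via Theorem~\ref{theorem:primalFeasible} — should be checked, or alternatively one invokes weak duality directly: any $(x_1, \dots, x_n)$ feasible for \eqref{eq:SILP} satisfies $c^\top x = \sum_k (\sum_i a^k(i)\overline v(i)) x_k = \sum_i \overline v(i) (\sum_k a^k(i) x_k) \ge \sum_i \overline v(i) b(i) = \langle b, \overline v\rangle$ since $\overline v \ge 0$). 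The main obstacle is the bookkeeping in the first two paragraphs — correctly tracking the $0$th (the $z$-) coordinate through the decomposition from Theorem~\ref{theorem:all-rays} and verifying that $I_2$ and $I_4$ indices cannot appear in $\bar I$; once that is in place, the inequality chain is a routine convexity estimate. A cleaner route for (ii) that sidesteps Lemma~\ref{lemma:primal-optimal-value} is the one-line LP weak-duality computation just given, which I would include as a remark.
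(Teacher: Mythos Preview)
Your proof of (i) is correct and follows the same route as the paper: apply Theorem~\ref{theorem:all-rays} with $M=n$ to the extended vector $(1,\overline v)$, observe that the resulting multipliers $u^h$ must have $\tilde a^k(h)=0$ for all $k$ and hence $\bar I\subseteq I_1\cup I_3$, read off $\sum_{h\in\bar I\cap I_3}\lambda_h=1$ from the $0$th coordinate, and finish with the convex-combination estimate. The paper does exactly this, citing the proof of Theorem~\ref{theorem:dual-infeasibility} rather than re-deriving $\bar I\subseteq I_1\cup I_3$.

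For (ii) there is a small wobble. Your primary route invokes Lemma~\ref{lemma:primal-optimal-value}, which requires \eqref{eq:SILP} to be feasible, and you try to deduce primal feasibility from ``$\tilde b(h)\le 0$ on $I_1$'' plus ``dual feasibility forces $I_3\neq\emptyset$''. That implication is false: $I_3\neq\emptyset$ says nothing about conditions (ii) and (iv) of Theorem~\ref{theorem:primalFeasible}, so you cannot conclude primal feasibility this way. The clean fix is exactly the one-line LP weak-duality computation you already supply as an alternative: for any primal-feasible $x$, $c^\top x=\sum_k(\sum_i a^k(i)\overline v(i))x_k=\sum_i\overline v(i)\sum_k a^k(i)x_k\ge\sum_i\overline v(i)b(i)=\langle b,\overline v\rangle$. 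This handles the feasible case directly; if \eqref{eq:SILP} is infeasible the bound is vacuous. The paper's one-line ``follows immediately from Lemma~\ref{lemma:primal-optimal-value}'' is tacitly making the same case split.
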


\begin{proof}
Applying Theorem~\ref{theorem:all-rays} as in the proof of Theorem~\ref{theorem:dual-infeasibility} implies there exists an index set $\bar{I} \subseteq I_{1} \cup I_{3}$ such that $(1, \overline{v}) = \sum_{h \in \bar{I} \cap I_{1}} \lambda_{h} (0, v^{h}) + \sum_{h \in \bar{I} \cap I_{3}} \lambda_{h} (1, v^{h}).$ Reasoning about the components of $(1, \bar v)$ separately gives, 
\begin{align}\label{eq:prop-weak-duality-con1}
\bar v = \sum_{h \in \bar{I} \cap I_{1}} \lambda_{h} v^{h} + \sum_{h \in \bar{I} \cap I_{3}} \lambda_{h} v^{h}
\end{align}
and $1 = \sum_{h \in \bar{I} \cap I_{3}} \lambda_h$. Lemma~\ref{lemma:feasible-FDSILP}(i) and the hypothesis $\tilde b(h) \le 0$ for all $h \in I_{1}$ imply  $\langle b, v^{h}  \rangle \le 0$ for all $h \in I_{1}$. Thus, 
(\ref{eq:prop-weak-duality-con1}) gives
$\langle b, \overline{v} \rangle \le \sum_{h \in \bar{I} \cap I_{3}} \lambda_{h} \langle b, v^{h} \rangle \le \langle b, v^{\bar h} \rangle = \tilde b(\bar h)$ for some $\bar h \in \bar I\cap I_3$, where the second inequality follows because the $\lambda_{h}$ for $h \in I_3$ are nonnegative and sum to $1$. This implies i).  Now ii) follows immediately from Lemma~\ref{lemma:primal-optimal-value}.
\end{proof}

\begin{theorem}[Dual boundedness]\label{theorem:dual-boundedness}
Suppose \eqref{eq:FDSILP} is feasible. Then \eqref{eq:FDSILP} is bounded if and only if
\begin{multicols}{2} 
\begin{enumerate}
\item[(i)] $\tilde{b}(h) \le 0$ for all $h \in I_1$ and 
\item[(ii)] $\sup_{h \in I_3} \tilde{b}(h) < \infty$.
\end{enumerate}
\end{multicols}
\end{theorem}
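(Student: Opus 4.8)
The plan is to derive both implications from Lemma~\ref{lemma:feasible-FDSILP}, which identifies the numbers $\tilde b(h)$ with dual objective values (for $h \in I_3$) and with values of the objective along recession directions (for $h \in I_1$), together with the weak duality bound in Lemma~\ref{lemma:weak-duality}. Recall that since \eqref{eq:FDSILP} is feasible, Theorem~\ref{theorem:dual-infeasibility} guarantees $I_3 \neq \emptyset$, so the suprema over $I_3$ below are taken over a nonempty set; and that boundedness of \eqref{eq:FDSILP} (a supremum problem) means $v(\ref{eq:FDSILP}) < \infty$, while feasibility already rules out $v(\ref{eq:FDSILP}) = -\infty$, so in the ``only if'' direction $v(\ref{eq:FDSILP})$ is a finite real number.

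For the ``if'' direction, assume (i) and (ii). Condition (i) is exactly the hypothesis of Lemma~\ref{lemma:weak-duality}, so for every feasible dual solution $\bar v$ there is some $\bar h \in I_3$ with $\langle b, \bar v\rangle \le \tilde b(\bar h) \le \sup_{h \in I_3}\tilde b(h)$, and the last quantity is finite by (ii). Taking the supremum over all feasible $\bar v$ yields $v(\ref{eq:FDSILP}) \le \sup_{h \in I_3}\tilde b(h) < \infty$, so \eqref{eq:FDSILP} is bounded.

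For the ``only if'' direction, assume \eqref{eq:FDSILP} is feasible and bounded. First I would prove (i) by contradiction: if $\tilde b(\bar h) > 0$ for some $\bar h \in I_1$, then by Lemma~\ref{lemma:feasible-FDSILP}\eqref{item:I1-property} the vector $v^{\bar h}$ is a recession direction of the dual feasible region (and it is an admissible direction, being nonnegative and of finite support since $u^{\bar h} \in \R^{(I)}_+$), while Lemma~\ref{lemma:feasible-FDSILP}\eqref{item:b-property} gives $\langle b, v^{\bar h}\rangle = \tilde b(\bar h) > 0$; adding arbitrarily large nonnegative multiples of $v^{\bar h}$ to a feasible dual solution drives the objective to $+\infty$, contradicting boundedness. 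Once (i) holds, (ii) is immediate: for each $h \in I_3$, Lemma~\ref{lemma:feasible-FDSILP}\eqref{item:I3-property} says $v^h$ is dual feasible and \eqref{item:b-property} gives its objective value as $\tilde b(h)$, so $\tilde b(h) \le v(\ref{eq:FDSILP})$ for every $h \in I_3$, whence $\sup_{h \in I_3}\tilde b(h) \le v(\ref{eq:FDSILP}) < \infty$. Since all the required machinery is already established, there is no substantive obstacle; the only points needing care are checking that $v^{\bar h}$ is a genuine admissible recession direction for the dual and keeping track that ``bounded'' means a finite optimal value, so that these one-sided bounds on $v(\ref{eq:FDSILP})$ close the argument.
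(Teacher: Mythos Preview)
Your proposal is correct and follows essentially the same approach as the paper: both use Lemma~\ref{lemma:feasible-FDSILP} to interpret the $v^h$ as recession directions (for $h\in I_1$) or feasible dual solutions (for $h\in I_3$), and invoke Lemma~\ref{lemma:weak-duality}(i) under condition~(i) to bound every dual objective value by some $\tilde b(\bar h)$ with $\bar h\in I_3$. The only cosmetic differences are that the paper frames both implications as contrapositives, and in its ``only if'' direction it handles (i) and (ii) independently rather than deriving (ii) after establishing (i); your observation that (i) enables the use of Lemma~\ref{lemma:weak-duality} is not actually needed for your argument for (ii), which relies only on Lemma~\ref{lemma:feasible-FDSILP}\eqref{item:b-property},\eqref{item:I3-property} and boundedness.
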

\begin{proof}
($\Longleftarrow$) By contrapositive. We suppose \eqref{eq:FDSILP} is unbounded and show  that if condition (i) holds, then (ii) does not hold. Assume $\tilde{b}(h) \leq 0$ for all $h \in I_1$. Since \eqref{eq:FDSILP} is unbounded, for every $M \in \N$ there exists a feasible $\bar v_M$ with $\langle b, \bar v_M \rangle \ge M$. By Lemma~\ref{lemma:weak-duality}, there exist some $h_M \in I_3$ such that $\tilde b(h_M) \geq \langle b, \bar v_M \rangle \geq M$. Thus, $\sup_{h \in I_3} \tilde{b}(h) \ge \tilde{b}(h_M) \geq M$ for all $M \in \N$ and this implies $\sup_{h \in I_3} \tilde{b}(h) = \infty$.  Therefore, (ii) does not hold.

($\Longrightarrow$) By contrapositive. Assume condition i) does not hold. Thus, there exists an $h^* \in I_1$ such that $\tilde{b}(h^*) > 0$ and by Lemma~\ref{lemma:feasible-FDSILP}(ii), $\langle a^k, v^{h^*}\rangle = 0$ for all $k = 1, \ldots, n$. Now, consider any $\bar v$ feasible to \eqref{eq:FDSILP}, which exists since \eqref{eq:FDSILP} is feasible. Then, $\bar v + \lambda v^{h^*}$ is also feasible for all $\lambda \geq 0$. Now, the objective value for these feasible solutions equal $\langle b, \bar v + \lambda v^{h^*} \rangle = \langle b, \bar v \rangle + \lambda \langle b, v^{h^*} \rangle$. Since $\langle b, v^{h^*} \rangle = \tilde b(h^*)  > 0$, letting $\lambda \to \infty$, yields unbounded values for the objective value of \eqref{eq:FDSILP}.

Next assume condition ii) does not hold. This implies there is a sequence of $\{h_m\}_{m \in \N}$ in $I_3$ 
such that, by Lemma~\ref{lemma:feasible-FDSILP}(i), $\langle b, v^{h_m}\rangle = \tilde{b}(h_m) \to \infty$. By Lemma~\ref{lemma:feasible-FDSILP}(iii), each $v^{h_m}$ is a feasible solution to \eqref{eq:FDSILP} and thus \eqref{eq:FDSILP} is unbounded.
\end{proof}

\begin{remark}\label{rem:boundedness}
Observe that there are two distinct ways for a feasible \eqref{eq:FDSILP} to be unbounded. The first is when there is a recession direction to the feasible region that drives the objective value to $+\infty$. From Lemma~\ref{lemma:feasible-FDSILP}\eqref{item:I1-property} every $h \in I_1$ yields a recession direction $v^h$. In addition, if $\tilde b(h) > 0$ then $ \langle b, v^h \rangle > 0$ and so moving within the feasible region along recession direction $v^h$ drives the objective to $+\infty$. This argument was given in full detail in the proof of Theorem~\ref{theorem:dual-boundedness}.

Contrary to our intuition from finite dimensions, the second way~\eqref{eq:FDSILP} may have an unbounded objective value can occur when the feasible region itself is bounded. This happens when there are no recession directions and $\sup_{h \in I_3} \tilde{b}(h) = \infty$. This occurs when \eqref{eq:FDSILP} has a sequence of feasible solutions whose values converge to $+\infty$. Consider the semi-infinite linear program:
\[
\begin{array}{rcl}
\inf x_1 && \\
\text{ s.t. } x_1 &\ge& i \quad \text{ for $i\in \N$}
\end{array}
\]
with finite support dual
\[
\begin{array}{rcl}
\sup \sum_{i \in \N} iv(i) && \\
\text{ s.t. } \sum_{i \in \N} v(i) &=& 1 \\
\phantom{\text{ s.t. } \sum_{i \in \N}} v(i) &\ge& 0 \quad \text{ for $i \in \N$}
\end{array}
\]
The feasible region of the finite support dual is bounded (note that $0 \le v(i) \le 1$ for all $i$) and there is no recession direction. However, the problem is still unbounded. Consider the sequence of feasible extreme point solutions $e^m$. Clearly, $ \sum_{i \in \N} ie^m(i) = m \rightarrow \infty$ as $m \to \infty$. Thus \eqref{eq:FDSILP} is unbounded. 
 
Fourier-Motzkin elimination can identify which of the conditions of  Theorem~\ref{theorem:dual-boundedness} are violated and  result in an unbounded problem. Applying Fourier-Motzkin elimination (after eliminating $x_1$) the system:
$z \ge i$ for $i = 1, 2, \dots$.
Thus, $I_1 = \emptyset$ so there are no recession directions, but $I_3 = \left\{1,2, \dots \right\}$ and $\sup_{h \in I_3} \tilde{b}(h) = \infty$. \hfill $\triangleleft$
\end{remark}

\subsubsection{Dual solvability}\label{s:dual-solvability}

To characterize dual solvability, begin with a characterization of  the optimal dual value.

\begin{theorem}\label{thm:dual-optimal-value}
If $\tilde{b}(h) \leq 0$ for all $h \in I_1$ then  $v\eqref{eq:FDSILP} = \sup_{h \in I_3} \tilde{b}(h)$.
\end{theorem}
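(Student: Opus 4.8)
The plan is to read off the identity from two facts already established: weak duality (Lemma~\ref{lemma:weak-duality}), which gives the upper bound under the standing hypothesis $\tilde b(h)\le 0$ for $h\in I_1$, and the fact that each restricted multiplier $v^h$ with $h\in I_3$ is itself a feasible dual solution with objective value exactly $\tilde b(h)$, which gives the matching lower bound.

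First I would dispose of the degenerate case. If \eqref{eq:FDSILP} is infeasible, then $I_3=\emptyset$ by Theorem~\ref{theorem:dual-infeasibility}, so $\sup_{h\in I_3}\tilde b(h) = -\infty$; since \eqref{eq:FDSILP} is a supremum problem, our conventions give $v\eqref{eq:FDSILP} = -\infty$ as well, and the identity holds trivially. So from now on assume \eqref{eq:FDSILP} is feasible, hence $I_3\neq\emptyset$ (again by Theorem~\ref{theorem:dual-infeasibility}).

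For the inequality $v\eqref{eq:FDSILP}\le\sup_{h\in I_3}\tilde b(h)$, let $\overline v$ be any feasible dual solution. Because the hypothesis $\tilde b(h)\le 0$ for all $h\in I_1$ is exactly the hypothesis of Lemma~\ref{lemma:weak-duality}, part (i) of that lemma yields some $\bar h\in I_3$ with $\tilde b(\bar h)\ge\langle b,\overline v\rangle$, so $\langle b,\overline v\rangle\le\sup_{h\in I_3}\tilde b(h)$; taking the supremum over all feasible $\overline v$ gives the bound. For the reverse inequality, fix any $h\in I_3$: by Lemma~\ref{lemma:feasible-FDSILP}\eqref{item:I3-property} the restriction $v^h$ is feasible for \eqref{eq:FDSILP}, and by Lemma~\ref{lemma:feasible-FDSILP}\eqref{item:b-property} its objective value is $\langle b,v^h\rangle=\tilde b(h)$, so $v\eqref{eq:FDSILP}\ge\tilde b(h)$; since $h\in I_3$ was arbitrary, $v\eqref{eq:FDSILP}\ge\sup_{h\in I_3}\tilde b(h)$. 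Combining the two inequalities completes the proof.

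There is no genuinely hard step here: all the work is carried by Lemma~\ref{lemma:weak-duality} (which in turn rests on Theorem~\ref{theorem:all-rays}) and by Lemma~\ref{lemma:feasible-FDSILP}. The only points requiring care are the empty-$I_3$ boundary case and matching the sign/value conventions for the optimal value of a supremum problem, neither of which involves any real difficulty.
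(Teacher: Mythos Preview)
Your proof is correct and follows essentially the same route as the paper: the upper bound comes from Lemma~\ref{lemma:weak-duality}(i) and the lower bound from Lemma~\ref{lemma:feasible-FDSILP}\eqref{item:b-property}--\eqref{item:I3-property}. Your explicit treatment of the infeasible case ($I_3=\emptyset$) is a small addition the paper leaves implicit, but the argument is otherwise identical.
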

\begin{proof}
By Lemma~\ref{lemma:weak-duality}(ii), for every dual feasible solution $\bar v$ there exists an $h \in I_3$ with $ \tilde b(h) \ge \langle b, \bar v \rangle$. Hence, $\sup_{h \in I_3} \tilde b(h) \ge \langle b, \bar v \rangle$ for all feasible $\bar v$. This implies $\sup_{h \in I_3} \tilde{b}(h) \ge v(\ref{eq:FDSILP})$. Conversely, by Lemma~\ref{lemma:feasible-FDSILP}(iii), every $h \in I_3$ yields a $v^h$ with $v^h$ feasible to \eqref{eq:FDSILP} and  $\tilde b(h) = \langle b, v^h \rangle$. Hence $ \tilde b(h) = \langle b, v^h \rangle \le v(\ref{eq:FDSILP})$ for all $h \in I_3$. Thus, $\sup_{h \in I_3} \tilde{b}(h) \le v(\ref{eq:FDSILP})$ and the result  follows.
\end{proof}

\begin{corollary}\label{cor:dual-optimal-value}
If either  \eqref{eq:SILP} is feasible or \eqref{eq:FDSILP} is feasible and bounded, then $v\eqref{eq:FDSILP} = \sup_{h \in I_3} \tilde{b}(h)$.
\end{corollary}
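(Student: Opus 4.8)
\medskip

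The plan is to reduce to Theorem~\ref{thm:dual-optimal-value}, whose only hypothesis is that $\tilde b(h) \le 0$ for all $h \in I_1$. So the entire task is to verify this hypothesis under each of the two stated assumptions, and then invoke Theorem~\ref{thm:dual-optimal-value} to conclude $v(\ref{eq:FDSILP}) = \sup_{h \in I_3} \tilde b(h)$.

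\medskip

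First I would handle the case where \eqref{eq:SILP} is feasible. By Theorem~\ref{theorem:primalFeasible} (primal feasibility), feasibility of \eqref{eq:SILP} implies condition (i) of that theorem, namely $\tilde b(h) \le 0$ for all $h \in I_1$. That is exactly the hypothesis of Theorem~\ref{thm:dual-optimal-value}, so we are done in this case.

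\medskip

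Next I would handle the case where \eqref{eq:FDSILP} is feasible and bounded. Here I would apply Theorem~\ref{theorem:dual-boundedness}: since \eqref{eq:FDSILP} is feasible, boundedness is equivalent to conditions (i) and (ii) of that theorem, and condition (i) is precisely $\tilde b(h) \le 0$ for all $h \in I_1$. Again this is the hypothesis of Theorem~\ref{thm:dual-optimal-value}, so the conclusion $v(\ref{eq:FDSILP}) = \sup_{h \in I_3}\tilde b(h)$ follows.

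\medskip

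I do not anticipate any real obstacle here — the corollary is a bookkeeping consequence of two earlier theorems, each of which supplies the sign condition on $\{\tilde b(h) : h \in I_1\}$ needed to feed into Theorem~\ref{thm:dual-optimal-value}. The only thing to be careful about is that in the second case one must explicitly note that feasibility of \eqref{eq:FDSILP} is what makes Theorem~\ref{theorem:dual-boundedness} applicable (it is a hypothesis there), and that boundedness then yields condition (i); this is immediate from the statement of Theorem~\ref{theorem:dual-boundedness}.
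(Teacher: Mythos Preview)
Your proposal is correct and follows essentially the same approach as the paper: in each case you verify the sign condition $\tilde b(h)\le 0$ for $h\in I_1$ (via Theorem~\ref{theorem:primalFeasible}(i) in the first case and Theorem~\ref{theorem:dual-boundedness}(i) in the second) and then invoke Theorem~\ref{thm:dual-optimal-value}. This matches the paper's proof line for line.
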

\begin{proof}
If \eqref{eq:SILP} is feasible, then by Theorem~\ref{theorem:primalFeasible}(ii) $\tilde{b}(h) \leq 0$ for all $h \in I_1$. The result follows from Theorem~\ref{thm:dual-optimal-value}. If \eqref{eq:FDSILP} is feasible and bounded then by Theorem~\ref{theorem:dual-boundedness}(i) $\tilde{b}(h) \leq 0$ for all $h \in I_1$. Once again, the result follows from Theorem~\ref{thm:dual-optimal-value}.
\end{proof}

\begin{theorem}[Dual solvability]\label{theorem:dual-solvability}
\eqref{eq:FDSILP} has an optimal solution if and only if 
\begin{multicols}{2}
\begin{enumerate}[(i)]
\item $\tilde{b}(h) \leq 0$ for all $h \in I_1$, and
\item $\sup_{h \in I_3} \tilde{b}(h)$ is attained.
\end{enumerate}
\end{multicols}

\end{theorem}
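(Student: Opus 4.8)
The plan is to route everything through the dictionary between dual feasible solutions and the index set $I_3$ that is already in place: Lemma~\ref{lemma:feasible-FDSILP}\eqref{item:I3-property} (with part~\eqref{item:b-property}) says every $h \in I_3$ furnishes a feasible solution $v^h$ of \eqref{eq:FDSILP} whose objective value is exactly $\tilde b(h)$, while Lemma~\ref{lemma:weak-duality}(i) says every feasible dual solution $\bar v$ has $\langle b, \bar v\rangle \le \tilde b(\bar h)$ for some $\bar h \in I_3$. Together these pin the dual optimal value to $\sup_{h \in I_3}\tilde b(h)$ (this is precisely Theorem~\ref{thm:dual-optimal-value}, valid once condition (i) holds), and they will let me translate ``an optimal solution exists'' into ``the supremum over $I_3$ is attained'' and back. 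No limiting arguments are needed, in contrast with the primal solvability analysis.

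For the forward direction, suppose \eqref{eq:FDSILP} has an optimal solution $\bar v$. Then \eqref{eq:FDSILP} is feasible and bounded, so Theorem~\ref{theorem:dual-boundedness}(i) gives condition (i). With (i) in hand, Theorem~\ref{thm:dual-optimal-value} gives $v(\ref{eq:FDSILP}) = \sup_{h \in I_3}\tilde b(h)$, which equals $\langle b, \bar v\rangle$ since $\bar v$ is optimal. Applying Lemma~\ref{lemma:weak-duality}(i) to $\bar v$ (its hypothesis is exactly (i)) produces $\bar h \in I_3$ with $\tilde b(\bar h) \ge \langle b, \bar v\rangle = \sup_{h \in I_3}\tilde b(h)$; since the reverse inequality $\tilde b(\bar h) \le \sup_{h \in I_3}\tilde b(h)$ is automatic, the supremum is attained at $\bar h$, i.e. condition (ii) holds.

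For the converse, assume (i) and (ii). By (ii) there is some $h^* \in I_3$ (so $I_3 \neq \emptyset$, and \eqref{eq:FDSILP} is feasible by Theorem~\ref{theorem:dual-infeasibility}) with $\tilde b(h^*) = \sup_{h \in I_3}\tilde b(h)$, a real number. By (i) and Theorem~\ref{thm:dual-optimal-value}, $v(\ref{eq:FDSILP}) = \sup_{h \in I_3}\tilde b(h) = \tilde b(h^*)$. By Lemma~\ref{lemma:feasible-FDSILP}\eqref{item:I3-property} the restriction $v^{h^*}$ is feasible to \eqref{eq:FDSILP}, and by Lemma~\ref{lemma:feasible-FDSILP}\eqref{item:b-property} its objective value is $\langle b, v^{h^*}\rangle = \tilde b(h^*) = v(\ref{eq:FDSILP})$. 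Hence $v^{h^*}$ is an optimal solution of \eqref{eq:FDSILP}.

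The argument is essentially bookkeeping once the projected-system dictionary is available, and the only subtlety is that weak duality and Theorem~\ref{thm:dual-optimal-value} both carry condition (i) as a standing hypothesis; this is why, in the forward direction, one must first extract (i) from dual boundedness (Theorem~\ref{theorem:dual-boundedness}) before one may assert $v(\ref{eq:FDSILP}) = \sup_{h \in I_3}\tilde b(h)$. I do not anticipate a genuine obstacle: the feasible solution realizing the supremum is handed to us directly by Lemma~\ref{lemma:feasible-FDSILP} as soon as the supremum is attained within $I_3$.
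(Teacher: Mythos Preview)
Your proof is correct and follows essentially the same route as the paper's: both directions pivot on Lemma~\ref{lemma:feasible-FDSILP} and Lemma~\ref{lemma:weak-duality}, extract condition~(i) from dual boundedness (Theorem~\ref{theorem:dual-boundedness}) in the forward direction, and identify $v(\ref{eq:FDSILP})=\sup_{h\in I_3}\tilde b(h)$ via Theorem~\ref{thm:dual-optimal-value} (the paper phrases this through Corollary~\ref{cor:dual-optimal-value}, but that is the same statement under the relevant hypotheses).
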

\begin{proof}
($\Longrightarrow$)
Let $v^*$ be an optimal solution to \eqref{eq:FDSILP} with optimal  value $v(\ref{eq:FDSILP}) = \langle b, v^* \rangle$. This implies \eqref{eq:FDSILP} is both feasible and bounded. By Theorem~\ref{theorem:dual-boundedness}(i), $\tilde{b}(h) \leq 0$ for all $h \in I_1$, establishing condition (i). Apply Lemma~\ref{lemma:weak-duality}(i) and conclude  there  exists a $v^{h^*}$ for some $h^* \in I_3$ with $\langle b, v^{h^*} \rangle \ge \langle b, v^* \rangle = v(\ref{eq:FDSILP})$. By Lemma~\ref{lemma:feasible-FDSILP}\eqref{item:I3-property}, $v^{h^*}$ is feasible to \eqref{eq:FDSILP} and  $\langle b, v^{h^*} \rangle \le v(\ref{eq:FDSILP})$. Hence $\tilde b(h^*) = \langle b, v^{h^*} \rangle = v(\ref{eq:FDSILP}) = \sup_{h \in I_3} \tilde{b}(h)$, where the first equality holds from Lemma~\ref{lemma:feasible-FDSILP}\eqref{item:b-property}, the second equality holds from the arguments in the previous two sentences, and the third equality holds from Corollary~\ref{cor:dual-optimal-value}. Thus, $ \tilde{b}(h^*) = \sup_{h \in I_3} \tilde{b}(h)$, establishing condition (ii). 

\noindent($\Longleftarrow$)
By hypothesis there is an  $h^* \in I_3$  such that 
$\sup_{h \in I_3} \tilde{b}(h) = \tilde{b}(h^*) < \infty$. The fact that $I_3$ is nonempty implies \eqref{eq:FDSILP} is feasible by Theorem~\ref{theorem:dual-infeasibility}. Thus, by Theorem~\ref{theorem:dual-boundedness} \eqref{eq:FDSILP} is bounded.   
Since \eqref{eq:FDSILP} is feasible and bounded, by Corollary~\ref{cor:dual-optimal-value}  $\sup_{h \in I_3} \tilde{b}(h) = v(\ref{eq:FDSILP})$. Moreover, Lemma~\ref{lemma:feasible-FDSILP}\eqref{item:b-property} and~\eqref{item:I3-property} imply that $\tilde{b}(h^*)  = \langle b, v^{h^*} \rangle $ and $v^{h^*}$ is a feasible solution to \eqref{eq:FDSILP}. Putting this together, $v(\ref{eq:FDSILP}) = \sup_{h \in I_3} \tilde{b}(h) = \tilde{b}(h^*) = \langle b, v^{h^*} \rangle$ and $v^{h^*}$ is an optimal solution to \eqref{eq:FDSILP}. \old{By Lemma~\ref{lemma:weak-duality}(i) for every feasible solution $\bar v$ to \eqref{eq:FDSILP} there exists an $h \in I_3$ with $ \langle b, v^h \rangle \ge \langle b, \bar v \rangle$. 
Thus, $\langle b, \bar v \rangle \le \langle b, v^h = \tilde b(h) \rangle \le \sup_{h \in I_3} \tilde{b}(h) = \langle b, v^{h^*} \rangle$.}  
\end{proof}

\subsubsection{Zero duality gap and strong duality}\label{section:duality-gap}

The primal-dual pair \eqref{eq:SILP} and \eqref{eq:FDSILP} has  a \emph{zero duality gap} if \eqref{eq:SILP} is feasible and $v(\ref{eq:SILP}) = v(\ref{eq:FDSILP})$.  

\begin{theorem}[Zero Duality Gap]\label{theorem:zero-duality-gap}
There is a zero duality gap for the primal-dual pair \eqref{eq:SILP} and \eqref{eq:FDSILP} if and only if
\begin{multicols}{2}
\begin{itemize}
\item[(i)]  \eqref{eq:SILP} is feasible, and
\item[(ii)] $\sup_{h \in I_3} \tilde{b}(h) \geq \lim_{\delta \to \infty}\omega(\delta)$.
\end{itemize}
\end{multicols}
\end{theorem}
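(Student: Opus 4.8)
The plan is to reduce everything to two results already in hand. Lemma~\ref{lemma:primal-optimal-value} says that when \eqref{eq:SILP} is feasible its value is $v(\ref{eq:SILP}) = \max\{\sup_{h\in I_3}\tilde b(h),\ \lim_{\delta\to\infty}\omega(\delta)\}$, and Corollary~\ref{cor:dual-optimal-value} says that whenever \eqref{eq:SILP} is feasible (one of its two alternative hypotheses) we have $v(\ref{eq:FDSILP}) = \sup_{h\in I_3}\tilde b(h)$. With both of these available, the claimed equivalence is essentially a one-line comparison of extended reals, once we unpack that ``zero duality gap'' means exactly ``\eqref{eq:SILP} is feasible and $v(\ref{eq:SILP}) = v(\ref{eq:FDSILP})$''.

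For the forward direction I would assume there is a zero duality gap. By definition this yields condition (i), that \eqref{eq:SILP} is feasible, together with $v(\ref{eq:SILP}) = v(\ref{eq:FDSILP})$. Feasibility of \eqref{eq:SILP} lets me invoke Corollary~\ref{cor:dual-optimal-value} to get $v(\ref{eq:FDSILP}) = \sup_{h\in I_3}\tilde b(h)$, and Lemma~\ref{lemma:primal-optimal-value} to get $v(\ref{eq:SILP}) = \max\{\sup_{h\in I_3}\tilde b(h),\ \lim_{\delta\to\infty}\omega(\delta)\}$. Equating the two sides and removing $\sup_{h\in I_3}\tilde b(h)$ from the maximum forces $\sup_{h\in I_3}\tilde b(h) \ge \lim_{\delta\to\infty}\omega(\delta)$, which is condition (ii). For the converse I would assume (i) and (ii); feasibility of \eqref{eq:SILP} again licenses both Lemma~\ref{lemma:primal-optimal-value} and Corollary~\ref{cor:dual-optimal-value}, and (ii) collapses the maximum in Lemma~\ref{lemma:primal-optimal-value} to give $v(\ref{eq:SILP}) = \sup_{h\in I_3}\tilde b(h) = v(\ref{eq:FDSILP})$; combined with (i) this is precisely the definition of zero duality gap.

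The only thing requiring care is bookkeeping with the extended-real conventions for $v(\cdot)$ of infeasible or unbounded problems. In particular, when \eqref{eq:SILP} is feasible, Corollary~\ref{cor:primalFeasible}(ii)--(iii) ensure that both $\sup_{h\in I_3}\tilde b(h)$ and $\lim_{\delta\to\infty}\omega(\delta)$ are strictly below $+\infty$, so the comparison and the cancellation inside the maximum are legitimate; and the corner case $I_3 = \emptyset$ is handled uniformly by the conventions $\sup_{h\in\emptyset}\tilde b(h) = -\infty$ and $v(\ref{eq:FDSILP}) = -\infty$ (dual infeasible, by Theorem~\ref{theorem:dual-infeasibility}), as well as $v(\ref{eq:SILP}) = -\infty$ when in addition $\lim_{\delta\to\infty}\omega(\delta) = -\infty$. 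There is no substantive obstacle beyond verifying that the hypotheses of the two quoted results are met, which amounts to condition (i).
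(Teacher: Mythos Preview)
Your proposal is correct and matches the paper's proof essentially line for line: both directions invoke Lemma~\ref{lemma:primal-optimal-value} for $v(\ref{eq:SILP})$ and Corollary~\ref{cor:dual-optimal-value} for $v(\ref{eq:FDSILP})$, then compare the resulting expressions. The extra attention you pay to extended-real conventions and the $I_3=\emptyset$ corner case is welcome but not needed for the argument to go through.
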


\begin{proof}
($\Longrightarrow$)  Assume zero duality gap. Condition (i) holds by definition of zero duality gap. \old{ If \eqref{eq:SILP} is unbounded, then by  Lemma~\ref{lemma:primal-optimal-value},  $\sup_{h \in I_3} \tilde{b}(h) = \lim_{\delta \to \infty}\omega(\delta) = -\infty$ and ii) holds.   Now assume \eqref{eq:SILP} is bounded and thus, v(\ref{eq:SILP}) is a finite real number. Moreover, the zero duality gap implies $v(\ref{eq:FDSILP}) = v(\ref{eq:SILP})$, which means that \eqref{eq:FDSILP} is feasible and bounded. Thus by Corollary~\ref{cor:dual-optimal-value}, $\sup_{h \in I_3} \tilde{b}(h) = v(\ref{eq:FDSILP}) = v(\ref{eq:SILP}) = \max\{\sup_{h \in I_3} \tilde{b}(h), \lim_{\delta \to \infty}\omega(\delta)\} \geq \lim_{\delta \to \infty}\omega(\delta)$, where the third equality holds by Lemma~\ref{lemma:primal-optimal-value}. Thus condition ii) holds.}Since (\ref{eq:SILP}) is feasible, by Corollary~\ref{cor:dual-optimal-value}, $$\sup_{h \in I_3} \tilde{b}(h) = v(\ref{eq:FDSILP}) = v(\ref{eq:SILP}) = \max\{\sup_{h \in I_3} \tilde{b}(h), \lim_{\delta \to \infty}\omega(\delta)\} \geq \lim_{\delta \to \infty}\omega(\delta),$$ where the third equality holds by Lemma~\ref{lemma:primal-optimal-value}. Thus condition (ii) holds.

($\Longleftarrow$) Now assume conditions (i) and (ii) hold.  By (i)~\eqref{eq:SILP} is feasible. \old{ Then \eqref{eq:SILP} is either unbounded or bounded.  If \eqref{eq:SILP} is unbounded, then  $v(\ref{eq:SILP}) = v(\ref{eq:FDSILP}) = -\infty$ and there is a zero duality gap.  Assume  \eqref{eq:SILP} is bounded. }By Lemma~\ref{lemma:primal-optimal-value}, $v(\ref{eq:SILP})= \max\{\sup_{h \in I_3} \tilde{b}(h), \lim_{\delta \to \infty}\omega(\delta)\} = \sup_{h \in I_3} \tilde{b}(h)$, where the second equality follows from condition (ii). \old{Thus, $-\infty < \sup_{h \in I_3} \tilde{b}(h) < \infty $. The first inequality implies that $I_3 \neq \emptyset$, thus by Theorem~\ref{theorem:dual-infeasibility} \eqref{eq:FDSILP} is feasible. Since \eqref{eq:SILP} is feasible, we have $\tilde{b}(h) \leq 0$ for all $h \in I_1$ by Theorem~\ref{theorem:primalFeasible}. Combined with the fact that $\sup_{h \in I_3} \tilde{b}(h) < \infty$, Theorem~\ref{theorem:dual-boundedness} implies that \eqref{eq:FDSILP} is bounded and}Also, Corollary~\ref{cor:dual-optimal-value} implies $v(\ref{eq:FDSILP}) = \sup_{h \in I_3} \tilde{b}(h)$. Thus, $v(\ref{eq:SILP}) = v(\ref{eq:FDSILP})$ and there is a zero duality gap.
\end{proof}

Combining solvability and duality, {\it strong duality} holds if there is a zero duality gap and there is an optimal solution to \eqref{eq:SILP} and \eqref{eq:FDSILP}. Putting several previous results together gives Theorem \ref{theorem:strong-duality}. 

\begin{theorem}[Strong Duality]\label{theorem:strong-duality} Strong duality holds for the primal-dual pair \eqref{eq:SILP} and \eqref{eq:FDSILP} if
\begin{itemize}
\item[(i)]  \eqref{eq:SILP} is feasible,

\item[(ii)] $\sup_{h \in I_3} \tilde{b}(h) > \lim_{\delta \rightarrow \infty} \omega(\delta)$,

\item[(iii)]  $\sup_{h \in I_3} \tilde{b}(h)$ is attained for at least one $h \in I_3.$ 
\end{itemize}
Conversely, if strong duality holds for the primal-dual pair \eqref{eq:SILP} and \eqref{eq:FDSILP} then (i) and (iii) hold as well as
\begin{itemize}
\item[(ii')] $\sup_{h \in I_3} \tilde{b}(h) \ge \lim_{\delta \rightarrow \infty} \omega(\delta)$.
\end{itemize}
\end{theorem}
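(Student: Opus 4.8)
The plan is to assemble Theorem~\ref{theorem:strong-duality} from the previously established primal solvability, dual solvability, and zero duality gap results, since "strong duality" is by definition the conjunction of zero duality gap with primal and dual solvability. For the forward direction, I would assume (i), (ii), (iii) and derive each of the three pieces in turn. From (i) and (ii) together with Theorem~\ref{theorem:primal-solvability} I get that \eqref{eq:SILP} is solvable with $v(\ref{eq:SILP}) = \sup_{h \in I_3}\tilde b(h)$. Since \eqref{eq:SILP} is feasible, Theorem~\ref{theorem:primalFeasible}(i) gives $\tilde b(h)\le 0$ for all $h\in I_1$; combining this with (iii) and applying Theorem~\ref{theorem:dual-solvability} yields that \eqref{eq:FDSILP} is solvable. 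Finally, (i) and the inequality (ii) (which implies (ii) in the weaker form needed) let me invoke Theorem~\ref{theorem:zero-duality-gap} to conclude $v(\ref{eq:SILP}) = v(\ref{eq:FDSILP})$. All three conditions for strong duality are then in hand.

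For the converse, I would assume strong duality holds and unpack it: \eqref{eq:SILP} is solvable (hence feasible, giving (i)), \eqref{eq:FDSILP} is solvable, and $v(\ref{eq:SILP}) = v(\ref{eq:FDSILP})$. Dual solvability and Theorem~\ref{theorem:dual-solvability} immediately yield $\tilde b(h)\le 0$ on $I_1$ and that $\sup_{h\in I_3}\tilde b(h)$ is attained, which is exactly (iii). For (ii$'$), since \eqref{eq:SILP} is feasible I apply Lemma~\ref{lemma:primal-optimal-value} to write $v(\ref{eq:SILP}) = \max\{\sup_{h\in I_3}\tilde b(h),\ \lim_{\delta\to\infty}\omega(\delta)\}$, and Corollary~\ref{cor:dual-optimal-value} (applicable because \eqref{eq:SILP} is feasible) gives $v(\ref{eq:FDSILP}) = \sup_{h\in I_3}\tilde b(h)$. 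The zero duality gap equation then forces $\sup_{h\in I_3}\tilde b(h) = \max\{\sup_{h\in I_3}\tilde b(h),\ \lim_{\delta\to\infty}\omega(\delta)\}$, which is precisely $\sup_{h\in I_3}\tilde b(h)\ge \lim_{\delta\to\infty}\omega(\delta)$, i.e.\ (ii$'$).

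The only mild subtlety — and the place I would be most careful — is bookkeeping around the case $v(\ref{eq:SILP}) = -\infty$ (unbounded primal), where some of the $\sup$/$\lim$ quantities are $-\infty$. One should check that "solvable" is being used consistently: an unbounded primal has no optimal solution, so strong duality fails in that case, and conversely hypotheses (ii)/(iii) in the forward direction preclude $\sup_{h\in I_3}\tilde b(h) = -\infty$ (condition (iii) requires the sup to be attained, forcing $I_3\neq\emptyset$, and (ii) is a strict inequality). Thus the extended-real edge cases are automatically excluded and do not require separate treatment; the proof is essentially a short citation-chaining argument. I would write it as two short paragraphs, one per direction, each just invoking the lettered results above in sequence.
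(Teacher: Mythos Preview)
Your proposal is correct and follows essentially the same approach as the paper: both directions are assembled by chaining Theorems~\ref{theorem:primal-solvability}, \ref{theorem:dual-solvability}, and~\ref{theorem:zero-duality-gap}, together with Theorem~\ref{theorem:primalFeasible}(i) to obtain $\tilde b(h)\le 0$ on $I_1$. The only cosmetic difference is that for (ii$'$) in the converse you unpack the argument via Lemma~\ref{lemma:primal-optimal-value} and Corollary~\ref{cor:dual-optimal-value}, whereas the paper simply cites the characterization in Theorem~\ref{theorem:zero-duality-gap} directly.
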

\begin{proof}
Suppose conditions (i) to (iii) hold. Conditions (i) and (ii) imply primal solvability via Theorem~\ref{theorem:primal-solvability}. 
Since \eqref{eq:SILP} is feasible, by Theorem~\ref{theorem:primalFeasible}(i), $\tilde b(h) \le 0$ for all $h \in I_1$. Combined with condition (iii) dual solvability follows from Theorem~\ref{theorem:dual-solvability}. 

Conditions (i) and (ii) imply the sufficient conditions for zero duality gap given in Theorem~\ref{theorem:zero-duality-gap} and the duality gap is zero. 

Conversely, suppose strong duality holds. Then there is a  zero duality gap and so  Theorem~\ref{theorem:zero-duality-gap}, (i) and (ii') hold.  Theorem~\ref{theorem:dual-solvability}(ii) implies condition (iii). 
\end{proof}

\begin{remark}
Some authors define \emph{strong duality} to mean zero duality gap and dual solvability, excluding the requirement of primal solvability. Under this definition, properties (i), (ii') and (iii) characterize strong duality. \hfill $\triangleleft$
\end{remark}

The next two examples demonstrate how strong duality may either hold or not hold when $\sup_{h \in I_3} \tilde b(h) = \lim_{\delta \to\infty} \omega(\delta)$. 
\begin{example}[Example~\ref{example:not-primal-optimal} revisited] 
In this example the primal is feasible but not solvable, so strong duality fails. However, we showed that $\lim_{\delta\to\infty} \omega(\delta) = \sup_{h \in I_3} \tilde{b}(h) = 0$. \hfill $\triangleleft$
\end{example}
\begin{example}[Example~\ref{example:primal-solvable} revisited] In this example the primal is solvable with objective value $v(\ref{eq:SILP}) = 0$. Recall also that $\sup_{h \in I_3} \tilde{b}(h) = 0$ is attained since $I_3$ is a singleton. This implies it is dual solvable and there is zero duality gap. This problem satisfies strong duality.   However,   $\sup_{h \in I_3} \tilde{b}(h) = \lim_{\delta \rightarrow \infty} \omega(\delta)$.  Therefore condition (ii) in Theorem~\ref{theorem:strong-duality} is not satisfied,  but condition (ii') is satisfied. \hfill $\triangleleft$
\end{example}

\subsection{Summary of primal and dual results}\label{ss:summary}

Table~\ref{table:summary} summarizes the main results of this section. For brevity in displaying conditions, define $S  :=\sup_{h \in I_3} \tilde{b}(h)$ and $L  := \lim_{\delta \rightarrow \infty} \omega(\delta)$. 
\begin{table}[h]
\small
\begin{center}
\begin{tabular}{| c || c | c |}
  \hline  
  Result                   & Sets involved  & Characterization \\
  \hline             
  \hline         
  Primal feasibility (Thm~\ref{theorem:primalFeasible}) & $I_1, I_2, I_3, I_4$ & Conditions i)-iv) of Theorem~\ref{theorem:primalFeasible}  \\
  Primal boundedness (Thm~\ref{theorem:primal-not-unbounded}) & $I_3, I_4$ & Primal feas. and ($I_3 \neq \emptyset$ OR $ L > -\infty$) \\
  Primal solvability* (Thm~\ref{theorem:primal-solvability}) & $I_3, I_4$ & Primal feasible and $S > L$ \\
  \hline  
  Dual feasibility (Thm~\ref{theorem:dual-infeasibility})  & $I_3$ & $I_3 \neq \emptyset$ \\
  Dual boundedness (Thm~\ref{theorem:dual-boundedness})  & $I_1, I_3$ & Dual feas., $\tilde b(h) \le 0$ for all $h \in I_1$, $S < \infty$ \\
  Dual solvability (Thm~\ref{theorem:dual-solvability})  & $I_1,I_3$ & $\tilde b(h) \le 0$ for all $h \in I_1$, $\sup$ defining $S$ attained \\
  \hline
  Zero duality gap (Thm~\ref{theorem:zero-duality-gap})   & $I_3, I_4$ & $S \ge L$ and Primal feasible\\
  \hline
\end{tabular}
\parbox{5in}{\caption{ \emph{Summary of results from Section~\ref{s:silp-classification}}. All results are characterizations except primal solvability, where a sufficient conditions is given. 
}\label{table:summary}}
\end{center}
\end{table}
\label{table1_page}

As discussed in the introduction, alternate characterizations of these properties have been obtained by other authors. These characterizations build on a different perspective of semi-infinite linear programming, typically based around topological conditions such as lower semicontinuity and closedness in the primal constraint space. They are not immediate consequences of our characterizations, or vice versa. 

We invite the reader to compare our results with the following in the literature: primal feasibility (Table II of Kortanek \cite{kortanek1974classifying}, Theoerem 4.4 of Goberna and L\'opez \cite{goberna1998linear}), primal boundedness (Table II of Kortanek \cite{kortanek1974classifying}, Theorem 9.3 of Goberna and L\'opez \cite{goberna1998linear}), primal solvability (Theorem 7 of Kortanek \cite{kortanek1974classifying}, Table 8.1 of Goberna and L\'opez \cite{goberna1998linear}, Theorem 2.1 of Shapiro \cite{shapiro2009semi}), dual feasibility (Table II of Kortanek~\cite{kortanek1974classifying}), dual boundedness (Table II of Kortanek~\cite{kortanek1974classifying}, Theorem 9.7 of Goberna and L\'opez \cite{goberna1998linear}), dual solvability (Table 8.1 of Goberna and L\'opez \cite{goberna1998linear}, Theorem 2.3 of Shapiro \cite{shapiro2009semi}), zero duality gap (Table 8.1 of Goberna and L\'opez \cite{goberna1998linear}, Theorems 2.1 and 2.3 in Shapiro \cite{shapiro2009semi}).  
The next two subsections illustrate insights that are gained by applying the results in Table~\ref{table:summary} to  two special cases of \eqref{eq:SILP}. 

\subsection{Tidy semi-infinite linear programs}\label{s:tidy-semi-infinite}

An instance of \eqref{eq:SILP} is \emph{tidy} if, after applying  Fourier-Motzkin elimination  to \eqref{eq:initial-system-obj-con}-\eqref{eq:initial-system-con},  $z$ is the only dirty variable remaining. Fortunately, tidiness is invariant under variable permutations and alternate orders of variable elimination in the Fourier-Motzkin elimination procedure. This follows from the comments in Remark~\ref{remark:canonical} and Theorem~\ref{theorem:invariance-of-cleanliness} in the Electronic Companion. 

Tidy semi-infinite linear programs play a fundamental role in applications of our theory in later sections. The key properties of tidy systems are summarized in the following theorem.

\begin{theorem}[Tidy semi-infinite linear programs]\label{theorem:all-clean-system}
If \eqref{eq:SILP} is feasible and tidy then
\begin{enumerate}[(i)]
\item \eqref{eq:SILP} is solvable,
\item \eqref{eq:FDSILP} is  feasible and bounded,
\item there is a zero duality gap for the primal-dual pair \eqref{eq:SILP} and \eqref{eq:FDSILP}.
\end{enumerate} 
\end{theorem}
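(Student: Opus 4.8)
The plan is to reduce all three claims to the characterizations already established in Section~\ref{s:silp-classification}, after recording the one structural fact that tidiness provides. By definition, \eqref{eq:SILP} is tidy precisely when the Fourier--Motzkin output system \eqref{eq:J_system} has $\ell = n+1$, i.e. $I_2 = I_4 = \emptyset$, so that the output reads $0 \ge \tilde b(h)$ for $h \in I_1$ and $z \ge \tilde b(h)$ for $h \in I_3$. Two consequences of this drive everything: first, since $I_4 = \emptyset$, the function $\omega$ of \eqref{eq:f_delta} is identically $-\infty$, so $\lim_{\delta \to \infty}\omega(\delta) = -\infty$; second, since $z$ can never be eliminated (hence $I_3 \cup I_4 \neq \emptyset$, as noted just after \eqref{eq:J_system}), having $I_4 = \emptyset$ forces $I_3 \neq \emptyset$, so $\sup_{h \in I_3}\tilde b(h) > -\infty$.

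With these two observations in hand, each conclusion follows by quoting the relevant result. For (ii): because \eqref{eq:SILP} is feasible, Theorem~\ref{theorem:primalFeasible} gives $\tilde b(h) \le 0$ for all $h \in I_1$ (condition (i)) and $\sup_{h \in I_3}\tilde b(h) < \infty$ (condition (iii)); since moreover $I_3 \neq \emptyset$, Theorem~\ref{theorem:dual-infeasibility} shows \eqref{eq:FDSILP} is feasible, and then Theorem~\ref{theorem:dual-boundedness} shows it is bounded. For (iii): \eqref{eq:SILP} is feasible and $\sup_{h \in I_3}\tilde b(h) \ge -\infty = \lim_{\delta\to\infty}\omega(\delta)$, so Theorem~\ref{theorem:zero-duality-gap} yields a zero duality gap. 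For (i): \eqref{eq:SILP} is feasible and $\sup_{h \in I_3}\tilde b(h) > -\infty = \lim_{\delta\to\infty}\omega(\delta)$, the inequality being strict exactly because $I_3 \neq \emptyset$, so Theorem~\ref{theorem:primal-solvability} produces an optimal primal solution with value $\sup_{h \in I_3}\tilde b(h)$.

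The only genuine content is the translation ``tidy $\iff I_2 = I_4 = \emptyset$'' together with the fact that $I_4 = \emptyset$ collapses $\omega$ to the constant $-\infty$; everything else is bookkeeping against Table~\ref{table:summary}. The single point requiring care is the degenerate conventions: one must be sure that $\sup_{h\in I_3}\tilde b(h)$ is a genuine real number strictly above $\lim_{\delta\to\infty}\omega(\delta)$ so that the strict hypothesis of Theorem~\ref{theorem:primal-solvability} holds, and this is precisely where the impossibility of eliminating $z$ (hence $I_3\neq\emptyset$) is used. No separating-hyperplane or topological argument enters; the theorem is a corollary of the projection machinery of Sections~\ref{s:fm-elim} and \ref{s:silp-classification}.
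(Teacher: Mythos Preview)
Your proof is correct and follows essentially the same route as the paper's: both argue that tidiness gives $I_2=I_4=\emptyset$, hence $\omega\equiv-\infty$ and $I_3\neq\emptyset$, and then invoke Theorems~\ref{theorem:primal-solvability}, \ref{theorem:dual-infeasibility}, \ref{theorem:dual-boundedness}, and \ref{theorem:zero-duality-gap} in turn. Your citation of conditions (i) and (iii) of Theorem~\ref{theorem:primalFeasible} for dual boundedness is in fact more precise than the paper's own reference.
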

\begin{proof}
Since \eqref{eq:SILP} is tidy, $I_2 = I_4 = \emptyset$. Since $z$ cannot be eliminated, $I_4 = \emptyset$ implies $I_3 \neq \emptyset$.  In addition, $I_4 = \emptyset$ means $\omega(\delta) = -\infty$ for all $\delta$ and  $\lim_{\delta \to \infty} \omega(\delta) = - \infty$. Moreover, since $I_3 \neq \emptyset$ it follows that $\sup_{h \in I_3} \tilde{b}(h) > -\infty$.  Then, $\sup_{h \in I_3} \tilde{b}(h) > \lim_{\delta \to \infty} \omega(\delta)$ and Theorem~\ref{theorem:primal-solvability} implies that the primal is solvable.   This establishes (i). 

Since $I_3 \neq \emptyset$, \eqref{eq:FDSILP} is feasible by Theorem~\ref{theorem:dual-infeasibility}. Since the primal is feasible, Theorem~\ref{theorem:primalFeasible}(i) and (ii) imply  that the dual is bounded via Theorem~\ref{theorem:dual-boundedness}. This establishes (ii).

Since the  primal is feasible  and $\sup_{h \in I_3} \tilde{b}(h) > \lim_{\delta \to \infty} \omega(\delta)$,  Theorem~\ref{theorem:zero-duality-gap} implies that there is a zero duality gap. This establishes (iii).  
\end{proof}

The following result provides a sufficient condition for the tidiness of a semi-infinite linear program. A similar result can be found in Goberna and L\'opez \cite{goberna1998linear}. 

\begin{theorem}[Bounded System]\label{theorem:bounded-zero-duality-gap} If there exists a $\gamma \in \R$ such that the system\begin{equation}\label{eq:bounded-zero-duality-gap}
\begin{array}{rcl}
-  c_{1} x_{1} -  c_{2} x_{2} - \cdots - c_{n} x_{n}    &\ge&   -\gamma    \\
  a^{1}(i) x_{1} + a^{2}(i) x_{2} + \cdots + a^{n}(i) x_{n}  &\ge&  b(i) \quad  \text{ for } i \in I  
\end{array}
\end{equation}
is feasible and bounded then \eqref{eq:SILP} is feasible and tidy. In particular, if the set of  solutions $(x_1, \dots, x_n)$ that satisfy \eqref{eq:bounded-zero-duality-gap} is feasible and bounded for some $\gamma\in \R$, then \eqref{eq:SILP} is solvable and there is zero duality gap. 
\end{theorem}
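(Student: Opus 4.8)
The plan is threefold: (1) note that feasibility of \eqref{eq:SILP} is immediate; (2) prove tidiness by running Fourier--Motzkin elimination on \eqref{eq:initial-system-obj-con}--\eqref{eq:initial-system-con} in parallel with Fourier--Motzkin elimination on the auxiliary system \eqref{eq:bounded-zero-duality-gap}, showing that the two runs make the same elimination decisions on $x_1,\dots,x_n$; and (3) invoke Theorem~\ref{theorem:all-clean-system} for the ``in particular'' clause.

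For (1): every $(x_1,\dots,x_n)$ feasible for \eqref{eq:bounded-zero-duality-gap} satisfies the constraints $\sum_k a^k(i)x_k\ge b(i)$, hence is feasible for \eqref{eq:SILP}; so \eqref{eq:SILP} is feasible.

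For (2): by Remark~\ref{remark:canonical} assume \eqref{eq:initial-system-obj-con}--\eqref{eq:initial-system-con} is in canonical form with $z$ the last variable, so the procedure processes $x_1,\dots,x_n$ and only then attempts $z$; as noted in Section~\ref{s:projected-system} it can never eliminate $z$, so tidiness is exactly the assertion that it eliminates every one of $x_1,\dots,x_n$ (equivalently $I_2=I_4=\emptyset$). Run Fourier--Motzkin elimination on \eqref{eq:bounded-zero-duality-gap} eliminating $x_1,\dots,x_n$ in the same order. I claim a \emph{lock-step} invariant: after eliminating the same set of variables, each constraint of the current \eqref{eq:initial-system-obj-con}--\eqref{eq:initial-system-con}-system either carries a $z$-term (with positive coefficient, normalized to $1$) or not, and in both cases deleting any $z$-term and matching constraints by their supports identifies it with a constraint of the current \eqref{eq:bounded-zero-duality-gap}-system having the same sign on each remaining $x$-coefficient. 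This follows by induction on the number of eliminated variables: initially the $x$-coefficients of the two systems coincide (the sole difference between the systems is the $+z$ in one objective row versus the constant $-\gamma$ on the right of the other, and neither affects any $x$-coefficient); and in the step eliminating a variable $x_j$ the sets $\mathcal{H}_+(j),\mathcal{H}_-(j),\mathcal{H}_0(j)$ depend only on the signs of the $x_j$-coefficients, so they coincide in the two runs, whence the two systems generate the same families of aggregated constraints, and the aggregation formula of Step~2.b.(ii) yields identical $x$-coefficients up to the positive rescaling one may perform in the \eqref{eq:initial-system-obj-con}--\eqref{eq:initial-system-con}-run to normalize the coefficient of $z$ to $1$ (Remark~\ref{rem:initial_remark}), a rescaling that preserves signs. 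Hence the decision ``$x_j$ is eliminated'' (both $\mathcal{H}_+(j)$ and $\mathcal{H}_-(j)$ nonempty, or the $x_j$-coefficient identically zero) agrees in the two runs. Since \eqref{eq:bounded-zero-duality-gap} is feasible and bounded, Theorem~\ref{theorem:region-boundedness}, together with the invariance of cleanliness under variable reorderings (Theorem~\ref{theorem:invariance-of-cleanliness}; see Remark~\ref{remark:canonical}), shows that Fourier--Motzkin elimination eliminates all of $x_1,\dots,x_n$ from \eqref{eq:bounded-zero-duality-gap}; by the lock-step invariant it also eliminates all of $x_1,\dots,x_n$ from \eqref{eq:initial-system-obj-con}--\eqref{eq:initial-system-con}. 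Thus $z$ is the only dirty variable and \eqref{eq:SILP} is tidy; combined with (1), the hypotheses of Theorem~\ref{theorem:all-clean-system} hold. For (3), the ``in particular'' statement is then Theorem~\ref{theorem:all-clean-system}(i) and (iii) for this feasible tidy instance: \eqref{eq:SILP} is solvable and has zero duality gap with \eqref{eq:FDSILP}.

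The main obstacle I foresee is making the lock-step argument airtight: keeping the evolving index sets $\tilde I$ of the two runs in exact correspondence, and verifying that the normalization rescalings (which scale all coefficients of a constraint simultaneously) never flip the sign of an $x$-coefficient, so that the ``eliminate / do not eliminate'' decisions stay genuinely synchronized throughout. The rest --- feasibility of \eqref{eq:SILP} and the closing invocation of Theorem~\ref{theorem:all-clean-system} --- is routine.
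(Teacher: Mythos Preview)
Your proposal is correct and follows essentially the same approach as the paper: the key observation in both is that the $x$-columns of \eqref{eq:bounded-zero-duality-gap} and of \eqref{eq:initial-system-obj-con}--\eqref{eq:initial-system-con} are identical, so Fourier--Motzkin elimination makes the same $\mathcal{H}_+(j),\mathcal{H}_-(j),\mathcal{H}_0(j)$ decisions on $x_1,\dots,x_n$ in both systems, whence boundedness of \eqref{eq:bounded-zero-duality-gap} (via Theorem~\ref{theorem:region-boundedness}) forces tidiness of \eqref{eq:SILP}, and Theorem~\ref{theorem:all-clean-system} finishes. Your lock-step invariant and the worry about $z$-rescaling are a slightly more cautious formalization of exactly this, but note that in the procedure of Section~\ref{s:fm-elim} the aggregation formula for $\hat a^k$ depends only on the $x$-coefficients, so no rescaling actually enters and the $x$-coefficients stay literally equal (not merely of the same sign) throughout.
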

\begin{proof}
Let $\Gamma_\gamma$ denote the set of those $x \in \R^n$ that satisfy \eqref{eq:bounded-zero-duality-gap}. Observe that the columns in  systems (\ref{eq:bounded-zero-duality-gap}) and~\eqref{eq:initial-system-obj-con}-\eqref{eq:initial-system-con} are identical for variables $x_{1}, \ldots, x_{n}$. This means if $x_k$ is eliminated when  Fourier-Motzkin elimination  is applied to one system, it will be eliminated in exactly the same order in the other. In particular, at each step of the elimination process, the sets $\mathcal{H}_0(k), \mathcal{H}_+(k)$ and $\mathcal{H}_-(k)$ are identical for the two systems.
\old{\vskip 3pt
\noindent $(\Longrightarrow)$
If \eqref{eq:SILP} is feasible and tidy, then by Theorem~\ref{theorem:bounded-zero-duality-gap} it is also bounded and solvable. Hence, there exists a feasible solution $x^*$ with $c^\top x^* = v(\ref{eq:SILP})$ since \eqref{eq:SILP} is solvable. Therefore, $\Gamma_\gamma$ is nonempty for $\gamma = v(\ref{eq:SILP})$. 

Claim $\Gamma_\gamma$ is also bounded.  Since \eqref{eq:SILP} is tidy, $x_1, \dots, x_n$ can be eliminated in \eqref{eq:initial-system-obj-con}-\eqref{eq:initial-system-con}. Thus, when applying Fourier-Motzkin elimination to \eqref{eq:bounded-zero-duality-gap}, \emph{all} variables can be eliminated and thus the system is clean. In the notation of \eqref{eq:defineI1}-\eqref{eq:defineI2} this means $H_2 = \emptyset$ and so by Theorem~\ref{theorem:region-boundedness} the set $\Gamma_\gamma$ is bounded. 

\noindent $(\Longleftarrow)$ } By hypothesis, $\Gamma_\gamma$ is non-empty and bounded so  Theorem~\ref{theorem:region-boundedness} guarantees that applying Fourier-Motzkin elimination  to~\eqref{eq:bounded-zero-duality-gap} results in a clean system. Thus, variables $x_1, \dots, x_n$ are eliminated during the procedure and so \old{by the observation made at the outset of the proof,}those variables are eliminated when applying  Fourier-Motzkin elimination  to \eqref{eq:defineI1}-\eqref{eq:defineI2}. Thus, \eqref{eq:SILP} is tidy. Since $\Gamma_\gamma$ is non-empty,   \eqref{eq:SILP} is feasible and tidy and the hypotheses of Theorem~\ref{theorem:all-clean-system} are met.  Then by Theorem~\ref{theorem:all-clean-system}, \eqref{eq:SILP} is solvable and there is a zero duality gap for the primal-dual pair  \eqref{eq:SILP} and \eqref{eq:FDSILP}.\end{proof}

\subsection{Finite linear programs}

Another special case is a semi-infinite linear program with finitely many constraints, i.e.  a finite linear program, or just a linear program. Finite linear programs are a special case of \eqref{eq:SILP} and  our analysis applies directly.

For finite linear programs,   $I_1$, $I_2$, $I_3$ and $I_4$ are always  finite sets. This  simplifies the characterizations  in Table~\ref{table:summary} since the supremums are taken over finite sets. Take, for example, primal feasibility (Theorem~\ref{theorem:primalFeasible}). Conditions ii)-iv) always hold from the finiteness of $I_2$, $I_3$ and $I_4$ respectively. Thus to determine primal feasibility it suffices to check if $\tilde b(h) \le 0$ for all $h \in I_1$. This result is well known (see for instance, Motzkin~\cite{motzkin36}).

As another example,  strong duality  holds for a finite linear program when the primal is feasible and bounded. Our framework recovers this result.

\begin{theorem}[Finite Case]\label{theorem:finite-strong-duality} If $I$ is a finite index set and \eqref{eq:SILP} is feasible and bounded, then strong duality holds for the primal-dual pair \eqref{eq:SILP} and \eqref{eq:FDSILP}.
 \end{theorem}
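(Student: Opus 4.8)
The plan is to apply the framework of Section~\ref{s:silp-classification} directly and verify the three sufficient conditions of Theorem~\ref{theorem:strong-duality}. First I would apply the Fourier-Motzkin elimination procedure to the reformulated system \eqref{eq:initial-system-obj-con}-\eqref{eq:initial-system-con}, producing the output system \eqref{eq:J_system} together with its index sets $I_1, I_2, I_3, I_4$. The key structural remark is that, since $I$ is finite, each of $I_1, I_2, I_3, I_4$ is finite: the procedure is initialized with $\tilde I$ of size $|I|+1$, and each elimination step replaces $\tilde I$ by $\mathcal H_0(j) \cup \{p \cup q : p \in \mathcal H_+(j),\, q \in \mathcal H_-(j)\}$, a finite set whenever the previous $\tilde I$ was finite, so finiteness is preserved by induction.

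The second step is to observe that $\lim_{\delta \to \infty}\omega(\delta) = -\infty$. If $I_4 = \emptyset$ this is the stated convention for $\omega$. If $I_4 \neq \emptyset$, then $I_4$ is finite and, by construction of \eqref{eq:J_system}, $\sum_{k=\ell}^n |\tilde a^k(h)| > 0$ for every $h \in I_4$ (otherwise the constraint indexed by $h$ would involve only $z$ and would belong to $I_3$). Hence each term $\tilde b(h) - \delta\sum_{k=\ell}^n|\tilde a^k(h)|$ diverges to $-\infty$ as $\delta \to \infty$, and since $\omega(\delta)$ is a maximum over finitely many such terms, $\omega(\delta) \to -\infty$.

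The third step is to deduce $I_3 \neq \emptyset$. By Lemma~\ref{lemma:primal-optimal-value}, $v(\ref{eq:SILP}) = \max\{\sup_{h \in I_3}\tilde b(h),\, \lim_{\delta\to\infty}\omega(\delta)\}$; since \eqref{eq:SILP} is bounded, $v(\ref{eq:SILP}) > -\infty$, and combined with $\lim_{\delta\to\infty}\omega(\delta) = -\infty$ this forces $\sup_{h \in I_3}\tilde b(h) > -\infty$, so $I_3 \neq \emptyset$. As $I_3$ is finite and nonempty, $\sup_{h \in I_3}\tilde b(h)$ is a finite real number that is attained at some $h^\ast \in I_3$. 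Thus condition (i) of Theorem~\ref{theorem:strong-duality} is the hypothesis, condition (ii) holds because $\sup_{h \in I_3}\tilde b(h) > -\infty = \lim_{\delta\to\infty}\omega(\delta)$, and condition (iii) holds because the supremum is attained at $h^\ast$. Theorem~\ref{theorem:strong-duality} then yields strong duality for the pair \eqref{eq:SILP} and \eqref{eq:FDSILP}.

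There is essentially no hard step here; the only care needed is the bookkeeping that finiteness of $I$ propagates through the elimination to $I_1, \ldots, I_4$, and that the degenerate case $I_4 = \emptyset$ is handled alongside the case $I_4 \neq \emptyset$ when arguing that $\omega$ diverges to $-\infty$.
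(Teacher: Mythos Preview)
Your proof is correct and follows essentially the same approach as the paper: verify conditions (i)--(iii) of Theorem~\ref{theorem:strong-duality} by using finiteness of $I$ to conclude $I_4$ is finite (hence $\lim_{\delta\to\infty}\omega(\delta)=-\infty$), then deduce $I_3\neq\emptyset$ from boundedness, and finally use finiteness of $I_3$ to get attainment of the supremum. The only cosmetic difference is that the paper invokes Theorem~\ref{theorem:primal-not-unbounded} to obtain $I_3\neq\emptyset$, whereas you argue directly from Lemma~\ref{lemma:primal-optimal-value}; since the former is an immediate corollary of the latter, this is not a substantive distinction.
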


\begin{proof}
\noindent Note that  conditions (i)-(iii) of Theorem~\ref{theorem:strong-duality} hold. By hypothesis  \eqref{eq:SILP} is feasible and bounded so i) holds. When $I$ is a finite set, $I_4$ has finite cardinality so  $\lim_{\delta \to \infty} \omega(\delta) = -\infty$. Combining this with the hypothesis that the primal is bounded implies $I_3 \neq \emptyset$ by Theorem~\ref{theorem:primal-not-unbounded}. Thus  condition (ii) in Theorem~\ref{theorem:strong-duality} holds. Finally, (iii) holds since $I_{3}$ is finite whenever $I$ is finite. 
\end{proof}

In Section~\ref{ss:silp-finite-lp} of the Electronic Companion we illuminate  further differences between semi-infinite linear programs and finite linear programs using the tool of Fourier-Motzkin elimination.

\section{Feasible sequences and regular duality of semi-infinite linear programs}

When $I_3$ is empty in~\eqref{eq:J_system}, Theorem~\ref{theorem:dual-infeasibility} implies that the finite support dual is infeasible. 
Nevertheless, if the primal problem has optimal solution value $z^{*}$, we  show there is a sequence $\{h_m\} \in I_4$ for $m \in \N$ with the desirable property that for all $k = 1,\dots, n$, $\tilde{a}^k(h_m)$ converge to zero and $\tilde{b}(h_m)$ converges to $z^{*}$ as $m \to \infty$.  In Theorem~\ref{theorem:regular-duality-silp} it is shown that there is a sequence of finite support elements with nice limiting properties, and whose objective values converges to the {\em primal optimal value}. The terminology for this phenomenon,  standard  in conic programming, is introduced next.  The concepts date back to Duffin \cite{duffin56}.

A sequence $v^m \in \R^{(I)}$, $m \in \N$ of finite support elements is a {\em feasible sequence} for \eqref{eq:FDSILP} if $v^m \geq 0$ for all $m\in \N$, and for every $k = 1, \ldots, n$,    $\lim_{m \to \infty} (\sum_{i\in I} a^k(i)v^m(i)) = c_k$. For a feasible sequence $(v^m)_{m\in\N}$,    its {\em value} is defined by  $\text{value}((v^m)_{m\in \N}) := \lim\sup_{m \to \infty} \sum_{i\in I} b(i)v^m(i)$. For a given \eqref{eq:FDSILP},   its {\em limit value} (a.k.a. {\em subvalue})  is 
$$
\sup \{\text{value}((v^m)_{m\in \N}) \st (v^m)_{m \in N} \textrm{ is a feasible sequence for \eqref{eq:FDSILP}}\}.
$$
Since any feasible solution $v\in \R^{(I)}$ to \eqref{eq:FDSILP} naturally corresponds to a feasible sequence (where every element in the sequence is $v$),    the limit value of \eqref{eq:FDSILP} is greater than or equal to its optimal value. We   prove a remarkable theorem (Theorem~\ref{theorem:regular-duality-silp} below) relating the limit value of \eqref{eq:FDSILP} and the optimal value of the primal \eqref{eq:SILP}.

\begin{lemma}[Weak Duality-II]\label{lemma:seq-weak-duality} Let $\bar x$ be a feasible solution to the primal \eqref{eq:SILP} and let $(v^m)_{m \in \N}$ be a feasible sequence for \eqref{eq:FDSILP}. Then $c^\top \bar x \geq \text{value}((v^m)_{m\in \N})$.
\end{lemma}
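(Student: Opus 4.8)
The plan is to pass to a finite-dimensional pairing and then take limits. First I would recall that a feasible solution $\bar x = (\bar x_1, \dots, \bar x_n)$ to \eqref{eq:SILP} satisfies $\sum_{k=1}^n a^k(i)\bar x_k \ge b(i)$ for all $i \in I$, and a feasible sequence $(v^m)_{m\in\N}$ consists of nonnegative finite-support functions $v^m \in \R^{(I)}_+$ with $\lim_{m\to\infty}\sum_{i\in I}a^k(i)v^m(i) = c_k$ for each $k = 1, \dots, n$. For each fixed $m$, since $v^m$ has finite support and $v^m \ge 0$, I can aggregate the primal constraints with weights $v^m(i)$: multiplying the inequality indexed by $i$ by $v^m(i) \ge 0$ and summing over $i \in \supp(v^m)$ gives
\begin{equation*}
\sum_{i\in I} b(i) v^m(i) \;\le\; \sum_{i\in I}\Big(\sum_{k=1}^n a^k(i)\bar x_k\Big)v^m(i) \;=\; \sum_{k=1}^n\Big(\sum_{i\in I}a^k(i)v^m(i)\Big)\bar x_k,
\end{equation*}
where interchanging the two finite sums is justified because $v^m$ has finite support.

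Next I would take the $\limsup$ as $m\to\infty$ on both sides. The right-hand side is a fixed finite linear combination (over $k = 1,\dots,n$) of the quantities $\sum_{i\in I}a^k(i)v^m(i)$, each of which converges to $c_k$ by the feasible-sequence hypothesis; hence the right-hand side converges to $\sum_{k=1}^n c_k \bar x_k = c^\top \bar x$. Therefore
\begin{equation*}
\text{value}\big((v^m)_{m\in\N}\big) \;=\; \limsup_{m\to\infty}\sum_{i\in I}b(i)v^m(i) \;\le\; \limsup_{m\to\infty}\sum_{k=1}^n\Big(\sum_{i\in I}a^k(i)v^m(i)\Big)\bar x_k \;=\; c^\top\bar x,
\end{equation*}
which is exactly the claimed inequality $c^\top \bar x \ge \text{value}((v^m)_{m\in\N})$.

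The argument is essentially routine; the only point requiring a little care is that the $\limsup$ of the right-hand side is actually a genuine limit, which holds because it is a finite sum of $n$ convergent sequences (each scaled by the constant $\bar x_k$), so no subsequence subtleties arise. There is no topological or separation machinery needed here — it is just nonnegative aggregation of finitely many inequalities followed by a passage to the limit, consistent with the paper's stated philosophy of avoiding topological vector space arguments. The main (very mild) obstacle is simply making the index-set bookkeeping precise, i.e. noting that all sums over $i\in I$ are really finite sums over $\supp(v^m)$, so that every manipulation above is legitimate termwise.
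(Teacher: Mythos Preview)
Your proof is correct and follows essentially the same approach as the paper's own proof: aggregate the primal constraints with the nonnegative finite-support weights $v^m$, then take $\limsup$ and use that the right-hand side is a finite linear combination of convergent sequences so its $\limsup$ equals its limit $c^\top \bar x$. The paper's argument is identical in structure and detail.
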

\begin{proof}
Since $\bar x$ is a feasible solution to the primal \eqref{eq:SILP},    $a^1(i)\bar x_1 + \ldots + a^n(i)\bar x_n \geq b(i)$ for every $i\in I$. For each $v^m$, since $v^m(i) \geq 0$ for all $i\in I$,    $v^m(i)a^1(i)\bar x_1 + \ldots + v^m(i)a^n(i)\bar x_n \geq b(i)v^m(i)$ for every $i\in I$. Therefore, summing over all the indices $i\in I$,   gives $(\sum_{i\in I}v^m(i)a^1(i))\bar x_1 + \cdots + (\sum_{i\in I}v^m(i)a^n(i))\bar x_n  \geq \sum_{i\in I}b(i)v^m(i)$ for all $m \in \N$. Thus,
$$\begin{array}{rcl}
c_1\bar x_1 + \ldots + c_n\bar x_n & = & \lim_{m \to \infty} [(\sum_{i\in I}v^m(i)a^1(i))\bar x_1 + \cdots + (\sum_{i\in I}v^m(i)a^n(i))\bar x_n] \\
& = &\lim\sup_{m \to \infty} [(\sum_{i\in I}v^m(i)a^1(i))\bar x_1 + \cdots + (\sum_{i\in I}v^m(i)a^n(i))\bar x_n] \\
& \geq & \lim\sup_{m\to \infty} [\sum_{i\in I}b(i)v^m(i)] \\
& = & \text{value}((v^m)_{m\in \N}),
\end{array}$$
where the first equality follows from the definition of feasible sequence.\end{proof} 

The following lemma is required for the main result of the section (Theorem~\ref{theorem:regular-duality-silp}).   Applying Fourier-Motzkin elimination on \eqref{eq:SILP} gives~\eqref{eq:J_system}. Recall the function $\omega(\delta) = \sup \{ \tilde{b}(h) -  \delta \sum_{k=\ell}^{n} |\tilde{a}^{k}(h)| \, : \, h \in I_4\}$ defined in~\eqref{eq:f_delta}.

\begin{lemma}\label{lemma:convergence_lim_case} 
Suppose $\lim_{\delta \rightarrow \infty} \omega(\delta) = d$ such that $-\infty < d < \infty$. Then there exists a sequence of indices $h_m$ in $I_4$ such that $\lim_{m \to \infty}\tilde b(h_m) = d$ and $\lim_{m\to \infty}\tilde a^k(h_m) = 0$ for all $k = \ell, \ldots, n$.

%
\end{lemma}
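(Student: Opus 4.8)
The plan is to build the sequence $(h_m)$ from near-maximizers of $\omega$ evaluated along the integers, and then extract both limit statements by comparing $\omega(m)$ with $\omega(m/2)$.

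First I would collect the elementary facts about $\omega$. Since $\lim_{\delta\to\infty}\omega(\delta)=d$ is a finite real and (as observed after~\eqref{eq:f_delta}) $\omega$ is nonincreasing, we have $\omega(\delta)\ge d$ for every $\delta\ge 0$, and there is a threshold $\delta_0\ge 0$ with $\omega(\delta)$ finite for all $\delta\ge\delta_0$; also $I_4\neq\emptyset$, since $I_4=\emptyset$ would force $\omega\equiv-\infty$ and hence $d=-\infty$. Then, for each integer $m\ge 2\delta_0$, I use that $\omega(m)$ is a finite supremum over the nonempty index set $I_4$ to pick $h_m\in I_4$ with
\[
\tilde b(h_m)-m\sum_{k=\ell}^{n}\bigl|\tilde a^k(h_m)\bigr|\;>\;\omega(m)-\tfrac1m .
\]
Write $a_m:=\sum_{k=\ell}^{n}|\tilde a^k(h_m)|\ge 0$ and $b_m:=\tilde b(h_m)$.

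The core of the argument is a double squeeze. From the choice of $h_m$ together with $\omega(m)\ge d$ we get $d-\tfrac1m< b_m-ma_m\le\omega(m)$, and since $\omega(m)\to d$ this yields $b_m-ma_m\to d$. Applying the definition of $\omega$ at $\delta=m/2$ (legitimate since $\omega$ is a function of a real variable and $m/2\ge\delta_0$) gives $b_m-\tfrac m2 a_m\le\omega(m/2)$, so $\limsup_m\bigl(b_m-\tfrac m2 a_m\bigr)\le d$; writing $b_m-\tfrac m2 a_m=(b_m-ma_m)+\tfrac m2 a_m$ and using $b_m-ma_m\to d$, this forces $\limsup_m\tfrac m2 a_m\le 0$, hence $m a_m\to 0$ because $m a_m\ge 0$. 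Consequently $a_m=(ma_m)/m\to 0$, so $|\tilde a^k(h_m)|\le a_m\to 0$ for every $k=\ell,\dots,n$, and $b_m=(b_m-ma_m)+ma_m\to d$. Reindexing $(h_m)_{m\ge 2\delta_0}$ as a sequence indexed by $\N$ gives the two claimed limits.

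I do not expect a deep obstacle here; the only subtlety is that $\omega$ may take the value $+\infty$ for small $\delta$, which is exactly why one must stay beyond the threshold $\delta_0$ before speaking of a near-maximizer of $\omega(\delta)$. The one genuinely non-obvious step is using two scalings of the elimination multiplier, $\delta=m$ and $\delta=m/2$: the larger scaling pins the residual $b_m-ma_m$ to $d$, while the smaller, combined with the first, forces $ma_m\to 0$, and both desired limits then drop out simultaneously.
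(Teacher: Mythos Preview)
Your argument is correct and is genuinely cleaner than the paper's. The paper splits into two cases according to whether the set $\{h\in I_4:\tilde b(h)\ge d\}$ is empty. In the nontrivial case it first shows $\inf_{h}\sum_{k}|\tilde a^k(h)|=0$ by contradiction (using that otherwise $\omega(\bar\delta+\delta)\le\omega(\bar\delta)-\delta\beta\to-\infty$), obtains a sequence with $\sum_k|\tilde a^k(h_m)|\to 0$, and then runs a second diagonal-subsequence extraction to force $\tilde b(h_{m_p})\to d$. Your two-scale comparison $\omega(m)$ versus $\omega(m/2)$ replaces all of that: the lower bound at scale $m$ pins $b_m-ma_m$ to $d$, the upper bound at scale $m/2$ then forces $ma_m\to 0$, and both limits fall out at once without any case split or subsequence. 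The only cosmetic point is that the parenthetical ``$m/2\ge\delta_0$'' is not actually what makes the step at $\delta=m/2$ legitimate; the inequality $b_m-\tfrac m2 a_m\le\omega(m/2)$ holds for every $m$ by the definition of the supremum, and all you really use is that $\omega(m/2)\to d$ as $m\to\infty$. The finiteness threshold $\delta_0$ is only needed so that $\omega(m)$ is a finite supremum when you pick the near-maximizer $h_m$.
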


\begin{proof}
Since $\omega(\delta)$ is a nonincreasing function of $\delta$, $\omega(\delta) \geq d$ for all $\delta$. Therefore, $d \leq \sup \{ \tilde{b}(h) -  \delta \sum_{k=\ell}^{n} |\tilde{a}^{k}(h)| \, : \, h \in I_4\}$ for every $\delta$. Define $\bar I := \{ h \in  I_4 \, : \, \tilde b(h) < d \}$. We consider two cases.

{\em \underline{Case 1: $I_4\setminus\bar I = \emptyset.$}} For any $m \in\N$, setting $\delta =  m$, we have that $d \leq \sup \{ \tilde{b}(h) -  m \sum_{k=\ell}^{n} |\tilde{a}^{k}(h)| \, : \, h \in I_4\}$ and thus, there exists $h_m \in I_4$ such that $d - \frac{1}{m} < \tilde b(h_m) - m \sum_{k=\ell}^{n} |\tilde{a}^{k}(h_m)|$. $I_4\setminus\bar I = \emptyset$ implies $\tilde b(h) < d$ for all $h \in I_4$ and therefore we have $$\begin{array}{rl}& d-\frac{1}{m} < d - m\sum_{k=\ell}^{n} |\tilde{a}^{k}(h_m)| \\ \Rightarrow & \sum_{k=\ell}^{n} |\tilde{a}^{k}(h_m)| < \frac{1}{m^2}.\end{array}$$ 

This shows that $\lim_{m \to \infty} \sum_{k=\ell}^n| \tilde{a}^k(h_m)| = 0$ which in turn implies that $\lim_{m\to\infty} \tilde a^k(h_m) = 0$ for all $k = \ell, \ldots, n$. 

Also, $$\begin{array}{rl}& d-\frac{1}{m} < \tilde b(h_m) - m \sum_{k=\ell}^{n} |\tilde{a}^{k}(h_m)| \\ \Rightarrow & d - \frac{1}{m} < \tilde b(h_m)\end{array}$$

since $m \sum_{k=\ell}^{n} |\tilde{a}^{k}(h_m)| \geq 0$. Since $\tilde b(h_m) < d$ we get $ d - \frac{1}{m} < \tilde b(h_m) < d$. And so $\lim_{m\to\infty} \tilde b(h_m) = d$.

%

{\em \underline{Case 2: $I_4\setminus\bar I \neq \emptyset.$}} We show it is sufficient to consider indices in $I_4 \setminus \bar I.$  Given any  $\delta \geq 0$, $\tilde{b}(h) -  \delta \sum_{k=\ell}^{n} |\tilde{a}^{k}(h)| < d$ for all $h \in \bar I$. Since $d \leq \sup \{ \tilde{b}(h) -  \delta \sum_{k=\ell}^{n} |\tilde{a}^{k}(h)| \, : \, h \in I_4\}$,  given $\delta \geq 0$,
$\sup \{ \tilde{b}(h) -  \delta \sum_{k=\ell}^{n} |\tilde{a}^{k}(h)| \, : \, h \in I_4\} = \sup \{ \tilde{b}(h) -  \delta \sum_{k=\ell}^{n} |\tilde{a}^{k}(h)| \, : \, h \in I_4\setminus \bar I\}.$
 Thus, $\omega(\delta) = \sup \{ \tilde{b}(h) -  \delta \sum_{k=\ell}^{n} |\tilde{a}^{k}(h)| \, : \, h \in I_4\setminus \bar I\}$ for all $\delta \geq 0$. 

First we show that there exists a sequence of indices $h_m \in I_4 \setminus \bar I$ such that $\tilde a^k(h_m) \to 0$ for all $k = \ell, \ldots, n$.  We begin by showing that $\inf\{\sum_{k=\ell}^n| \tilde{a}^k(h)| : h \in I_4\setminus \bar I\} = 0$. This implies that there is a sequence $h_m \in I_4\setminus \bar I$ such that $\lim_{m \to \infty} \sum_{k=\ell}^n| \tilde{a}^k(h_m)| = 0$ which in turn   implies that $\lim_{m\to\infty} \tilde a^k(h_m) = 0$ for all $k = \ell, \ldots, n$. 

Suppose to the contrary that $\inf\{\sum_{k=\ell}^n| \tilde{a}^k(h)| : h \in I_4\setminus \bar I\} = \beta > 0$. Since $\omega(\delta)$ is nonincreasing and $\lim_{\delta \to \infty}\omega(\delta) = d < \infty$, there exists $\bar \delta \geq 0$ such that $\omega(\bar\delta) < \infty$. Observe that $d = \lim_{\delta \to \infty}\omega(\delta) = \lim_{\delta \to\infty} \omega(\bar\delta + \delta)$. Then, for every $\delta \geq 0$, 
\[
\begin{array}{rcl}
\omega(\bar\delta + \delta) &=& \sup \{ \tilde{b}(h) -  (\bar\delta + \delta) \sum_{k=\ell}^{n} |\tilde{a}^{k}(h)| \, : \, h \in I_4\setminus \bar I\} \\
&=& \sup \{ \tilde{b}(h) -  \bar\delta  \sum_{k=\ell}^{n} |\tilde{a}^{k}(h)|  -\delta \sum_{k=\ell}^{n} |\tilde{a}^{k}(h)| \, : \, h \in I_4\setminus \bar I\} \\
&\le& \sup \{ \tilde{b}(h) -  \bar\delta  \sum_{k=\ell}^{n} |\tilde{a}^{k}(h)|  -\delta \beta \, : \, h \in I_4\setminus \bar I\} \\
&=& \sup \{ \tilde{b}(h) -  \bar\delta \sum_{k=\ell}^{n} |\tilde{a}^{k}(h)| \, : \, h \in I_4\setminus \bar I\} - \delta\beta\\
 &=& \omega(\bar \delta) - \delta\beta.
\end{array}
\]
Therefore, $d = \lim_{\delta\to\infty} \omega(\bar \delta + \delta) \leq \lim_{\delta \to \infty} (\omega(\bar\delta) - \delta\beta) = -\infty$, since $\beta > 0$ and $\omega(\bar\delta) < \infty$.  This contradicts $-\infty < d$. Thus $\beta = 0$ and  there is a sequence $h_m \in I_4 \setminus \bar I$ such that $\tilde a^k(h_m) \to 0$ for all $k = \ell, \ldots, n$.

Now we show there is a  subsequence of  $\tilde b(h_m)$ that converges to $d$.  Since $\lim_{\delta \to \infty}\omega(\delta) = d$, there is a sequence $(\delta_p)_{p \in \N}$ such that $\delta_p \geq 0$ and $\omega(\delta_p) < d + \frac{1}{p}$ for all $p \in \N$.   It was shown above that the sequence $h_m \in I_4\setminus \bar I$ is such that $\lim_{m \to \infty} \sum_{k=\ell}^n |\tilde{a}^k(h_m)| = 0$.  This implies that for every $p \in \N$ there is an $m_{p} \in \N$ such that for all $m \ge m_{p},$ $\delta_{p} \sum_{k = \ell}^{n} | \tilde{a}^{k}(h_{m})| < \frac{1}{p}.$ Thus, one can extract a subsequence $(h_{m_p})_{p\in \N}$ of $(h_m)_{m\in \N}$ such that $\delta_{p} \sum_{k = \ell}^{n} | \tilde{a}^{k}(h_{m_p})| < \frac{1}{p}$ for all $p \in \N.$ Then
\[
d + \frac{1}{p} > \omega(\delta_{p}) = \sup \{\tilde{b}(h) - \delta_p \sum_{k=\ell}^{n} |\tilde{a}^k(h)| \, : \, h \in I_4\setminus \bar I \} \ge \tilde{b}(h_{m_{p}}) -  \delta_{p} \sum_{k = \ell}^{n} | \tilde{a}^{k}(h_{m_{p}})|.
\]
The second inequality,  along with $\delta_{p} \sum_{k = \ell}^{n} | \tilde{a}^{k}(h_{m})| < \frac{1}{p},$ and the fact that $h_{m_{p}} \in I_{4} \backslash \overline{I}$ implies $\tilde{b}(h_{m_{p}}) \ge d$, gives
$
d + \frac{2}{p} \ge \tilde{b}(h_{m_{p}}) \ge d
$
and  $\tilde{b}(h_{m_{p}}), p \in \N$ is the desired subsequence.
\end{proof}

\begin{theorem}[Regular duality of semi-infinite linear programs]\label{theorem:regular-duality-silp}  If  \eqref{eq:SILP}  has an optimal primal value $z^{*}$, where  $-\infty < z^* <\infty$,  then the limit value $\hat d$ of \eqref{eq:FDSILP} is finite and $z^* = \hat d$.
\end{theorem}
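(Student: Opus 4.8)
The plan is to trap the limit value $\hat d$ of \eqref{eq:FDSILP} between $z^*$ from above, using the sequential weak duality already established, and $z^*$ from below, by exhibiting an explicit feasible sequence of value $z^*$ built from Fourier--Motzkin multipliers. For the upper bound, Lemma~\ref{lemma:seq-weak-duality} gives $c^\top \bar x \ge \mathrm{value}((v^m)_{m\in\N})$ for every primal feasible $\bar x$ and every feasible sequence $(v^m)_{m\in\N}$; taking $\bar x$ to be an optimal primal solution (which exists since $z^*$ is attained) yields $z^* \ge \mathrm{value}((v^m)_{m\in\N})$ for all feasible sequences, hence $\hat d \le z^* < \infty$. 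It therefore suffices to construct one feasible sequence whose value equals $z^*$: this simultaneously shows $\hat d \ge z^* > -\infty$, and combined with the upper bound forces $\hat d = z^*$, in particular finite.

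To build this sequence, apply the Fourier--Motzkin elimination procedure to \eqref{eq:initial-system-obj-con}--\eqref{eq:initial-system-con} to obtain \eqref{eq:J_system}. Since \eqref{eq:SILP} has finite optimal value it is feasible and bounded, so Lemma~\ref{lemma:primal-optimal-value} gives $z^* = \max\{\sup_{h\in I_3}\tilde b(h),\ \lim_{\delta\to\infty}\omega(\delta)\}$. I split into two exhaustive cases according to which term realizes the maximum. In the case $\sup_{h\in I_3}\tilde b(h) = z^*$, the fact that $z^* > -\infty$ forces $I_3 \ne \emptyset$, so choose $h_m \in I_3$ with $\tilde b(h_m) \to z^*$; by Lemma~\ref{lemma:feasible-FDSILP}\eqref{item:I3-property} each $v^{h_m}$ is feasible for \eqref{eq:FDSILP}, so $\sum_{i\in I}a^k(i)v^{h_m}(i) = c_k$ exactly for every $k$, and $(v^{h_m})_{m\in\N}$ is a feasible sequence whose value is $\limsup_m \langle b, v^{h_m}\rangle = \limsup_m \tilde b(h_m) = z^*$ by Lemma~\ref{lemma:feasible-FDSILP}\eqref{item:b-property}.

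In the remaining case $\sup_{h\in I_3}\tilde b(h) < z^*$, necessarily $\lim_{\delta\to\infty}\omega(\delta) = z^*$, and in particular $I_4 \ne \emptyset$. Since $-\infty < z^* < \infty$, Lemma~\ref{lemma:convergence_lim_case} applied with $d = z^*$ produces a sequence $h_m \in I_4$ with $\tilde b(h_m) \to z^*$ and $\tilde a^k(h_m) \to 0$ for all $k = \ell,\dots,n$. By Lemma~\ref{lemma:feasible-FDSILP}\eqref{item:I4-property}, $\sum_{i\in I}a^k(i)v^{h_m}(i) = c_k$ for $k < \ell$ and $\sum_{i\in I}a^k(i)v^{h_m}(i) = c_k + \tilde a^k(h_m)$ for $k \ge \ell$, so letting $m\to\infty$ and using $\tilde a^k(h_m)\to 0$ shows $\lim_m \sum_{i\in I}a^k(i)v^{h_m}(i) = c_k$ for every $k$; thus $(v^{h_m})_{m\in\N}$ is a feasible sequence, of value $\limsup_m \langle b, v^{h_m}\rangle = \limsup_m \tilde b(h_m) = z^*$ by Lemma~\ref{lemma:feasible-FDSILP}\eqref{item:b-property}. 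In either case $\hat d \ge z^*$, so $\hat d = z^*$ and $\hat d$ is finite. The one step needing genuine care is the second case: checking that the $v^{h_m}$ actually form a \emph{feasible} sequence (i.e. the constraint sums converge to $c$, not merely stay bounded) rests entirely on the asymptotic vanishing $\tilde a^k(h_m) \to 0$, which is exactly the content of Lemma~\ref{lemma:convergence_lim_case} and is the real engine of the proof.
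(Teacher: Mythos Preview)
Your argument is essentially the paper's proof, with the same two-case split on whether $z^* = \sup_{h\in I_3}\tilde b(h)$ or $z^* = \lim_{\delta\to\infty}\omega(\delta)$, and the same use of Lemma~\ref{lemma:convergence_lim_case} and Lemma~\ref{lemma:feasible-FDSILP} to build the feasible sequence in the second case. One small difference: in the first case you build the sequence directly from the $v^{h_m}$ with $h_m\in I_3$, whereas the paper invokes Theorem~\ref{theorem:zero-duality-gap} to get $z^* = v(\text{FDSILP})$ and then sandwiches $\hat d$ between $v(\text{FDSILP})$ and $z^*$. Your route is slightly more self-contained.

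There is one genuine slip in your upper-bound step. You write ``taking $\bar x$ to be an optimal primal solution (which exists since $z^*$ is attained)'', but the hypothesis $-\infty < z^* < \infty$ only says \eqref{eq:SILP} is feasible and bounded; it does \emph{not} guarantee the infimum is attained (cf.\ Example~\ref{example:not-primal-optimal}, where the optimal value is $0$ but no optimal solution exists). The fix is immediate: Lemma~\ref{lemma:seq-weak-duality} gives $c^\top \bar x \ge \mathrm{value}((v^m)_{m\in\N})$ for \emph{every} feasible $\bar x$, so taking the infimum over feasible $\bar x$ yields $z^* \ge \mathrm{value}((v^m)_{m\in\N})$ for every feasible sequence, hence $\hat d \le z^*$, without any appeal to solvability. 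With that correction your proof is complete.
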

\begin{proof}
 By Lemma~\ref{lemma:primal-optimal-value}, $z^{*} = \max\{  \sup\{ \tilde{b}(h) \, : \, h \in I_3 \},   \lim_{\delta \rightarrow \infty} \omega(\delta) \}$.    If $z^* = \sup\{ \tilde{b}(h) \, : \, h \in I_3 \}$, then by Theorem~\ref{theorem:zero-duality-gap}, there is a  zero duality gap, i.e., $z^* = d^*$ where $d^*$ is the optimal value of \eqref{eq:FDSILP}.   From Lemma~\ref{lemma:seq-weak-duality},  $\hat d \leq z^*$, so  $z^{*} = d^{*}$ implies $\hat{d} \le d^{*}.$ By definition of limit value, $\hat{d} \ge d^{*}.$  Therefore, $d^{*} = \hat{d} = z^*.$

In the other case when $z^* =  \lim_{\delta \rightarrow \infty} \omega(\delta)$,  by   Lemma~\ref{lemma:convergence_lim_case}  there is  a sequence $h_m \in  I_4$ such that $\lim_{m \to \infty}\tilde b(h_m) = z^*$ and $\lim_{m\to \infty}\tilde a^k(h_m) = 0$ for all $k = \ell, \ldots, n$. By Lemma~\ref{lemma:feasible-FDSILP} there exist $v^{h_m} \in \R^{(I)}_+$ for each $m \in \N$ such that $- c_k + \sum_{i\in I}v^{h_m}(i)a^k(i) = 0$ for $k = 1, \ldots, \ell-1$, $- c_k + \sum_{i\in I}v^{h_m}(i)a^k(i) = \tilde a^k(h_m)$ for $k = \ell, \ldots, n$, and $\sum_{i\in I}b(i)v^{h_m}(i) = \tilde b(h_m)$.   Since $\lim_{m\to \infty}\tilde a^k(h_m) = 0$ for all $k = \ell, \ldots, n$, and $\lim_{m \to \infty}\tilde b(h_m) = z^*$,  $v^{h_m}$, $m \in \N$ is a feasible sequence with value $z^*$. Thus, $\hat d \geq z^*$.      Again, from Lemma~\ref{lemma:seq-weak-duality},  $\hat d \leq z^*$,  so $z^* = \hat d$. 
\end{proof}

\section{Application: Convex programs} \label{s:application-convex}

Recall the convex program \eqref{eq:CP} and its Lagrangian dual \eqref{eq:LD} introduced in Section~\ref{s:introduction}. 
%
Construct the semi-infinite linear program
%
\begin{equation}\label{eq:convex-silp}
\begin{array}{clcl}
 \inf &  \sigma && \\
 {\rm s.t.} & \sigma - \sum_{i=1}^p\lambda_ig_i(x) &\ge& f(x) \quad \text{ for } x \in \Omega \\
& \phantom{\sigma + \sum_{i=1}^p}\lambda & \geq & 0.
\end{array}\tag{\text{CP-SILP}} 
\end{equation}
along with its finite support dual for~\eqref{eq:convex-silp}. There are two sets of  constraints in~\eqref{eq:convex-silp}.  There are typically an uncountable number of constraints indexed by $x\in \Omega$ and a finite number of nonnegativity, $\lambda \ge 0,$  constraints indexed by  $\{1, \ldots, p\}$. Thus, the finite support dual elements belong to $\R^{(\Omega\cup\{1, \ldots, p\})}$. The finite support  dual defined over $(u, v) \in \R^{(\Omega)}\times \R^p$ is
\begin{eqnarray}\label{eq:convex-fdsilp}
 (\text{CP-FDSILP})  \qquad   \sup  \sum_{x\in \Omega} u(x)f(x) && \label{eq:obj}\\
 {\rm s.t.} \qquad \sum_{x\in \Omega} u(x) & = & 1 \label{eq:convex}\\
-\sum_{x\in \Omega}u(x)g_i(x) + v_i & = & 0 \quad \text{ for } i = 1, \ldots, p \label{eq:g}\\
 (u, v) & \in & \R^{(\Omega)}_+\times \R^p_+. \label{eq:nonnegative}
\end{eqnarray} 
Recall $v(\text{CP})$ is the  optimal value of \eqref{eq:CP}, $v(\ref{eq:LD})$ is the optimal value of \eqref{eq:LD}, $v(\ref{eq:convex-silp})$ is the optimal value of \eqref{eq:convex-silp} and $v(\ref{eq:convex-silp})$ is the optimal value of \eqref{eq:convex-fdsilp}-\eqref{eq:nonnegative}. 
\begin{remark}\label{rem:optimal-inequalities}
We show in the appendix (Theorems~\ref{theorem:LD-SILP}~and~\ref{theorem:CP-FDSILP}) the following holds:
\begin{align*}
v(\text{CP-SILP}) = v(\text{LD}) \geq v(\text{CP}) = v(\text{CP-FDSILP})
\end{align*}
where the inequality follows from weak duality of the Lagrangian dual (or the weak duality of semi-infinite linear programs as discussed in Section~\ref{s:silp-classification}). 
\hfill$\triangleleft$
\end{remark}

We are now able to provide a new proof of a very well-known sufficient condition for zero duality gaps in convex programming.
\begin{theorem}[Slater's theorem for convex programs]\label{thm:slater-convex}
Assume  the convex program~\eqref{eq:CP} is feasible and bounded, i.e., $-\infty < v(\text{CP}) < \infty$ and 
there exists a $x^* \in \Omega$ such that $g_i(x^*) > 0$ for all $i=1, \ldots, p$. Then there is a zero duality gap between the convex program~\eqref{eq:CP} and its Lagrangian dual~\eqref{eq:LD} and  there exists a $\lambda^* \geq 0$ such that $v(\text{LD}) = L(\lambda^*)$, i.e., the Lagrangian dual is solvable.
\end{theorem}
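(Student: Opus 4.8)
The plan is to read the Lagrangian dual \eqref{eq:LD} as the semi-infinite linear program \eqref{eq:convex-silp}, with finite support dual \eqref{eq:convex-fdsilp}--\eqref{eq:nonnegative}, and to show that \eqref{eq:convex-silp} is feasible and \emph{tidy}. Granting this, Theorem~\ref{theorem:bounded-zero-duality-gap} (equivalently Theorem~\ref{theorem:all-clean-system}) gives that \eqref{eq:convex-silp} is solvable and that the pair \eqref{eq:convex-silp}, \eqref{eq:convex-fdsilp}--\eqref{eq:nonnegative} has zero duality gap, i.e.\ $v(\text{CP-SILP})=v(\text{CP-FDSILP})$. By Remark~\ref{rem:optimal-inequalities}, $v(\text{LD})=v(\text{CP-SILP})=v(\text{CP-FDSILP})=v(\text{CP})$, which is the claimed zero duality gap for the convex program. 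Moreover, an optimal solution $(\sigma^*,\lambda^*)$ of \eqref{eq:convex-silp} satisfies $\lambda^*\ge0$, $\sigma^*\ge f(x)+\sum_{i=1}^p\lambda^*_ig_i(x)$ for all $x\in\Omega$ (hence $L(\lambda^*)\le\sigma^*$) and $\sigma^*=v(\text{CP-SILP})=v(\text{LD})$; since $v(\text{LD})\le L(\lambda^*)$ trivially, $L(\lambda^*)=v(\text{LD})$, so \eqref{eq:LD} is solvable.

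To show \eqref{eq:convex-silp} is feasible and tidy I would apply Theorem~\ref{theorem:bounded-zero-duality-gap}: it suffices to produce $\gamma\in\R$ for which the set $\Gamma_\gamma\subseteq\R^{p+1}$ of $(\sigma,\lambda)$ with
\[
\sigma\le\gamma,\qquad \lambda\ge0,\qquad \sigma-\sum_{i=1}^p\lambda_ig_i(x)\ge f(x)\ \ (x\in\Omega)
\]
is nonempty and bounded (these constraints are the instance of \eqref{eq:bounded-zero-duality-gap} attached to \eqref{eq:convex-silp}, since they share the columns in $\sigma,\lambda$). Boundedness is exactly where the Slater point enters: the constraint indexed by $x=x^*$ reads $\sigma\ge f(x^*)+\sum_{i=1}^p g_i(x^*)\lambda_i$, and together with $\sigma\le\gamma$, $\lambda\ge0$ and $g_i(x^*)>0$ it forces $0\le g_i(x^*)\lambda_i\le\gamma-f(x^*)$ for every $i$, hence $0\le\lambda_i\le(\gamma-f(x^*))/g_i(x^*)$, and also $f(x^*)\le\sigma\le\gamma$. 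Thus $\Gamma_\gamma$ lies in a bounded box for every $\gamma\ge f(x^*)$; once $\Gamma_\gamma$ is also known to be nonempty, Theorem~\ref{theorem:region-boundedness} (a nonempty bounded feasible set has a clean Fourier--Motzkin output) gives that $\sigma,\lambda_1,\dots,\lambda_p$ are all eliminated --- i.e.\ \eqref{eq:convex-silp} is tidy --- and nonemptiness of $\Gamma_\gamma$ also makes \eqref{eq:convex-silp} feasible.

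The main obstacle is therefore the one remaining point: that $\Gamma_\gamma\ne\emptyset$ for some (hence, by the box bound, for all sufficiently large) $\gamma$, equivalently that $v(\text{LD})<\infty$, equivalently that there is a $\bar\lambda\ge0$ with $L(\bar\lambda)<\infty$ --- for then $\gamma=L(\bar\lambda)$ and $(L(\bar\lambda),\bar\lambda)\in\Gamma_\gamma$. This finiteness is the real content of Slater's theorem; in a classical argument it is precisely the step of supporting the concave perturbation function $s\mapsto\sup\{f(x)\st x\in\Omega,\ g(x)\ge s\}$ by an affine functional at $s=0$, with the Slater point guaranteeing that the support is nonvertical. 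Since the present approach forbids separating hyperplanes, this must instead be obtained from the Fourier--Motzkin analysis of Section~\ref{s:fm-elim}: one leans again on the Slater row $\sigma\ge f(x^*)+\sum_i g_i(x^*)\lambda_i$ --- now to pin down the recession directions of the system --- and on the easy observations that \eqref{eq:convex-fdsilp}--\eqref{eq:nonnegative} is feasible (take $u$ a unit mass at $x^*$ and $v_i=g_i(x^*)\ge0$) and bounded ($v(\text{CP-FDSILP})=v(\text{CP})<\infty$), to conclude via the feasibility characterizations of Section~\ref{s:silp-classification} that \eqref{eq:convex-silp} is feasible. Breaking the apparent circularity between ``\eqref{eq:convex-silp} is feasible'' and ``\eqref{eq:convex-silp} is tidy'' --- which is exactly what rules out the duality gap a violation of Slater could create --- is where the proof needs genuine care; everything else is bookkeeping with the results of Section~\ref{s:silp-classification}.
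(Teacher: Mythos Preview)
Your proposal correctly identifies the architecture --- cast \eqref{eq:LD} as \eqref{eq:convex-silp}, establish it is feasible and tidy, and invoke Theorem~\ref{theorem:all-clean-system} --- and your use of the Slater row to bound $\Gamma_\gamma$ is exactly right. But the proof is explicitly incomplete: you leave unresolved precisely the step you flag as the ``main obstacle,'' namely that \eqref{eq:convex-silp} is feasible (equivalently, that some $\lambda\ge0$ has $L(\lambda)<\infty$). Your chosen route to tidiness via Theorem~\ref{theorem:bounded-zero-duality-gap} needs $\Gamma_\gamma\ne\emptyset$, which is feasibility of \eqref{eq:convex-silp} again, so you are genuinely stuck in the circularity you describe.

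The paper breaks the circularity with a one-line observation you overlooked: since $v(\text{CP})<\infty$, one may replace $f$ by the concave function $\tilde f=\min\{f,B\}$ for any $B\ge v(\text{CP})$ without changing the problem; with $\tilde f$ bounded above by $B$, the pair $(\sigma,\lambda)=(B,0)$ is trivially feasible for \eqref{eq:convex-silp}. The paper then establishes tidiness not via Theorem~\ref{theorem:bounded-zero-duality-gap} but by tracking the elimination directly (Claim~\ref{claim:slater-convex-clean}): after eliminating $\sigma$ one has $z-\sum_i\lambda_ig_i(x^*)\ge f(x^*)$ in the intermediate system, and since $g_k(x^*)>0$ this row together with $\lambda_k\ge0$ keeps each $\lambda_k$ clean, by induction on $k$. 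Note that this direct argument uses only the column data and does not presuppose feasibility --- so in fact your own suggested escape route (prove tidiness first, then use feasibility and boundedness of \eqref{eq:convex-fdsilp}--\eqref{eq:nonnegative} together with Theorems~\ref{theorem:primalFeasible} and~\ref{theorem:dual-boundedness} and $I_2=I_4=\emptyset$ to recover primal feasibility) would also have worked, had you switched from Theorem~\ref{theorem:bounded-zero-duality-gap} to a direct elimination argument for tidiness.
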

\begin{proof} 
Since $v(\text{\ref{eq:CP}}) < \infty$, it is valid to  replace the objective function $f(x),$ by the concave function $\tilde{f}(x) = \min\{f(x), B\}$,  where $B$ is an upper bound on $v(\text{CP})$. Thus, we assume that \eqref{eq:convex-silp} is feasible : $\sigma = B$, $\lambda = 0$ where $B$ is an upper bound on $f(x)$. 

We now perform Fourier Motzkin on \eqref{eq:convex-silp} after reformulating as in Section~\ref{s:silp-classification}:

$$
\begin{array}{rclcl}
 z & - & \sigma &\ge & 0\\
 & & \sigma - \sum_{i=1}^p\lambda_ig_i(x) &\ge& f(x) \qquad \forall x \in \Omega \\
& & \phantom{\sigma + \sum_{i=1}^p}\lambda_i & \geq & 0 \qquad i=1, \ldots, p
\end{array}$$

We first eliminate variable $\sigma$ and end up in the following intermediate system during the Fourier-Motzkin elimination procedure:

\begin{equation}\label{eq:intermediate}
\begin{array}{clcl}
 & z - \sum_{i=1}^p\lambda_ig_i(x) &\ge& f(x) \qquad \forall x \in \Omega \\
& \phantom{\sigma + \sum_{i=1}^p}\lambda_i & \geq & 0 \qquad i=1, \ldots, p
\end{array}\end{equation}

\begin{claim}\label{claim:slater-convex-clean}
The variables $\lambda_1, \ldots, \lambda_p$ remain clean as the Fourier Motzkin elimination procedure proceeds on \eqref{eq:intermediate}.
\end{claim}
\begin{proof}[Proof of Claim]\renewcommand{\qedsymbol}{}  
We now track the intermediate inequalities produced by the Fourier Motzkin elimination procedure as we go through $\lambda_1, \ldots, \lambda_p$. We claim that after processing variables $\lambda_1, \lambda_2, \ldots \lambda_k$ where $1 \leq k \leq p$ we have the inequality $z - \sum_{i=k+1}^p\lambda_ig_i(x^*) \ge f(x^*)$ in the intermediate system of inequalities. We prove this by induction on $k$.

Consider $k=1$ first. We have the constraint corresponding to $x^*$: $z - \sum_{i=1}^p\lambda_ig_i(x^*) \ge f(x^*)$ in \eqref{eq:intermediate}. Since $g_1(x^*) > 0$ by hypothesis, the coefficient of $\lambda_1$ is negative in this constraint. Moreover, we have the constraint $\lambda_1 \geq 0$. We can multiply the constraint $\lambda_1 \geq 0$ by $g_1(x^*)$ and add to $z - \sum_{i=1}^p\lambda_ig_i(x^*) \ge f(x^*)$, resulting in the inequality $z - \sum_{i=2}^p\lambda_ig_i(x^*) \ge f(x^*)$. So the base case is done.

Now for the induction step for $k>1$. By the induction hypothesis, we have the constraint $z - \sum_{i=k}^p\lambda_ig_i(x^*) \ge f(x^*)$ after processing $\lambda_1, \ldots, \lambda_{k-1}$. Since $g_k(x^*) > 0$ the coefficient of $\lambda_k$ is negative in this constraint. We also have the constraint $\lambda_{k} \geq 0$ in the intermediate system obtained after processing $\lambda_1, \ldots, \lambda_{k-1}$. Multiplying the constraint $\lambda_{k} \geq 0$ by $g_k(x^*)$ and adding to $z - \sum_{i=k}^p\lambda_ig_i(x^*) \ge f(x^*)$, we obtain the constraint $z - \sum_{i=k+1}^p\lambda_ig_i(x^*) \ge f(x^*)$. Thus the induction is complete.\quad $\dagger$
\end{proof}

By Claim~\ref{claim:slater-convex-clean}, we have that all variables except $z$ are clean throughout the Fourier-Motzkin elimination procedure. Since \eqref{eq:convex-silp} is feasible (by the discussion in the first paragraph of the proof), by Theorem~\ref{theorem:all-clean-system} $v(\text{CP-SILP}) = v(\text{CP-FDSILP})$ and $(\text{CP-SILP})$ is solvable. By Remark~\ref{rem:optimal-inequalities} we have $v(\text{CP}) = v(\text{LD})$. Moreover, since~\eqref{eq:convex-silp} is solvable, by Theorem~\ref{theorem:LD-SILP} there exists $\lambda^*$ such that $v(\text{LD}) = L(\lambda^*)$.\end{proof}

The following example demonstrates that  it is possible to identify  a zero duality gap with  techniques of this paper, even when a Slater condition fails. 

\begin{example}\label{ex:no-slater-duality}
Consider the    convex optimization problem
\begin{align}\label{eq:no-slater-example}
\begin{array}{rl}
\qquad  \max_{x\in \R^n} & 0 \\
 \textrm{s.t.} & \phantom{-}1 - x_1^2 - x_2^2 \ge 0 \\
               & -1 + x_1 \phantom{- 3x_2^2} \ge 0.
\end{array}
\end{align}
The feasible region is the singleton $\left\{(1,0) \right\}$ and so no Slater point exists, however there is a zero duality gap. For this instance, \eqref{eq:convex-silp} is
\begin{equation}\label{eq:convex-example}
\begin{array}{clcl}
\inf &  \sigma && \\
 {\rm s.t.} & \sigma + \lambda_1(x_1^2 + x_2^2 - 1) + \lambda_2 (1 - x_1) &\ge& 0 \quad \text{ for } x \in \R^n \\
& \phantom{\sigma - \lambda_1(x_1^2 + x_2^2 - 1) + }\lambda & \geq & 0.
\end{array}
\end{equation}
Setting $(\sigma, \lambda_1, \lambda_2) = (0,0,0)$ shows that this semi-infinite linear program  \eqref{eq:SILP}  is feasible. Notice also that the right-hand function $b$ is  the zero function.   Applying  Fourier-Motzkin elimination  to~\eqref{eq:convex-example} gives   $\tilde{b}(h) = 0$  for all  $h$ and  this implies  $\sup_{h \in I_3} \tilde b(h) = 0$. Also, for any $\delta \geq 0$,
$
\omega(\delta) = \sup_{h \in I_4} \left\{\tilde b(h) - \delta \sum_{k=\ell}^n |\tilde a^k(h)|\right\} = \sup_{h \in I_4} \left\{- \delta \sum_{k=\ell}^n |\tilde a^k(h)|\right\} \le 0.
$
\ Then $\sup_{h \in I_3} \tilde{b}(h) \geq \lim_{\delta \to \infty}\omega(\delta)$ and by  Theorem~\ref{theorem:zero-duality-gap} there is a zero duality gap between \eqref{eq:convex-example} and its finite support dual. By Theorem~\ref{theorem:LD-SILP}~and~\ref{theorem:CP-FDSILP} this implies there is a zero duality gap between \eqref{eq:no-slater-example} and its Lagrangian dual. \hfill $\triangleleft$
\end{example}
\section{Application: Generalized Farkas' Theorem}\label{s-application-farkas-minkowski}

In this section,  Fourier-Motzkin elimination provides an alternate proof of the generalized Farkas' theorem, a well-known cornerstone result in the semi-infinite linear programming literature (see Goberna and L\'opez~\cite{goberna1998linear}).  Consider a closed convex set given as the intersection of (possibly infinitely many) halfspaces  
\begin{equation}\label{eq:convex_set}
P = \{x \in \R^n \st a^1(i)x_1 + \cdots + a^n(i)x_n \geq b(i) \text{ for } i\in I\},
\end{equation} 
where $I$ is any index set, $a^1, \ldots, a^n$ and $b$ are elements of $\R^I$. An inequality $c^\top x \geq d$ is a {\em consequence} of the system of inequalities $a^1(i)x_1 + \ldots a^n(i)x_n \geq b(i)$, $i\in I$ if $c^\top x \geq d$ for every $x \in P$. If  $P = \emptyset$, then   every inequality is a consequence the inequalities  $a^1(i)x_1 + \ldots a^n(i)x_n \geq b(i)$, $i\in I.$   Let $\alpha^i$ denote the vector in $\R^n$ given by $\alpha^i = (a^1(i), \ldots, a^n(i))^{\top}$. The notation $0_n$ is used to denote the $n$-dimensional vector of zeros.

In the theorem below, the difficulty is proving necessity of the conditions. We show how our Fourier-Motkzin approach can be used to prove necessity, as opposed to a separating hyperplane theorem, as was done in Goberna and L\'opez~\cite{goberna1998linear}. The sufficiency direction is identical to that of Theorem 3.1 in Goberna and L\'opez~\cite{goberna1998linear} and is omitted.

\begin{theorem}[Generalized Farkas' Theorem, see Theorem 3.1 in Goberna and L\'opez~\cite{goberna1998linear}]\label{thm:gen-farkas-minkowski}
The inequality $c^\top x \geq d$ is a consequence of $(\alpha^i)^\top x \geq b(i)$ for all $i\in I$, if and only if at least one of the following holds:
\begin{enumerate}[(i)]
\item \(\displaystyle\left[\begin{array}{c}c \\ d\end{array}\right] \in \cl\bigg(\cone\bigg(\left\{\left[\begin{array}{c}0_n \\ -1\end{array}\right], \left[\begin{array}{c}\alpha^i \\ b(i)\end{array}\right];\; i\in I\right\}\bigg)\bigg)\) \label{first-part}
\item \(\displaystyle\left[\begin{array}{c}0_n \\ 1\end{array}\right] \in \cl\bigg(\cone\bigg(\left\{\left[\begin{array}{c}\alpha^i \\ b(i)\end{array}\right];\; i\in I\right\}\bigg)\bigg).\) \label{second-part}
\end{enumerate}
\end{theorem}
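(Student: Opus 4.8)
The plan is to prove only the necessity direction (sufficiency being deferred to Goberna and L\'opez), using the Fourier--Motzkin machinery developed in Sections~\ref{s:fm-elim}--\ref{s:silp-classification}. So suppose $c^\top x \ge d$ is a consequence of the system $(\alpha^i)^\top x \ge b(i)$, $i \in I$. The natural device is to consider the semi-infinite linear program $\inf\{c^\top x : (\alpha^i)^\top x \ge b(i),\ i \in I\}$, i.e.\ an instance of \eqref{eq:SILP}, and split into two cases according to whether its feasible region $P$ is empty or not.

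\emph{Case 1: $P \neq \emptyset$.} Then $c^\top x \ge d$ being a consequence means exactly that \eqref{eq:SILP} is feasible and $v(\ref{eq:SILP}) \ge d$. Apply the Fourier--Motzkin elimination procedure to the reformulated system \eqref{eq:initial-system-obj-con}--\eqref{eq:initial-system-con}, producing the output system \eqref{eq:J_system} with index sets $I_1, I_2, I_3, I_4$. By Lemma~\ref{lemma:primal-optimal-value}, $v(\ref{eq:SILP}) = \max\{\sup_{h\in I_3}\tilde b(h),\, \lim_{\delta\to\infty}\omega(\delta)\} \ge d$. I would now use the multipliers $u^h$ (equivalently their restrictions $v^h$) supplied by Theorem~\ref{theorem:FM-elim-succ} together with Lemma~\ref{lemma:feasible-FDSILP}. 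Each $h \in I_3$ gives a feasible dual solution $v^h \ge 0$ with $\sum_i \alpha^i v^h(i) = c$ and $\langle b, v^h\rangle = \tilde b(h)$; these exhibit $(c, \tilde b(h))$ as a nonnegative combination of the vectors $(\alpha^i, b(i))$, and since $\tilde b(h) \ge d$ is approached (or attained) we can add a nonnegative multiple of $(0_n, -1)$ to land exactly on $(c,d)$, placing $(c,d)$ in the closed cone of part \eqref{first-part}. If instead $\lim_{\delta\to\infty}\omega(\delta) \ge d$ is the binding quantity, invoke Lemma~\ref{lemma:convergence_lim_case} (after handling the $\lim = \infty$ or $= -\infty$ extremes separately) to get a sequence $h_m \in I_4$ with $\tilde a^k(h_m) \to 0$ and $\tilde b(h_m) \to \lim_{\delta\to\infty}\omega(\delta)$; the associated $v^{h_m}$ satisfy $\sum_i \alpha^i v^{h_m}(i) \to c$ and $\langle b, v^{h_m}\rangle \to \lim_{\delta\to\infty}\omega(\delta) \ge d$, and passing to the limit (again correcting by $(0_n,-1)$) again lands $(c,d)$ in the closed cone of \eqref{first-part}.

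\emph{Case 2: $P = \emptyset$.} Then the reformulated constraint system \eqref{eq:initial-system-con} is infeasible (the objective constraint \eqref{eq:initial-system-obj-con} plays no role here, so equivalently run Fourier--Motzkin on \eqref{eq:initialSystem} with data $\alpha^i, b(i)$). By Theorem~\ref{theorem:feasible}, infeasibility means either some $\tilde b(h) > 0$ with $h \in H_1$, or $\sup_{h\in H_2} \tilde b(h)/\sum|\tilde a^k(h)| = \infty$. In the first subcase, Theorem~\ref{theorem:all-rays} (or just Theorem~\ref{theorem:FM-elim-succ}) gives a multiplier $u^h \ge 0$ with $\langle \alpha^k, u^h\rangle = 0$ for all $k$ and $\langle b, u^h\rangle = \tilde b(h) > 0$, so after scaling $(0_n, 1) = \sum_i u^h(i)(\alpha^i, b(i))$ lies in the cone, giving \eqref{second-part}. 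In the second subcase, a sequence of multipliers with $\langle\alpha^k, v^{h_m}\rangle \to 0$ for all $k$ and $\langle b, v^{h_m}\rangle \to \infty$ can be renormalized (divide by $\langle b, v^{h_m}\rangle$) to produce a sequence whose coordinates tend to $(0_n, 1)$, again yielding \eqref{second-part}. One should double-check which of the two conclusions \eqref{first-part}/\eqref{second-part} is needed when $P=\emptyset$ but every inequality is trivially a consequence — \eqref{second-part} is the right target since it encodes $0 \ge 1$ as a consequence.

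\emph{Main obstacle.} The delicate point is the limiting argument in Case~1 when $\lim_{\delta\to\infty}\omega(\delta)$ is the active quantity: one must produce the feasible sequence $v^{h_m}$ and verify that the (possibly infinite-support-in-the-limit) limit of nonnegative combinations still lands inside the \emph{closure} of the cone — which is exactly why the closure appears in \eqref{first-part}. Likewise in Case~2's second subcase, the renormalization must be shown to keep all coordinates bounded and converging, so that the limit point $(0_n,1)$ genuinely lies in the closed cone. Matching these Fourier--Motzkin limit phenomena precisely to the two ``closed cone'' alternatives is the crux; the purely algebraic bookkeeping (that each $v^h$ really does give a conic combination of the $(\alpha^i,b(i))$) is routine via Lemma~\ref{lemma:feasible-FDSILP} and Theorem~\ref{theorem:FM-elim-succ}.
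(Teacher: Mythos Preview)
Your proposal is correct and follows essentially the same approach as the paper: the same case split on $P=\emptyset$ versus $P\neq\emptyset$, the same invocation of Theorem~\ref{theorem:feasible} (with subcases $H_1$ versus $H_2$) in the infeasible case, and the same use of Lemma~\ref{lemma:primal-optimal-value} together with Lemma~\ref{lemma:convergence_lim_case} (subcases $I_3$ versus $I_4$) in the feasible case. One small imprecision: in your $P=\emptyset$, second subcase, the multipliers $v^{h_m}$ do not themselves satisfy $\langle \alpha^k, v^{h_m}\rangle \to 0$; rather $\langle a^k, u^{h_m}\rangle = \tilde a^k(h_m)$ for $k\ge \ell$, and it is only \emph{after} dividing by $\tilde b(h_m)$ that these coordinates tend to zero---but your renormalization step handles exactly this, so the argument goes through.
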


\begin{proof}

Assume $c^\top x \geq d$ is a consequence. There are two cases, depending on whether $P$ is empty or not. 

\noindent\underline{\emph{Case 1: $P = \emptyset$.}} Apply the Fourier-Motzkin elimination procedure to the constraints  that define $P$ in \eqref{eq:convex_set} and obtain the system (\ref{eq:defineI1})-(\ref{eq:defineI2}) with the corresponding index sets $H_1$ and $H_2$. Since $P = \emptyset$, by Theorem~\ref{theorem:feasible} either $\tilde{b}(h)  > 0$ for some $h^* \in H_{1}$, or $ \sup \{ \tilde{b}(h) / \sum_{k=\ell}^{n} |\tilde{a}^{k}(h)|  \, :  \, h \in H_{2}   \}  = \infty$.   Consider these two cases in turn:
\medskip

\underline{\emph{Case 1a:}  $\tilde{b}(h^*)  > 0$ for some $h^* \in H_{1}$.}  By Theorem~\ref{theorem:FM-elim-succ}, there exists $u^{h^*} \in \R^{(I)}_+$ with finite support such that $\langle a^j, u^{h^*} \rangle = 0$ for all $j = 1, \ldots, n$ and $\langle b, u^{h^*}\rangle > 0$. Using the multiplers $\frac{u^{h^*} }{\langle b, u^{h^*}\rangle}$ for the constraints corresponding to the non-zero elements in $u^{h^*}$ to aggregate constraints, gives  $\left[\begin{array}{c}0_n \\ 1\end{array}\right] \in \cone\bigg(\left\{\left[\begin{array}{c}\alpha^i \\ b(i)\end{array}\right];\; i\in I\right\}\bigg)$. Condition \eqref{second-part} in the statement of the theorem is  satisfied.
\medskip 

\underline{\emph{Case 1b:}  $ \sup_{h \in H_2} \tilde{b}(h) / \sum_{k=\ell}^{n} |\tilde{a}^{k}(h)| = \infty$.} This implies that there is a sequence $h_m \in H_2$, $m = 1, 2, \ldots$ such that $\tilde b(h_m) / \sum_{k=\ell}^{n} |\tilde{a}^{k}(h_m)| > m$. This implies $\tilde b(h_m) > 0$ for all $m$. Rearranging the terms,  gives $\lim_{m\to \infty}\frac{\sum_{k=\ell}^{n} |\tilde{a}^{k}(h_m)|}{\tilde b(h_m)} = 0.$ The above limit implies $\lim_{m\to \infty}\frac{\tilde{a}^k(h_m)}{\tilde b(h_m)} = 0$ for  $k=\ell, \ell+ 1, \ldots, n$.
By  Theorem~\ref{theorem:FM-elim-succ}, there exists $u^{h_m} \in \R^{(I)}_+$ with finite support such that $\langle a^j, u^{h_m} \rangle = 0$ for  $j = 1, \ldots, \ell-1$, $\langle a^j, u^{h_m} \rangle = \tilde{a}^j(h_m)$ for $j=\ell, \ldots, n$ and $\langle b, u^{h_m} \rangle = \tilde b(h_m)$. Since  $\tilde b(h_m) > 0$,  $\langle a^j, \frac{u^{h_m}}{\tilde b(h_m)} \rangle = 0$ for all $j = 1, \ldots, \ell-1$, $\langle a^j, \frac{u^{h_m}}{\tilde b(h_m)} \rangle = \frac{\tilde a^j(h_m)}{\tilde b(h_m)}$ for $j=\ell, \ldots, n$ and $\langle b, \frac{u^{h_m}}{\tilde b(h_m)} \rangle = 1$.  Since $\lim_{m\to \infty} \frac{\tilde a^j(h_m)}{\tilde b(h_m)} = 0$ for  $j = 1, \ldots, n$, this gives a sequence of points in $\cone\bigg(\left\{\left[\begin{array}{c}\alpha^i \\ b(i)\end{array}\right];\; i\in I\right\}\bigg)$ that converges to $\left[\begin{array}{c}0_n \\ 1\end{array}\right]$ and condition \eqref{second-part} holds.

\vskip 5pt

\noindent\underline{\emph{Case 2: $P\neq \emptyset$.}} Consider the  semi-infinite linear program

\begin{equation}\label{eq:semi-infinite2}
\begin{array}{rl}
\inf_{x\in \R^n} & c^\top x \\
 \textrm{s.t.} & a^1(i)x_1 + a^2(i)x_2 + \cdots + a^n(i)x_n \geq b(i), \quad \text{ for } i\in I.
\end{array}
\end{equation}
If $P \neq \emptyset$, the semi-infinite linear program defined by~\eqref{eq:semi-infinite2} is feasible, i.e., $z^*  <\infty$. Since $c^\top x \geq d$ is a consequence, \eqref{eq:semi-infinite2} is bounded, i.e., $z^* \geq d > -\infty$. Reformulate as in~\eqref{eq:initial-system-obj}-\eqref{eq:initial-system-con} and apply  Fourier-Motzkin elimination and obtain the system~\eqref{eq:J_system} with the corresponding index sets $I_1, I_2, I_3$ and $I_4$. Then by Lemma~\ref{lemma:primal-optimal-value}  the primal optimal value is
\begin{eqnarray*}
z^{*} = \max\{  \sup_{h \in I_3} \tilde{b}(h),   \lim_{\delta \rightarrow \infty} \omega(\delta)    \}.
\end{eqnarray*}

Again consider two cases :

\underline{\emph{Case 2a:} $z^{*} = \sup_{h \in I_3} \tilde{b}(h)$.} This implies that for any fixed $\epsilon > 0$ there is an $h^* \in I_3$ such that $\tilde b(h^*) \geq z^* - \epsilon \geq d - \epsilon$. Since $h^* \in I_3$, Lemma~\ref{lemma:feasible-FDSILP}\eqref{item:I3-property} implies  that there exists $v^{h^*} \in \R^{(I)}$ such that $\langle a^j, v^{h^*} \rangle = c_j$ and $\tilde b(h^*) = \langle b, v^{h^*}\rangle \geq d-\epsilon$. Thus, $\left[\begin{array}{c} c \\ d -\epsilon \end{array}\right]$ is in $\cone\bigg(\left\{\left[\begin{array}{c}0_n \\ -1 \end{array}\right], \left[\begin{array}{c}\alpha^i \\ b(i)\end{array}\right];\; i\in I\right\}\bigg)$ where the multiplier for $\left[\begin{array}{c}0_n \\ -1 \end{array}\right]$ is $\tilde b(h^*) - (d -\epsilon)$. Since this is true for any  $\epsilon >0$,   $\left[\begin{array}{c}c \\ d\end{array}\right] \in \cl\bigg(\cone\bigg(\left\{\left[\begin{array}{c}0_n \\ -1\end{array}\right], \left[\begin{array}{c}\alpha^i \\ b(i)\end{array}\right];\; i\in I\right\}\bigg)\bigg)$ and condition \eqref{first-part} of the theorem holds.

\underline{\emph{Case 2b:} $z^{*} = \lim_{\delta \rightarrow \infty} \omega(\delta)$. } 
Since $-\infty < z^* < \infty$, by Lemma~\ref{lemma:convergence_lim_case}, there exists a subsequence of indices $h_m, m= 1, 2 , \ldots$ such that $h_m \in I_4$, $\tilde a^k(h_m) \to 0$ for all $k = \ell, \ldots, n$ and $\tilde b(h_m) \to z^*$. Let $\tilde \alpha^m \in \R^n$ be defined by $(\tilde{\alpha}^m)_k = 0$ for $k =1, \ldots, \ell-1$ and $(\tilde{\alpha}^m)_k = \tilde a^k(h_m)$ for $k=\ell, \ldots, n $. By Lemma~\ref{lemma:feasible-FDSILP}\eqref{item:I4-property}, for each $m\in \N$, $\tilde \alpha^m = \alpha^m - c$, for some $\alpha^m \in \cone(\{\alpha^i\}_{i\in I})$. Renaming $\tilde b(h_m) = b_m$, gives $\left[\begin{array}{c}\alpha^m \\ b_m \end{array}\right] \in \cone\bigg(\left\{\left[\begin{array}{c}\alpha^i \\ b(i)\end{array}\right];\; i\in I\right\}\bigg)$
\old{and
$$\left[\begin{array}{c}\tilde \alpha^m \\ 0 \end{array}\right] = \left[\begin{array}{c}\alpha^m - c \\ b_m - d\end{array}\right] + (b_m - d)\left[\begin{array}{c}0_n \\ -1 \end{array}\right].$$
Since $\tilde \alpha^m\to 0$ as $m \to \infty$,

$$\begin{array}{rl}& \left[\begin{array}{c}\alpha^m - c \\ b_m - d\end{array}\right] + (b_m - d)\left[\begin{array}{c}0_n \\ -1 \end{array}\right]\to 0 \\ \\
\Rightarrow & \left[\begin{array}{c}\alpha^m \\ b_m\end{array}\right] + (b_m - d)\left[\begin{array}{c}0_n \\ -1 \end{array}\right] - \left[\begin{array}{c}c  \\ d\end{array}\right]\to 0 \\ \\
\Rightarrow & \left[\begin{array}{c}\alpha^m \\ b_m\end{array}\right] + (b_m - d)\left[\begin{array}{c}0_n \\ -1 \end{array}\right]\to \left[\begin{array}{c}c  \\ d\end{array}\right].
\end{array}
$$}
and $\left[\begin{array}{c}\tilde \alpha^m \\ b_m - z^* \end{array}\right] = \left[\begin{array}{c}\alpha^m - c \\ b_m - d\end{array}\right] + (z^* - d)\left[\begin{array}{c}0_n \\ -1 \end{array}\right].$
Since $\tilde \alpha^m\to 0$ and $b_m \to z^*$ as $m \to \infty$,

$$\begin{array}{rl}& \left[\begin{array}{c}\alpha^m - c \\ b_m - d\end{array}\right] + (z^* - d)\left[\begin{array}{c}0_n \\ -1 \end{array}\right]\to 0 \\ \\
\Rightarrow & \left[\begin{array}{c}\alpha^m \\ b_m\end{array}\right] + (z^* - d)\left[\begin{array}{c}0_n \\ -1 \end{array}\right] - \left[\begin{array}{c}c  \\ d\end{array}\right]\to 0 \\ \\
\Rightarrow & \left[\begin{array}{c}\alpha^m \\ b_m\end{array}\right] + (z^* - d)\left[\begin{array}{c}0_n \\ -1 \end{array}\right]\to \left[\begin{array}{c}c  \\ d\end{array}\right].
\end{array}
$$

Now $z^* \geq d$.   Therefore
 $\left[\begin{array}{c}c \\ d\end{array}\right] \in \cl\bigg(\cone\bigg(\left\{\left[\begin{array}{c}0_n \\ -1\end{array}\right], \left[\begin{array}{c}\alpha^i \\ b(i)\end{array}\right];\; i\in I\right\}\bigg)\bigg)$ and   condition \eqref{first-part}  of the theorem holds. 
\end{proof}
\section{Conclusion}\label{s:conclusion}

This paper explores two related themes. The first is how the powerful extension of Fourier-Motzkin elimination to semi-infinite systems of linear inequalities is used to prove and provide insights about duality theory for semi-infinite linear programs. The second theme is that semi-infinite linear programming has implications for finite dimensional convex optimization. 

The connection between semi-infinite linear programming and convex optimization is made clear by the method of projection.
Fourier-Motzkin elimination is purely algebraic. It is simply the aggregation of pairs of linear inequalities using nonnegative multipliers. The key insight is that topological conditions  common in the duality theory of finite-dimensional convex and conic programming imply simple conditions that ensure duality results. There is no need to appeal to advanced convex analysis or results from the theory of topological vector spaces. 

%

Both themes, and the connections between them, deserve further exploration. Regarding the first, it might be fruitful to further explore the connections between our characterization of zero duality gap and the characterization presented in Theorem~8.2 of Goberna and L\'opez~\cite{goberna1998linear}. Goberna and L\'opez's approach is topological and based on separating hyperplane theory, whereas our approach is based on the purely algebraic Fourier-Motzkin elimination procedure. Our proof of the generalized Farkas'  theorem (see our Theorem~\ref{thm:gen-farkas-minkowski} and Theorem 3.1 in Goberna and L\'opez \cite{goberna1998linear}) provides a useful starting point for further exploration.

Regarding the second theme,  there are at least two avenues for further research. First, all the duality results for finite-dimensional convex optimization considered here were derived by showing the associated semi-infinite linear program was tidy. Recall that when \eqref{eq:SILP} is tidy, $\lim_{\delta \to \infty}\omega(\delta) = - \infty$. This condition (along with primal feasibility) suffices to establish primal solvability (Theorem~\ref{theorem:primal-solvability}) and zero duality gap (Theorem~\ref{theorem:zero-duality-gap}). However, tidiness is far from necessary, as demonstrated in Examples~\ref{example:primal-solvable}~and~\ref{ex:no-slater-duality}. Exploring how to translate more subtle sufficient conditions for zero duality gap arising from finite values for $\lim_{\delta \to \infty}\omega(\delta)$ into the language of finite dimensional convex optimization could prove fruitful.

This paper has not addressed the algorithmic aspects of Fourier-Motzkin elimination applied to semi-infinite linear programs. There is considerable work on computational approaches to solving semi-infinite linear programs, see for instance Glashoff and Gustavson \cite{glashoff1983linear} and Stein and Still \cite{stein2003solving}.
Obviously, when applied to semi-infinite linear programs, Fourier-Motzkin elimination is not a finite process, so a direct comparison with existing computational methods will certainly prove unfavorable for our approach. However, if the functions $b, a^k \in \R^I$ for $k = 1, \ldots, n$ could be characterized in a reasonably simple format, then symbolic elimination might be possible. 

\endgroup

\section*{Acknowledgements}

The authors thank the reviewers and associate editor for generous and highly insightful comments that led to a much better presentation.  

\bibliographystyle{plain}
\bibliography{references}

\appendix 
\section{Electronic Companion}

\subsection{Invariance of cleanliness under permutations}\label{ss:clean-invariance}

In this section of the Electronic Companion we provide a geometric interpretation of a {\it clean system}.  Recall a clean system is one where all of the variables are projected out, that is, there are no dirty variables.   The key results  are Theorem~\ref{theorem:invariance-of-cleanliness},   Theorem~\ref{theorem:clean-lineality-equivalence}, and Theorem~\ref{theorem:bounded-characterization}.  By Theorem~\ref{theorem:invariance-of-cleanliness},  if there is a variable permutation  that results in a clean system, then every variable permutation results in a clean system.  This is a very useful result.  It tells us that if  Fourier-Motzkin elimination  applied to (SILP)  results in a dirty variable, then there is no permutation that could ever make the elimination process find a clean system.  Hence there is no need to ever search for such a permutation, it does not exist.    Furthermore,  by Theorem~\ref{theorem:clean-lineality-equivalence}, if Fourier-Motzkin elimination does result in a clean system, under any permutation,  then we know the recession cone of the (SILP) feasible region is equal to the lineality space of the (SILP).   Hence dirty variables are always the result of the geometric property that the recession cone is not equal to the lineality space.   Finally, in Theorem~\ref{theorem:bounded-characterization} we give a necessary and sufficient condition for the Fourier-Motzkin elimination procedure to conclude that the feasible region of (SILP) is bounded.

\begin{theorem}\label{theorem:invariance-of-cleanliness}
 If there exists a permutation of the variables that results in a clean system using Fourier-Motzkin elimination, then every variable permutation results in a clean system.
\end{theorem}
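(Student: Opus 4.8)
The plan is to connect the purely syntactic notion of cleanliness to the geometric invariant of the feasible set, namely the gap between the recession cone $\rec(\Gamma)$ and the lineality space $\lin(\Gamma)$ of the system~\eqref{eq:initialSystem}. Since $\rec(\Gamma)$ and $\lin(\Gamma)$ are defined intrinsically from $\Gamma \subseteq \R^n$, they do not depend on how the variables are ordered. So if we can show that a clean output is equivalent to the condition $\rec(\Gamma) = \lin(\Gamma)$ (independently of the permutation used), then invariance of cleanliness follows immediately. This is exactly the content of Theorem~\ref{theorem:clean-lineality-equivalence}, which the electronic companion states separately; the present theorem is then its corollary.

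First I would recall what a clean system means: after running the Fourier-Motzkin elimination procedure on~\eqref{eq:initialSystem} (in canonical form for the chosen permutation), every decision variable is eliminated, so $\ell = n+1$ and the output is a pure list of constant inequalities $0 \ge \tilde b(h)$ for $h \in \tilde I$. Equivalently, in the notation of~\eqref{eq:defineI1}--\eqref{eq:defineI2}, we have $H_2 = \emptyset$. The key step is the ``only if'' direction: if some permutation $\pi$ yields a clean system, then $\rec(\Gamma) = \lin(\Gamma)$. To see this, suppose $d \in \rec(\Gamma)$. Feasibility of $\Gamma$ (clean output with all $\tilde b(h)\le 0$, else $\Gamma$ is empty and there is nothing to prove since then both cone and lineality are trivial after reindexing — one should handle the empty case separately, noting an infeasible system stays infeasible under any permutation) together with the projection characterization (Theorem~\ref{theorem:FM-elim-succ}) forces $\langle \alpha^i, d\rangle = 0$ for all the aggregated constraints whose multipliers $u^h$ satisfy $\langle a^k, u^h\rangle = 0$ for all $k$; and cleanliness means \emph{every} surviving constraint is of this form. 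One shows that a recession direction must then in fact satisfy $\langle \alpha^i, d \rangle = 0$ for all $i \in I$, hence $-d$ is also a recession direction, i.e. $d \in \lin(\Gamma)$. The reverse inclusion $\lin(\Gamma)\subseteq\rec(\Gamma)$ is trivial. For the ``if'' direction (which we need so that the equivalence is genuinely permutation-free): if $\rec(\Gamma) = \lin(\Gamma)$, then running Fourier-Motzkin under \emph{any} permutation cannot leave a dirty variable, because a dirty variable $x_k$ in the output~\eqref{eq:defineI2} would give, via $x(\delta;\ell)$ (Definition~\ref{def:x-delta}) and Remark~\ref{x-delta-feasible}, a genuine one-sided recession direction that is not in the lineality space, contradicting $\rec(\Gamma)=\lin(\Gamma)$.

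Putting the two directions together: ``some permutation gives a clean system'' $\iff$ ``$\rec(\Gamma)=\lin(\Gamma)$'' $\iff$ ``every permutation gives a clean system,'' which is the claim. I expect the main obstacle to be the careful bookkeeping in the ``only if'' direction — specifically, arguing that cleanliness of the output (a statement about the terminal system) propagates back to force $\langle \alpha^i, d\rangle = 0$ for \emph{all} original constraints $i\in I$, not merely for the finitely many aggregated multipliers surviving at termination. This requires using Theorem~\ref{theorem:all-rays} to express an arbitrary multiplier combination annihilating the relevant columns as a nonnegative combination of the Fourier-Motzkin multipliers $u^h$, together with the fact that each $u^h$ has finite support and, in the clean case, annihilates \emph{every} column $a^1,\dots,a^n$. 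The edge cases — $\Gamma$ empty, or $\Gamma$ a single point, or $n$ versus $n-1$ variable systems in an induction — need to be stated explicitly but are routine. Since Theorem~\ref{theorem:clean-lineality-equivalence} is already available in the companion, the cleanest writeup is simply: ``This is immediate from Theorem~\ref{theorem:clean-lineality-equivalence}, because $\rec(\Gamma)$ and $\lin(\Gamma)$ are independent of the labeling of the coordinates.''
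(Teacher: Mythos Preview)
Your ``cleanest writeup'' suggestion --- simply citing Theorem~\ref{theorem:clean-lineality-equivalence} --- is circular. In the paper's logical order, Theorem~\ref{theorem:clean-lineality-equivalence} is proved \emph{using} Theorem~\ref{theorem:invariance-of-cleanliness} (see item~3 in its proof). So you cannot appeal to it here. The paper instead proves Theorem~\ref{theorem:invariance-of-cleanliness} by citing Proposition~\ref{prop:clean-permutation-invariant}, which in turn rests on the pair Proposition~\ref{prop:dirty-variable} (conic index set $\Rightarrow$ dirty under every permutation) and Corollary~\ref{corollary:dirty-is-conic} (dirty under some permutation $\Rightarrow$ conic index set). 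The equivalence with $\rec(\Gamma)=\lin(\Gamma)$ is a \emph{consequence}, not the engine.

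Your direct sketch has the right shape but two genuine gaps. First, in the ``only if'' direction you propose to use Theorem~\ref{theorem:all-rays} to show that a recession direction $d$ must satisfy $\langle \alpha^i, d\rangle = 0$ for all $i\in I$. But Theorem~\ref{theorem:all-rays} is a statement about \emph{dual} multiplier vectors $\bar u\in\R^{(I)}_+$, not about primal directions $d\in\R^n$; it does not give you control over the slacks $\langle \alpha^i, d\rangle$. The paper's actual mechanism here is the Lineality Implication Lemma~\ref{lemma:lineality-implication}: one tracks the conic index set back through the elimination and shows that if the last remaining conic variable ends up with a zero column in the projected system, the whole index set was secretly a lineality set --- a delicate recursive argument you have not supplied. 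Second, in the ``if'' direction you say a dirty variable yields a one-sided recession direction ``via $x(\delta;\ell)$,'' but $x(\delta;\ell)$ lives in the projected space $\R^{n-\ell+1}$; lifting it to a recession direction of the original $\Gamma\subseteq\R^n$ that is provably \emph{not} in the lineality space is exactly the content of the Conic Index Set Extension (Lemma~\ref{lemma:conic-index-extension} and Proposition~\ref{prop:conic-index-extension}), which your sketch omits.
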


\begin{proof}   By Proposition~\ref{prop:clean-permutation-invariant},
if there exists a permutation of the variables that results in a clean system when the Fourier-Motzkin procedure is applied, then every permutation of the variables results in a  clean system.
\end{proof}

Recall $\Gamma$ is the feasible region of the semi-infinite linear system  of (SILP).  The \emph{recession cone} of $\Gamma$ is denoted by $\rec(\Gamma)$  and \emph{lineality space} of $\Gamma$ is denoted by  $\lin(\Gamma) $,  respectively.

\begin{theorem}\label{theorem:clean-lineality-equivalence}
Every  permutation of the variables  results in a clean system using Fourier-Motzkin elimination if and only if  $\rec(\Gamma) = \lin(\Gamma)$.
\end{theorem}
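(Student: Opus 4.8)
The plan is to prove the equivalence by relating cleanliness of the Fourier-Motzkin output to the geometry of $\Gamma$ via the projection characterization already established in Theorem~\ref{theorem:FM-elim-succ}. Because of Theorem~\ref{theorem:invariance-of-cleanliness}, it suffices to show that \emph{some} permutation yields a clean system if and only if $\rec(\Gamma)=\lin(\Gamma)$; so I will work with a fixed canonical ordering and the output system \eqref{eq:defineI1}-\eqref{eq:defineI2} with index sets $H_1$ and $H_2$, where the system is clean exactly when $H_2=\emptyset$ (equivalently $\ell=n+1$).

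For the direction ``clean $\Rightarrow \rec(\Gamma)=\lin(\Gamma)$'': suppose the output is clean, so all variables $x_1,\dots,x_n$ are eliminated and the output consists only of the constraints $0\ge \tilde b(h)$, $h\in H_1$. Take any $r\in\rec(\Gamma)$; I must show $-r\in\rec(\Gamma)$, i.e.\ $r\in\lin(\Gamma)$. The key observation is that $\rec(\Gamma)=\{r\in\R^n : \langle \alpha^i, r\rangle \ge 0 \text{ for all } i\in I\}$ where $\alpha^i=(a^1(i),\dots,a^n(i))$. Now apply the Fourier-Motzkin procedure to the \emph{homogeneous} system $\langle \alpha^i, x\rangle \ge 0$, $i\in I$: this has exactly the same coefficient data $a^k$ as \eqref{eq:initialSystem}, so by the observation in the proof of Theorem~\ref{theorem:bounded-zero-duality-gap} (the sets $\mathcal H_0(k),\mathcal H_+(k),\mathcal H_-(k)$ depend only on the coefficients), the same variables are eliminated in the same order, and the output of the homogeneous system is $0\ge 0$ for $h\in H_1$ and empty otherwise — hence describes all of $\R^{n-\ell+1}$ when projected. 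Since $\ell=n+1$, the homogeneous system's feasible set is a subspace (its projection onto the empty set of coordinates is trivially all of $\{0\}=\R^0$, and lifting back shows the solution set is closed under negation). More carefully: each elimination step on a homogeneous system preserves the property that the constraint system is symmetric under $x\mapsto -x$ when there are no surviving one-sided (dirty) constraints, because a dirty variable is precisely what breaks this symmetry. Therefore $\rec(\Gamma)$ is a subspace, which means $\rec(\Gamma)=-\rec(\Gamma)=\rec(\Gamma)\cap(-\rec(\Gamma))=\lin(\Gamma)$.

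For the converse ``$\rec(\Gamma)=\lin(\Gamma) \Rightarrow$ clean'': I argue by contrapositive. Suppose the output is dirty, so $H_2\neq\emptyset$ and $\ell\le n$. By the structure of a dirty system noted after \eqref{eq:defineI2}, for each $k\in\{\ell,\dots,n\}$ the sign of $\tilde a^k(h)$ is constant over $h\in H_2$, and $\sum_{k=\ell}^n|\tilde a^k(h)|>0$. Consider the vector $x(1;\ell)=(\bar x_\ell,\dots,\bar x_n)$ from Definition~\ref{def:x-delta}: it satisfies $\sum_{k=\ell}^n \tilde a^k(h)\bar x_k = \sum_{k=\ell}^n|\tilde a^k(h)|>0$ for all $h\in H_2$, and $\sum_{k=\ell}^n\tilde a^k(h)\bar x_k=0$ for $h\in H_1$. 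By Theorem~\ref{theorem:FM-elim-succ} the output system describes $P(\Gamma;x_1,\dots,x_{\ell-1})$, and the multipliers $u^h$ certify that each output inequality is a nonnegative aggregation of the original ones; using these multipliers one shows the direction obtained by extending $(0,\dots,0,\bar x_\ell,\dots,\bar x_n)$ (padding the clean coordinates with zeros — or more precisely, lifting via the recession directions of the clean-variable constraints, which exist since those variables were eliminated without obstruction) gives a recession direction $r$ of $\Gamma$ with the property that $-r$ is \emph{not} a recession direction, because the strict inequality $\langle \alpha^i,\cdot\rangle$-aggregates to a positive value on $H_2$ but $-r$ would aggregate to a negative value, violating feasibility-preservation. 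Hence $\rec(\Gamma)\neq\lin(\Gamma)$.

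The main obstacle I anticipate is the careful bookkeeping in the converse direction: turning the dirty-system vector $x(\delta;\ell)$ into an honest recession direction of the full-dimensional $\Gamma$ (i.e.\ correctly ``lifting'' through the eliminated clean variables) and then verifying that its negation genuinely fails to be a recession direction. The cleanest route is probably to pass to the homogeneous system throughout — replace $b$ by $0$, observe Fourier-Motzkin produces the same elimination pattern, note $\rec(\Gamma)$ is exactly the feasible set of that homogeneous system, and then the statement reduces to: \emph{a homogeneous semi-infinite linear system has a subspace as its solution set if and only if Fourier-Motzkin eliminates all variables}, which is a clean self-contained lemma whose proof is an easy induction on the number of variables using Theorem~\ref{theorem:FM-elim} at each step. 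I would likely restructure the proof around that lemma rather than manipulating $\rec$ and $\lin$ directly.
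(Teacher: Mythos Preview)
Your final suggested route—pass to the homogeneous system (same FM elimination pattern, since the sets $\mathcal{H}_\pm(k)$ depend only on the coefficient columns, not on $b$) and prove the lemma ``the feasible set of a homogeneous semi-infinite system is a subspace iff FM eliminates all variables''—is correct and genuinely different from the paper. The paper instead introduces \emph{conic index sets} (Definition~\ref{definition:conic-index-set}) and builds a chain of extension results (Lemma~\ref{lemma:conic-index-extension}, Proposition~\ref{prop:conic-index-extension}, Corollary~\ref{corollary:dirty-is-conic}, Lemma~\ref{lemma:lineality-implication}, Proposition~\ref{prop:dirty-variable}) showing that a dirty variable appears under some permutation iff a conic index set exists iff $\rec(\Gamma)\neq\lin(\Gamma)$. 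Your lemma is more direct and, once proved for a single elimination order, yields Theorem~\ref{theorem:invariance-of-cleanliness} as a free corollary rather than a hypothesis. The paper's conic-index machinery is reused in Theorem~\ref{theorem:bounded-characterization}, which is what it buys.

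That said, your writeup before the final paragraph has a real gap. The ``symmetry under $x\mapsto -x$'' justification for clean$\Rightarrow$subspace is either circular or vacuous: symmetry of the homogeneous feasible set is exactly what you are trying to prove, and intermediate constraint families of a clean elimination need not be sign-symmetric as sets of inequalities. The induction you propose does work, but the key step you do not supply is this: with $x_1$ eliminable (both $\mathcal{H}_\pm(1)$ nonempty) and $r\in\Gamma$, the inductive hypothesis gives $(-r_2,\dots,-r_n)\in P(\Gamma;x_1)$, hence some $s=(s_1,-r_2,\dots,-r_n)\in\Gamma$; since the homogeneous $\Gamma$ is a convex cone, $r+s=(r_1+s_1,0,\dots,0)\in\Gamma$, which forces all $a^1(i)$ to share a sign unless $r_1+s_1=0$, contradicting that both $\mathcal{H}_\pm(1)$ are nonempty; so $s=-r\in\Gamma$. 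For the converse, your instinct to abandon ``padding with zeros'' is right: work with $x(1;\ell)$ in the projected homogeneous system, observe that $-x(1;\ell)$ violates the $H_2$ constraints strictly, and use that a linear projection of a subspace is a subspace.
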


\begin{proof} The logic is as follows.
\begin{itemize}

\item[1.]  By Definition~\ref{definition:conic-index-set},  there exists a conic index set for (SILP) if and only if $\rec(\Gamma) \neq \lin(\Gamma)$.

\item[2.]  By Proposition~\ref{prop:dirty-variable},  if  (SILP) contains a conic index set, then the Fourier Motzkin elimination procedure will terminate with at least one dirty variable regardless of the variable permutation used in the elimination procedure.  By  Corollary~\ref{corollary:dirty-is-conic} if there is a permutation of the variables that results in a dirty variable then (SILP) has a conic index set.  Hence (SILP) has a conic index set if and only if there is permutation of the variables that results in a dirty variable using Fourier-Motzkin elimination.

\item[3.]  By Theorem~\ref{theorem:invariance-of-cleanliness}   there is permutation of the variables that results in a dirty variable using Fourier-Motzkin elimination if and only if there is no permutation of the variable that results in a clean system.  Then by item 2.,  (SILP) contains a conic index set if and only if there is no permutation of the variable that results in a clean system.

\item[4.]  Items 1. and 3.  imply  there is no permutation of the variables that results in a clean system if and only if $\rec(\Gamma) \neq \lin(\Gamma)$.

\end{itemize}

The contrapositive of item 4. gives our result.
\end{proof}

\begin{theorem}\label{theorem:bounded-characterization}
If (SILP) is feasible, then the feasible region of (SILP) is bounded if and only if,   for every variable permutation, application of the Fourier-Motzkin elimination procedure (see Section~\ref{s:fm-elim}) to (SILP) results in both ${\cal H}_{+}(j)$ and ${\cal H}_{-}(j)$ nonempty at each iteration of Step  2b.   
\end{theorem}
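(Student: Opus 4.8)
The plan is to prove the two directions separately, leaning on the characterization of cleanliness in terms of recession cones (Theorem~\ref{theorem:clean-lineality-equivalence}) together with the feasibility result (Theorem~\ref{theorem:feasible}) and the boundedness result (Theorem~\ref{theorem:region-boundedness}).

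First I would handle the ``easy'' direction: suppose that for \emph{every} variable permutation, each iteration of Step~2b has both $\mathcal H_+(j)$ and $\mathcal H_-(j)$ nonempty. Then in every iteration the ``if'' branch of Step~2b.(i) executes, so every variable is eliminated and the output system is clean, i.e.\ $H_2 = \emptyset$ (in the notation of \eqref{eq:defineI1}--\eqref{eq:defineI2}) for every permutation. By Theorem~\ref{theorem:clean-lineality-equivalence} this gives $\rec(\Gamma) = \lin(\Gamma)$. It then remains to argue that a feasible system with $\rec(\Gamma) = \lin(\Gamma)$ is bounded; this requires ruling out that $\lin(\Gamma)$ itself is a nonzero subspace. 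Here I would invoke Theorem~\ref{theorem:clean-lineality-equivalence} more carefully together with the structure of Fourier--Motzkin: if $\lin(\Gamma)$ contained a nonzero vector $d$, then choosing a coordinate $x_k$ with $d_k \neq 0$ and placing $x_k$ last in the elimination order, the residual one-variable system before eliminating $x_k$ would force $\mathcal H_+(k) \cup \mathcal H_-(k)$ to be empty (since $d$ is a line of the feasible set along $x_k$ after all other coordinates are projected out), contradicting the hypothesis. Hence $\lin(\Gamma) = \{0\}$, so $\rec(\Gamma) = \{0\}$, and a nonempty $\Gamma$ with trivial recession cone is bounded.

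Next I would handle the converse: assume $\Gamma$ is nonempty and bounded. By Theorem~\ref{theorem:region-boundedness}, applying the Fourier--Motzkin procedure (in any fixed permutation, which we may assume is canonical by Lemma~\ref{lemma:canonical} and Remark~\ref{remark:canonical}) yields $H_2 = \emptyset$, i.e.\ a clean system, and this holds for every permutation by Theorem~\ref{theorem:invariance-of-cleanliness}. I then need to promote ``the output is clean'' to ``every iteration of Step~2b had both $\mathcal H_+(j)$ and $\mathcal H_-(j)$ nonempty.'' The point is that the ``if'' branch of Step~2b is the only branch that actually eliminates variable $x_j$; Step~2c (the case $\mathcal H_+(j) \cup \mathcal H_-(j) = \emptyset$) removes $j$ from $\mathcal D$ without eliminating it in the projection sense, and if exactly one of $\mathcal H_+(j), \mathcal H_-(j)$ is empty the procedure skips $j$ entirely, leaving $x_j$ dirty. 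Since cleanliness means \emph{all} variables were eliminated, every $x_j$ must have gone through the ``if'' branch, which is exactly the condition that both $\mathcal H_+(j)$ and $\mathcal H_-(j)$ were nonempty at that iteration. One subtlety: I should check that the case $\mathcal H_+(j) \cup \mathcal H_-(j) = \emptyset$ cannot occur for a bounded feasible region — but if it did, the residual system would not constrain $x_j$ at all once the earlier variables are projected out, producing an unbounded line in $\rec(\Gamma)$, contradicting boundedness; so this case is genuinely excluded.

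The main obstacle I anticipate is the careful bookkeeping in the forward direction connecting the purely syntactic statement about $\mathcal H_+(j), \mathcal H_-(j)$ at each iteration to the geometric conclusion $\rec(\Gamma) = \lin(\Gamma) = \{0\}$, and in particular ruling out a nonzero lineality space. The cleanest route is probably to route everything through Theorem~\ref{theorem:clean-lineality-equivalence} and Theorem~\ref{theorem:region-boundedness} rather than re-deriving the geometry by hand, and to use the invariance statement (Theorem~\ref{theorem:invariance-of-cleanliness}) to reduce ``for every permutation'' claims to ``for some permutation'' claims wherever convenient. I expect the bulk of the argument to be a short chain of citations once the two branches of Step~2b are correctly interpreted.
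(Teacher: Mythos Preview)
Your proposal is correct but takes a genuinely different route from the paper in both directions.

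For the forward direction (both nonempty at every step $\Rightarrow$ bounded), the paper gives a direct elementary recursion: since $\mathcal H_+(j)$ and $\mathcal H_-(j)$ are both nonempty, any single $p \in \mathcal H_+(j)$ and $q \in \mathcal H_-(j)$ yield explicit affine upper and lower bounds on $x_j$ in terms of $x_{j+1},\dots,x_n$; starting from finite bounds on $x_n$ (guaranteed by feasibility of the one-variable residual system) and recursing backward bounds every coordinate. Your approach instead routes through Theorem~\ref{theorem:clean-lineality-equivalence} to get $\rec(\Gamma) = \lin(\Gamma)$ and then uses a permutation trick to force $\lin(\Gamma) = \{0\}$. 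The paper's argument is shorter and self-contained; yours leans on the heavier characterization theorem but is conceptually clean and makes the geometry explicit.

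For the backward direction (bounded $\Rightarrow$ both nonempty at every step), the paper invokes the conic/lineality index-set machinery directly: boundedness means $\rec(\Gamma) = \{0\}$, so there is no conic index set and no lineality index set; Corollary~\ref{corollary:dirty-is-conic} then rules out the exactly-one-empty case and Corollary~\ref{corollary:zero-is-lineality} rules out the both-empty case. Your version uses Theorem~\ref{theorem:region-boundedness} (bounded $\Rightarrow$ $H_2 = \emptyset$, hence clean) to handle the first case and a direct unboundedness argument for the second. These are comparable in depth --- the paper's corollaries are essentially the packaged form of the geometric observations you sketch by hand. One point worth tightening in your write-up: when ruling out $\mathcal H_+(j) \cup \mathcal H_-(j) = \emptyset$, take $j$ to be the \emph{first} iteration where both-nonempty fails, so that iterations $1,\dots,j-1$ genuinely went through Step~2b and the residual system at step $j$ describes $P(\Gamma; x_1,\dots,x_{j-1})$ via Corollary~\ref{cor:clean-projection}; without this, the phrase ``once the earlier variables are projected out'' is not yet justified.
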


\begin{proof}
Assume without loss the variable permutation is $\{ 1, 2, \ldots, n \}$ and that  at each iteration of step 2b of the Fourier-Motzkin elimination procedure, both ${\cal H}_{+}(j)$ and ${\cal H}_{-}(j)$ are not empty.  Show that this implies (SILP) is bounded.    Since both ${\cal H}_{+}(j)$ and ${\cal H}_{-}(j)$ are not empty
\begin{eqnarray*}
x_{j} &\ge&   \frac{\tilde b(p)}{\tilde a^{j}(p)} -\sum_{k = j +1}^{n} \frac{\tilde a^{k}(p)}{\tilde a^{j}(p)} x_{k},  \quad \forall p \in {\cal H}_{+}(j)   \\
x_{j}  &\le&  \frac{\tilde b(q)}{\tilde a^{j}(q)} -\sum_{k = j +1}^{n} \frac{\tilde a^{k}(q)}{\tilde a^{j}(q)} x_{k}, \quad \forall q \in {\cal H}_{-}(j).  
\end{eqnarray*}
Therefore $x_{j}$ has an upper bound  and a lower bound if the variables  $x_{j+1},  \ldots, x_{n}$ are bounded.  When $j = n$,
\begin{eqnarray*}
 x_{n} &\ge&  \sup \{  \frac{\tilde b(p)}{\tilde a^{j}(p)} \, : \, p \in {\cal H}_{+}(n) \} \\
  x_{n} &\le&  \inf \{  \frac{\tilde b(q)}{\tilde a^{j}(q)} \, : \, q \in {\cal H}_{-}(n) \}. 
\end{eqnarray*}
Therefore variable $x_{n}$ has a lower bound and an upper bound.  Then it follows from a simple recursive argument that variables $x_{n-1}, \ldots, x_{1}$ are bounded and the feasible region of (SILP) is bounded. 

\vskip 7pt

Now assume the feasible region of (SILP) is bounded. Then there cannot exists a conic index set nor a lineality index set.  Then by  Corollary~\ref{corollary:dirty-is-conic} there cannot be a dirty variable, i.e.  the case where  ${\cal H}_{+}(j)$ or ${\cal H}_{-}(j)$  is empty, but not both empty. By Corollary~\ref{corollary:zero-is-lineality}  there is never a variable  $j$ with both ${\cal H}_{+}(j)$ and ${\cal H}_{-}(j)$  empty. Then  at each iteration of step 2b of the Fourier-Motzkin elimination procedure, both ${\cal H}_{+}(j)$ and ${\cal H}_{-}(j)$ are not empty.
\end{proof}

The results used in the proofs of Theorem~\ref{theorem:invariance-of-cleanliness},   Theorem~\ref{theorem:clean-lineality-equivalence}, and  Theorem~\ref{theorem:bounded-characterization}   are in Section~\ref{s:appendix-permutation-independent}.  Basic definitions used in these theorems  are in  Section~\ref{s:appendix-basic-defintions}.

\subsubsection{Basic Definitions}\label{s:appendix-basic-defintions}

 \begin{definition}\label{definition:conic-index-set}
An index set $J_{C} =  \{ k_{1}, k_{2}, \ldots, k_{m} \}  \subseteq \{ 1, \ldots, n \}$ is a \emph{conic index set} if and only if there exist nonzero $\alpha_{k_{1}},  \alpha_{k_{2}},  \cdots \alpha_{k_{m}}$  such that for every feasible solution $(\overline{x}_{1}, \ldots, \overline{x}_{n})$  to (SILP),  the vector  $(\hat{x}_{1}, \ldots, \hat{x}_{n})$ defined by
\begin{eqnarray*}
 \hat{x}_{k} = \overline{x}_{k}, \quad \forall k \notin J_{C}, && \hat{x}_{k} = \overline{x}_{k} + r \alpha_{k}, \quad \forall k \in J_{C} 
\end{eqnarray*}
is feasible for all $r > 0$, but    the vector  $(\tilde{x}_{1}, \ldots, \tilde{x}_{n})$    defined by
\begin{eqnarray*}
 \tilde{x}_{k} = \overline{x}_{k}, \quad \forall k \notin J_{C}, && \tilde{x}_{k} = \overline{x}_{k} -   r \alpha_{k}, \quad \forall k \in J_{C}.
\end{eqnarray*}
is infeasible for a sufficiently large $r > 0$.
\end{definition}

\begin{remark}
If $J_{C} =  \{ k_{1}, k_{2}, \ldots, k_{m} \}  \subseteq \{ 1, \ldots, n \}$ is a \emph{conic index set} in Definition~\ref{definition:conic-index-set} then $y = (y_{1}, \ldots, y_{n})$ defined by
\begin{eqnarray*}
y_{k}  = 0, \quad \forall k \notin J_{C}, &&  y_{k} =  \alpha_{k}, \quad \forall k \in J_{C} 
\end{eqnarray*}
is an element of $ \rec(\Gamma)$ since for any feasible  $\overline{x}$,  $\overline{x} + r y \in \Gamma$ for all   $r > 0.$   However, for sufficiently large $r$,  $\overline{x} - r y \notin \Gamma$  so  $y \notin   \lin(\Gamma).$  Likewise each element in $ \rec(\Gamma) \backslash \lin(\Gamma)$ corresponds to a conic index set. \hfill $\triangleleft$
\end{remark}

\begin{definition}\label{definition:lineality-index-set}
An index set $J_{L} =  \{ k_{1}, k_{2}, \ldots, k_{m} \}  \subseteq \{ 1, \ldots, n \}$ is a \emph{lineality index set} if and only if there exist nonzero $\alpha_{k_{1}},  \alpha_{k_{2}},  \cdots \alpha_{k_{m}}$  such that for every feasible solution $(\overline{x}_{1}, \ldots, \overline{x}_{n})$  to (SILP),  the vector  $(\hat{x}_{1}, \ldots, \hat{x}_{n})$  defined by 
\begin{eqnarray*}
 \hat{x}_{k} = \overline{x}_{k}, \quad \forall k \notin J_{L}, && \hat{x}_{k} = \overline{x}_{k} + r \alpha_{k}, \quad \forall k \in J_{L}
 \end{eqnarray*}
 is feasible for all $r > 0$, and the vector $(\tilde{x}_{1}, \ldots, \tilde{x}_{n})$    
defined by
 \begin{eqnarray*}
  \tilde{x}_{k} = \overline{x}_{k}, \quad \forall  k \notin J_{L}, && \tilde{x}_{k} = \overline{x}_{k} -   r \alpha_{k}, \quad \forall k \in J_{L}.
\end{eqnarray*}
is also  feasible for all $r > 0$.
\end{definition}

\subsubsection{Clean Systems are Permutation Independent}\label{s:appendix-permutation-independent}

In this Section we assume  that $(SILP)$ is feasible.   Also assume  that  the FM procedure has eliminated variables $x_{1}, \ldots, x_{\ell}$ and the system of inequalities describing $P(\Gamma; x_{1}, \ldots, x_{\ell})$ is
\begin{eqnarray}
\tilde{a}^{\ell+1}(i) x_{\ell  + 1}+ \tilde{a}^{\ell + 2}(i) x_{\ell + 2} + \cdots + \tilde{a}^{n} x_{n} \ge \tilde{b}(i), \quad i \in \tilde{I}.  \label{eq:conic-variable-generation-1}
\end{eqnarray}
We use the notation  $J_{C}(\ell+1) \subseteq  \{ \ell  + 1, \ldots,   n \}$  to denote a conic index set with respect to the system~\eqref{eq:conic-variable-generation-1}.

\begin{remark}\label{remark:coefficient-nonnegativity}
 If $(\overline{x}_{1}, \ldots,  \overline{x}_{n})$ is a feasible solution to (SILP), then by Theorem 2.2, $(\overline{x}_{\ell + 1}, \ldots,  \overline{x}_{n})$ is a feasible solution to~\eqref{eq:conic-variable-generation-1}.  Then by definition of conic index set, for all $r > 0$, $(\hat{x}_{\ell + 1}, \ldots, \hat{x}_{n})$ is   also feasible to~\eqref{eq:conic-variable-generation-1}  where
\begin{eqnarray*}
 \hat{x}_{k} = \overline{x}_{k}, \quad \forall  k \notin J_{C}(\ell+1) \text{  and  } k > \ell,  \qquad \hat{x}_{k} = \overline{x}_{k} + r \alpha_{k}, \quad \forall k \in J_{C}(\ell+1).
\end{eqnarray*}
Since $(\hat{x}_{\ell + 1}, \ldots, \hat{x}_{n})$ is   also feasible to~\eqref{eq:conic-variable-generation-1}   for all $r > 0$  it follows that
\begin{eqnarray*}
\sum_{k \in J_{C}(\ell+1)}r \alpha_{k} \tilde{a}^{k}(i)   =   r \sum_{k \in J_{C}(\ell+1)} \alpha_{k} \tilde{a}^{k}(i)  \ge 0, \quad i \in \tilde{I}.
\end{eqnarray*}
  This implies 
\begin{eqnarray}
\sum_{k \in J_{C}(\ell+1)} \alpha_{k} \tilde{a}^{k}(i) \ge 0, \quad i \in \tilde{I}.   \label{eq:fm-consitency}
\end{eqnarray}  
\end{remark}

\begin{lemma}(Conic Index Set Extension)\label{lemma:conic-index-extension}
Assume  that   the Fourier-Motzkin  procedure has eliminated variables $x_{1}, \ldots, x_{\ell}$ producing the system~\eqref{eq:conic-variable-generation-1}  that describes $P(\Gamma; x_{1}, \ldots, x_{\ell})$.
If  $J_{C}(\ell+1) \subseteq  \{ \ell  + 1, \ldots,   n \}$  is a conic index set of $P(\Gamma; x_{1}, \ldots, x_{\ell})$, then there is a  conic index set $J_{C}(\ell)$ of $P(\Gamma; x_{1}, \ldots, x_{\ell -1})$ such that $J_{C}(\ell) = J_{C}(\ell+1) \cup \{ \ell \}$ or $J_{C}(\ell) = J_{C}(\ell+1) $.
\end{lemma}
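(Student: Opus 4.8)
The plan is to analyze what happens to the conic index set $J_C(\ell+1)$ of the projected system \eqref{eq:conic-variable-generation-1} when we ``un-project'' one step, i.e.\ when we restore the variable $x_\ell$ and pass from $P(\Gamma;x_1,\dots,x_\ell)$ to $P(\Gamma;x_1,\dots,x_{\ell-1})$. By the Fourier-Motzkin machinery (Theorem~\ref{theorem:FM-elim} and Corollary~\ref{cor:FM-elim}), the system describing $P(\Gamma;x_1,\dots,x_{\ell-1})$ has, as its coefficient columns for $x_{\ell+1},\dots,x_n$, exactly the coefficients that get aggregated to form \eqref{eq:conic-variable-generation-1} when $x_\ell$ is eliminated. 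So I would start with a feasible point $(\bar x_1,\dots,\bar x_n)\in\Gamma$, restrict it to $(\bar x_\ell,\dots,\bar x_n)$, which is feasible to the ($x_\ell$-included) system, and then try to push in the direction $\alpha$ supported on $J_C(\ell+1)$ (with $\alpha_k=0$ for $k\notin J_C(\ell+1)$, $k>\ell$, and $\alpha_\ell$ to be determined). The direction $\alpha$ already has the property from Remark~\ref{remark:coefficient-nonnegativity}, inequality~\eqref{eq:fm-consitency}, that it does not decrease the left-hand side of any inequality in \eqref{eq:conic-variable-generation-1}.

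The key case split I expect: either the direction $(\alpha_{\ell+1},\dots,\alpha_n)$ already satisfies $\sum_{k=\ell+1}^n \alpha_k a^k(i)\ge 0$ for \emph{every} $i\in I$ in the pre-elimination system (not just after aggregation), in which case I can take $\alpha_\ell=0$ and $J_C(\ell) = J_C(\ell+1)$ works directly — moving in direction $\alpha$ keeps feasibility for all $r>0$, while moving in $-\alpha$ violates feasibility for large $r$ because it violated it already in the projected system (this reverse direction is where the ``conic, not lineality'' property is preserved). Otherwise, there is some constraint $i$ with a negative coefficient on $x_\ell$ among the ones needing correction, and I need to choose $\alpha_\ell$ so that $\alpha_\ell a^\ell(i) + \sum_{k>\ell}\alpha_k a^k(i)\ge 0$ for all $i\in\mathcal H_-(\ell)\cup\mathcal H_+(\ell)$ simultaneously; the standard Fourier-Motzkin bound shows such an $\alpha_\ell$ exists precisely because \eqref{eq:fm-consitency} holds for the aggregated (pairwise-summed) constraints, which bounds the feasible interval for $\alpha_\ell$ from both sides (and the interval is nonempty). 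Then $J_C(\ell) = J_C(\ell+1)\cup\{\ell\}$. In both cases I must also verify the ``infeasible for large $r$ in the $-\alpha$ direction'' clause: since moving in $-\alpha$ from the projection of $\bar x$ eventually leaves $P(\Gamma;x_1,\dots,x_\ell)$, it must eventually leave $P(\Gamma;x_1,\dots,x_{\ell-1})$ as well, because the latter projects onto (a superset is wrong — onto exactly) the former after eliminating $x_\ell$ plus the new $x_\ell$-coordinate constraints; a point not in $P(\Gamma;x_1,\dots,x_\ell)$ cannot be extended by any $x_\ell$ to a point in $P(\Gamma;x_1,\dots,x_{\ell-1})$, so the displaced point with its newly assigned $x_\ell = \bar x_\ell - r\alpha_\ell$ is infeasible.

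The main obstacle I anticipate is the bookkeeping in the second case: making sure that a \emph{single} scalar $\alpha_\ell$ can be chosen to work for all the pre-elimination constraints at once, and that this is exactly what \eqref{eq:fm-consitency} (applied to all pairs $(p,q)\in\mathcal H_+(\ell)\times\mathcal H_-(\ell)$, together with the constraints in $\mathcal H_0(\ell)$) guarantees. Concretely, \eqref{eq:fm-consitency} says $\sum_{k>\ell}\alpha_k \tilde a^k(h)\ge 0$ for the aggregated coefficients $\tilde a^k(h) = \frac{a^k(p)}{a^\ell(p)} - \frac{a^k(q)}{a^\ell(q)}$; rearranging this is exactly the statement that $\sup_{q\in\mathcal H_-(\ell)} \big(-\tfrac{1}{a^\ell(q)}\sum_{k>\ell}\alpha_k a^k(q)\big) \le \inf_{p\in\mathcal H_+(\ell)}\big(-\tfrac{1}{a^\ell(p)}\sum_{k>\ell}\alpha_k a^k(p)\big)$, and any $\alpha_\ell$ in this (possibly one-sided infinite, but nonempty) interval works, with the $\mathcal H_0(\ell)$ constraints handled separately by \eqref{eq:fm-consitency} restricted to $h\in\mathcal H_0(\ell)$. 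Once $\alpha_\ell$ is fixed, I would check: (a) feasibility of $\bar x + r(\,\alpha_\ell e^\ell + \sum_{k>\ell}\alpha_k e^k\,)$ for all $r>0$ directly against the original constraints of (SILP); (b) the $-\alpha$ direction is eventually infeasible, using the projection characterization as above; and (c) $\alpha$ is a nonzero vector (it has nonzero entries on $J_C(\ell+1)\ne\emptyset$), so the extended set is a genuine conic index set in the sense of Definition~\ref{definition:conic-index-set}. This completes the proof.
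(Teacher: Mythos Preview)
Your approach is essentially the same as the paper's: use Remark~\ref{remark:coefficient-nonnegativity} / inequality~\eqref{eq:fm-consitency} to show that the system of bounds on a candidate $\alpha_\ell$ is consistent, then take $J_C(\ell)=J_C(\ell+1)$ if $\alpha_\ell=0$ lies in the feasible interval and $J_C(\ell)=J_C(\ell+1)\cup\{\ell\}$ otherwise. One bookkeeping slip to fix: in your displayed consistency condition the roles of $\mathcal H_+(\ell)$ and $\mathcal H_-(\ell)$ are reversed --- the constraints $p\in\mathcal H_+(\ell)$ give \emph{lower} bounds $\alpha_\ell \ge -\tfrac{1}{a^\ell(p)}\sum_{k>\ell}\alpha_k a^k(p)$ and $q\in\mathcal H_-(\ell)$ give \emph{upper} bounds, so the correct nonemptiness condition is $\sup_{p\in\mathcal H_+(\ell)}\big(-\tfrac{1}{a^\ell(p)}\sum_{k>\ell}\alpha_k a^k(p)\big)\le \inf_{q\in\mathcal H_-(\ell)}\big(-\tfrac{1}{a^\ell(q)}\sum_{k>\ell}\alpha_k a^k(q)\big)$, and this is what~\eqref{eq:fm-consitency} actually yields upon rearrangement. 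Your treatment of the ``$-\alpha$ eventually infeasible'' clause via the projection characterization is more explicit than the paper's, which simply asserts the extended set is conic.
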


\begin{proof} 
  By hypothesis, variable $\ell$ can be eliminated so ${\cal H}_{+}(\ell)$ is not empty and ${\cal H}_{-}(\ell)$ is not empty (if both ${\cal H}_{+}(\ell)$  and ${\cal H}_{-}(\ell)$ are empty we have a zero column and it follows immediately that $J_{C}(\ell) = J_{C}(\ell+1) $ is a conic index set for $P(\Gamma; x_{1}, \ldots, x_{\ell -1})$).  Assume prior to elimination variable $x_{\ell}$ the system is
\begin{eqnarray}
a^{\ell}(i) x_{\ell}+ a^{\ell + 1}(i) x_{\ell + 1} + \cdots + a^{n} x_{n} \ge b(i), \quad i \in I.  \label{eq:conic-variable-generation-2}
\end{eqnarray}
Now show there is a well-defined $\alpha_{\ell}$ so we extend the conic index set $J_{C}(\ell+1)$ to include variable $x_{\ell}$.  When projecting  out variable $x_{\ell}$ the $\tilde{a}(i)$ and $\tilde{b}(i)$  in~\eqref{eq:conic-variable-generation-1} are generated from the $a(i)$ and $b(i)$ in~\eqref{eq:conic-variable-generation-2}.  For the feasible $(\overline{x}_{1}, \ldots,  \overline{x}_{n})$,
\begin{eqnarray}
\sum_{k=\ell + 1}^{n} a^{\ell + 1}(i) \overline{x}_{k} &\ge& b(i), \quad i \in  {\cal H}_{0}(\ell).  \label{eq:conic-lemma-2-b} \\
\frac{b(p)}{a^{\ell}(p)} -\sum_{k = \ell +1}^{n} \frac{a^{k}(p)}{a^{\ell}(p)} \overline{x}_{k} &\le& \frac{\tilde{b}(q)}{a^{\ell}(q)} -\sum_{k = \ell +1}^{n} \frac{a^{k}(q)}{a^{\ell}(q)} \overline{x}_{k}, \nonumber\\ 
&& \forall p \in {\cal H}_{+}(\ell), \,\,  \forall q \in {\cal H}_{-}(\ell)
\end{eqnarray}
and
\begin{eqnarray}
\overline{x}_{\ell} &\ge&   \frac{b(p)}{a^{\ell}(p)} -\sum_{k = \ell +1}^{n} \frac{a^{k}(p)}{a^{\ell}(p)} \overline{x}_{k},  \quad \forall p \in {\cal H}_{+}(\ell)  \label{eq:project-ell-p-b} \\
\overline{x}_{\ell}  &\le&  \frac{b(q)}{a^{\ell}(q)} -\sum_{k = \ell +1}^{n} \frac{a^{k}(q)}{a^{\ell}(q)} \overline{x}_{k}, \quad \forall q \in {\cal H}_{-}(\ell).  \label{eq:project-ell-q-b}
\end{eqnarray}

If there exists an $\alpha_{\ell}$ that satisfies
\begin{eqnarray}
\alpha_{\ell} &\ge&  -\sum_{k \in J_{C}(\ell+1)} \alpha_{k} \frac{a^{k}(p)}{a^{\ell}(p)}, \quad \forall p \in {\cal H}_{+}(\ell)  \label{eq:def-alpha-ell-p-b} \\
\alpha_{\ell} &\le&   - \sum_{k \in J_{C}(\ell+1)} \alpha_{k} \frac{a^{k}(q)}{a^{\ell}(q)}, \quad \forall q \in {\cal H}_{-}(\ell)  \label{eq:def-alpha-ell-q-b}
\end{eqnarray}
then $r > 0$ gives 
\begin{eqnarray}
 r\alpha_{\ell} &\ge&  -r \sum_{k \in J_{C}(\ell+1)} \alpha_{k} \frac{a^{k}(p)}{a^{\ell}(p)}, \quad \forall p \in {\cal H}_{+}(\ell)   \label{eq:def-alpha-ell-p-r-b} \\
r\alpha_{\ell} &\le&   - r\sum_{k \in J_{C}(\ell+1)} \alpha_{k} \frac{a^{k}(q)}{a^{\ell}(q)}, \quad \forall q \in {\cal H}_{-}(\ell)  \label{eq:def-alpha-ell-q-r-b}.
\end{eqnarray}

\vskip 7pt

\noindent {\bf Claim:}  The  system is~\eqref{eq:def-alpha-ell-p-b}-\eqref{eq:def-alpha-ell-q-b} is consistent.  Multiply~\eqref{eq:def-alpha-ell-q-b} by -1 and apply Fourier-Motzkin elimination.   This yields~\eqref{eq:fm-consitency} and the fact that~\eqref{eq:fm-consitency} is nonnegative  implies~\eqref{eq:def-alpha-ell-p-b}-\eqref{eq:def-alpha-ell-q-b} is consistent.  $\dagger$
\vskip 7pt

Combining~\eqref{eq:def-alpha-ell-p-r-b}-\eqref{eq:def-alpha-ell-q-r-b} with~\eqref{eq:project-ell-p-b}-\eqref{eq:project-ell-q-b}
{\small
\begin{eqnarray}
\overline{x}_{\ell} +r \alpha_{\ell} &\ge&   \frac{b(p)}{a^{\ell}(p)} -\sum_{k = \ell +1}^{n} \frac{a^{k}(p)}{a^{\ell}(p)} \overline{x}_{k}  -r \sum_{k \in J_{C}(\ell+1)} \alpha_{k} \frac{a^{k}(p)}{a^{\ell}(p)}, \, \forall p \in {\cal H}_{+}(\ell)  \label{eq:p-constraint} \\
\overline{x}_{\ell} + r \alpha_{\ell} &\le&  \frac{b(q)}{a^{\ell}(q)} -\sum_{k = \ell +1}^{n} \frac{a^{k}(q)}{a^{\ell}(q)} \overline{x}_{k}  - r \sum_{k \in J_{C}(\ell+1)} \alpha_{k} \frac{a^{t}(q)}{a^{\ell}(q)}, \, \forall q \in {\cal H}_{-}(\ell)  \label{eq:q-constraint}
\end{eqnarray}
}

If there is an $\alpha_{\ell} = 0$ that is a solution to~\eqref{eq:def-alpha-ell-p-b}-\eqref{eq:def-alpha-ell-q-b} set $J_{C}(\ell) = J_{C}(\ell+1).$ Otherwise, if all solutions  to~\eqref{eq:def-alpha-ell-p-b}-\eqref{eq:def-alpha-ell-q-b} are nonzero, pick a nonzero $\alpha_{\ell}$  and  set $J_{C}(\ell) =  \{ \ell \} \cup J_{C}(\ell+1)$. In either case, $J_{C}(\ell)$ is a conic index set  for $P(\Gamma; x_{1}, \ldots, x_{\ell -1})$.

\end{proof}

\begin{example}[Example Illustrating Lemma~\ref{lemma:conic-index-extension}]
Consider the system
\begin{eqnarray*}
-\frac{2}{3} x_{1} - x_{2} &\ge& b_{1} \\
-\frac{1}{2} x_{1} - x_{2} &\ge& b_{2} \\
-x_{1} - x_{2} &\ge& b_{3} \\
x_{1} + 3 x_{2} &\ge& b_{4} 
\end{eqnarray*}
In reference back to Lemma~\ref{lemma:conic-index-extension},  $\ell = 1$.  Project out $x_{1}$ and get
\begin{eqnarray*}
\frac{3}{2}x_{2} &\ge& \frac{3}{2}b_{1}  + b_{4}\\
 x_{2} &\ge& 2 b_{2} + b_{4} \\
2 x_{2} &\ge& b_{3}  + b_{4}.
\end{eqnarray*}
Observe $J_{C}(2) = \{ 2 \}$ and the inequalities corresponding to~\eqref{eq:def-alpha-ell-p-b}-\eqref{eq:def-alpha-ell-q-b}  are
\begin{eqnarray*}
\alpha_{1} &\le& -\frac{3}{2} \\
\alpha_{1} &\le& -2 \\
\alpha_{1} &\le&  -1 \\
\alpha_{1} &\ge&  -3.
\end{eqnarray*}
Then $\{1, 2\}$ is a conic index set  and if  $(\overline{x}_{1}, \overline{x}_{2})$ is feasible then  $(\overline{x}_{1} + \alpha_{1}r, \overline{x}_{2} + r)$ is feasible for all $r > 0$ when $-3 \le \alpha_{1}  \le -2$ and   $\alpha_{2} = 1$.\hfill $\triangleleft$
\end{example}

\begin{prop}[Conic Index Set Extension]\label{prop:conic-index-extension}
Assume  that $(SILP)$ is feasible and  the Fourier-Motzkin procedure has eliminated variables $x_{1}, \ldots, x_{\ell}$ and the system of inequalities describing $P(\Gamma; x_{1}, \ldots, x_{\ell})$ is~\eqref{eq:conic-variable-generation-1}.
If  $J_{C}(\ell+1) \subseteq  \{ \ell  + 1, \ldots,   n \}$  is a conic index set of $P(\Gamma; x_{1}, \ldots, x_{\ell})$, then there is a  conic index set $J_{C}$ of (SILP) such that $ J_{C}(\ell+1)  \subseteq J_{C}$. 
\end{prop}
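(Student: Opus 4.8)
The plan is to prove Proposition~\ref{prop:conic-index-extension} by induction on the number of variables that have been eliminated, using Lemma~\ref{lemma:conic-index-extension} (Conic Index Set Extension) as the single-step building block. The base case is vacuous: if $\ell$ variables have been eliminated and $J_C(\ell+1)$ is a conic index set of $P(\Gamma; x_1, \dots, x_\ell)$, and if in fact $\ell = 0$, then there is nothing to eliminate and $J_C := J_C(1)$ is already a conic index set of (SILP) containing itself.

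For the inductive step I would assume the result holds whenever the Fourier-Motzkin procedure has eliminated $\ell-1$ variables, and prove it when it has eliminated $\ell$ variables. So suppose $x_1, \dots, x_\ell$ have been eliminated, producing the system~\eqref{eq:conic-variable-generation-1} describing $P(\Gamma; x_1, \dots, x_\ell)$, and suppose $J_C(\ell+1) \subseteq \{\ell+1, \dots, n\}$ is a conic index set of that projection. By Lemma~\ref{lemma:conic-index-extension}, there is a conic index set $J_C(\ell)$ of $P(\Gamma; x_1, \dots, x_{\ell-1})$ with either $J_C(\ell) = J_C(\ell+1) \cup \{\ell\}$ or $J_C(\ell) = J_C(\ell+1)$; in both cases $J_C(\ell+1) \subseteq J_C(\ell)$. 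Now $P(\Gamma; x_1, \dots, x_{\ell-1})$ is exactly the system obtained after eliminating $\ell-1$ variables, so the induction hypothesis applies to it with conic index set $J_C(\ell)$: there is a conic index set $J_C$ of (SILP) with $J_C(\ell) \subseteq J_C$. Chaining the two inclusions gives $J_C(\ell+1) \subseteq J_C(\ell) \subseteq J_C$, which is the desired conclusion.

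The only subtlety I anticipate — and the step I would flag as the main obstacle — is making sure the recursion is correctly anchored to the Fourier-Motzkin elimination order: the conic index set $J_C(\ell)$ produced by Lemma~\ref{lemma:conic-index-extension} lives in the variable-space $\{\ell, \ell+1, \dots, n\}$ corresponding to the residual system after $\ell-1$ eliminations (by Theorem~\ref{theorem:FM-elim-succ} / Corollary~\ref{cor:clean-projection}, this residual system genuinely describes $P(\Gamma; x_1, \dots, x_{\ell-1})$), so that applying the induction hypothesis to it is legitimate. One should also note the edge case where, in the course of peeling back from $P(\Gamma; x_1, \dots, x_\ell)$ to (SILP), a variable $x_j$ has a zero column ($\mathcal H_+(j) = \mathcal H_-(j) = \emptyset$); Lemma~\ref{lemma:conic-index-extension} already handles this by returning $J_C(j+1)$ unchanged, so the inclusion is preserved and the induction goes through. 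Once these bookkeeping points are in place, the argument is a routine finite induction with no computation required beyond what Lemma~\ref{lemma:conic-index-extension} supplies.

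\begin{proof}
The proof is by induction on $\ell$, the number of variables eliminated by the Fourier-Motzkin procedure to produce the system~\eqref{eq:conic-variable-generation-1} describing $P(\Gamma; x_1, \dots, x_\ell)$.

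If $\ell = 0$, then no variable has been eliminated, $P(\Gamma; x_1, \dots, x_\ell) = \Gamma$ is the feasible region of (SILP), and $J_C := J_C(1)$ is itself a conic index set of (SILP) with $J_C(1) \subseteq J_C$.

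Now suppose the statement holds after $\ell - 1$ eliminations, and suppose the procedure has eliminated $x_1, \dots, x_\ell$ with $J_C(\ell+1) \subseteq \{\ell+1, \dots, n\}$ a conic index set of $P(\Gamma; x_1, \dots, x_\ell)$. By Lemma~\ref{lemma:conic-index-extension}, there is a conic index set $J_C(\ell)$ of $P(\Gamma; x_1, \dots, x_{\ell-1})$ with $J_C(\ell) = J_C(\ell+1) \cup \{\ell\}$ or $J_C(\ell) = J_C(\ell+1)$; in either case $J_C(\ell+1) \subseteq J_C(\ell)$. The system describing $P(\Gamma; x_1, \dots, x_{\ell-1})$ is precisely the one obtained after $\ell - 1$ eliminations (Corollary~\ref{cor:clean-projection}), so by the induction hypothesis applied with the conic index set $J_C(\ell)$ there is a conic index set $J_C$ of (SILP) with $J_C(\ell) \subseteq J_C$. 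Therefore $J_C(\ell+1) \subseteq J_C(\ell) \subseteq J_C$, completing the induction.
\end{proof}
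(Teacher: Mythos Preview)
Your proof is correct and takes essentially the same approach as the paper: both arguments repeatedly apply Lemma~\ref{lemma:conic-index-extension} to step back one eliminated variable at a time, extending the conic index set from $J_C(\ell+1)$ down to $J_C(1)$ with $J_C(\ell+1) \subseteq J_C(\ell) \subseteq \cdots \subseteq J_C(1) = J_C$. The paper phrases this as a direct iteration (``repeat this process \ldots\ terminating with $J_C = J_C(1)$''), while you formalize the same recursion as an induction on $\ell$; the content is identical.
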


\begin{proof}
Use Lemma~\ref{prop:conic-index-extension} and generate conic index set $J_{C}(\ell)$ on $P(\Gamma; x_{1}, \ldots, x_{\ell-1})$ from  conic index set $J_{C}(\ell+1)$ on $P(\Gamma; x_{1}, \ldots, x_{\ell})$.  Repeat this process using Lemma~\ref{prop:conic-index-extension} and successively generate conic index set $J_{C}(k)$ from $J_{C}(k +1)$  for $k = 1, \ldots, \ell$, terminating with $J_{C} = J_{C}(1)$ a conic index set of (SILP).
\end{proof}

\begin{corollary}(Dirty Variable in Conic Index Set)\label{corollary:dirty-is-conic}
Assume  that   the Fourier-Motzkin  procedure has eliminated variables $x_{1}, \ldots, x_{\ell}$ producing the system~\eqref{eq:conic-variable-generation-1}  that describes $P(\Gamma; x_{1}, \ldots, x_{\ell})$.
If variable $x_{t}$ for $t > \ell$ is dirty, then  there is a conic index set $J_{C}$ for (SILP) and $t \in J_{C}.$
\end{corollary}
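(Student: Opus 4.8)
The plan is to use the characterization of dirty variables in terms of conic index sets that has already been built up, together with Proposition~\ref{prop:conic-index-extension}. The key observation is that if $x_t$ is a dirty variable in the output of the Fourier-Motzkin procedure, then at the stage just before $x_t$ would have been processed, exactly one of $\mathcal{H}_+(t)$ and $\mathcal{H}_-(t)$ is empty while their union is not. Say $\mathcal{H}_-(t) = \emptyset$ and $\mathcal{H}_+(t) \neq \emptyset$ (the other case is symmetric). This means every surviving inequality has a nonnegative coefficient on $x_t$, and at least one has a strictly positive coefficient. Geometrically, this forces the unit direction $e^t$ (restricted to the remaining variables $x_t, \ldots, x_n$) to be a recession direction of $P(\Gamma; x_1, \ldots, x_{t-1})$ that is not in the lineality space: moving in the $+e^t$ direction preserves all inequalities, but moving in the $-e^t$ direction eventually violates the inequality with positive coefficient on $x_t$.

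First I would make this precise by invoking Corollary~\ref{cor:clean-projection} (clean projection): since $x_t$ is the first dirty variable (or, if not, we only care about the stage at which it becomes unprocessable), the intermediate system after eliminating $x_1, \ldots, x_{t-1}$ — call its variable indices $t, \ldots, n$ — genuinely describes $P(\Gamma; x_1, \ldots, x_{t-1})$. Then I would check directly from Definition~\ref{definition:conic-index-set} that $\{t\}$ is a conic index set of $P(\Gamma; x_1, \ldots, x_{t-1})$, taking $\alpha_t = 1$ (or $\alpha_t = -1$ in the symmetric case): for any feasible $(\bar x_t, \ldots, \bar x_n)$, the point $(\bar x_t + r, \bar x_{t+1}, \ldots, \bar x_n)$ satisfies all inequalities for every $r > 0$ because all coefficients on $x_t$ are $\ge 0$, while $(\bar x_t - r, \ldots)$ violates the inequality with positive $x_t$-coefficient once $r$ is large enough. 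This is exactly the definition of a conic index set with respect to the projected system.

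Next I would apply Proposition~\ref{prop:conic-index-extension} with $\ell = t-1$ and $J_C(\ell+1) = J_C(t) = \{t\}$: since $\{t\}$ is a conic index set of $P(\Gamma; x_1, \ldots, x_{t-1})$ and (SILP) is feasible, there is a conic index set $J_C$ of (SILP) with $\{t\} \subseteq J_C$, i.e., $t \in J_C$. That closes the argument. One subtlety to handle carefully: the statement assumes variables $x_1, \ldots, x_\ell$ have already been eliminated and $x_t$ with $t > \ell$ is dirty, so "dirty" here means $x_t$ gets stuck at \emph{some} later stage of the procedure, not necessarily immediately after $x_\ell$; I would note that we may simply continue the procedure until $x_t$ is the variable being considered and one of $\mathcal{H}_\pm$ becomes empty, and apply the reasoning above at that stage, then chain the conic-index-set extension back through all intermediate eliminations via Proposition~\ref{prop:conic-index-extension}. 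The main obstacle is bookkeeping: making sure the indices and the relabeling are tracked correctly so that the conic index set produced at the stuck stage is genuinely extended all the way back to a conic index set of the original (SILP), but Proposition~\ref{prop:conic-index-extension} is precisely designed to do this, so no new ideas are needed.
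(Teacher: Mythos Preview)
Your proposal is correct and follows essentially the same approach as the paper: show that $\{t\}$ is a conic index set for the projected system, then invoke Proposition~\ref{prop:conic-index-extension} to extend it back to a conic index set $J_C$ of (SILP) containing $t$. The paper's proof is even shorter than your outline because, in its setup, the system~\eqref{eq:conic-variable-generation-1} is already the output of the full Fourier--Motzkin procedure; hence $x_t$ dirty means its coefficients in~\eqref{eq:conic-variable-generation-1} are already all of one sign, so $\{t\}$ is immediately a conic index set for $P(\Gamma; x_1, \ldots, x_\ell)$ without any need to continue the procedure to stage $t-1$. Your excursion about ``which stage $x_t$ gets stuck'' is therefore unnecessary bookkeeping, though it does no harm.
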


\begin{proof}
If variable $x_{t}$ is dirty, set $J_{C}(\ell + 1)= \{ t \}$ and observe that $J_{C}(\ell + 1)$ is a conic  index set  for the projected space  $P(\Gamma; x_{1}, \ldots, x_{\ell})$.   Then by Proposition~\ref{prop:conic-index-extension}  there is a conic index set $J_{C}$ for (SILP) with $t \in J_{C}.$
\end{proof}

In what follows Lemma~\ref{lemma:lineality-index-extension} replicates Lemma~\ref{lemma:conic-index-extension} for lineality index sets instead of conic index sets.   Proposition~\ref{prop:lineality-index-extension} replicates Proposition~\ref{prop:conic-index-extension} for lineality index sets instead of conic index sets.    Corollary~\ref{corollary:zero-is-lineality} replicates Corollary~\ref{corollary:dirty-is-conic} for a  variable in the projected system  with all zero coefficients instead of all nonnegative or all  nonpositive coefficients.

\begin{lemma}(Lineality Index Set Extension)\label{lemma:lineality-index-extension}
Assume  that   the Fourier-Motzkin  procedure has eliminated variables $x_{1}, \ldots, x_{\ell}$ producing the system~\eqref{eq:conic-variable-generation-1}  that describes $P(\Gamma; x_{1}, \ldots, x_{\ell})$.
If  $J_{L}(\ell+1) \subseteq  \{ \ell  + 1, \ldots,   n \}$  is a lineality index set of $P(\Gamma; x_{1}, \ldots, x_{\ell})$, then there is a  lineality index set $J_{L}(\ell)$ of $P(\Gamma; x_{1}, \ldots, x_{\ell -1})$ such that $J_{L}(\ell) = J_{L}(\ell+1) \cup \{ \ell \}$ or $J_{L}(\ell) = J_{L}(\ell+1) $.
\end{lemma}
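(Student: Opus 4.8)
The plan is to mimic the proof of Lemma~\ref{lemma:conic-index-extension} almost verbatim, since a lineality index set is just a conic index set for which \emph{both} directions $\pm r\alpha$ preserve feasibility. As in that proof, the only nontrivial case is when variable $x_\ell$ was genuinely eliminated, i.e.\ both $\mathcal H_+(\ell)$ and $\mathcal H_-(\ell)$ are nonempty (if both are empty, variable $\ell$ has a zero column and one simply takes $J_L(\ell)=J_L(\ell+1)$, noting that perturbing along the zero column in either direction changes nothing). So assume we are in the genuine-elimination case and let system~\eqref{eq:conic-variable-generation-2} be the state of the algorithm just before $x_\ell$ is processed. Given the lineality directions $\alpha_k$, $k\in J_L(\ell+1)$, for the projected system~\eqref{eq:conic-variable-generation-1}, I want to find a scalar $\alpha_\ell$ so that $\{\ell\}\cup J_L(\ell+1)$ (or $J_L(\ell+1)$ itself, if $\alpha_\ell=0$ works) is a lineality index set one level up.

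First I would record the analogue of Remark~\ref{remark:coefficient-nonnegativity}: since $\overline x + ry$ \emph{and} $\overline x - ry$ are feasible for all $r>0$ (where $y$ is supported on $J_L(\ell+1)$ with $y_k=\alpha_k$), applying this to~\eqref{eq:conic-variable-generation-1} gives $\sum_{k\in J_L(\ell+1)}\alpha_k\tilde a^k(i)\ge 0$ \emph{and} $\le 0$ for all $i\in\tilde I$, hence
\begin{equation*}
\sum_{k\in J_L(\ell+1)}\alpha_k\tilde a^k(i)=0\quad\text{for all }i\in\tilde I.
\end{equation*}
Now recall (from the proof of Lemma~\ref{lemma:conic-index-extension}) that this quantity is exactly the left-hand side obtained by Fourier--Motzkin-eliminating a fictitious variable $\alpha_\ell$ from the pair of systems
\begin{equation*}
\alpha_\ell \ge -\sum_{k\in J_L(\ell+1)}\alpha_k\frac{a^k(p)}{a^\ell(p)}\ (\forall p\in\mathcal H_+(\ell)),\qquad
\alpha_\ell \le -\sum_{k\in J_L(\ell+1)}\alpha_k\frac{a^k(q)}{a^\ell(q)}\ (\forall q\in\mathcal H_-(\ell)).
\end{equation*}
Because the eliminated quantity is now identically \emph{zero} rather than merely nonnegative, Fourier--Motzkin (Theorem~\ref{theorem:FM-elim}) tells us the interval of valid $\alpha_\ell$ is nonempty and in fact the upper bounds equal the lower bounds: there is a \emph{unique} feasible $\alpha_\ell$ (equivalently, $\sup$ of the lower bounds $=\inf$ of the upper bounds). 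Pick that $\alpha_\ell$. Scaling by $r>0$ and adding these inequalities to~\eqref{eq:project-ell-p-b}--\eqref{eq:project-ell-q-b}, exactly as in~\eqref{eq:p-constraint}--\eqref{eq:q-constraint}, shows $\overline x_\ell+r\alpha_\ell$ satisfies the bounds coming from $\mathcal H_+(\ell),\mathcal H_-(\ell)$, so $\overline x + ry'$ is feasible, where $y'$ extends $y$ by $y'_\ell=\alpha_\ell$.

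The new ingredient beyond Lemma~\ref{lemma:conic-index-extension} is that I must also check feasibility of $\overline x - ry'$. But since the eliminated quantity vanished identically, the same inequalities~\eqref{eq:def-alpha-ell-p-b}--\eqref{eq:def-alpha-ell-q-b} hold with $-\alpha_\ell$ in place of $\alpha_\ell$ (replacing $\alpha_k$ by $-\alpha_k$ for $k\in J_L(\ell+1)$ flips all signs and, crucially, the tight interval for $\alpha_\ell$ is symmetric about... no, more simply: $-\alpha_\ell$ satisfies the bound system generated by $-\alpha_k$, which is the relevant system for the $-r$ perturbation). Hence $\overline x - ry'$ is also feasible for all $r>0$, and also the $\mathcal H_0(\ell)$ constraints~\eqref{eq:conic-lemma-2-b} are unaffected by perturbing $x_\ell$. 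Therefore $J_L(\ell):=\{\ell\}\cup J_L(\ell+1)$ (or $J_L(\ell+1)$ if $\alpha_\ell=0$) is a lineality index set for $P(\Gamma;x_1,\dots,x_{\ell-1})$. The one point needing a little care — and the only real obstacle — is justifying that the eliminated expression being \emph{identically zero} forces the $\alpha_\ell$-interval to be a single point and that this is precisely the value that simultaneously handles both the $+r$ and $-r$ directions; this is the symmetry observation that upgrades the conic argument to a lineality argument, and I would spell it out by noting that the lower-bound functionals and (negated) upper-bound functionals coincide when $\sum_k\alpha_k\tilde a^k(i)\equiv 0$.
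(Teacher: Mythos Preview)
Your proposal is correct and follows essentially the same route as the paper: both replicate the proof of Lemma~\ref{lemma:conic-index-extension}, noting that for a lineality index set the key inequality $\sum_{k\in J_L(\ell+1)}\alpha_k\tilde a^k(i)\ge 0$ becomes an equality, which is what makes the $\pm r$ directions work simultaneously. Your added observation that this equality forces the feasible interval for $\alpha_\ell$ to collapse to a single point is correct (all lower bounds coincide with all upper bounds once every $(p,q)$-combination vanishes) and in fact clarifies why the \emph{same} $\alpha_\ell$ serves both the $+r$ and $-r$ perturbations---a point the paper's proof leaves implicit.
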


\begin{proof}
Observe that in case of a lineality variable, instead of a conic variable, the system~\eqref{eq:fm-consitency}
\begin{eqnarray*}
\sum_{k \in J_{C}(\ell + 1)} \alpha_{k} \tilde{a}^{k}(i) \ge 0, \quad i \in \tilde{I}
\end{eqnarray*}
used in the proof of Lemma~\ref{lemma:conic-index-extension} becomes
\begin{eqnarray*}
\sum_{k \in J_{L}(\ell + 1)} \alpha_{k} \tilde{a}^{k}(i) = 0, \quad i \in \tilde{I}
\end{eqnarray*}
since for a lineality variable $r$ is both positive and negative.  The implication of equality is that when 
 replicating  the proof of Lemma~\ref{lemma:conic-index-extension} we can multiply~\eqref{eq:def-alpha-ell-p-b}-\eqref{eq:def-alpha-ell-q-b}  by positive and negative $r$ and still guarantee the existence of an $\alpha_{\ell}$ solution.  Since multiplying by both $r$ and $-r$ is valid when calculating $\alpha_{\ell}$ it follows that $J_{L}(\ell) = J_{L}(\ell +1)$ is a lineality index set.
\end{proof}

\begin{prop}(Lineality Index Set Extension)\label{prop:lineality-index-extension}
Assume  that   the Fourier-Motzkin  procedure has eliminated variables $x_{1}, \ldots, x_{\ell}$ producing the system~\eqref{eq:conic-variable-generation-1}  that describes $P(\Gamma; x_{1}, \ldots, x_{\ell})$.
If  $J_{L}(\ell+1) \subseteq  \{ \ell  + 1, \ldots,   n \}$  is a lineality  index set of $P(\Gamma; x_{1}, \ldots, x_{\ell})$, then there is a  lineality index set $J_{L}$ of (SILP) such that $ J_{L}(\ell+1)  \subseteq J_{L}$. 
\end{prop}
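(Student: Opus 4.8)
This proposition is the "projection-to-full-problem" lifting statement for lineality index sets, and the plan is to mirror exactly the proof of Proposition~\ref{prop:conic-index-extension} with Lemma~\ref{lemma:lineality-index-extension} in place of Lemma~\ref{lemma:conic-index-extension}. Concretely, I start from the lineality index set $J_L(\ell+1)$ of the projected system $P(\Gamma;x_1,\dots,x_\ell)$ described by~\eqref{eq:conic-variable-generation-1}, and peel back one variable at a time: Lemma~\ref{lemma:lineality-index-extension} produces a lineality index set $J_L(\ell)$ of $P(\Gamma;x_1,\dots,x_{\ell-1})$ with either $J_L(\ell)=J_L(\ell+1)\cup\{\ell\}$ or $J_L(\ell)=J_L(\ell+1)$; in both cases $J_L(\ell+1)\subseteq J_L(\ell)$.

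The key steps, in order, are: (1) observe that applying Lemma~\ref{lemma:lineality-index-extension} requires the system obtained after eliminating $x_1,\dots,x_{k}$ be exactly the residual Fourier--Motzkin system describing $P(\Gamma;x_1,\dots,x_k)$ — this is guaranteed by Theorem~\ref{theorem:FM-elim-succ} / Corollary~\ref{cor:clean-projection} and the canonical-form assumption; (2) invoke Lemma~\ref{lemma:lineality-index-extension} repeatedly, generating $J_L(k)$ from $J_L(k+1)$ for $k=\ell,\ell-1,\dots,1$, maintaining the invariant $J_L(\ell+1)\subseteq J_L(k)$ at every stage (a trivial finite induction downward on $k$); (3) terminate with $J_L:=J_L(1)$, which by construction is a lineality index set of (SILP) (the unprojected system), and which satisfies $J_L(\ell+1)\subseteq J_L$. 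Since $\ell$ is finite, this recursion halts after finitely many steps.

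There is essentially no hard obstacle here beyond bookkeeping: the entire substance is carried by Lemma~\ref{lemma:lineality-index-extension}, and this proposition is its "iterate to the bottom" corollary. The only point demanding a sentence of care is making sure the hypothesis of Lemma~\ref{lemma:lineality-index-extension} is met at each recursive call — namely that the intermediate system at the step where we re-introduce variable $x_k$ is indeed the Fourier--Motzkin residual system for $P(\Gamma;x_1,\dots,x_k)$, which is exactly what the Fourier--Motzkin elimination procedure produces by construction (and records via Theorem~\ref{theorem:FM-elim-succ}). A clean way to phrase this is to run the induction on the number of already-eliminated leading variables.

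Here is the proof:

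\begin{proof}
Apply Lemma~\ref{lemma:lineality-index-extension} to the lineality index set $J_L(\ell+1)$ of $P(\Gamma; x_{1}, \ldots, x_{\ell})$ to obtain a lineality index set $J_L(\ell)$ of $P(\Gamma; x_{1}, \ldots, x_{\ell-1})$ with $J_L(\ell+1) \subseteq J_L(\ell)$. Since the system of inequalities describing $P(\Gamma; x_{1}, \ldots, x_{k})$ for each $k \le \ell$ is precisely the residual system produced by $k$ iterations of Step~2 of the Fourier-Motzkin elimination procedure (see Corollary~\ref{cor:clean-projection} and Theorem~\ref{theorem:FM-elim-succ}, using that the system is in canonical form), Lemma~\ref{lemma:lineality-index-extension} may be applied again to $J_L(\ell)$ to produce a lineality index set $J_L(\ell-1)$ of $P(\Gamma; x_{1}, \ldots, x_{\ell-2})$ with $J_L(\ell) \subseteq J_L(\ell-1)$. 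Repeating this, we successively generate a lineality index set $J_L(k)$ of $P(\Gamma; x_{1}, \ldots, x_{k-1})$ from $J_L(k+1)$ for $k = \ell, \ell-1, \ldots, 1$, and a straightforward downward induction on $k$ gives $J_L(\ell+1) \subseteq J_L(k)$ for every $k$. Since $\ell$ is finite, this process terminates after finitely many steps with $J_L := J_L(1)$, a lineality index set of (SILP) satisfying $J_L(\ell+1) \subseteq J_L$.
\end{proof}
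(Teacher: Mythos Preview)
The proposal is correct and takes essentially the same approach as the paper: the paper's proof simply says ``Replicate the proof of Proposition~\ref{prop:conic-index-extension},'' which is precisely the recursive application of Lemma~\ref{lemma:lineality-index-extension} stepping back from $J_L(\ell+1)$ through $J_L(\ell), J_L(\ell-1), \ldots$ to $J_L(1)=J_L$ that you carry out. Your added remarks about Corollary~\ref{cor:clean-projection} and Theorem~\ref{theorem:FM-elim-succ} ensuring the lemma's hypotheses at each step are careful but not strictly required, since this is built into the Fourier--Motzkin procedure.
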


\begin{proof}
Replicate the proof of Proposition~\ref{prop:conic-index-extension}. 
\end{proof}

\begin{corollary}(A Zero Variable is in a Lineality Index Set)\label{corollary:zero-is-lineality}
Assume  that   the Fourier-Motzkin  procedure has eliminated variables $x_{1}, \ldots, x_{\ell}$ producing the system~\eqref{eq:conic-variable-generation-1}  that describes $P(\Gamma; x_{1}, \ldots, x_{\ell})$.
If $x_{t}$ for $t > \ell$ is a zero variable, i.e.  $\tilde{a}^{t}(i) = 0$ for all $i \in \tilde{I}$, then  there is a lineality index set $J_{L}$ for (SILP)  and $t \in J_{L}.$
\end{corollary}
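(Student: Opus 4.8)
The final statement to prove is Corollary~\ref{corollary:zero-is-lineality}: if, after eliminating $x_1,\dots,x_\ell$, a variable $x_t$ with $t>\ell$ has all-zero coefficients in the projected system~\eqref{eq:conic-variable-generation-1}, then $t$ belongs to a lineality index set $J_L$ for (SILP).

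\textbf{Plan.} The proof is a direct parallel to Corollary~\ref{corollary:dirty-is-conic}, just swapping the conic machinery for the lineality machinery developed in Lemma~\ref{lemma:lineality-index-extension} and Proposition~\ref{prop:lineality-index-extension}. First I would observe that a zero variable is the most degenerate kind of lineality variable: if $\tilde a^t(i)=0$ for all $i\in\tilde I$, then setting $J_L(\ell+1):=\{t\}$ with the single coefficient $\alpha_t=1$ (any nonzero value works), for every feasible $(\bar x_1,\dots,\bar x_n)$ of (SILP) — hence, by Theorem~\ref{theorem:FM-elim-succ}, every feasible $(\bar x_{\ell+1},\dots,\bar x_n)$ of~\eqref{eq:conic-variable-generation-1} — perturbing only coordinate $t$ by $\pm r\alpha_t$ leaves all the inequalities of~\eqref{eq:conic-variable-generation-1} unchanged, since the coefficient of $x_t$ is identically zero. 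Thus both $(\hat x)$ and $(\tilde x)$ in Definition~\ref{definition:lineality-index-set} are feasible for all $r>0$, so $J_L(\ell+1)=\{t\}$ is a lineality index set of $P(\Gamma;x_1,\dots,x_\ell)$.

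\textbf{Lifting back to (SILP).} Having established that $\{t\}$ is a lineality index set for the projected system $P(\Gamma;x_1,\dots,x_\ell)$, I would invoke Proposition~\ref{prop:lineality-index-extension} with $J_L(\ell+1)=\{t\}\subseteq\{\ell+1,\dots,n\}$. That proposition guarantees a lineality index set $J_L$ of (SILP) with $J_L(\ell+1)\subseteq J_L$, and in particular $t\in J_L$. This completes the argument.

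\textbf{Where the real work lives.} There is essentially no obstacle in Corollary~\ref{corollary:zero-is-lineality} itself; it is a one-line consequence of the preceding results. The genuinely substantive step was already done upstream, namely in Lemma~\ref{lemma:lineality-index-extension}, which adapts the conic-index-set extension argument of Lemma~\ref{lemma:conic-index-extension} by noting that when $r$ ranges over both signs the consistency system~\eqref{eq:fm-consitency} becomes an equality system $\sum_{k\in J_L(\ell+1)}\alpha_k\tilde a^k(i)=0$, which still admits an $\alpha_\ell$ completing the index set. Once that lemma and its iteration (Proposition~\ref{prop:lineality-index-extension}) are in hand, the corollary is immediate, exactly mirroring how Corollary~\ref{corollary:dirty-is-conic} follows from Proposition~\ref{prop:conic-index-extension}. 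So my proposal for the corollary would be short: set up the singleton $\{t\}$, verify it is a lineality index set of the projected system because the coefficient of $x_t$ vanishes identically, and then apply Proposition~\ref{prop:lineality-index-extension}.
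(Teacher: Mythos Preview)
Your proposal is correct and follows exactly the paper's approach: the paper's proof says simply ``Replicate the proof of Corollary~\ref{corollary:dirty-is-conic},'' which is precisely what you do---set $J_L(\ell+1)=\{t\}$, observe it is a lineality index set for the projected system because the coefficient of $x_t$ is identically zero, and then invoke Proposition~\ref{prop:lineality-index-extension} to lift it to a lineality index set $J_L$ of (SILP) containing $t$.
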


\begin{proof}
Replicate the proof of Corollary~\ref{corollary:dirty-is-conic}. 
\end{proof}

The following lemma is critical in proving our main result.  The basic idea is that if  there is an index set $J$ that ``behaves'' like a conic index set, but  projection of the variables $x_{1}, \ldots, x_{\ell}$ results in the remaining variables in the index set having zero coefficients in the projected system, then $J$ must actually be a lineality index set. The proof relies heavily on the ideas used in the proof of Lemma~\ref{lemma:lineality-index-extension}.

\begin{lemma}(Lineality Implication)\label{lemma:lineality-implication}
Assume  that   the Fourier-Motzkin  procedure has eliminated variables $x_{1}, \ldots, x_{\ell}$ producing the system~\eqref{eq:conic-variable-generation-1}  that describes $P(\Gamma; x_{1}, \ldots, x_{\ell})$.
Further assume $J =  \{ k_{1}, \ldots,  k_{m} \} \subseteq \{1, \ldots, n\}$ is an index set with associated nonzero $\alpha_{k_{1}}, \ldots, \alpha_{k_{m}}$  such that 
 \begin{itemize}

\item[1.]  the set $J \cap \{ \ell+1, \ldots, n \}$ is not empty and $J \cap \{ \ell+1, \ldots, n \}$ is a lineality index set for $P(\Gamma; x_{1}, \ldots, x_{\ell})$ based on the nonzero $\alpha_{k_{m}}$ where $k_{m} > \ell$,  and

\item[2.] for every feasible solution $(\overline{x}_{1}, \ldots, \overline{x}_{n})$  to (SILP), $(\hat{x}_{1}, \ldots, \hat{x}_{n})$   where
\begin{eqnarray*}
 \hat{x}_{k} = \overline{x}_{k}, \quad \forall k \notin J, && \hat{x}_{k} = \overline{x}_{k} + r \alpha_{k}, \quad \forall k \in J 
\end{eqnarray*}
is feasible for all $r > 0$.

\end{itemize}
Then $J$  is a lineality index set  with associated nonzero $\alpha_{k_{1}}, \ldots, \alpha_{k_{m}}$.

\end{lemma}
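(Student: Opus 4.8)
The goal is to upgrade the index set $J$ from a set that only ``pushes forward'' (item 2 gives feasibility of $\overline x + r\alpha$ for all $r>0$) to a genuine lineality index set, i.e.\ to show that $\overline x - r\alpha$ is also feasible for all $r>0$. The natural strategy is to mimic the proof of Lemma~\ref{lemma:lineality-index-extension} but ``in reverse'': rather than extending a lineality index set by one eliminated variable, I will peel off the eliminated variables $x_1, \dots, x_\ell$ one at a time, each time showing that the restriction of $J$ to the remaining variables is still a lineality index set. The point of hypothesis 1 is precisely that this induction has a base: after all $\ell$ variables have been eliminated, $J \cap \{\ell+1, \dots, n\}$ is \emph{already known} to be a lineality index set for $P(\Gamma; x_1, \dots, x_\ell)$.

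\textbf{Key steps.} First I would record the structural consequence of hypothesis 1 together with Remark~\ref{remark:coefficient-nonnegativity}: since $J \cap \{\ell+1,\dots,n\}$ is a lineality index set for the projected system~\eqref{eq:conic-variable-generation-1}, one has $\sum_{k \in J \cap \{\ell+1,\dots,n\}} \alpha_k \tilde a^k(i) = 0$ for all $i \in \tilde I$ (the lineality analogue of~\eqref{eq:fm-consitency}, obtained exactly as in Lemma~\ref{lemma:lineality-index-extension} by using that $r$ ranges over both signs). Next, working backwards through the elimination, suppose inductively that $J \cap \{j, j+1, \dots, n\}$ is a lineality index set for $P(\Gamma; x_1, \dots, x_{j-1})$ with the same coefficients $\alpha_k$, $k > j-1$, $k \in J$ (the $\ell$-case being the base). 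I would then consider the system just before $x_{j-1}$ was eliminated and argue, as in the proof of Lemma~\ref{lemma:conic-index-extension}/\ref{lemma:lineality-index-extension}, that there is a value of $\alpha_{j-1}$ solving the two-sided bound system~\eqref{eq:def-alpha-ell-p-b}--\eqref{eq:def-alpha-ell-q-b}; crucially, because the coefficient-sum vanishes identically (the lineality case), the bound system can be scaled by both positive and negative $r$, so $J \cap \{j-1, \dots, n\}$ (with $\alpha_{j-1}$ possibly zero, possibly nonzero) is a lineality index set for $P(\Gamma; x_1, \dots, x_{j-2})$. Here is the one genuinely new wrinkle: if the index $j-1$ is \emph{itself in $J$} with a prescribed nonzero $\alpha_{j-1}$ coming from the hypothesis, I must check that this prescribed value is consistent with — indeed, is one of — the solutions of the bound system. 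This follows from hypothesis 2: the ``forward'' feasibility of $\overline x + r\alpha$ for all $r>0$ forces the $\alpha$-combination to satisfy the one-sided inequalities dual to~\eqref{eq:def-alpha-ell-p-b}--\eqref{eq:def-alpha-ell-q-b}, and combined with the equality of the tail coefficient-sum this pins the constraints on $\alpha_{j-1}$ down to exactly the two-sided system, of which the prescribed $\alpha_{j-1}$ is a solution. Iterating down to $j-1 = 0$ yields that $J$ itself is a lineality index set for (SILP) with the given coefficients $\alpha_{k_1}, \dots, \alpha_{k_m}$.

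\textbf{Main obstacle.} The delicate point is reconciling the \emph{prescribed} coefficients $\alpha_k$ for $k \in J$ (including possibly eliminated indices $k \le \ell$) with the coefficients that the backward construction would \emph{produce} at each step. In Lemmas~\ref{lemma:conic-index-extension} and~\ref{lemma:lineality-index-extension} one is free to choose the new coefficient $\alpha_\ell$ from the consistent bound system; here that freedom is curtailed whenever the newly exposed variable already lies in $J$. The resolution is to show that hypothesis 2 (forward feasibility in \emph{all} coordinates of $J$ simultaneously) plus the vanishing tail sum from hypothesis 1 force the prescribed $\alpha_k$ to satisfy precisely the relevant system of bounds at every stage, so no inconsistency arises; and then the ``two-sided'' (lineality) character propagates because the governing linear functionals $\sum_{k} \alpha_k a^k(i)$ vanish identically rather than merely being nonnegative. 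Once this bookkeeping is set up carefully, the argument is a routine finite induction on the number of eliminated variables, invoking Theorem~\ref{theorem:FM-elim-succ} to relate feasibility in $\Gamma$ with feasibility in the successive projected systems.
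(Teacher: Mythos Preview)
Your proposal is correct and follows essentially the same approach as the paper: recurse backward from the known lineality index set at level $\ell$ (hypothesis~1), at each step use the vanishing coefficient-sum to get the two-sided bound system, and invoke hypothesis~2 to verify that the \emph{prescribed} $\alpha_k$ (for $k\in J$, $k\le\ell$) actually solves that system---the paper does this last step by contradiction (if the prescribed $\alpha_{k_h}$ violated a bound, then for large $r$ hypothesis~2 would fail). You have identified the same ``main obstacle'' the paper isolates and resolve it the same way.
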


\begin{proof}
The essence of the proof is to follow the proof of Proposition~\ref{prop:lineality-index-extension} and start with the lineality index set for $P(\Gamma; x_{1}, \ldots, x_{\ell})$ and recurse back and construct the entire set $J$.  However, for this to work, it is necessary to construct the given $\alpha_{k_{1}}, \ldots, \alpha_{k_{m}}$ in the hypothesis.   By hypothesis we know $J \cap \{ \ell+1, \ldots, n \}$ is a lineality index set for $P(\Gamma; x_{1}, \ldots, x_{\ell})$ based on the nonzero $\alpha_{k_{m}}$ where $k_{m} > \ell$. Hence we need to match the $\alpha_{k_{m}}$ for all $k_{m} \le \ell$. Consider an arbitrary $k_{h} \in J$ with $k_{h} \le \ell$. Assume at this point we have a match for the lineality set $J(k_{h} + 1)$.  It suffices to show
\begin{eqnarray}
\alpha_{k_{h}} &\ge&  -\sum_{k \in J(k_{h}+1)} \alpha_{k} \frac{a^{k}(p)}{a^{k_{h}}(p)}, \quad \forall p \in {\cal H}_{+}(k_{h})  \label{eq:def-alpha-ell-p-lin} \\
\alpha_{k_{h}} &\le&   - \sum_{k \in J(k_{h}+1)} \alpha_{k} \frac{a^{k}(q)}{a^{k_{h}}(q)}, \quad \forall q \in {\cal H}_{-}(k_{h})  \label{eq:def-alpha-ell-q-lin}
\end{eqnarray}
Assume without loss~\eqref{eq:def-alpha-ell-p-lin} is violated (a similar argument is valid if~\eqref{eq:def-alpha-ell-q-lin} is violated).  Then there is a $\hat{p} \in {\cal H}_{+}(k_{h})$ such that
\begin{eqnarray}
\alpha_{k_{h}} &<&  -\sum_{k \in J(k_{h}+1)} \alpha_{k} \frac{a^{k}(\hat{p})}{a^{k_{h}}(\hat{p})}.  \label{eq:phat-violation}
\end{eqnarray}
But 
{\small
\begin{eqnarray}
\overline{x}_{k_{h}} +r \alpha_{k_{h}} &\ge&   \frac{b(p)}{a^{k_{h}}(p)} -\sum_{k = k_{h} +1}^{n} \frac{a^{k}(p)}{a^{k_{h}}(p)} \overline{x}_{k}  -r \sum_{k \in J(k_{h}+1)} \alpha_{k} \frac{a^{k}(p)}{a^{k_{h}}(p)} \label{eq:phat-violation-b}
\end{eqnarray}
}
must hold for all $p \in {\cal H}_{+}(\ell)$ and feasible $\hat{x}$ by part 2. of the hypothesis for this lemma.  But~\eqref{eq:phat-violation}  implies for sufficiently large $r$ that~\eqref{eq:phat-violation-b} will be violated.  Hence it is possible in the backward recursion to generate the exact $\alpha_{k}$.  By a similar argument, $j \notin J$ implies that $\alpha_{j} = 0$ must be in the interval defined by~\eqref{eq:def-alpha-ell-p-lin}-\eqref{eq:def-alpha-ell-q-lin}.  

Also, as in the proof of Lemma~\ref{lemma:lineality-index-extension}, if  $\alpha_{k_{h}}$ satisfies~\eqref{eq:def-alpha-ell-p-lin}-\eqref{eq:def-alpha-ell-q-lin}, then $\alpha_{k_{h}}$ satisfies
\begin{eqnarray*}
\alpha_{k_{h}} &\le&  -\sum_{k \in J(k_{h}+1)} \alpha_{k} \frac{a^{k}(p)}{a^{k_{h}}(p)}, \quad \forall p \in {\cal H}_{+}(k_{h})   \\
\alpha_{k_{h}} &\ge&   - \sum_{k \in J(k_{h}+1)} \alpha_{k} \frac{a^{k}(q)}{a^{k_{h}}(q)}, \quad \forall q \in {\cal H}_{-}(k_{h})  
\end{eqnarray*}
since $J(k_{h}+1)$ is a lineality set and this implies
\begin{eqnarray*}
\sum_{k \in J(k_{h} + 1)} \alpha_{k} \tilde{a}^{k}(i) = 0, \quad i \in \tilde{I}
\end{eqnarray*}
and adding variable $k_{h}$ to $J(k_{h} + 1)$ results in a new lineality index set. 

Finally, given that the recursion began with an index set of lineality variables, this is maintained at each step and the lemma is proved.
\end{proof}

\begin{prop}\label{prop:dirty-variable}
If (SILP) contains a conic index set, then the Fourier-Motzkin elimination procedure will terminate with at least one dirty variable regardless of the variable permutation used in the elimination procedure. 
\end{prop}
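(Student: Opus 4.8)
The plan is to show that a conic index set of (SILP) forces a dirty variable, by tracking the conic index set forward through the elimination and arguing that the last variable of that set to be eliminated (in whatever order the permutation prescribes) cannot actually be eliminated — it must be dirty. The key technical ingredient is already available in the excerpt: Lemma~\ref{lemma:conic-index-extension}, Lemma~\ref{lemma:lineality-implication}, and Corollary~\ref{corollary:zero-is-lineality}, together with the contrapositive flavor of Corollary~\ref{corollary:dirty-is-conic}. Here is how I would carry it out.

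Let $J_C = \{k_1, \ldots, k_m\}$ be a conic index set of (SILP), with associated nonzero $\alpha_{k_1}, \ldots, \alpha_{k_m}$. Fix an arbitrary variable permutation and run the Fourier-Motzkin elimination procedure; relabel so the permutation is the identity. Suppose, for contradiction, that the procedure terminates with no dirty variable, i.e., the system is clean and every variable $x_1, \ldots, x_n$ is eliminated in turn. Consider the variables in $J_C$ and let $t \in J_C$ be the one eliminated \emph{last} among the members of $J_C$; say variables $x_1, \ldots, x_{t-1}$ (in the relabeled order) have been eliminated when $x_t$ is about to be processed, and by the choice of $t$, no index of $J_C$ other than $t$ remains among $\{t, t+1, \ldots, n\}$. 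In the residual system $P(\Gamma; x_1, \ldots, x_{t-1})$, I claim the variable $x_t$ has all-zero coefficients. Indeed, Remark~\ref{remark:coefficient-nonnegativity} (applied with $\ell = t-1$) shows that $\sum_{k \in J_C \cap \{t, \ldots, n\}} \alpha_k \tilde a^k(i) \ge 0$ for all $i$ in the residual index set; but $J_C \cap \{t, \ldots, n\} = \{t\}$, so $\alpha_t \tilde a^t(i) \ge 0$ for all $i$, and applying the same reasoning to the opposite direction (using that a conic index set still behaves conically on the projected system — this is exactly the content that lets Lemma~\ref{lemma:conic-index-extension} recurse) one would get $\alpha_t \tilde a^t(i) \le 0$ for all $i$ as well, hence $\tilde a^t(i) = 0$ for all $i$ since $\alpha_t \neq 0$. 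But then $x_t$ is a zero variable in $P(\Gamma; x_1, \ldots, x_{t-1})$, so by Corollary~\ref{corollary:zero-is-lineality} there is a lineality index set $J_L$ of (SILP) with $t \in J_L$; combined with the hypothesis that $J_C$ acts conically, Lemma~\ref{lemma:lineality-implication} would then force $J_C$ itself to be a lineality index set, contradicting the definition of a conic index set (Definition~\ref{definition:conic-index-set}), which requires that moving in the $-\alpha$ direction eventually leaves $\Gamma$.

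An alternative and perhaps cleaner routing — the one I would actually prefer to write up — sidesteps the delicate ``conic on both sides'' bookkeeping: apply Lemma~\ref{lemma:conic-index-extension} and Proposition~\ref{prop:conic-index-extension} to see directly that whenever the procedure processes a variable $x_j$ that lies in (the forward image of) the conic index set and $x_j$ \emph{were} eliminated, the resulting system would either still carry a conic index set or the dropped variable index would have to be absorbed; iterating, the conic index set cannot simply vanish. More precisely, if \emph{every} variable were eliminated, then starting from the empty residual system and running Proposition~\ref{prop:conic-index-extension}-style extensions backward would produce a conic index set inside the empty system, which is absurd because there are no variables left to perturb. The clean way to phrase this: the property ``the current residual system has a conic index set'' is preserved under elimination of a \emph{clean} variable (this is exactly Lemma~\ref{lemma:conic-index-extension} read in the projection direction), so if (SILP) starts with a conic index set and all variables get cleanly eliminated, the final system $\{0 \ge \tilde b(h)\}$ would have a conic index set — impossible since it has no variables. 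Hence at least one variable must fail to be eliminated, i.e., is dirty, and since this argument made no use of the particular permutation, it holds for every permutation.

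The main obstacle is the first routing's step establishing $\tilde a^t(i) = 0$: Remark~\ref{remark:coefficient-nonnegativity} as stated only gives the nonnegativity inequality $\sum_{k} \alpha_k \tilde a^k(i) \ge 0$ in one direction, and pinning down that the projected coefficients of the \emph{last-eliminated} member of $J_C$ vanish requires either the symmetric (negative $r$) argument — which is only legitimate if $J_C$ behaves as a lineality set, which is what we are trying to rule out — or a more careful induction that tracks how the conic behavior of $J_C$ is inherited by each projection. This is why I favor the second routing, whose only real content is invoking Lemma~\ref{lemma:conic-index-extension} (which already does the hard work of showing a conic index set of a projected system lifts to one of the pre-projection system) and then noting the impossibility of a conic index set in a variable-free system; the permutation-independence is then automatic because Lemma~\ref{lemma:conic-index-extension} is stated for an arbitrary elimination order.
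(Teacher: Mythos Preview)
Your first routing is essentially the paper's argument, but the step where you conclude $\tilde a^t(i) = 0$ is not justified the way you write it. Remark~\ref{remark:coefficient-nonnegativity} only yields $\alpha_t\,\tilde a^t(i)\ge 0$, i.e., all coefficients of $x_t$ in the residual system have the same sign. There is no legitimate ``opposite direction'' reasoning: the conic-index-set hypothesis gives feasibility only in the $+\alpha$ direction, and infeasibility in the $-\alpha$ direction does \emph{not} translate into a reversed inequality on the coefficients (nor is this ``the content that lets Lemma~\ref{lemma:conic-index-extension} recurse''). What actually closes the argument --- and what the paper does --- is a case split at this point: either the column of $x_t$ is not identically zero, in which case one of $\mathcal H_+(t),\mathcal H_-(t)$ is nonempty and the other empty, so $x_t$ is dirty and we are finished; or the column is identically zero, in which case $\{t\}$ is a lineality index set of the projected system and Lemma~\ref{lemma:lineality-implication} applied with $J=J_C$ forces $J_C$ itself to be a lineality index set of (SILP), contradicting the conic hypothesis. (Equivalently, under your standing contradiction assumption that no variable is dirty, ``same sign'' plus ``$x_t$ not dirty'' immediately forces ``all zero''; you never need the reversed inequality you were reaching for. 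The detour through Corollary~\ref{corollary:zero-is-lineality} is also unnecessary --- Lemma~\ref{lemma:lineality-implication} applies directly.)

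Your second routing has a more basic gap. Lemma~\ref{lemma:conic-index-extension} goes from the \emph{projected} system to the \emph{pre-projected} one: a conic index set of $P(\Gamma;x_1,\dots,x_\ell)$ lifts to one of $P(\Gamma;x_1,\dots,x_{\ell-1})$. To run the ``conic property is preserved under elimination of a clean variable'' argument you need the opposite implication, which the lemma does not supply and which is not obvious --- the backward-infeasibility clause of Definition~\ref{definition:conic-index-set} does not pass to projections in any immediate way, since after projecting out $x_j$ one is free to choose a different $x_j$ for each $r$. So this routing does not stand on the tools available; the paper avoids it entirely by working with the single remaining member of $J_C$ and the case split above.
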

\begin{proof}
Assume an arbitrary variable permutation.  By hypothesis there is a conic   index set $J_{C} = \{ j_{1}, \ldots,   j_{m} \}$.  The order of the index set is irrelevant, so  assume without loss  that for the given variable  permutation    $j_{1} < j_{2} < \cdots < j_{m}$.   Apply Fourier-Motzkin elimination  and attempt to eliminate the variables (reindexed to reflect the selected variable permutation)  $1, \ldots, \ell$  where $\ell =  j_{m-1} $.   If a dirty variable is discovered prior to eliminating variable $x_{\ell}$, we are done, there exists a dirty variable. Therefore,  we can assume variables    $1, \ldots, \ell$  where $\ell =  j_{m-1} $ are eliminated and the projected system is
\begin{eqnarray*}
\tilde{a}^{\ell+1}(i) x_{\ell  + 1}+ \tilde{a}^{\ell + 2}(i) x_{\ell + 2} + \cdots + \tilde{a}^{n} x_{n} \ge \tilde{b}(i), \quad i \in \tilde{I}
\end{eqnarray*}
where $j_{m} \ge \ell + 1$. There are two cases to consider. 

\vskip 7pt

\noindent {\bf Case 1:} Eliminating a variable in the index set $\{1, \ldots, \ell\}$  does not result in $j_{m}$ indexing a zero column in the projected system.    We show  variable  $j_{m}$ is dirty in the projected system.   This follows because  the projected system does not include variables $j_{1}, \ldots, j_{m-1}$.   Variable $j_{m}$ indexes the only variable in the conic index set $J_{C}$ that remains in the projected system   and the not all $\tilde{a}^{j_{m}}(i)$ are zero.   Remark~\ref{remark:coefficient-nonnegativity} implies 
$r \alpha_{ j_{m}} \tilde{a}^{ j_{m}}(i)     \ge 0$ for all  $i \in \tilde{I}.$
If $\alpha_{ j_{m}} > 0$ then $\tilde{a}^{ j_{m}}(i) \ge 0$   for all  $i \in \tilde{I}$.    If $\alpha_{ j_{m}} < 0$ then $\tilde{a}^{ j_{m}}(i) \le 0$  for all  $i \in \tilde{I}$.  In either case, column $j_{m}$ is dirty since not all coefficients are equal to zero.

\vskip 7pt

\noindent {\bf Case 2:}  Eliminating a variable in the index set $\{1, \ldots, \ell\}$  does  result in $j_{m}$ indexing a zero column in the projected system.     Then set    $  J \cap \{ \ell + 1, \ldots, n \}  =\{ j_{m} \}$ is a lineality index set in $P(\Gamma; x_{1}, \ldots, x_{\ell})$. Then by Lemma~\ref{lemma:lineality-implication}  the index set $J_{C}$ must  be lineality index set which contradicts the hypotheses.  Therefore Case 2 cannot occur.
\end{proof}

\begin{prop}\label{prop:clean-permutation-invariant}
If there exists a permutation of the variables that results in a clean system when the Fourier-Motzkin procedure is applied to the variables, then every permutation of the variables results in a  clean system.
\end{prop}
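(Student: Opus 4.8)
The plan is to argue by contraposition together with the machinery built in Lemmas~\ref{lemma:conic-index-extension}--\ref{lemma:lineality-implication} and Propositions~\ref{prop:conic-index-extension}--\ref{prop:dirty-variable}. Suppose some permutation $\pi$ of the variables produces a \emph{dirty} system when the Fourier-Motzkin procedure is run; I want to show that \emph{every} permutation produces a dirty system. Since $\pi$ produces a dirty variable, Corollary~\ref{corollary:dirty-is-conic} (applied with $\ell$ the stage at which the first dirty variable is exposed, and $t$ the index of that dirty variable) guarantees that (SILP) contains a conic index set $J_C$. This is the crucial ``permutation-free'' object: a conic index set is a property of the feasible region $\Gamma$ itself (as the Remark after Definition~\ref{definition:conic-index-set} observes, elements of $\rec(\Gamma)\setminus\lin(\Gamma)$ correspond to conic index sets), and its existence does not depend on how the variables are ordered.

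Next, I would invoke Proposition~\ref{prop:dirty-variable}: if (SILP) contains a conic index set, then the Fourier-Motzkin elimination procedure terminates with at least one dirty variable \emph{regardless of the variable permutation used}. Hence every permutation yields a dirty system. Taking the contrapositive of the implication ``some permutation gives a dirty system $\Rightarrow$ every permutation gives a dirty system'' gives exactly the statement: if there exists a permutation producing a clean system, then every permutation produces a clean system. In short, the proof is a two-line deduction once Corollary~\ref{corollary:dirty-is-conic} and Proposition~\ref{prop:dirty-variable} are in hand.

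The main obstacle is not in this proposition itself but in the supporting results it leans on — in particular Proposition~\ref{prop:dirty-variable} and the Lineality Implication Lemma (Lemma~\ref{lemma:lineality-implication}), whose Case~2 analysis (showing that if elimination ever zeroes out the last coordinate of a conic index set then that index set was secretly a lineality index set, contradicting conicity) is the delicate point. For the present statement, however, those are black boxes proved earlier, so the argument here reduces to correctly wiring together: ``dirty under one permutation $\Rightarrow$ conic index set exists'' (Corollary~\ref{corollary:dirty-is-conic}) and ``conic index set exists $\Rightarrow$ dirty under every permutation'' (Proposition~\ref{prop:dirty-variable}), then contraposing. I would write it out as:

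\begin{proof}
We prove the contrapositive. Suppose some permutation of the variables results in a \emph{dirty} system when the Fourier-Motzkin procedure is applied; we show every permutation results in a dirty system. Fix such a permutation and apply the Fourier-Motzkin procedure. By hypothesis it terminates with at least one dirty variable; let $x_t$ be the first dirty variable exposed, say after variables $x_1,\dots,x_\ell$ have been eliminated. By Corollary~\ref{corollary:dirty-is-conic}, there is a conic index set $J_C$ for (SILP) with $t\in J_C$. In particular, (SILP) contains a conic index set. By Proposition~\ref{prop:dirty-variable}, the Fourier-Motzkin elimination procedure then terminates with at least one dirty variable regardless of the variable permutation used. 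Hence every permutation of the variables results in a dirty system. Taking the contrapositive, if there exists a permutation of the variables that results in a clean system, then every permutation of the variables results in a clean system.
\end{proof}
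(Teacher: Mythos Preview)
Your proof is correct and essentially identical to the paper's: both reduce to combining Corollary~\ref{corollary:dirty-is-conic} (dirty variable $\Rightarrow$ conic index set) with Proposition~\ref{prop:dirty-variable} (conic index set $\Rightarrow$ dirty under every permutation). The only cosmetic difference is that the paper argues directly (clean under some permutation $\Rightarrow$ no conic index set by the contrapositive of Proposition~\ref{prop:dirty-variable} $\Rightarrow$ no dirty variable under any permutation by the contrapositive of Corollary~\ref{corollary:dirty-is-conic}), whereas you contrapose the whole statement first and then apply the two supporting results in their forward direction.
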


\begin{proof}
If the Fourier-Motzkin  procedure for some permutation results in a clean  system then there are no conic index sets by the contrapositive of Proposition~\ref{prop:dirty-variable}.  If there are no conic index sets, then applying the Fourier-Motzkin  procedure to any  permutation of the variables  cannot result in dirty variables since a dirty variable  implies the existence of a conic index set  by Corollary~\ref{corollary:dirty-is-conic}.
\end{proof}

\subsection{Differences between semi-infinite linear programming and finite linear programming}\label{ss:silp-finite-lp}

In this section we show how the Fourier-Motzkin elimination procedure can be used to reveal important differences between a finite linear programs and a semi-infinite linear program. Consider the following well-known facts about finite linear programs:
\begin{enumerate}[(i)]
\item if the primal is infeasible then the dual must be either infeasible or unbounded, and
\item if the primal has a finite optimal objective value, then the dual must be feasible and bounded with the same objective value (that is, strong duality always holds).
\end{enumerate}
The following two examples demonstrate that (i) and (ii) need not hold for general semi-infinite linear programs. 
\begin{example}\label{example:primal-infeasible-dual-solvable}
Consider the following problem
\begin{eqnarray*}
\inf x_1 \phantom{+ x_2} && \\
\phantom{\inf x_1 + } \frac{1}{i}x_2&\ge& 1  \quad \text{ for $i = 1, 2, \dots$} \\
              x_1 \phantom{+x_2} &\ge& 0.
\end{eqnarray*}
This problem is infeasible since for any  $x_2$, there exists a sufficiently large $i$ such that $\frac{1}{i}x_2 < 1$. 

Add the constraint $ - x_1 + z \ge 0,$ apply Fourier-Motzkin elimination and project out the clean variable $x_1$ to get the  following system.
\begin{equation*} 
\begin{array}{rcl}
\dfrac{1}{i}x_2 &\ge& 1  \quad \text{ for $i = 1, 2, \dots$} \\
\phantom{\frac{1}{i}x_1}  z &\ge&  0,
\end{array}
\end{equation*}
In this system $I_1 = I_4 = \emptyset$ and $I_3$ is a singleton. This implies that $\sup_{h \in I_3} \tilde b(h)$ is attained and  the dual is solvable. \hfill $\triangleleft$
\end{example}

\vskip 5pt
\begin{example}[Example~\ref{example:not-primal-optimal} revisited]\label{example:not-primal-optimal-revisited}

In Example~\ref{example:not-primal-optimal},  the primal problem has a finite optimal value of $0$. This optimal value remains greater than or equal to zero even without the non-negativity constraint on $x_1$ in \eqref{eq:not-primal-optimal}. This is because  $\omega(\delta)$ still equals  $\frac{1}{4(\delta - 1)}$  and   $\lim_{\delta \to \infty} \omega(\delta) = 0$.  Then by Lemma~\ref{lemma:primal-optimal-value}, the optimal primal value is greater than or equal to zero. However,  the finite support dual of this semi-infinite  linear program is infeasible. The objective coefficient of $x_2$ in the primal is $0$ and the coefficient of $x_2$ is strictly positive in the constraints.   This implies that  the only possible dual element satisfying the dual constraint corresponding to $x_2$ is $u = 0$; however, the objective coefficient of $x_1$ is $1$ and this dual vector does not satisfy the dual constraint corresponding to $x_1$. Alternatively, the infeasibility of the dual follows from Theorem~\ref{theorem:dual-infeasibility} because in this case $I_3 = \emptyset$. \hfill $\triangleleft$
\end{example}

\subsection{Application: Conic linear programs}\label{s:application-conic}

We consider conic programming with the following primal problem:
\begin{align*}\label{eq:conlp-primal}
\begin{array}{rl}
\quad \inf_{x\in X} & \langle x, \phi  \rangle \\
\textrm{s.t.} & A(x) \succeq_P d
\end{array}\tag{\text{ConLP}}
\end{align*}
where $X$ is a finite dimensional vector space and $Y$ an arbitrary vector space, $A : X \rightarrow Y$ is a linear mapping, $d \in Y$, $P$ is a pointed convex cone in $Y$ and $\phi$ is a linear functional on $X$. The standard dual (also a conic program) is
\begin{align*}\label{eq:conlp-dual}
\begin{array}{rl}
\sup_{\psi \in Y'} & \langle d, \psi  \rangle \\
\textrm{s.t.} & A'(\psi) = \phi \\
& \psi \in P'.
\end{array}\tag{\text{ConLPD}}
\end{align*}
In this section of the electronic companion, we study a semi-infinite linear program that is equivalent to (ConLP) and use the method of projection to give a new proof of the following well-known duality result for conic programs. 
\theoremstyle{definition}
\newtheorem*{thm:slater}{Theorem~\ref{thm:slater}}
\begin{thm:slater}[Zero duality gap via an interior point]
Let $Y$ be finite-dimensional, and let $P$ be reflexive. Assume the primal conic program~\eqref{eq:conlp-primal} is feasible. Suppose there exists a $\psi^* \in \intr(P')$ with $A'(\psi^*) = \phi$. Then the primal-dual pair~\eqref{eq:conlp-primal}-\eqref{eq:conlp-dual} has a zero duality gap. Moreover, the primal is solvable.
\end{thm:slater}
Our proof uses the interior point $\psi^*$ to construct a set of  constraints that show the associated semi-infinite linear program is tidy. Thus,  zero duality gap and primal solvability are established in a transparent ``algebraic" manner. We believe our results add fresh insight to the literature on connections between conic programming and semi-infinite linear programming (see, for instance Zhang \cite{zhang2010understanding}).

\subsubsection{Preliminaries}

For the linear map $A$ defined on $X$, let $\ker(A) = \{x \in X : A(x) = 0\}$ denote the kernel of $T$. The \emph{algebraic adjoint} $A' : Y' \to X'$ of $A$, where $Y'$ and $X'$ are the algebraic dual vector spaces of $X$ and $Y$ respectively, is the mapping $A'(\psi) = \psi \circ A$ (the algebraic adjoint has been defined elsewhere, see for instance Chapter 6 of \cite{hitchhiker}).

The following results demonstrate how, without loss of generality, we may assume $A'$ is surjective when \eqref{eq:conlp-primal} is feasible and bounded. 

\begin{lemma}\label{lem:A'-surjective}
Given a linear mapping  $A:X \rightarrow Y$,  $\ker(A) = \{0\}$ if and only if $A'$ is surjective.
\end{lemma}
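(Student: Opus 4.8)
The statement is a standard linear-algebra fact about a linear map $A : X \to Y$ with $X$ finite-dimensional, and the natural proof is to establish the two implications separately using the rank-nullity theorem together with the defining relation $\langle x, A'(\psi) \rangle = \langle A(x), \psi \rangle$ for $\psi \in Y'$, $x \in X$.

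For the direction ``$\ker(A) = \{0\} \Rightarrow A'$ surjective'', I would argue as follows. Since $X$ is finite-dimensional and $\ker(A) = \{0\}$, the map $A$ is injective, so $X$ is isomorphic to the subspace $A(X) \subseteq Y$, which is therefore a finite-dimensional subspace of $Y$. Given any target functional $\chi \in X'$, transport it across this isomorphism: define a linear functional $\eta_0$ on the subspace $A(X)$ by $\eta_0(A(x)) := \chi(x)$, which is well-defined precisely because $A$ is injective. Now extend $\eta_0$ to a linear functional $\psi \in Y'$ on all of $Y$; this extension exists because any subspace has an algebraic complement (a Hamel basis argument — note the paper is explicitly working with algebraic duals, so no continuity is needed). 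Then for every $x \in X$, $\langle x, A'(\psi)\rangle = \langle A(x), \psi \rangle = \eta_0(A(x)) = \chi(x)$, so $A'(\psi) = \chi$. Hence $A'$ is surjective.

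For the converse ``$A'$ surjective $\Rightarrow \ker(A) = \{0\}$'', I would argue by contrapositive. Suppose $\ker(A) \neq \{0\}$, and pick a nonzero $x_0 \in \ker(A)$. Since $X$ is finite-dimensional, there is a functional $\chi \in X'$ with $\chi(x_0) \neq 0$ (extend a functional defined on $\lin\{x_0\}$ to all of $X$). But for every $\psi \in Y'$ we have $\langle x_0, A'(\psi)\rangle = \langle A(x_0), \psi \rangle = \langle 0, \psi \rangle = 0 \neq \chi(x_0)$, so $\chi$ is not in the image of $A'$. Therefore $A'$ is not surjective, which proves the contrapositive.

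The proof is entirely routine; the only point requiring a little care is that everything is happening in \emph{algebraic} duals, so the extension-of-functionals steps must be justified by the existence of algebraic complements (Hamel bases) rather than by any Hahn–Banach/topological argument — but this is harmless since $X$ is finite-dimensional and $A(X)$ is a finite-dimensional subspace of $Y$. I do not anticipate any real obstacle; the main thing to state cleanly is the well-definedness of $\eta_0$ on $A(X)$, which is exactly where injectivity of $A$ is used.
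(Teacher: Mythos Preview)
Your proposal is correct and follows essentially the same approach as the paper: in the forward direction you define a functional on $A(X)$ via the inverse of $A$ and extend it to all of $Y$ (the paper writes this as $\phi\circ A^{-1}$, you write it as $\eta_0$), and in the converse direction both arguments rest on the fact that the algebraic dual separates points, the paper phrasing it directly and you by contrapositive. Your remarks about finite-dimensionality of $X$ are harmless but in fact unnecessary---the paper's proof goes through for arbitrary $X$, since algebraic extensions of linear functionals and separation by $X'$ hold without any dimension hypothesis.
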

\begin{proof}
($\Longrightarrow$)  If  $\ker(A) = \{0\}$, then $A$ is one-to-one and there is  a linear map $A^{-1} : Im(A) \to X$.  Let $\phi$ be an arbitrary linear functional in  $X^{\prime}.$  We show there exists a linear functional $\psi \in Y^{\prime}$  such that   $\phi = A^{\prime}(\psi).$    Define the linear functional $\phi \circ A^{-1}$  on $Im(A)$ and let  $\psi$ be any extension of this linear functional  from  $Im(A)$ to $Y$. Thus $\psi \in Y'$. We now show $\phi = A'(\psi)$. For any $x \in X$, $\langle x, A'(\psi)\rangle = \langle A(x), \psi \rangle = (\phi \circ A^{-1} )(A(x)) = \phi(x) = \langle x, \phi \rangle$. The second equality follows since $A(x) \in Im(A)$.

($\Longleftarrow$) Consider $x \in X$ such that $A(x) = 0$. Note that for every $\phi \in X'$, $\langle x, \phi \rangle = 0$. This would imply that $x = 0$. Since $A'$ is surjective, for every $\phi \in X'$ there exists $\psi \in Y'$ such that $A'(\psi) = \phi$. Thus, $\langle x, \phi \rangle = \langle x, A'(\psi) \rangle = \langle A(x), \psi \rangle = \langle 0, \psi \rangle = 0$. \end{proof}

\begin{remark}
Note that a related result to Lemma~\ref{lem:A'-surjective} for topological adjoints is well-known in the functional analysis literature (see for instance Theorem 2 on page 156 of \cite{luenberger}). This more familiar result requires that $Im(A)$ be a closed set in $Y$. This requirement does not fit our setting since we assume no topology on $Y$. In contrast, for algebraic adjoints, no assumption on $Im(A)$ is necessary. Indeed, \emph{any} extension of the linear functional in the forward direction of the above proof suffices, it need not be continuous in a given topology.
\hfill $\triangleleft$
\end{remark}

\begin{lemma}\label{lem:kernel} If \eqref{eq:conlp-primal} is feasible and bounded, then $\ker(A) \subseteq \ker(\phi)$. 
\end{lemma}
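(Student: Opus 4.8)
\textbf{Proof proposal for Lemma~\ref{lem:kernel}.}

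The plan is to argue by contradiction using the homogeneity of the kernel. Suppose \eqref{eq:conlp-primal} is feasible and bounded, but there exists $\bar x \in \ker(A)$ with $\langle \bar x, \phi \rangle \neq 0$. By replacing $\bar x$ with $-\bar x$ if necessary, we may assume $\langle \bar x, \phi \rangle < 0$. Let $x_0$ be any feasible point, so that $A(x_0) \succeq_P d$. First I would observe that for every scalar $t \geq 0$, the point $x_0 + t\bar x$ is also feasible: indeed $A(x_0 + t\bar x) = A(x_0) + tA(\bar x) = A(x_0) \succeq_P d$ since $\bar x \in \ker(A)$. This uses only linearity of $A$ and the definition of $\ker(A)$, with no topology on $Y$ required.

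Next I would evaluate the objective along this ray. We have $\langle x_0 + t\bar x, \phi \rangle = \langle x_0, \phi \rangle + t\langle \bar x, \phi \rangle$, and since $\langle \bar x, \phi \rangle < 0$, letting $t \to \infty$ drives this quantity to $-\infty$. Since \eqref{eq:conlp-primal} is an infimum and $x_0 + t\bar x$ is feasible for all $t \geq 0$, this shows $v(\text{ConLP}) = -\infty$, contradicting the hypothesis that \eqref{eq:conlp-primal} is bounded. Hence no such $\bar x$ exists, i.e., every $\bar x \in \ker(A)$ satisfies $\langle \bar x, \phi \rangle = 0$, which is precisely $\ker(A) \subseteq \ker(\phi)$.

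This argument is entirely elementary; there is no real obstacle. The only point requiring a moment's care is the sign normalization at the start — one must note that $\ker(A)$ is a linear subspace so that $\bar x \in \ker(A)$ implies $-\bar x \in \ker(A)$, which lets us assume $\langle \bar x, \phi \rangle < 0$ without loss of generality rather than merely $\neq 0$. Everything else is direct substitution into the definitions of feasibility and boundedness for the conic program.
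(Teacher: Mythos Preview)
Your proof is correct and essentially identical to the paper's own argument: both take an element of $\ker(A)\setminus\ker(\phi)$, normalize its sign so the objective value is negative, and push a feasible point along this direction to drive the objective to $-\infty$, contradicting boundedness. Only the variable names differ.
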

\begin{proof}
Prove the contrapositive and assume  that there is an  $r \in \ker(A) \setminus \ker(\phi)$. Without loss of generality assume $\langle r, \phi \rangle < 0$ (otherwise  make the argument with $-r$). Let $\bar x$ be a feasible solution to \eqref{eq:conlp-primal}, i.e., $A(\bar x) \succeq_P d$. Since $r \in \ker(A)$,  $A(\bar x + \lambda r) \succeq_P d$ for all $\lambda \geq 0$. But since $\langle r, \phi \rangle < 0$, $\langle \bar x + \lambda r, \phi \rangle \to -\infty$ as $\lambda \to \infty$, contradicting the boundedness of \eqref{eq:conlp-primal}.\end{proof}

\begin{lemma}\label{lem:restriction-problem}
Let $X$ be a finite-dimensional space, so that orthogonal complements of subspaces are well-defined. Let $\bar\phi = \phi|_{\ker(A)^\perp}$ be the linear functional  on $\ker(A)^\perp$ defined by the restriction of $\phi$ to $\ker(A)^\perp$. Similarly, let $\bar A = A|_{\ker(A)^\perp}$ denote the restriction of the linear map $A$. Consider the optimization problem 

\begin{equation}\label{eq:conlp-primal-lineality}
\begin{array}{rl}
\quad \inf_{x\in \ker(A)^\perp} & \langle x, \bar\phi  \rangle \\
\textrm{s.t.} & \bar A(x) \succeq_P d.
\end{array}
\end{equation}
If \eqref{eq:conlp-primal} is feasible and bounded, the optimal value of \eqref{eq:conlp-primal} equals the optimal value of~\eqref{eq:conlp-primal-lineality}. Moreover, if $O$ is the set of optimal primal solutions for \eqref{eq:conlp-primal}, and $\overline O$ is the set of optimal primal solutions for~\eqref{eq:conlp-primal-lineality}, then $O = \overline O + \ker(A)$. 
\end{lemma}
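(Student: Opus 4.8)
The plan is to prove the two claims—equality of optimal values, and the structure $O = \overline O + \ker(A)$—by exploiting the orthogonal decomposition $X = \ker(A)^\perp \oplus \ker(A)$. Every $x \in X$ can be written uniquely as $x = x^\perp + x^0$ with $x^\perp \in \ker(A)^\perp$ and $x^0 \in \ker(A)$. First I would observe that $A(x) = \bar A(x^\perp)$ since $A$ annihilates $\ker(A)$; hence $x$ is feasible for \eqref{eq:conlp-primal} if and only if $x^\perp$ is feasible for \eqref{eq:conlp-primal-lineality}. Next I would invoke Lemma~\ref{lem:kernel}: since \eqref{eq:conlp-primal} is feasible and bounded, $\ker(A) \subseteq \ker(\phi)$, so $\langle x^0, \phi \rangle = 0$, and therefore $\langle x, \phi \rangle = \langle x^\perp, \phi \rangle = \langle x^\perp, \bar\phi \rangle$. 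The objective value of any feasible $x$ equals the objective value of its projection $x^\perp$, which is feasible for the restricted problem.

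From this correspondence the equality of optimal values is immediate in both directions: given any feasible $x$ for \eqref{eq:conlp-primal}, its projection $x^\perp$ is feasible for \eqref{eq:conlp-primal-lineality} with the same objective value, so $v\eqref{eq:conlp-primal-lineality} \le v\eqref{eq:conlp-primal}$; conversely any feasible $\bar x \in \ker(A)^\perp$ for \eqref{eq:conlp-primal-lineality} is, viewed inside $X$, feasible for \eqref{eq:conlp-primal} with the same objective (here $\bar A(\bar x) = A(\bar x)$ and $\langle \bar x, \bar\phi\rangle = \langle \bar x, \phi\rangle$ since $\bar x \in \ker(A)^\perp$), giving the reverse inequality. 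Denote this common value by $v^*$.

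For the structural statement, I would argue by double inclusion. If $\bar x \in \overline O$ and $r \in \ker(A)$, then $A(\bar x + r) = \bar A(\bar x) \succeq_P d$ so $\bar x + r$ is feasible, and $\langle \bar x + r, \phi \rangle = \langle \bar x, \phi \rangle + \langle r, \phi\rangle = v^* + 0 = v^*$ using $\ker(A) \subseteq \ker(\phi)$; hence $\bar x + r \in O$, giving $\overline O + \ker(A) \subseteq O$. Conversely, let $x \in O$ and decompose $x = x^\perp + x^0$. Then $x^\perp$ is feasible for \eqref{eq:conlp-primal-lineality} with objective value $\langle x^\perp, \bar\phi\rangle = \langle x, \phi\rangle = v^*$, so $x^\perp \in \overline O$; and $x^0 \in \ker(A)$; thus $x = x^\perp + x^0 \in \overline O + \ker(A)$. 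This proves $O \subseteq \overline O + \ker(A)$ and completes the argument.

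I do not anticipate a serious obstacle here; the only subtlety worth flagging is making sure the objective of the restricted problem is correctly identified with the restriction of $\phi$ rather than with $\phi$ on all of $X$—but this is exactly why Lemma~\ref{lem:kernel} is invoked, since it guarantees the $\ker(A)$-component of the objective vanishes on all feasible points, which is what makes the projection $x \mapsto x^\perp$ objective-preserving. A minor bookkeeping point is that one should note $\ker(A)^\perp$ is well-defined because $X$ is finite-dimensional (as the lemma's hypothesis already states), so the orthogonal decomposition used throughout is legitimate.
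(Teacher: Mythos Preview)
Your proposal is correct and follows essentially the same approach as the paper: decompose $x = x^\perp + x^0$ using $X = \ker(A)^\perp \oplus \ker(A)$, invoke Lemma~\ref{lem:kernel} to kill the objective on the $\ker(A)$-component, and observe that feasibility depends only on $x^\perp$. In fact your write-up is more complete than the paper's, which only spells out the direction ``feasible $x$ for \eqref{eq:conlp-primal} yields feasible $x^\perp$ for \eqref{eq:conlp-primal-lineality} with equal objective'' and leaves the reverse inequality and the identity $O = \overline O + \ker(A)$ implicit.
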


\begin{proof}
Since \eqref{eq:conlp-primal} is feasible and bounded,  $\ker(A) \subseteq \ker(\phi)$ by Lemma~\ref{lem:kernel}. For any $x$ feasible to \eqref{eq:conlp-primal}, let $r \in \ker(A)$ and $\bar x \in \ker(A)^\perp$ such that $x = \bar x + r$. Since $\ker(A) \subseteq \ker(\phi)$, $\langle r, \phi \rangle = 0$. Thus, $\langle x, \phi\rangle = \langle \bar x + r, \phi \rangle = \langle \bar x, \phi \rangle = \langle \bar x, \bar \phi \rangle$, the last equality follows since $\bar x \in \ker(A)^\perp$. Similarly, $\bar A(\bar x) = A(\bar x) = A(\bar x + r) = A(x) \succeq_P d$. Thus, $\bar x$ is a feasible solution to~\eqref{eq:conlp-primal-lineality} with the same objective value as $\langle x, \phi \rangle$. 
\end{proof}

\begin{remark}\label{rem:surjective-assum}
By Lemma~\ref{lem:restriction-problem}, when \eqref{eq:conlp-primal} is feasible and bounded, it suffices to consider a restricted optimization problem like~\eqref{eq:conlp-primal-lineality}. Note that $\ker(\bar A) = \{0\}$. Thus, without loss of generality,  it is valid to assume that for an instance of a feasible and bounded \eqref{eq:conlp-primal} in a finite-dimensional space $X$, the linear map $A$ has zero kernel, i.e., it is one-to-one. This implies that $A'$ is surjective by Lemma~\ref{lem:A'-surjective}. \hfill$\triangleleft$
\end{remark}

Let $F = \{x \in X\st A(x) \succeq_P d\}$ denote the feasible region of \eqref{eq:conlp-primal}. In our development, it is convenient to assume that the algebraic adjoint $A'$ of the linear map $A$ is surjective. As discussed above (Lemmas~\ref{lem:A'-surjective}--\ref{lem:restriction-problem} and Remark~\ref{rem:surjective-assum}) this can be assumed without loss of generality when \eqref{eq:conlp-primal} is feasible and bounded. 

Construct the following primal-dual pair of semi-infinite linear programs in the case where $X$ is finite-dimensional and the cone $P$ is reflexive.  Recall that a cone $P$ is  {\em reflexive} if $P'' = P$ under the natural embedding of $Y \hookrightarrow Y''$. The condition that $P$ is reflexive naturally holds in many important special cases of conic programming. Once such case is when $Y$ is finite dimensional and $P$ is a closed, pointed cone in $Y$. Then $P$ is easily seen to be reflexive. This case includes   linear programming, semi-definite programming (SDPs) and copositive programming. The above reformulation as a semi-infinite linear program works for any such instance.

The primal semi-infinite linear program is
\begin{align*}\label{eq:con-silp}
\begin{array}{rl}
\quad \inf_{x \in \R^n}  & c^\top x  \\
\textrm{s.t.} & a^1(\psi)x_1 + a^2(\psi)x_2 + \cdots + a^n(\psi)x_n \geq   b(\psi) \quad \text{ for all } \psi \in P'
\end{array}\tag{\text{ConSILP}} 
\end{align*}
where $n=\dim(X)$, and we choose a  basis $e^1, \ldots, e^n \in X$ to view $X$ as isomorphic to $\R^n$, and  $c \in \R^n$ represents the linear functional $\phi \in X'$ (also using the isomorphism of $X'$ and $\R^n$).  In \eqref{eq:con-silp}, the elements $a^{j} \in \R^{P^{\prime}}$  $j = 1, \ldots, n$ and $b \in \R^{P^{\prime}},$ are defined by $a^j(\psi) := \langle A(e^j), \psi \rangle$ and $b(\psi) := \langle b, \psi \rangle$. The finite support dual of \eqref{eq:con-silp} is
\begin{align*}\label{eq:con-silp-lagrangian-dual-finite}
\begin{array}{rcl}
\sup \sum_{\psi \in P'} b(\psi) v(\psi) && \\
 \quad  {\rm s.t.} \quad \sum_{\psi \in P'} a^{k}(\psi) v(\psi) &=& c_k\quad \text{ for } k = 1, \ldots, n \\
v &\in& \R^{(P')}_+.
\end{array}\tag{\text{ConFDSILP}}
\end{align*}
The close connection of this primal-dual pair to the conic pair  \eqref{eq:conlp-primal}--\eqref{eq:conlp-dual} is shown in Theorem~\ref{theorem:new_express} and Theorem~\ref{theorem:new_express_dual} below. Theorem~\ref{theorem:new_express} shows that \eqref{eq:conlp-primal} and 
\eqref{eq:con-silp} are \emph{equivalent}, meaning their respective feasible sets are isomorphic under an isomorphism which preserves objective values. In particular, this means $v(\ref{eq:conlp-primal}) = v(\ref{eq:con-silp})$. Similarly, Theorem~\ref{theorem:new_express_dual} shows that \eqref{eq:conlp-dual} and \eqref{eq:con-silp-lagrangian-dual-finite} are equivalent. In particular this means, $v(\ref{eq:conlp-dual})=v(\ref{eq:con-silp-lagrangian-dual-finite})$.

\subsubsection{Equivalent semi-infinite linear programming formulations of conic programs}\label{sss:appendix-conic-programs}

%

The following two theorems show the equivalence of \eqref{eq:conlp-primal} and \eqref{eq:conlp-dual} with their semi-infinite programming formulations given in \eqref{eq:con-silp} and \eqref{eq:con-silp-lagrangian-dual-finite}.

\begin{theorem}[Primal correspondence]\label{theorem:new_express} 
Assume $P$ is reflexive and $X$ is finite-dimensional. Let $e^1, \ldots, e^n$ be the basis of $X$ used to define~\eqref{eq:con-silp} and ~\eqref{eq:con-silp-lagrangian-dual-finite}. Then, $v(\ref{eq:conlp-primal}) = v(\ref{eq:con-silp})$. Moreover, the set of feasible solutions to \eqref{eq:conlp-primal} is isomorphic to the set of feasible solutions to \eqref{eq:con-silp} under this basis.
\end{theorem}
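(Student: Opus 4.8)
The plan is to exhibit an explicit linear isomorphism between the feasible region $F = \{x \in X : A(x) \succeq_P d\}$ of \eqref{eq:conlp-primal} and the feasible region of \eqref{eq:con-silp}, and to check that this isomorphism preserves objective values, from which the equality of optimal values follows immediately. The natural candidate is the coordinate isomorphism $T : X \to \R^n$ sending $x = \sum_{j=1}^n x_j e^j$ to $(x_1, \dots, x_n)$, where $e^1, \dots, e^n$ is the chosen basis of $X$. Since $T$ is a vector space isomorphism by construction, the only real content is showing that $x \in F$ if and only if $T(x)$ is feasible for \eqref{eq:con-silp}, and that $\langle x, \phi \rangle = c^\top T(x)$.

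First I would handle the objective. By definition $c \in \R^n$ represents $\phi$ under the isomorphism $X' \cong \R^n$ dual to the basis $e^1, \dots, e^n$, which means $c_j = \langle e^j, \phi \rangle$; hence for $x = \sum_j x_j e^j$ we get $\langle x, \phi \rangle = \sum_j x_j \langle e^j, \phi \rangle = c^\top T(x)$ by linearity of $\phi$. Second, I would unwind the constraint equivalence. Fix $x \in X$ and write $x = \sum_{j=1}^n x_j e^j$. For any $\psi \in P'$, linearity of $A$ and of $\psi$ gives
\[
\langle A(x) - d, \psi \rangle = \sum_{j=1}^n x_j \langle A(e^j), \psi \rangle - \langle d, \psi \rangle = \sum_{j=1}^n a^j(\psi) x_j - b(\psi),
\]
using the definitions $a^j(\psi) = \langle A(e^j), \psi \rangle$ and $b(\psi) = \langle d, \psi \rangle$. (I note the excerpt writes $b(\psi) := \langle b, \psi\rangle$, which appears to be a typo for $\langle d, \psi\rangle$; I would use $\langle d,\psi\rangle$.) Thus the semi-infinite system in \eqref{eq:con-silp} says exactly that $\langle A(x) - d, \psi \rangle \ge 0$ for every $\psi \in P'$, i.e.\ $A(x) - d \in P'' $. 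Since $P$ is reflexive, $P'' = P$, so this is equivalent to $A(x) - d \in P$, i.e.\ $A(x) \succeq_P d$, i.e.\ $x \in F$. Therefore $T$ restricts to a bijection $F \to \{\text{feasible set of \eqref{eq:con-silp}}\}$, and since it also preserves objective values, $v(\ref{eq:conlp-primal}) = v(\ref{eq:con-silp})$.

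The only genuinely non-routine point is the invocation of reflexivity to pass from $A(x) - d \in P''$ back to $A(x) - d \in P$; everything else is bookkeeping with the basis and bilinearity of the pairing. I would make sure to state clearly that the inclusion $P \subseteq P''$ always holds (so one direction is free), and that the reverse inclusion $P'' \subseteq P$ is precisely the reflexivity hypothesis, which by assumption of the theorem is in force. One should also double-check the well-definedness of $a^j$ and $b$ as elements of $\R^{P'}$, but this is immediate since $A(e^j) \in Y$ and $d \in Y$ and each $\psi \in P' \subseteq Y'$ can be evaluated on them. No finiteness or surjectivity of $A'$ is needed for this theorem — those hypotheses enter only for the dual correspondence (Theorem~\ref{theorem:new_express_dual}) — so I would not invoke them here.
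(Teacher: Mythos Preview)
Your proof is correct and follows essentially the same approach as the paper: both arguments unwind the coordinate isomorphism, verify that the objective is preserved, and use reflexivity of $P$ to identify the condition ``$\langle A(x)-d,\psi\rangle\ge 0$ for all $\psi\in P'$'' with $A(x)-d\in P$. The only cosmetic difference is that the paper splits the feasibility equivalence into two directions (using the contrapositive for one), whereas you invoke $P''=P$ directly as an equivalence; your observation about the apparent typo $b(\psi)=\langle b,\psi\rangle$ versus $\langle d,\psi\rangle$ is also correct.
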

\begin{proof}
Since $X$ is isomorphic to $\R^n$ with respect to the basis $e^1, \ldots, e^n$  and  $c \in \R^n$  represents the linear functional $\phi \in X'$ the objective functions of both problems are identical (under this isomorphism). The result  follows if the feasible regions of both problems are isomorphic under this same mapping. 
%

Let $F$ denote the feasible region of (ConLP) and $\hat F$ denote the feasible region of (ConSILP). We show $F$ is isomorphic to $\hat F$ under the basis $e^1, \ldots, e^n$. 
%
First we show that if $x = x_1e^1 + \ldots + x_ne^n \in F$ then $(x_1, \ldots, x_n) \in \hat F$.   If $x\in F$, then  $A(x) \succeq_P d$. Therefore, $A(x) - d \in P$ and so for all $\psi \in P'$, $ \langle (A(x) - d), \psi \rangle \geq 0$. Writing $A(x) = \sum_{j=1}^n x_j A(e^j)$ and using the linearity of $\psi$, it follows  that $(x_1, \ldots, x_n)\in \hat F$.

Next we show that if $(x_1, \ldots, x_n) \in \hat F$, then $x = x_1e^1 + \ldots + x^ne^n \in F$.  We establish the contrapositive, i.e. if $x \not\in F$ then $(x_1, \ldots, x_n) \not\in \hat F$. If $x \not\in F$,  then $A(x) - d \not\in P$ and since $P$ is reflexive, $A(x)-d \not\in P''$ (under the natural embedding of $Y \hookrightarrow Y''$). Therefore, there exists $\psi  \in P'$ such that $\langle (A(x)-d), \psi \rangle < 0$. Again, using the linearity of $\psi$  it follows  that $(x_1, \ldots, x_n) \not\in \hat F$.
\end{proof}

\begin{theorem}[Dual Correspondence]\label{theorem:new_express_dual}
 Assume $P$ is reflexive and $X$ is finite-dimensional. Let $e^1, \ldots, e^n$ be the basis of $X$ used to define~\eqref{eq:con-silp} and ~\eqref{eq:con-silp-lagrangian-dual-finite}. Then, $v(\ref{eq:conlp-dual}) = v(\ref{eq:con-silp-lagrangian-dual-finite})$. Moreover, there exists maps $T : P' \to \R^{(P')}_+$ and $\hat T : \R^{(P')}_+ \to P'$ such that if $\psi \in P'$ is a feasible solution to \eqref{eq:conlp-dual} then $T(\psi)$ is a feasible solution to \eqref{eq:con-silp-lagrangian-dual-finite}.  Conversely, if $v \in \R^{(P')}$ is a feasible solution to \eqref{eq:con-silp-lagrangian-dual-finite} then $\hat T(v)$ is a feasible solution to \eqref{eq:conlp-dual}.
\end{theorem}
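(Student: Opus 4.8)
The plan is to exhibit the two maps $T$ and $\hat T$ explicitly and verify they send feasible solutions to feasible solutions while preserving objective values; equality of optimal values then follows. First I would set up notation: write $n = \dim(X)$, fix the basis $e^1,\dots,e^n$ used to define \eqref{eq:con-silp}, so that $a^k(\psi) = \langle A(e^k),\psi\rangle$ and $b(\psi) = \langle d,\psi\rangle$ for $\psi\in P'$. Recall the two dual problems: \eqref{eq:conlp-dual} ranges over $\psi\in Y'$ with $A'(\psi)=\phi$ and $\psi\in P'$, with objective $\langle d,\psi\rangle$; \eqref{eq:con-silp-lagrangian-dual-finite} ranges over $v\in\R^{(P')}_+$ with $\sum_{\psi\in P'}a^k(\psi)v(\psi)=c_k$ for all $k$, with objective $\sum_{\psi\in P'}b(\psi)v(\psi)$. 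The natural candidates are $T(\psi) := e^\psi$ (the unit finite-support function supported at the single point $\psi\in P'$, with value $1$ there) and $\hat T(v) := \sum_{\psi\in P'}v(\psi)\,\psi$, a \emph{finite} sum of elements of $P'$, hence a well-defined element of $Y'$; since $P'$ is a convex cone and each $v(\psi)\ge 0$, in fact $\hat T(v)\in P'$.

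The key steps, in order. (1) Verify $T$ is well-defined into $\R^{(P')}_+$: $e^\psi$ has finite (singleton) support and is nonnegative. (2) Show $T$ preserves feasibility: if $A'(\psi)=\phi$, then for each $k$, $\langle A(e^k),\psi\rangle = \langle e^k, A'(\psi)\rangle = \langle e^k,\phi\rangle = c_k$, so $\sum_{\psi'\in P'}a^k(\psi')\,e^\psi(\psi') = a^k(\psi) = c_k$; thus the equality constraints of \eqref{eq:con-silp-lagrangian-dual-finite} hold. (3) Show $T$ preserves the objective: $\sum_{\psi'\in P'}b(\psi')e^\psi(\psi') = b(\psi) = \langle d,\psi\rangle$, the objective value of $\psi$ in \eqref{eq:conlp-dual}. (4) Verify $\hat T$ is well-defined into $P'$: finite support of $v$ makes $\sum_{\psi}v(\psi)\psi$ a finite sum; closure of $P'$ under nonnegative combinations gives membership in $P'$. (5) Show $\hat T$ preserves feasibility: for each $k$, $\langle e^k, A'(\hat T(v))\rangle = \langle A(e^k),\hat T(v)\rangle = \sum_\psi v(\psi)\langle A(e^k),\psi\rangle = \sum_\psi v(\psi)a^k(\psi) = c_k = \langle e^k,\phi\rangle$; since this holds for every basis vector $e^k$, $A'(\hat T(v)) = \phi$. (6) Show $\hat T$ preserves the objective: $\langle d,\hat T(v)\rangle = \sum_\psi v(\psi)\langle d,\psi\rangle = \sum_\psi v(\psi)b(\psi)$. (7) Conclude: the image of $T$ realizes every objective value attained by a feasible point of \eqref{eq:conlp-dual}, so $v(\ref{eq:con-silp-lagrangian-dual-finite})\ge v(\ref{eq:conlp-dual})$; the image of $\hat T$ realizes every objective value attained by a feasible point of \eqref{eq:con-silp-lagrangian-dual-finite}, so $v(\ref{eq:conlp-dual})\ge v(\ref{eq:con-silp-lagrangian-dual-finite})$ (with the usual care for the infeasible/unbounded conventions from the Notation paragraph — if one problem is infeasible, the maps show the other has no feasible point either, or rather that feasibility transfers, so both sides equal $-\infty$ consistently). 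Hence $v(\ref{eq:conlp-dual}) = v(\ref{eq:con-silp-lagrangian-dual-finite})$.

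The calculations here are genuinely routine — everything reduces to the adjoint identity $\langle x, A'(\psi)\rangle = \langle A(x),\psi\rangle$, linearity of evaluation, and the fact that a linear functional on a finite-dimensional space is determined by its values on a basis. The one place requiring a moment of care is step (5): passing from "$\langle A(e^k),\hat T(v)\rangle = c_k$ for all $k$" to "$A'(\hat T(v)) = \phi$" uses that $X$ is finite-dimensional with basis $e^1,\dots,e^n$ and that $\phi$ is represented by $c$, so two linear functionals agreeing on a basis are equal; this is exactly where the finite-dimensionality hypothesis on $X$ is used. The reflexivity of $P$ is not actually needed for the \emph{dual} correspondence (it was needed for the primal correspondence in Theorem~\ref{theorem:new_express} to recover $A(x)-d\in P$ from $A(x)-d\in P''$); I would note this but keep the hypothesis as stated for uniformity with Theorem~\ref{theorem:new_express}. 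The main "obstacle", such as it is, is simply being careful about which direction of inequality each map gives and handling the degenerate cases, but there is no deep difficulty.
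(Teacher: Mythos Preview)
Your proposal is correct and essentially identical to the paper's own proof: the paper defines $T(\psi^*)$ to be the indicator $e^{\psi^*}$ and $\hat T(v^*) = \sum_{\psi\in P'} v^*(\psi)\psi$, then checks exactly the constraint and objective identities you list in steps (2)--(6). Your additional remarks on where finite-dimensionality of $X$ enters and on the irrelevance of reflexivity for this direction are accurate and go slightly beyond what the paper states explicitly.
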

\begin{proof}
It suffices to construct maps $T$ and $\hat T$ which satisfy the following properties.
\begin{enumerate}[(i)]
\item $\langle e^k, A'(\psi^*)\rangle = \sum_{\psi \in P'} a^{k}(\psi) T(\psi^*)(\psi)$, for every $\psi^* \in P'$ and all $k=1, \ldots, n$.
\item $\langle d, \psi^* \rangle = \sum_{\psi \in P'}b(\psi)T(\psi^*)(\psi)$, for every $\psi^* \in P'$.
\item $\sum_{\psi \in P'} a^{k}(\psi) v(\psi) = \langle e^k, A'(\hat T(v))\rangle$, for every $v \in \R^{(P')}_+$ and all $k=1, \ldots, n$.
\item $\sum_{\psi \in P'}b(\psi)v(\psi) = \langle d, \hat T(v) \rangle$, for every $v \in \R^{(P')}_+$.
\end{enumerate}
The map $T$ is defined as follows. For any $\psi^* \in P'$, $T(\psi^*)$ is the finite support element $v^* \in \R^{(P')}$ where the only non-zero component of $v^*$ is 1 and corresponds to $\psi^*$. For any $k \in \{1, \ldots, n\}$, $\sum_{\psi \in P'} a^{k}(\psi) v^*(\psi) = a^k(\psi^*) = \langle A(e^k), \psi^* \rangle = \langle e^k, A'(\psi^*) \rangle$ and  (i) is satisfied.  Also,  $\sum_{\psi \in P'}b(\psi)v^*(\psi) = b(\psi^*)=\langle d, \psi^* \rangle$ and  (ii) is satisfied.

The map $\hat T$ is defined as follows. For any $v^* \in \R^{(P')}$, $\hat T(v^*) = \sum_{\psi\in P'}v^*(\psi)\psi$ where the sum is well-defined because $v^*$ has finite support. Since $v^*$ has nonnegative entries, $\hat T(v^*) \in P'$. Now, $\sum_{\psi \in P'} a^k(\psi)v^*(\psi) = \sum_{\psi \in P'}\langle A(e^k), \psi \rangle v^*(\psi) = \langle A(e^k), \sum_{\psi \in P'} v^*(\psi)\psi\rangle = \langle A(e^k), \hat T(v^*) \rangle = \langle e^k, A'(\hat T(v^*)\rangle$ and (iii) is satisfied.  Also, $\sum_{\psi \in P'}b(\psi)v^*(\psi) = \sum_{\psi \in P'} \langle d, \psi\rangle v^*(\psi) = \langle d, \sum_{\psi \in P'}v^*(\psi)\psi \rangle = \langle d, \hat T(v^*)\rangle$ and (iv) is satisfied.
\end{proof}

\subsubsection{Zero duality gap via boundedness}

This result is known in the literature (see for instance Shapiro~\cite{shapiro2005duality}), but we show it as an immediate consequence of Theorem~\ref{theorem:bounded-zero-duality-gap} based on Fourier-Motzkin elimination techniques.

\begin{theorem}[Zero duality gap via boundedness]\label{thm:tbounded}

If $P$ is reflexive and there exists a scalar $\gamma$ such the set $\left\{x : A(x) \succeq_P d \text{ and } \langle x, \phi  \rangle \le \gamma\right\}$ is nonempty and bounded, then there is no duality gap for the primal-dual pair~\eqref{eq:conlp-primal}-\eqref{eq:conlp-dual}.
\end{theorem}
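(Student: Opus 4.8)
The plan is to reduce directly to Theorem~\ref{theorem:bounded-zero-duality-gap} by passing through the equivalent semi-infinite linear programming reformulation. First I would invoke the reduction machinery of Lemmas~\ref{lem:A'-surjective}--\ref{lem:restriction-problem} and Remark~\ref{rem:surjective-assum}: since the hypothesis supplies a $\gamma$ for which $\{x : A(x) \succeq_P d,\ \langle x,\phi\rangle \le \gamma\}$ is nonempty and bounded, in particular \eqref{eq:conlp-primal} is feasible and bounded, so without loss of generality $A$ is one-to-one and $A'$ is surjective. This puts us in the setting where Theorems~\ref{theorem:new_express} and~\ref{theorem:new_express_dual} apply, giving a semi-infinite linear program \eqref{eq:con-silp} with $v(\ref{eq:conlp-primal}) = v(\ref{eq:con-silp})$ and dual \eqref{eq:con-silp-lagrangian-dual-finite} with $v(\ref{eq:conlp-dual}) = v(\ref{eq:con-silp-lagrangian-dual-finite})$, and moreover the feasible sets correspond under the isomorphisms $T, \hat T$ and the chosen basis.

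Next I would translate the bounded superlevel set of \eqref{eq:conlp-primal} into the bounded system \eqref{eq:bounded-zero-duality-gap} required by Theorem~\ref{theorem:bounded-zero-duality-gap}. Writing $c \in \R^n$ for the vector representing $\phi$ and $a^k(\psi) = \langle A(e^k),\psi\rangle$, $b(\psi) = \langle d,\psi\rangle$ as in \eqref{eq:con-silp}, the set
\[
\Gamma_\gamma = \{(x_1,\dots,x_n) : -c^\top x \ge -\gamma,\ a^1(\psi)x_1 + \cdots + a^n(\psi)x_n \ge b(\psi) \text{ for all } \psi \in P'\}
\]
is, by the primal correspondence in Theorem~\ref{theorem:new_express} (the same isomorphism $X \cong \R^n$ carries $A(x)\succeq_P d$ to the semi-infinite constraints, and carries $\langle x,\phi\rangle \le \gamma$ to $-c^\top x \ge -\gamma$), exactly the image of $\{x : A(x)\succeq_P d,\ \langle x,\phi\rangle \le \gamma\}$ under that isomorphism. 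Hence $\Gamma_\gamma$ is nonempty and bounded. By Theorem~\ref{theorem:bounded-zero-duality-gap} applied to \eqref{eq:con-silp}, it follows that \eqref{eq:con-silp} is feasible and tidy, hence solvable, and there is a zero duality gap between \eqref{eq:con-silp} and \eqref{eq:con-silp-lagrangian-dual-finite}, i.e.\ $v(\ref{eq:con-silp}) = v(\ref{eq:con-silp-lagrangian-dual-finite})$.

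Finally I would chase the values back: $v(\ref{eq:conlp-primal}) = v(\ref{eq:con-silp}) = v(\ref{eq:con-silp-lagrangian-dual-finite}) = v(\ref{eq:conlp-dual})$, which is precisely the assertion that the primal-dual pair \eqref{eq:conlp-primal}--\eqref{eq:conlp-dual} has no duality gap. (One should note that the reduction to the surjective-adjoint case in Remark~\ref{rem:surjective-assum} changes neither $v(\ref{eq:conlp-primal})$ nor $v(\ref{eq:conlp-dual})$: the primal value is preserved by Lemma~\ref{lem:restriction-problem}, and the dual value is preserved because the restricted problem has the same optimal value and weak duality pins the dual between them; alternatively the zero-gap conclusion transfers back directly.) The only real subtlety — and the step I expect to require the most care — is checking that the extra objective-bounding inequality $-c^\top x \ge -\gamma$ is faithfully transported by the isomorphism of Theorem~\ref{theorem:new_express} together with the boundedness of $\Gamma_\gamma$; everything else is a routine concatenation of the equivalence theorems with Theorem~\ref{theorem:bounded-zero-duality-gap}.
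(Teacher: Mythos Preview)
Your proposal is correct and follows exactly the approach the paper intends: the paper's own ``proof'' of Theorem~\ref{thm:tbounded} is simply the one-line remark that it is an immediate consequence of Theorem~\ref{theorem:bounded-zero-duality-gap}, and you have spelled out precisely that reduction via the primal/dual correspondences of Theorems~\ref{theorem:new_express} and~\ref{theorem:new_express_dual}. Your parenthetical about transferring the conclusion back through the $\ker(A)^\perp$ reduction is the only place requiring real care, and you have correctly flagged it; otherwise the argument is a straightforward concatenation, just as the paper claims.
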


\begin{remark}
The above result shows that semi-definite programs (SDPs) and copositive programs with nonempty, bounded feasible regions have zero duality gap.
\end{remark}

\subsubsection{Regular duality for conic programs}

We now prove a central result of conic programming, known as {\em regular duality}, using the machinery of FM elimination. First, some notions from conic programming (see Chapter 4 of Gartner and Matous\'ek~\cite{gartner-matousek} for more details). A sequence $(\psi^m)_{m\in \N}$, is called a feasible sequence for the dual program~\eqref{eq:conlp-dual} if $\psi^m \in P'$ for all $m \in \N$ and $\lim_{m \to \infty} A'(\psi^m) = \phi.$

The {\em value of a feasible sequence} $(\psi^m)_{m\in \N}$ is  $\langle d, (\psi^m)_{m\in \N} \rangle = \lim\sup_{m \to \infty}\langle d, \psi^m\rangle$. The {\em limit value (a.k.a. subvalue)} of the dual program~\eqref{eq:conlp-dual} is    $$\sup\{\langle d, (\psi^m)_{m \in \N}\rangle \st (\psi^m)_{m \in \N} \textrm{ is a feasible sequence for }\eqref{eq:conlp-dual} \}.$$

A simple  proof of regular duality for conic programs is easily obtained using projection (see Theorem 4.7.3 in~Gartner and Matousek~\cite{gartner-matousek} for the more standard proof technique).
\begin{theorem}[Regular duality for conic programs]\label{thm:regular-duality}

If the primal conic program~\eqref{eq:conlp-primal} is feasible and has a finite optimal value $z^*$,  then the dual program~\eqref{eq:conlp-dual} has a finite limit value $\hat d$ and $z^* = \hat d$.
\end{theorem}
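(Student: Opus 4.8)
The plan is to reduce the conic statement (Theorem~\ref{thm:regular-duality}) to the already-established semi-infinite result (Theorem~\ref{theorem:regular-duality-silp}) via the equivalences in Section~\ref{sss:appendix-conic-programs}. First I would invoke Remark~\ref{rem:surjective-assum}: since \eqref{eq:conlp-primal} is feasible and bounded (its optimal value $z^*$ is finite), we may assume without loss of generality that $X$ is finite-dimensional with $\ker(A) = \{0\}$, so that $A'$ is surjective by Lemma~\ref{lem:A'-surjective}; Lemma~\ref{lem:restriction-problem} guarantees this restriction does not change the optimal value or the limit value. Then, choosing a basis $e^1, \dots, e^n$ of $X$, I would pass to the primal-dual pair \eqref{eq:con-silp}--\eqref{eq:con-silp-lagrangian-dual-finite}. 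By Theorem~\ref{theorem:new_express}, $v(\ref{eq:conlp-primal}) = v(\ref{eq:con-silp}) = z^*$, which is finite, and in particular \eqref{eq:con-silp} has finite optimal primal value $z^*$.

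The second step is to apply Theorem~\ref{theorem:regular-duality-silp} to \eqref{eq:con-silp}: since its optimal primal value $z^*$ satisfies $-\infty < z^* < \infty$, the limit value $\hat d_{\text{SILP}}$ of \eqref{eq:con-silp-lagrangian-dual-finite} is finite and equals $z^*$. It then remains to translate the notion of ``limit value'' back and forth between \eqref{eq:con-silp-lagrangian-dual-finite} and \eqref{eq:conlp-dual}. This is where the maps $T : P' \to \R^{(P')}_+$ and $\hat T : \R^{(P')}_+ \to P'$ from Theorem~\ref{theorem:new_express_dual} do the work. Given a feasible sequence $(\psi^m)_{m \in \N}$ for \eqref{eq:conlp-dual} — meaning $\psi^m \in P'$ and $A'(\psi^m) \to \phi$ — I would set $v^m := T(\psi^m) \in \R^{(P')}_+$ and use properties (i) and (ii) of Theorem~\ref{theorem:new_express_dual} to check that $\sum_{\psi} a^k(\psi) v^m(\psi) = \langle e^k, A'(\psi^m)\rangle \to c_k$ for each $k$, so $(v^m)$ is a feasible sequence for \eqref{eq:con-silp-lagrangian-dual-finite}, and its value $\limsup_m \sum_\psi b(\psi) v^m(\psi) = \limsup_m \langle d, \psi^m\rangle$ equals the value of $(\psi^m)$. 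Conversely, given a feasible sequence $(v^m)$ for \eqref{eq:con-silp-lagrangian-dual-finite}, set $\psi^m := \hat T(v^m) \in P'$ and use properties (iii) and (iv) to get $A'(\psi^m) \to \phi$ and matching values. Hence the two limit values coincide: $\hat d = \hat d_{\text{SILP}} = z^*$, finishing the proof.

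The main obstacle — and the place where care is needed — is the bookkeeping around the convergence $A'(\psi^m) \to \phi$ versus the coordinatewise convergence $\sum_{\psi} a^k(\psi) v^m(\psi) \to c_k$. Because $X$ is finite-dimensional and we have fixed a basis, convergence of the linear functionals $A'(\psi^m)$ to $\phi$ in $X'$ is exactly coordinatewise convergence of the $n$ real numbers $\langle e^k, A'(\psi^m)\rangle$ to $\langle e^k, \phi\rangle = c_k$; identities (i) and (iii) of Theorem~\ref{theorem:new_express_dual} say these real numbers agree with $\sum_\psi a^k(\psi) v^m(\psi)$ under the correspondence $v^m \leftrightarrow \psi^m$. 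So the equivalence of ``feasible sequence'' on the two sides is immediate once one writes down these identities carefully. The value identities (ii) and (iv) then give equality of $\limsup$s term by term. A small point to flag is that $\hat T(T(\psi)) = \psi$ but $T(\hat T(v))$ need not equal $v$; this is harmless, since we only need that feasible sequences map to feasible sequences of equal value in both directions, which suffices to identify the two suprema defining the limit values.
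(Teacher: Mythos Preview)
Your proposal is correct and follows essentially the same route as the paper: pass to the semi-infinite pair via Theorems~\ref{theorem:new_express} and~\ref{theorem:new_express_dual}, apply Theorem~\ref{theorem:regular-duality-silp}, and transport feasible sequences back and forth using $T$ and $\hat T$ to identify the two limit values. One small remark: the initial reduction to $\ker(A)=\{0\}$ via Remark~\ref{rem:surjective-assum} is unnecessary here (neither Theorem~\ref{theorem:new_express} nor Theorem~\ref{theorem:new_express_dual} requires surjectivity of $A'$), and your claim that Lemma~\ref{lem:restriction-problem} preserves the dual limit value is not actually established by that lemma---simply drop that step.
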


\begin{proof}
By Theorem~\ref{theorem:new_express}, the optimal value of~\eqref{eq:con-silp} is equal to  $z^*$ and $z^*$ is finite since the optimal value of~\eqref{eq:conlp-primal} is finite.   By Theorem~\ref{theorem:regular-duality-silp}, the limit value of~\eqref{eq:con-silp-lagrangian-dual-finite} equals the optimal value of~\eqref{eq:con-silp}.   By Theorem~\ref{theorem:new_express_dual},  every feasible sequence 
for~\eqref{eq:conlp-dual} maps  to a feasible sequence 
for~\eqref{eq:con-silp-lagrangian-dual-finite}. Similarly, every feasible sequence 
for~\eqref{eq:con-silp-lagrangian-dual-finite} 
maps to a feasible sequence 
for~\eqref{eq:conlp-dual}. Thus, the limit value $\hat d$ of~\eqref{eq:conlp-dual} is equal  $z^{*}$,   the limit value of~\eqref{eq:con-silp-lagrangian-dual-finite}.
\end{proof}

\subsubsection{Zero duality gap via an interior point condition}

The main result of this section demonstrates how the Fourier-Motzkin elimination procedure can be used to establish a ``Slater-like" theorem for conic programs. The result is well known. Alternate proofs can be found in the conic programming literature (see for instance Chapter~4 of \cite{gartner-matousek}). The novelty here is the new proof using projection techniques.

For this section, we impose the condition that $Y$ is also finite-dimensional (along with $X$). As in the discussion after the definition of \eqref{eq:con-silp}, we identify $X$ and $X'$ with $\R^n$. Let $B(x, \epsilon) \subseteq \R^n$ denote the open ball of radius $\epsilon$ with center $x \in \R^n$.  Identify the objective linear functional $\phi \in X^{\prime}$ with the vector $c\in \R^n$.

\begin{lemma}\label{lem:open-mapping} Let $Y$ be finite-dimensional, and let $P$ be reflexive. Assume  $A' : Y' \to X'$ is surjective and there exists $\psi^* \in \intr(P')$ with $A'(\psi^*) = c$. Then there exists $\epsilon > 0$ and such that for all $\bar c \in B(c, \epsilon),$ there exists a  $\bar \psi \in P'$ such that  $\bar{c}^\top x \geq \langle d, \bar \psi \rangle$ is a constraint in~\eqref{eq:con-silp}.
\end{lemma}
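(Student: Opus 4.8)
The plan is to exploit the fact that $\psi^*$ is an \emph{interior} point of the dual cone $P'$ together with surjectivity of $A'$ to manufacture a whole neighborhood's worth of constraints in \eqref{eq:con-silp}. First I would recall the structure of the constraints of \eqref{eq:con-silp}: for each $\psi \in P'$ there is a constraint $a^1(\psi)x_1 + \cdots + a^n(\psi)x_n \ge b(\psi)$, where $a^j(\psi) = \langle A(e^j),\psi\rangle$ and $b(\psi) = \langle d,\psi\rangle$. Writing these coefficients in vector form, the constraint indexed by $\psi$ is exactly $(A'(\psi))^\top x \ge \langle d,\psi\rangle$, since $\langle e^j, A'(\psi)\rangle = \langle A(e^j),\psi\rangle = a^j(\psi)$. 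Thus the left-hand-side coefficient vector of the constraint indexed by $\psi$ is precisely $A'(\psi) \in X' \cong \R^n$. So the statement ``$\bar c^\top x \ge \langle d,\bar\psi\rangle$ is a constraint in \eqref{eq:con-silp}'' amounts to: there exists $\bar\psi \in P'$ with $A'(\bar\psi) = \bar c$.

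Next I would use the interiority of $\psi^*$ in $P'$ (a subset of the finite-dimensional space $Y' \cong Y''$, identified with $Y$ via reflexivity) to fix $\rho > 0$ with $B_{Y'}(\psi^*,\rho) \subseteq P'$. Now consider the restriction of $A'$ to this ball. Since $A'$ is surjective and linear between finite-dimensional spaces, it is an open map; hence $A'(B_{Y'}(\psi^*,\rho))$ is an open subset of $X' \cong \R^n$ containing $A'(\psi^*) = c$. Therefore there exists $\epsilon > 0$ such that $B(c,\epsilon) \subseteq A'(B_{Y'}(\psi^*,\rho)) \subseteq A'(P')$. Concretely: given $\bar c \in B(c,\epsilon)$, pick $\bar\psi \in B_{Y'}(\psi^*,\rho) \subseteq P'$ with $A'(\bar\psi) = \bar c$; then by the identification above, $\bar c^\top x \ge \langle d,\bar\psi\rangle$ is exactly the constraint of \eqref{eq:con-silp} indexed by $\bar\psi$, which is what we want.

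The only genuinely nontrivial ingredient is the open mapping step — but in finite dimensions this is elementary linear algebra rather than functional analysis: a surjective linear map $A' : Y' \to X'$ between finite-dimensional spaces admits a linear right inverse $R : X' \to Y'$ (choose a complement of $\ker A'$), and then for $\bar c$ close to $c$ one can take $\bar\psi = \psi^* + R(\bar c - c)$, which lies in $B_{Y'}(\psi^*,\rho)$ once $\|\bar c - c\|$ is small enough (namely $\epsilon < \rho / \|R\|$) and satisfies $A'(\bar\psi) = A'(\psi^*) + \bar c - c = \bar c$. So the main obstacle is essentially bookkeeping: making sure the identification of the constraint's coefficient vector with $A'(\psi)$ is stated cleanly, and that the hypothesis $A'$ surjective (legitimate here by the earlier reduction in Remark~\ref{rem:surjective-assum}, which this section's preamble invokes) is actually available. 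I would present the right-inverse construction explicitly since it makes the choice of $\epsilon$ transparent and avoids appealing to a named open mapping theorem.
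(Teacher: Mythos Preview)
Your proposal is correct and follows essentially the same route as the paper: identify the constraint indexed by $\psi$ with $(A'(\psi))^\top x \ge \langle d,\psi\rangle$, take an open ball around $\psi^*$ inside $P'$, and use that a surjective linear map between finite-dimensional spaces is open to produce the required $\epsilon$. The paper simply invokes the Open Mapping theorem at that step, whereas you additionally spell out the right-inverse construction to make $\epsilon$ explicit; this is a nice touch but not a genuinely different argument.
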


\begin{proof}
For each $\psi \in P'$, the constraint in~\eqref{eq:con-silp} corresponding to $\psi$ is   $\sum_{j=1}^n x_j    \langle A(e^{j}), \psi  \rangle  \geq \langle d, \psi \rangle$. The left hand side of the inequality is the same as $\sum_{j=1}^n x_j    \langle e^{j}, A'(\psi)  \rangle = \langle x, A'(\psi)\rangle$. Since $A'$ is a linear map between finite-dimensional spaces, it is continuous and by assumption, surjective. By the Open Mapping theorem, $A'$ maps open sets to open sets. Since $\psi^* \in \intr(P')$ there exists an open ball $B^* \subseteq P'$ containing $\psi^*$. Thus, $A'(B^*)$ is an open set containing $c$. Therefore, there exists an $\epsilon > 0$ such that $B(c, \epsilon) \subseteq A'(B^*)$. Thus, for every $\bar c \in B(c, \epsilon)$, there exists $\bar\psi \in B^*$ such that $A'(\bar \psi) =\bar c$. Since all $\psi \in B^*\subseteq P'$ give constraints $\langle x,  A'(\psi)\rangle \geq \langle d, \psi\rangle$ in~\eqref{eq:con-silp}, for every $\bar c \in B(c, \epsilon)$ there is the  constraint $\bar c ^\top x = \langle x, A'(\bar\psi)\rangle \geq \langle d, \bar\psi\rangle$ in~\eqref{eq:con-silp}.
\end{proof}

\begin{theorem}[Zero duality gap via an interior point]\label{thm:slater} Let $Y$ be finite-dimensional, and let $P$ be reflexive. If  the primal conic program~\eqref{eq:conlp-primal} is feasible and  there exists $\psi^* \in \intr(P')$ with $A'(\psi^*) = \phi$, then there  is a  zero duality gap for the primal dual pair~\eqref{eq:conlp-primal}-\eqref{eq:conlp-dual}. Moreover, the primal is solvable.
\end{theorem}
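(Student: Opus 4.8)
The plan is to reduce Theorem~\ref{thm:slater} to the tidiness machinery via the semi-infinite linear program \eqref{eq:con-silp}. First I would dispose of a harmless reduction: since the primal \eqref{eq:conlp-primal} is feasible and we want to conclude zero duality gap, I may assume \eqref{eq:conlp-primal} is also bounded — if it is unbounded then $v(\ref{eq:conlp-primal}) = -\infty$ and the dual \eqref{eq:conlp-dual} is infeasible, but the existence of $\psi^*\in\intr(P')$ with $A'(\psi^*)=\phi$ already exhibits a feasible dual point, a contradiction; so unboundedness cannot occur under the hypotheses. Hence \eqref{eq:conlp-primal} is feasible and bounded, and by Remark~\ref{rem:surjective-assum} (via Lemmas~\ref{lem:A'-surjective}--\ref{lem:restriction-problem}) I may assume without loss of generality that $A$ has trivial kernel and $A'$ is surjective. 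By Theorem~\ref{theorem:new_express}, \eqref{eq:con-silp} is then feasible, and $v(\ref{eq:conlp-primal}) = v(\ref{eq:con-silp})$, with feasible sets isomorphic.

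Next I would invoke Lemma~\ref{lem:open-mapping}: because $A'$ is surjective and $\psi^*\in\intr(P')$ satisfies $A'(\psi^*) = c$ (identifying $\phi$ with $c\in\R^n$), there is $\epsilon>0$ so that for every $\bar c \in B(c,\epsilon)$ there is a constraint $\bar c^\top x \ge \langle d,\bar\psi\rangle$ appearing in \eqref{eq:con-silp}. In particular, for each standard unit vector $e^k$, both $c + \tfrac{\epsilon}{2} e^k$ and $c - \tfrac{\epsilon}{2} e^k$ give rise to constraints of \eqref{eq:con-silp}. The plan is to use these $2n$ special constraints (together with the objective-defining constraint $z - c^\top x \ge 0$) to show that the system \eqref{eq:bounded-zero-duality-gap} associated with \eqref{eq:con-silp}, for a suitable $\gamma$, is feasible and bounded; Theorem~\ref{theorem:bounded-zero-duality-gap} then yields tidiness, hence by Theorem~\ref{theorem:all-clean-system} zero duality gap and primal solvability for the pair \eqref{eq:con-silp}--\eqref{eq:con-silp-lagrangian-dual-finite}. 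Concretely, take $\gamma$ to be any upper bound on $v(\ref{eq:con-silp})$ (finite, since the problem is bounded); the set $\Gamma_\gamma = \{x : \text{constraints of \eqref{eq:con-silp}},\ c^\top x \le \gamma\}$ is nonempty because \eqref{eq:con-silp} is feasible with finite optimal value. For boundedness of $\Gamma_\gamma$: for any $x\in\Gamma_\gamma$ and any coordinate $k$, the two constraints coming from $c\pm\tfrac{\epsilon}{2}e^k$ read $(c^\top x) \pm \tfrac{\epsilon}{2} x_k \ge \langle d,\psi^{\pm}_k\rangle$, and combined with $c^\top x \le \gamma$ these bound $x_k$ both above and below in terms of $\gamma$, $\epsilon$, and the fixed real numbers $\langle d,\psi^{\pm}_k\rangle$. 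Thus $\Gamma_\gamma$ is bounded.

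Having established that \eqref{eq:con-silp} is feasible and tidy, Theorem~\ref{theorem:all-clean-system} gives: \eqref{eq:con-silp} is solvable, \eqref{eq:con-silp-lagrangian-dual-finite} is feasible and bounded, and there is a zero duality gap, i.e.\ $v(\ref{eq:con-silp}) = v(\ref{eq:con-silp-lagrangian-dual-finite})$. Finally I would transport this back to the conic pair: by Theorem~\ref{theorem:new_express}, $v(\ref{eq:conlp-primal}) = v(\ref{eq:con-silp})$, and by Theorem~\ref{theorem:new_express_dual}, $v(\ref{eq:conlp-dual}) = v(\ref{eq:con-silp-lagrangian-dual-finite})$; chaining these equalities gives $v(\ref{eq:conlp-primal}) = v(\ref{eq:conlp-dual})$, the claimed zero duality gap. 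Primal solvability of \eqref{eq:conlp-primal} follows from primal solvability of \eqref{eq:con-silp} together with the objective-preserving isomorphism of feasible sets in Theorem~\ref{theorem:new_express}: an optimal $x$ for \eqref{eq:con-silp} corresponds to an optimal point of \eqref{eq:conlp-primal}.

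The main obstacle I anticipate is the bookkeeping around the reduction to $A'$ surjective: one must check that the interior-point hypothesis is preserved (or appropriately reinterpreted) when passing from \eqref{eq:conlp-primal} to the restricted problem \eqref{eq:conlp-primal-lineality} on $\ker(A)^\perp$, since Lemma~\ref{lem:open-mapping} is stated under the surjectivity assumption. The point is that $\psi^*$ with $A'(\psi^*) = \phi$ restricts to a dual functional for the restricted problem with the analogous interior property relative to the (reflexive) cone $P$, and the restricted adjoint $\bar A'$ is surjective because $\bar A$ is injective — but making this transition airtight, and confirming that the special $2n$ constraints genuinely survive into the system processed by Fourier--Motzkin elimination (rather than being used up in earlier eliminations), is the delicate part. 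Everything else is a direct application of Theorems~\ref{theorem:bounded-zero-duality-gap}, \ref{theorem:all-clean-system}, \ref{theorem:new_express}, and \ref{theorem:new_express_dual}.
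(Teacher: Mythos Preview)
Your proposal is correct and follows the same overall architecture as the paper --- reduce to $A'$ surjective, invoke Lemma~\ref{lem:open-mapping} to produce the $2n$ ``perturbed'' constraints $(c\pm\tfrac{\epsilon}{2}e^k)^\top x \ge \langle d,\psi^\pm_k\rangle$, establish tidiness of \eqref{eq:con-silp}, and transport back via Theorems~\ref{theorem:new_express} and~\ref{theorem:new_express_dual}. The one genuine difference is \emph{how} tidiness is established. The paper proves a Claim by directly tracking the Fourier--Motzkin elimination process: it splits into two cases ($c=0$ versus $c\neq 0$) and, in the second case, argues that after eliminating $x_1$ the constraints $\pm\tfrac{\epsilon}{2}x_k + z \ge b^k_\pm$ remain in the system until $x_k$ is processed, witnessing that each $x_k$ is clean. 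You instead observe that the $2n$ perturbed constraints together with $c^\top x \le \gamma$ bound every coordinate $x_k$ above and below (indeed, $c^\top x \le \gamma$ and $c^\top x \pm \tfrac{\epsilon}{2}x_k \ge b^k_\pm$ give $\tfrac{2(b^k_+-\gamma)}{\epsilon} \le x_k \le \tfrac{2(\gamma - b^k_-)}{\epsilon}$), so $\Gamma_\gamma$ is bounded and Theorem~\ref{theorem:bounded-zero-duality-gap} delivers tidiness without any case analysis or step-by-step tracking of the elimination. Your route is slightly cleaner; the paper's route is more explicit about \emph{why} each variable is clean at the algorithmic level.

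Two remarks on your closing caveats. First, the reduction to $\ker(A)=\{0\}$ does preserve the interior-point hypothesis with no difficulty: the cone $P$ and the space $Y$ are unchanged, and $\bar A'(\psi^*)$ is simply the restriction of $A'(\psi^*)=\phi$ to $\ker(A)^\perp$, which is exactly $\bar\phi$. Second, your worry about the $2n$ constraints ``surviving'' Fourier--Motzkin elimination is a concern for the paper's tracking argument, not for yours --- since you route through Theorem~\ref{theorem:bounded-zero-duality-gap}, you only need $\Gamma_\gamma$ to be bounded as a set, and that is a static fact about the constraint system, independent of any elimination order.
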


\begin{proof}
By hypothesis,  there exists $\psi^* \in \intr(P')$ with $A'(\psi^*) = c$ so the dual conic program~\eqref{eq:conlp-dual} is feasible.  Since~\eqref{eq:conlp-primal} is also feasible by hypothesis, feasibility  of~\eqref{eq:conlp-dual} implies  \eqref{eq:conlp-primal}  is both feasible and  bounded. Then by Remark~\ref{rem:surjective-assum}, it is valid to assume $A'$ is surjective. 

\begin{claim} The variables $x_1, \ldots, x_n$ remain clean when  Fourier-Motzkin elimination is applied  to~\eqref{eq:con-silp}.
\end{claim}
\begin{proof}[Proof of Claim]\renewcommand{\qedsymbol}{} 
Since $A'(\psi^*) = c$, there is a constraint $c^\top x \geq \langle d, \psi^* \rangle$ in the system~\eqref{eq:con-silp}.   The constraint $-c^\top x + z \geq 0$ is also present when Fourier-Motzkin elimination is performed on a semi-infinite linear program. 
By Lemma~\ref{lem:open-mapping}, there exists $\epsilon > 0$ such that every $\bar c \in B(c, \epsilon)$ gives a constraint $\bar c^\top x \geq b'$ in~\eqref{eq:con-silp} where $b' = \langle d,\bar \psi\rangle$ with $A'(\bar\psi) = \bar c$. Thus, for any $\delta < \epsilon$, both $(c + \delta e^j)^\top x \geq  b^j_+$ and $(c - \delta e^j)^\top x \geq  b^j_-$ are constraints for every $j = 1, \ldots, n$, (where $b^j_+$ and $b^j_-$ are $\langle d,\psi^j_+\rangle$ and $\langle d,\psi^j_-\rangle$ respectively with $A'(\psi^j_+) = c + \delta e^j$ and $A'(\psi^j_-) = c - \delta e^j)$). \medskip

\noindent\underline{\em Case 1: $c_j = 0$ for all $j=1, \ldots, n.$}  In this case the constraints are  $\frac{\epsilon}{2}x_j \geq  b^j_+$ and $-\frac{\epsilon}{2}x_j \geq  b^j_-$ in the system. During Fourier-Motzkin, for each $j=1, \ldots, n$, the constraints $\frac{\epsilon}{2}x_j \geq  b^j_+$ and $-\frac{\epsilon}{2}x_j \geq  b^j_-$ remain in the system until variable $x_j$ is reached. This makes all variables $x_1, \ldots, x_n$ clean throughout the Fourier-Motzkin procedure.

\medskip

\noindent\underline{\em Case 2: $c_j \neq 0$ for some $j \in \{1, \ldots, n\}.$}  Relabel the variables such that $j = 1$ and  $c_1 \neq 0$. Note that coefficient of $x_1$ in $-c^\top x + z \geq 0$,  has opposite sign to the coefficient of $x_1$ in each pair of  constraints $(c + \frac{\epsilon}{2} e^k)^\top x \geq b^kj_+$ and $(c - \frac{\epsilon}{2} e^k)^\top x \geq b^k_-$ for $j =2, \ldots, n$. Clearly $x_1$ is clean, and when  $x_1$ is eliminated the constraint $-c^\top x + z \geq 0$  is aggregated with the constraints $(c + \frac{\epsilon}{2} e^k)^\top x \geq b^k_+$ and $(c - \frac{\epsilon}{2} e^k)^\top x \geq b^k_-$, for each $k = 2, \ldots, n$. This leaves the constraints $\frac{\epsilon}{2} x_k + z \geq  b^k_+$ and $-\frac{\epsilon}{2} x_j + z\geq  b^k_-$ in the system for $k =2, \ldots, n$, after $x_1$ is eliminated.  As in Case 1, these constraints remain in the system variable until $x_k$ is reached. This makes all variables $x_1, \ldots, x_n$ clean throughout the Fourier-Motzkin procedure.\quad $\dagger$
\end{proof}

Since variables $x_1, \dots, x_n$ are clean throughout the Fourier-Motzkin procedure, and~\eqref{eq:con-silp} is feasible (since \eqref{eq:conlp-primal} is feasible), the problem is feasible and tidy and  by Theorem~\ref{theorem:all-clean-system}, there is a zero duality gap between the pair~\eqref{eq:con-silp}-\eqref{eq:con-silp-lagrangian-dual-finite}, and~\eqref{eq:con-silp} is solvable. 
This implies that there is zero duality gap for the pair~\eqref{eq:conlp-primal}-\eqref{eq:conlp-dual}, and the primal \eqref{eq:conlp-primal} is solvable.\end{proof}


\begin{remark}
Since the dual conic program~\eqref{eq:conlp-dual} is also a conic program, one can consider~\eqref{eq:conlp-dual} as a primal conic program.  In this case  the dual is~\eqref{eq:conlp-primal}.  By Theorem~\ref{thm:slater}, there is a zero duality gap between this primal-dual pair if there is  a point $x^*$ such that $A(x^*) - d \in \intr(P)$. Moreover, the dual is solvable in this case. \hfill $\triangleleft$
\end{remark}

\subsection{Convex programs}\label{sss:appendix-convex}

\begin{theorem}\label{theorem:LD-SILP}
$v(\ref{eq:LD}) = v(\ref{eq:convex-silp})$. Moreover, \eqref{eq:convex-silp} is solvable if and only if there exists $\lambda^* \geq 0$ such that $L(\lambda^*) = \inf_{\lambda\geq 0}L(\lambda)$.
\end{theorem}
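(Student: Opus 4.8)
The plan is to show the two optimization problems \eqref{eq:LD} and \eqref{eq:convex-silp} have the same feasible values by establishing a direct correspondence between feasible solutions. First I would unpack the definition of the Lagrangian: for a fixed $\lambda \ge 0$, $L(\lambda) = \sup\{f(x) + \sum_{i=1}^p \lambda_i g_i(x) : x \in \Omega\}$, so $L(\lambda) \le \sigma$ for a real number $\sigma$ if and only if $f(x) + \sum_{i=1}^p \lambda_i g_i(x) \le \sigma$ for every $x \in \Omega$, i.e. $\sigma - \sum_{i=1}^p \lambda_i g_i(x) \ge f(x)$ for all $x \in \Omega$. This is precisely the semi-infinite constraint system of \eqref{eq:convex-silp}. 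Hence the pair $(\sigma, \lambda)$ is feasible for \eqref{eq:convex-silp} exactly when $\lambda \ge 0$ and $\sigma \ge L(\lambda)$. Since \eqref{eq:convex-silp} minimizes $\sigma$, its optimal value is $\inf\{\sigma : \lambda \ge 0,\ \sigma \ge L(\lambda)\} = \inf_{\lambda \ge 0} L(\lambda) = v(\ref{eq:LD})$, which gives the first claim. One caveat to address: $L(\lambda)$ may equal $+\infty$ for some $\lambda$, in which case no finite $\sigma$ works for that $\lambda$ and the corresponding $\lambda$ simply contributes nothing to the infimum on either side; I would note that the equality of the two extended-real-valued infima is unaffected, and also handle the degenerate case where \eqref{eq:convex-silp} is infeasible (equivalently $L(\lambda) = +\infty$ for all $\lambda \ge 0$, so $v(\ref{eq:LD}) = +\infty$ too).

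For the ``moreover'' statement, I would argue both directions using the same correspondence. If \eqref{eq:convex-silp} is solvable, let $(\sigma^*, \lambda^*)$ be an optimal solution; then $\lambda^* \ge 0$, and $\sigma^* = v(\ref{eq:convex-silp}) = v(\ref{eq:LD})$ while feasibility forces $\sigma^* \ge L(\lambda^*)$, and optimality of $(\sigma^*,\lambda^*)$ together with $L(\lambda^*) \ge \inf_{\lambda \ge 0} L(\lambda) = \sigma^*$ forces $L(\lambda^*) = \sigma^* = v(\ref{eq:LD}) = \inf_{\lambda \ge 0} L(\lambda)$, so $\lambda^*$ attains the Lagrangian dual. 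Conversely, if there exists $\lambda^* \ge 0$ with $L(\lambda^*) = \inf_{\lambda \ge 0} L(\lambda) = v(\ref{eq:LD})$, then $v(\ref{eq:LD})$ is finite (I should note that this uses that \eqref{eq:LD} is not $-\infty$, which is a consequence of weak duality / Remark~\ref{rem:optimal-inequalities} when \eqref{eq:CP} is feasible; if $v(\ref{eq:LD}) = -\infty$ then no $\lambda^*$ can attain it and the statement is vacuous, so this case can be dismissed), and setting $\sigma^* := L(\lambda^*)$ gives a point $(\sigma^*, \lambda^*)$ that is feasible for \eqref{eq:convex-silp} (since $\sigma^* \ge L(\lambda^*)$ with equality) and has objective value $\sigma^* = v(\ref{eq:convex-silp})$, hence optimal; so \eqref{eq:convex-silp} is solvable.

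I do not expect a serious obstacle here — the result is essentially a restatement of the epigraph reformulation of the Lagrangian dual. The one place requiring care is the bookkeeping around infinite values of $L(\lambda)$ and around the empty-feasible-region / unbounded-dual edge cases, so that the chain of (in)equalities between extended reals is justified rather than merely formal. I would state explicitly at the start of the proof that we work in the extended reals $\R \cup \{\pm\infty\}$ and that $L(\lambda) = +\infty$ is permitted, then carry the argument through uniformly.
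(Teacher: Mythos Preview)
Your proposal is correct and follows essentially the same approach as the paper: both rest on the observation that $(\sigma,\lambda)$ is feasible for \eqref{eq:convex-silp} precisely when $\lambda\ge 0$ and $\sigma\ge L(\lambda)$, from which the equality of optimal values and the solvability equivalence follow directly. Your write-up of the ``moreover'' clause is actually more explicit than the paper's, which simply says the second part ``follows from very similar arguments''; the one minor quibble is your remark that ``no $\lambda^*$ can attain'' the value $-\infty$, which tacitly assumes $\Omega\neq\emptyset$ (otherwise $L(\lambda)\equiv -\infty$), but this is a harmless standing assumption the paper makes implicitly as well.
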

\begin{proof}
First we show $v(\ref{eq:LD}) \geq v(\ref{eq:convex-silp})$. If, for every $\lambda \geq 0$, $L(\lambda) = \infty$ then $v(\ref{eq:LD}) = \infty$ and the result is immediate. Else, consider any $\bar\lambda \geq 0$ such that $L(\bar\lambda) < \infty$. Set $\bar\sigma = L(\bar\lambda)$. Then $(\bar\sigma, \bar\lambda)$ is a feasible solution to \eqref{eq:convex-silp} with the same objective value as $L(\bar\lambda)$. Thus, $L(\bar\lambda)\geq v(\ref{eq:convex-silp})$. Since $\bar\lambda\geq 0$ was chosen arbitrarily,  $\inf_{\lambda\geq 0}L(\lambda) \geq v(\ref{eq:convex-silp})$. 

Now we show $v(\ref{eq:convex-silp}) \geq v(\ref{eq:LD})$. If~\eqref{eq:convex-silp} is infeasible then $v(\ref{eq:convex-silp}) = \infty$ and the result is immediate. Otherwise, consider any feasible solution $(\bar\sigma, \bar\lambda)$ to \eqref{eq:convex-silp}. Then $\bar\sigma \geq L(\bar\lambda)$ and thus $\bar\sigma \geq \inf_{\lambda \geq 0}L(\lambda)$. Since $\bar\sigma$ is the objective value of this feasible solution to \eqref{eq:convex-silp}, the optimal value of \eqref{eq:convex-silp} is greater than or equal to $\inf_{\lambda \geq 0}L(\lambda)$.

The second part follows from very similar arguments.
\end{proof}
\begin{theorem}\label{theorem:CP-FDSILP}
$v(\ref{eq:CP}) = v(\text{CP-FDSILP})$.
\end{theorem}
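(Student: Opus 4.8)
**The plan is to establish the equality $v(\ref{eq:CP}) = v(\text{CP-FDSILP})$ by showing that both problems have feasible solutions in exact correspondence with the same objective values, together with a matching of their supremum/infimum structure.** The natural strategy is to exhibit an explicit bijection (or at least a value-preserving correspondence) between feasible solutions of \eqref{eq:CP} and feasible solutions of \eqref{eq:convex-fdsilp}--\eqref{eq:nonnegative}. Given a feasible $x \in \Omega$ for \eqref{eq:CP} (so $g_i(x) \ge 0$ for all $i$), one would define $u \in \R^{(\Omega)}_+$ to be the unit mass at $x$ (i.e. $u(x) = 1$, zero elsewhere) and set $v_i := \sum_{x'\in\Omega} u(x') g_i(x') = g_i(x) \ge 0$. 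Then \eqref{eq:convex}, \eqref{eq:g} and \eqref{eq:nonnegative} are all satisfied, and the objective $\sum_{x'\in\Omega} u(x') f(x') = f(x)$ matches. This shows $v(\text{CP-FDSILP}) \ge v(\ref{eq:CP})$.

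For the reverse inequality $v(\text{CP-FDSILP}) \le v(\ref{eq:CP})$, I would take an arbitrary feasible $(u, v) \in \R^{(\Omega)}_+ \times \R^p_+$ for \eqref{eq:convex-fdsilp}--\eqref{eq:nonnegative} and show its objective value $\sum_{x\in\Omega} u(x) f(x)$ is at most $v(\ref{eq:CP})$. The key point is that constraint \eqref{eq:convex} says the $u(x)$ are nonnegative weights summing to $1$, so $\bar x := \sum_{x\in\Omega} u(x)\, x$ is a convex combination of finitely many points of $\Omega$, hence $\bar x \in \Omega$ by convexity of $\Omega$. By concavity of each $g_i$, $g_i(\bar x) \ge \sum_{x\in\Omega} u(x) g_i(x) = v_i \ge 0$ (using \eqref{eq:g} and $v_i \ge 0$), so $\bar x$ is feasible for \eqref{eq:CP}. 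By concavity of $f$, $f(\bar x) \ge \sum_{x\in\Omega} u(x) f(x)$. Hence $v(\ref{eq:CP}) \ge f(\bar x) \ge \sum_{x\in\Omega} u(x) f(x)$, and taking the supremum over feasible $(u,v)$ gives $v(\ref{eq:CP}) \ge v(\text{CP-FDSILP})$.

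One must handle the degenerate cases carefully to make the suprema behave. If \eqref{eq:CP} is infeasible, then $v(\ref{eq:CP}) = -\infty$ in the paper's convention (supremum objective, infeasible); and in that case \eqref{eq:convex-fdsilp} must also be infeasible, because otherwise the convex-combination argument above would produce a feasible point of \eqref{eq:CP}, a contradiction — so $v(\text{CP-FDSILP}) = -\infty$ as well. If \eqref{eq:CP} is feasible but unbounded, the forward direction shows \eqref{eq:convex-fdsilp} has feasible solutions of arbitrarily large objective value, so $v(\text{CP-FDSILP}) = \infty = v(\ref{eq:CP})$. In the remaining (feasible, bounded) case the two inequalities above combine to give equality.

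**The main obstacle is really just bookkeeping rather than a deep difficulty:** one must be careful that $\bar x = \sum_{x\in\Omega} u(x)\, x$ is well-defined (it is, since $u$ has finite support), that it genuinely lands in $\Omega$ (needs only convexity of $\Omega$, not closedness — closedness is irrelevant here), and that the concavity inequalities $g_i(\bar x) \ge \sum u(x) g_i(x)$ and $f(\bar x) \ge \sum u(x) f(x)$ are applied in the correct direction (Jensen for concave functions). A secondary subtlety is making sure the correspondence respects the paper's sign conventions for $v(\cdot)$ on infeasible/unbounded instances, which is why I would explicitly separate the three cases. No appeal to Fourier–Motzkin, duality, or topology is needed for this particular theorem; it is a direct convexity argument, and I expect the proof in the paper to be short.
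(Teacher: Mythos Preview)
Your proposal is correct and follows essentially the same approach as the paper: the paper proves $v(\ref{eq:CP}) \geq v(\text{CP-FDSILP})$ by forming the convex combination $\bar x = \sum_{x\in\Omega} x\,\bar u(x)$ and applying Jensen's inequality for the concave $g_i$ and $f$, and proves $v(\text{CP-FDSILP}) \geq v(\ref{eq:CP})$ by the unit-mass construction $\bar u = \delta_{\bar x}$, $\bar v_i = g_i(\bar x)$. Your treatment of the infeasible and unbounded cases is slightly more explicit than the paper's, but the substance is identical.
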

\begin{proof}
First we show $v(\ref{eq:CP}) \geq v(\text{CP-FDSILP})$. If~\eqref{eq:convex}-\eqref{eq:nonnegative} is infeasible, then $v(\text{CP-FDSILP}) = -\infty$ and the result is immediate. Assume~\eqref{eq:convex}-\eqref{eq:nonnegative}    has feasible solution $(\bar u, \bar v)$. Let $\bar x = \sum_{x\in \Omega} x\bar u(x)$.  This sum is well-defined because $\bar u$ has finite support. Note that $\bar x$ is feasible to \eqref{eq:CP}. First, since $\Omega$ is convex, by~\eqref{eq:convex} $\bar x \in \Omega$.   By~\eqref{eq:g},    $-\sum_{x\in \Omega}\bar u(x)g_i(x)  + \overline{v}_i =  0$ for all $i = 1, \ldots, p$. Since $\overline{v}_i \geq 0$,  $\sum_{x\in \Omega}\bar u(x)g_i(x) \geq 0$. By~\eqref{eq:convex} and concavity of $g_i$,   $g_i(\bar x) = g_i(\sum_{x\in \Omega} x\bar u(x)) \geq \sum_{x\in \Omega}\bar u(x)g_i(x) \geq 0$ for all $i=1, \ldots, p$. Thus the constraints of \eqref{eq:CP} are satisfied. Since $f$ is concave it follows that $f(\bar x) = f(\sum_{x\in \Omega} x\bar u(x)) \geq \sum_{x\in \Omega}\bar u(x)f(x)$ and $\sum_{x\in \Omega}\bar u(x)f(x)$ is the objective value of $(\bar u, \bar v)$ in \eqref{eq:convex-fdsilp}. This implies $v(\ref{eq:CP}) \geq v(\text{CP-FDSILP})$.

Now we show that $v(\text{CP-FDSILP}) \geq v(\ref{eq:CP})$. If~\eqref{eq:CP} is infeasible, then $v(\ref{eq:CP}) = -\infty$ and the result is immediate. Otherwise, consider any feasible solution $\bar x$ to~\eqref{eq:CP}. Let $\bar u \in \R_{+}^{(\Omega)}$ be defined by $\bar u(\bar x) = 1$ and $\bar u(x) = 0$ for all $x \neq \bar x$. Define $\bar v \in \R^p$ by $\bar v_i = g_i(\bar x)$. Since $\bar x$ is feasible to~\eqref{eq:CP}, $\bar v \in \R^p_+$. Thus, $(\bar u, \bar v)$ is a feasible solution to~\eqref{eq:convex-fdsilp}. The objective value of $(\bar u, \bar v)$ in \eqref{eq:convex-fdsilp} is $f(\bar x)$ which is the objective value $\bar x$ in \eqref{eq:CP}. 
\end{proof}

\subsection{Additional sufficient conditions for zero duality gap}\label{s:app-countable}

By looking at the recession cone of~\eqref{eq:bounded-zero-duality-gap} it is possible gain further insights and discover useful sufficient conditions for zero duality gaps in general semi-infinite linear programs.   We show  results first discovered by Karney~\cite{karney81}  follow directly and easily from our methods.   The recession  cone of~\eqref{eq:bounded-zero-duality-gap} is  defined by the system
\begin{eqnarray}
-  c_{1} x_{1} -  c_{2} x_{2} - \cdots - c_{n} x_{n}    &\ge&   0  \label{eq:bounded-recession-cone-1}  \\
  a^{1}(i) x_{1} + a^{2}(i) x_{2} + \cdots + a^{n}(i) x_{n} \phantom{ + z} &\ge& 0 \quad  \text{ for } i \in I.   \label{eq:bounded-recession-cone-2}
\end{eqnarray}
Applying Fourier-Motzkin elimination to~\eqref{eq:bounded-recession-cone-1}-\eqref{eq:bounded-recession-cone-2} gives
\begin{eqnarray}
0 &\ge&  0 \quad \text{ for } h\in H_1\label{eq:defineH1-recession} \\
\tilde{a}^{\ell}(h) x_{\ell} + \tilde{a}^{\ell+1}(h) x_{\ell+1} + \cdots + \tilde{a}^{n}(h) x_{n} &\ge& 0 \quad \text{ for } h \in H_{2}.  \label{eq:defineH2-recession}
\end{eqnarray}
Following the notation of Karney~\cite{karney81},  $K$ denotes the recession cone of \eqref{eq:SILP},  given by the inequalities~\eqref{eq:bounded-recession-cone-2} and $N$ denotes the null space of the objective function vector $c.$  
\old{
\begin{remark}\label{remark:nullspace_K_N}
 If  $K \cap N$ is a subspace,   then for any  $r \in K \cap N$,   $-r  \in (K \cap N)$.  Then both $r$ and $-r$ are in $K$.  This implies $r \in M.$ Therefore,  $K \cap N \subseteq M.$  This observation is used later in Theorem \ref{theorem:unbounded-recession-cone}. \hfill $\triangleleft$
 \end{remark} }

\begin{lemma}\label{lemma:strict-inequality}
If $H_2$ is nonempty in \eqref{eq:defineH2-recession}, then there exists a ray $r \in \R^n$ satisfying~\eqref{eq:bounded-recession-cone-1}-\eqref{eq:bounded-recession-cone-2} with at least one of the inequalities in~\eqref{eq:bounded-recession-cone-1}-\eqref{eq:bounded-recession-cone-2} satisfied strictly.
\end{lemma}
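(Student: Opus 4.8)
The plan is to produce the required ray explicitly from the data of the output system and then detect the strict inequality by tracking the Fourier--Motzkin multipliers. First I would record two preliminary observations. Since the right-hand side of the homogeneous system \eqref{eq:bounded-recession-cone-1}--\eqref{eq:bounded-recession-cone-2} is the zero function, Theorem~\ref{theorem:FM-elim-succ}(i) forces $\tilde b(h)=0$ for every $h$, which is exactly why the output \eqref{eq:defineH1-recession}--\eqref{eq:defineH2-recession} has zero right-hand sides. Since $H_2\neq\emptyset$, the index $\ell$ of the first dirty variable satisfies $\ell\le n$, so \eqref{eq:defineH1-recession}--\eqref{eq:defineH2-recession} is a genuine dirty system, Definition~\ref{def:x-delta} applies to it, and Theorem~\ref{theorem:FM-elim-succ} tells us that its solution set equals the projection $P(\Gamma;x_1,\dots,x_{\ell-1})$, where $\Gamma$ denotes the feasible region of \eqref{eq:bounded-recession-cone-1}--\eqref{eq:bounded-recession-cone-2}.

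Next I would take $\bar x:=x(1;\ell)$, the tuple from Definition~\ref{def:x-delta} with $\delta=1$. For each $h\in H_2$ one computes $\sum_{k=\ell}^{n}\tilde a^k(h)\,\bar x_k=\sum_{k=\ell}^{n}|\tilde a^k(h)|>0$, the strict inequality holding because $h\in H_2$ guarantees $\sum_{k=\ell}^{n}|\tilde a^k(h)|>0$. Hence $\bar x$ satisfies \eqref{eq:defineH2-recession} strictly and \eqref{eq:defineH1-recession} trivially, so $\bar x\in P(\Gamma;x_1,\dots,x_{\ell-1})$; consequently there exist $r_1,\dots,r_{\ell-1}$ with $r:=(r_1,\dots,r_{\ell-1},\bar x_\ell,\dots,\bar x_n)$ satisfying \eqref{eq:bounded-recession-cone-1}--\eqref{eq:bounded-recession-cone-2}. (This is just the homogeneous analogue of the ``$\Longleftarrow$'' computation in the proof of Theorem~\ref{theorem:feasible} and of Remark~\ref{x-delta-feasible}.) Note $r\neq 0$ since $\bar x_n=\pm 1$, so $r$ is genuinely a ray.

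The only step that needs real care --- and the one I regard as the main obstacle --- is to show that $r$ satisfies at least one of \eqref{eq:bounded-recession-cone-1}--\eqref{eq:bounded-recession-cone-2} strictly, rather than all with equality. The idea is to fix any $h^*\in H_2$ and use that, by Theorem~\ref{theorem:FM-elim-succ}, the $h^*$-th output inequality is the aggregate of the original inequalities with a nonnegative finite-support multiplier $u^{h^*}$. Expanding this aggregate and evaluating at $r$, its left-hand side equals on the one hand $\sum_{k=\ell}^{n}\tilde a^k(h^*)\bar x_k=\sum_{k=\ell}^{n}|\tilde a^k(h^*)|>0$, and on the other hand a finite sum $\sum_j u^{h^*}(j)\,s_j$, where $s_j\ge 0$ is the value at $r$ of the left-hand side of the $j$-th constraint of \eqref{eq:bounded-recession-cone-1}--\eqref{eq:bounded-recession-cone-2} (nonnegative because $r$ is feasible). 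A nonnegative finite sum that is strictly positive must contain a strictly positive summand, so $u^{h^*}(j^*)>0$ and $s_{j^*}>0$ for some $j^*$, i.e.\ the $j^*$-th inequality is satisfied strictly at $r$. This finishes the proof, and it uses only the multiplier bookkeeping already established in Theorem~\ref{theorem:FM-elim-succ} --- no separation theorem or topology is needed.
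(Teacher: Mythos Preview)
Your proof is correct and follows essentially the same approach as the paper: construct a feasible point for the projected system \eqref{eq:defineH1-recession}--\eqref{eq:defineH2-recession} that satisfies some $H_2$ inequality strictly, lift it to a ray $r$ satisfying \eqref{eq:bounded-recession-cone-1}--\eqref{eq:bounded-recession-cone-2}, and then observe that because every output inequality is a nonnegative combination of the original ones (Theorem~\ref{theorem:FM-elim-succ}), strictness of the aggregate forces strictness of at least one original constraint. The only cosmetic difference is that the paper chooses a single coordinate direction $x_k=\pm 1$, $x_j=0$ for $j\neq k$ among the dirty variables, whereas you use the full vector $x(1;\ell)$ from Definition~\ref{def:x-delta}; both choices work for the same reason.
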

\begin{proof}
If  $H_2$ is nonempty, there is a  $k \ge \ell$ such that   $\tilde a^k(h)$ is nonzero for at least  one $h \in H_{2}$.   Since $x_{k}$ is a dirty variable,  the nonzero $\tilde a^k(h)$ are of the same sign for all $h \in H_{2}$. If the $\tilde a^k(h)$ are all nonnegative, then set $x_k = 1$ and $x_i = 0$ for $i \neq k$; if  the $\tilde a^\ell(h)$ are all nonpositive, then set $x_k = -1$ and $x_i = 0$ for $i \ne k$. This  solution to \eqref{eq:defineH1-recession}-\eqref{eq:defineH2-recession} satisfies at least one of the inequalities in \eqref{eq:defineH1-recession}-\eqref{eq:defineH2-recession} strictly. Since this is the projection of some $r$ satisfying~\eqref{eq:bounded-recession-cone-1}-\eqref{eq:bounded-recession-cone-2}, this $r$ must satisfy at least one inequality in~\eqref{eq:bounded-recession-cone-1}-\eqref{eq:bounded-recession-cone-2} strictly, since all inequalities in \eqref{eq:defineH1-recession}-\eqref{eq:defineH2-recession} are conic combinations of inequalities in~\eqref{eq:bounded-recession-cone-1}-\eqref{eq:bounded-recession-cone-2}.
\end{proof}
 \old{
 Next, for each $k = \ell, \ldots, n$, construct a nonzero solution of~\eqref{eq:bounded-recession-cone-1}-\eqref{eq:bounded-recession-cone-2} as follows. For a given $\hat{k},$ set  $x_{\hat{k}} = \delta > 0$ and $x_{k} = 0$ for $k \neq \hat{k}$  if the nonzero elements $\tilde{a}^{\hat{k}}(h)$  are positive and  $x_{\hat{k}} = \delta < 0$ and $x_{k} = 0$ for $k \neq \hat{k}$  if the nonzero elements $\tilde{a}^{\hat{k}}(h)$  are positive.      Complete the solution for $x_{1}, \ldots, x_{\ell -1}$ as follows. When Fourier-Motzkin elimination is applied to~\eqref{eq:bounded-recession-cone-1}-\eqref{eq:bounded-recession-cone-2} in matrix order, at step $\ell - 1$, there is a system of inequalities
\begin{eqnarray*}
\hat{a}^{\ell - 1}(i) x_{\ell-1}+ \sum_{k=\ell}^{n} \hat{a}^{k}(i) x_{k} \ge 0, \quad i \in I(\ell - 1),
\end{eqnarray*}
 where $I(\ell - 1)$ is an index set for all the constraints in the system after projecting out variables $x_{1}$ through $x_{\ell - 2}.$ Fixing  the $x_{k},$ $k = \ell, \ldots, n$ as described above gives
 \begin{eqnarray*}
\hat{a}^{\ell - 1}(i) x_{\ell-1}+ \hat{a}^{\hat{k}}(i)  \delta  \ge 0, \quad i \in I(\ell - 1).
\end{eqnarray*}
At this step, both ${\cal H}_{+}(\ell - 1)$ and ${\cal H}_{-}(\ell - 1)$ are nonempty. Then the system of inequalities at this step with the fixed $x_{\ell}, \ldots, x_{n}$ is
 \begin{eqnarray*}
 x_{\ell-1} &\ge& -  \hat{a}^{\hat{k}}(i) \delta / \hat{a}^{\ell - 1}(i) , \quad i \in {\cal H}_{+}(\ell - 1)  \\
 x_{\ell-1} &\le&  \hat{a}^{\hat{k}}(i) \delta / \hat{a}^{\ell - 1}(i) , \quad i \in {\cal H}_{-}(\ell - 1)
\end{eqnarray*}
Then all feasible values of  $x_{\ell-1}$ are given by (assuming  without loss $\delta > 0$)
\begin{eqnarray*}
\delta  \sup \{ -\hat{a}^{\hat{k}}(i)  / \hat{a}^{\ell - 1}(i)  \, : \, i \in {\cal H}_{+}(\ell - 1) \}    \le  x_{\ell-1} \le  \delta  \inf \{ \hat{a}^{\hat{k}}(i)  / \hat{a}^{\ell - 1}(i)  \, : \, i \in {\cal H}_{-}(\ell - 1) \},
\end{eqnarray*}
and a feasible value is, without loss,  $x_{\ell - 1} = d_{\ell - 1} \delta$ where 
\begin{eqnarray*}
d_{\ell - 1} =  \inf \{ \hat{a}^{\hat{k}}(i)  / \hat{a}^{\ell - 1}(i)  \, : \, i \in {\cal H}_{-}(\ell - 1) \}.
\end{eqnarray*}
Recursing back in this fashion to variable $x_{1}$ gives a nonzero solution vector $r$
\begin{eqnarray}
r = \delta (d_{1}, \ldots, d_{\ell - 1}, 0, \ldots, 1, \dots, 0).  \label{eq:define-extreme-ray}
\end{eqnarray}
where the 1 is in component $\hat{k}.$
 The rays defined  in~\eqref{eq:define-extreme-ray}   for  $k = \ell, \ldots, n$ are obviously nonzero  and feasible to~\eqref{eq:bounded-recession-cone-1}-\eqref{eq:bounded-recession-cone-2}    for all $\delta > 0$.  Indeed,  by construction,  the rays $r,$ are extreme rays of~\eqref{eq:bounded-recession-cone-1}-\eqref{eq:bounded-recession-cone-2}, but this fact is not needed in the results that follow.}

\old{\begin{corollary}\label{cor:bounded-recession-cone}
Assume that   $\gamma \in \R$ and there is a feasible solution value to \eqref{eq:SILP} that is less than or equal to $\gamma.$  If the zero vector  is the unique solution to the system~\eqref{eq:bounded-recession-cone-1}-\eqref{eq:bounded-recession-cone-2}, then \eqref{eq:SILP} is tidy, the primal is solvable, and  there is a zero duality gap for the primal-dual pair \eqref{eq:SILP} and \eqref{eq:FDSILP}.
\end{corollary}

\begin{corollary}\label{cor:bounded-recession-cone}
\eqref{eq:SILP} is feasible and tidy if and only if there exists a $\gamma \in \R$ such that \eqref{eq:bounded-zero-duality-gap} is feasible and the zero vector  is the unique solution to the system~\eqref{eq:bounded-recession-cone-1}-\eqref{eq:bounded-recession-cone-2}
In particular, the ``if" direction implies \eqref{eq:SILP} is solvable and there is zero duality gap.
\end{corollary}

\begin{proof}
We prove this by showing that the feasible region defined by  system~\eqref{eq:bounded-zero-duality-gap} is bounded if and only if the zero vector  is the unique solution to the system~\eqref{eq:bounded-recession-cone-1}-\eqref{eq:bounded-recession-cone-2}.  The result then follows from Theorem~\ref{theorem:bounded-zero-duality-gap}.

($\Longrightarrow$)  We show  that if  the feasible region defined by~\eqref{eq:bounded-zero-duality-gap} is bounded, then~\eqref{eq:bounded-recession-cone-1}-\eqref{eq:bounded-recession-cone-2}  has the zero vector as the unique solution.  Prove the contrapositive and assume $\overline{x}$ is a nonzero solution to~\eqref{eq:bounded-recession-cone-1}-\eqref{eq:bounded-recession-cone-2}.  By hypothesis,~\eqref{eq:bounded-zero-duality-gap} has a feasible solution $\hat{x}.$  Then $\hat{x} + \lambda \overline{x}$ is feasible to~\eqref{eq:bounded-zero-duality-gap} for all positive $\lambda$, so $\overline{x}$ nonzero implies   the feasible region defined by~\eqref{eq:bounded-zero-duality-gap} is unbounded.  

($\Longleftarrow$)  We show that if the zero vector  is the unique solution to the system~\eqref{eq:bounded-recession-cone-1}-\eqref{eq:bounded-recession-cone-2}, then the feasible region defined by system~\eqref{eq:bounded-zero-duality-gap} is bounded.  Prove the contrapositive and assume the feasible region defined by~\eqref{eq:bounded-zero-duality-gap} is not bounded. Note that this implies there is a nonzero solution to~\eqref{eq:bounded-recession-cone-1}-\eqref{eq:bounded-recession-cone-2}. 
If~\eqref{eq:bounded-zero-duality-gap} is  not bounded, then  by Theorem \ref{theorem:region-boundedness},  Fourier-Motzkin elimination applied to~\eqref{eq:bounded-zero-duality-gap}   gives system~\eqref{eq:defineI1}-\eqref{eq:defineI2}  with   $H_{2}$  not empty. The values of the right hand side  have no effect on the elimination process. Therefore, Fourier-Motzkin elimination applied to~\eqref{eq:bounded-recession-cone-1}-\eqref{eq:bounded-recession-cone-2}  also results in a nonempty $H_{2}$ in  system~\eqref{eq:defineH2-recession}.   Then  the extreme rays defined in~\eqref{eq:define-extreme-ray} are nonzero solutions to~\eqref{eq:bounded-recession-cone-1}-\eqref{eq:bounded-recession-cone-2}. 
\end{proof}

\begin{theorem}\label{theorem:unbounded-recession-cone}
Assume  \eqref{eq:SILP} is feasible and that applying Fourier-Motzkin elimination to~\eqref{eq:bounded-recession-cone-1}-\eqref{eq:bounded-recession-cone-2}  gives~\eqref{eq:defineH1-recession}-\eqref{eq:defineH2-recession}.  If $r$ is a ray defined by~\eqref{eq:define-extreme-ray}, and $r$ is not an element of  the null space $N,$ then $v(\ref{eq:SILP}) = v(\ref{eq:FDSILP})$.
\end{theorem}

\begin{proof}
Assume $r$ is a ray defined in~\eqref{eq:define-extreme-ray} and $r$ is not in $N,$  the null space of $c.$ Then $\langle r,  c\rangle  \neq 0.$ By construction, $r$ is a ray of~\eqref{eq:bounded-recession-cone-1}-\eqref{eq:bounded-recession-cone-2}  so $\langle r,  c\rangle  \neq 0$ and~\eqref{eq:bounded-recession-cone-1}  together imply  $\langle r,  c\rangle < 0.$  By hypothesis, \eqref{eq:SILP} has a feasible solution $\overline{x}.$ Then $\overline{x} + r$ is feasible for all $\delta > 0.$ This implies \eqref{eq:SILP} is unbounded so $v(\ref{eq:SILP}) = v(\ref{eq:FDSILP})=-\infty$ and there is a zero duality gap.
\end{proof}
}
\begin{theorem}\label{theorem:unbounded-recession-cone}
If \eqref{eq:SILP} is feasible and $K \cap N$ is a subspace, then  $v(\ref{eq:SILP}) = v(\ref{eq:FDSILP})$.
\end{theorem}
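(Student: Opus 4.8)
The plan is to reduce Theorem~\ref{theorem:unbounded-recession-cone} to the already-established machinery, in particular to Theorem~\ref{theorem:bounded-zero-duality-gap} (the ``bounded system'' result) and the permutation-invariance results of the Electronic Companion. First I would observe that there are really two cases. If \eqref{eq:SILP} is unbounded, then $v(\ref{eq:SILP}) = -\infty$; since \eqref{eq:SILP} is feasible, Theorem~\ref{theorem:primalFeasible}(i) gives $\tilde b(h)\le 0$ for all $h\in I_1$, so by weak duality (Lemma~\ref{lemma:weak-duality}(ii) or directly Lemma~\ref{lemma:seq-weak-duality}) every dual-feasible value is $\le v(\ref{eq:SILP}) = -\infty$, forcing \eqref{eq:FDSILP} to be infeasible and hence $v(\ref{eq:FDSILP}) = -\infty$ as well. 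So in the unbounded case there is trivially a zero duality gap. The substantive case is when \eqref{eq:SILP} is feasible \emph{and bounded}, i.e. $-\infty < v(\ref{eq:SILP}) < \infty$.

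In the bounded case, set $\gamma := v(\ref{eq:SILP})$, so the system \eqref{eq:bounded-zero-duality-gap} with this $\gamma$ is feasible (any optimal or near-optimal primal point works; feasibility only needs $-c^\top x \ge -\gamma$ to be satisfiable together with the original constraints, which holds for any $x$ with $c^\top x \le \gamma$, e.g. an optimal solution if one exists, or otherwise a point with objective value within any tolerance — actually here I need $c^\top x \le \gamma = v(\ref{eq:SILP})$ to be \emph{achievable}; if the primal is not solvable I instead take $\gamma$ slightly larger than $v(\ref{eq:SILP})$, which does not affect the recession cone). The recession cone of the feasible set $\Gamma_\gamma$ of \eqref{eq:bounded-zero-duality-gap} is exactly the solution set of \eqref{eq:bounded-recession-cone-1}--\eqref{eq:bounded-recession-cone-2}, which is $K \cap N$ in Karney's notation. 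The key step is then: \textbf{$K\cap N$ being a subspace implies that Fourier-Motzkin elimination applied to \eqref{eq:bounded-recession-cone-1}--\eqref{eq:bounded-recession-cone-2} (equivalently to \eqref{eq:bounded-zero-duality-gap}) produces a clean system.} For this I would use the Electronic Companion results: if the elimination produced a dirty variable, then by Corollary~\ref{corollary:dirty-is-conic} there is a conic index set $J_C$ for the system, which by the Remark following Definition~\ref{definition:conic-index-set} corresponds to a vector $y \in \rec(\Gamma_\gamma)\setminus\lin(\Gamma_\gamma) = (K\cap N)\setminus\lin(\Gamma_\gamma)$; but if $K\cap N$ is a subspace then every element of $K\cap N$ is in its own lineality, contradicting $y \notin \lin(\Gamma_\gamma)$. (Here I must be slightly careful that $\lin$ of the recession cone versus $\lin$ of $\Gamma_\gamma$ agree on the relevant vectors — but $\rec(\Gamma_\gamma) = K\cap N$ and $\lin(\Gamma_\gamma)$ is the largest subspace inside $\rec(\Gamma_\gamma)$, so if $\rec(\Gamma_\gamma)$ is itself a subspace then $\lin(\Gamma_\gamma) = \rec(\Gamma_\gamma)$ and there is no such $y$.) So the system \eqref{eq:bounded-zero-duality-gap} is clean; by Theorem~\ref{theorem:region-boundedness}'s companion (or more directly by the characterization that a clean system means $\rec = \lin$), and then invoking Theorem~\ref{theorem:bounded-zero-duality-gap}.

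Wait — Theorem~\ref{theorem:bounded-zero-duality-gap} as stated requires \eqref{eq:bounded-zero-duality-gap} to be \emph{bounded}, not merely clean. So I would instead argue directly: once \eqref{eq:bounded-zero-duality-gap} is feasible and its Fourier-Motzkin elimination is clean (all of $x_1,\dots,x_n$ eliminated), the observation at the start of the proof of Theorem~\ref{theorem:bounded-zero-duality-gap} — that the columns for $x_1,\dots,x_n$ in \eqref{eq:bounded-zero-duality-gap} and in \eqref{eq:initial-system-obj-con}--\eqref{eq:initial-system-con} are identical — shows those same variables get eliminated when running Fourier-Motzkin on the reformulated \eqref{eq:SILP}, so \eqref{eq:SILP} is tidy. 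Since \eqref{eq:SILP} is feasible and tidy, Theorem~\ref{theorem:all-clean-system} gives zero duality gap: $v(\ref{eq:SILP}) = v(\ref{eq:FDSILP})$. The main obstacle, and the step I would write most carefully, is the equivalence ``$K\cap N$ is a subspace $\iff$ the recession cone of \eqref{eq:bounded-zero-duality-gap} has no conic index set'' and the bookkeeping that cleanliness of \eqref{eq:bounded-recession-cone-1}--\eqref{eq:bounded-recession-cone-2} transfers to \eqref{eq:bounded-zero-duality-gap} (the right-hand sides are irrelevant to which variables $\mathcal H_+,\mathcal H_-,\mathcal H_0$ place where, hence to cleanliness) and then to \eqref{eq:SILP}. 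Everything else — the unbounded case, the choice of $\gamma$, and the final appeal to Theorem~\ref{theorem:all-clean-system} — is routine.
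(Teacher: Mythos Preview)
Your approach is correct and lands on the same core mechanism as the paper: both proofs hinge on the observation that the columns of \eqref{eq:bounded-recession-cone-1}--\eqref{eq:bounded-recession-cone-2} coincide with those of \eqref{eq:initial-system-obj-con}--\eqref{eq:initial-system-con} for $x_1,\dots,x_n$, so cleanliness of the homogeneous system makes \eqref{eq:SILP} tidy and Theorem~\ref{theorem:all-clean-system} finishes. The organization differs, though. The paper splits on whether $H_2$ is empty or not; when $H_2\neq\emptyset$ it invokes Lemma~\ref{lemma:strict-inequality} to obtain a ray $r$ satisfying \eqref{eq:bounded-recession-cone-1}--\eqref{eq:bounded-recession-cone-2} with at least one \emph{strict} inequality, then argues that $c^\top r<0$ forces unboundedness while $c^\top r=0$ forces $r\in K\cap N$, hence $-r\in K\cap N$, hence all inequalities are tight --- a contradiction. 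You instead split on bounded/unbounded and, in the bounded case, invoke the higher-level characterization from Theorem~\ref{theorem:clean-lineality-equivalence}/Corollary~\ref{corollary:dirty-is-conic} (dirty $\Leftrightarrow$ $\rec\neq\lin$) to conclude cleanliness directly from $K\cap N$ being a subspace. Your route is a bit slicker; the paper's is more self-contained since it avoids the recursive machinery behind Corollary~\ref{corollary:dirty-is-conic}.

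One point you pass over too quickly: the solution set of \eqref{eq:bounded-recession-cone-1}--\eqref{eq:bounded-recession-cone-2} is $K\cap\{x:c^\top x\le 0\}$, not $K\cap N$ on its face. You need to use boundedness of \eqref{eq:SILP} here --- every $r\in K$ satisfies $c^\top r\ge 0$, so the intersection with $\{c^\top x\le 0\}$ collapses to $K\cap N$. You gesture at this but state the identification as though it were just notation; make the one-line justification explicit. After that fix, and after your own self-correction (bypassing Theorem~\ref{theorem:bounded-zero-duality-gap}, which demands boundedness of $\Gamma_\gamma$, in favor of transferring cleanliness directly to tidiness of \eqref{eq:SILP}), the argument is complete.
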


\begin{proof}
\underline{{\em Case 1: $H_{2}$ in~\eqref{eq:defineH2-recession} is empty.}} Observe that the columns in  systems~\eqref{eq:bounded-recession-cone-1}-\eqref{eq:bounded-recession-cone-2} and~\eqref{eq:initial-system-obj-con}-\eqref{eq:initial-system-con} are identical for variables $x_{1}, \ldots, x_{n}$. This means if $x_k$ is eliminated when  Fourier-Motzkin elimination  is applied to one system, it  is eliminated in the other system. Since $H_{2}$ in~\eqref{eq:defineH2-recession} is empty,  \eqref{eq:SILP} is tidy. Then by Theorem~\ref{theorem:all-clean-system},  $v(\ref{eq:SILP}) = v(\ref{eq:FDSILP})$.

\underline{{\em Case 2:  $H_{2}$ in~\eqref{eq:defineH2-recession} is not empty.}} If $H_{2}$ is not empty, by Lemma~\ref{lemma:strict-inequality}, there exists a $r$ satisfying~\eqref{eq:bounded-recession-cone-1}-\eqref{eq:bounded-recession-cone-2} such that at least one of the inequalities in~\eqref{eq:bounded-recession-cone-1}-\eqref{eq:bounded-recession-cone-2} is satisfied strictly. If $c^T r < 0$  and  $r \in K$, then $v(\ref{eq:SILP}) = -\infty$.   Therefore \eqref{eq:FDSILP} is infeasible by weak duality and $v(\ref{eq:SILP}) = v(\ref{eq:FDSILP}) = -\infty$. If  $c^T r = 0$ then the constraint~\eqref{eq:bounded-recession-cone-1} is tight at $r$ and so
$r\in N$. Then $r \in K \cap N$ which is a subspace by hypothesis.   Then  $-r \in K \cap N$.  This implies  $r \in K \cap -K$. But this means that $r$ satisfies all inequalities in~\eqref{eq:bounded-recession-cone-2} at equality and this contradicts the fact established for this case that at least one inequality in \eqref{eq:bounded-recession-cone-1}-\eqref{eq:bounded-recession-cone-2} is strict.
\old{
 it either contains an index $h$ such that $\supp(\overline{u}^{h})$ contains the index for row~\eqref{eq:bounded-recession-cone-1} or it does not contain such an $h$.  Assume no such $h$ index exits.  Fourier Motzkin   elimination applied to (SILP) generates exactly the same set of multiplier vectors as Fourier-Motzkin elimination applied to~\eqref{eq:bounded-recession-cone-1}-\eqref{eq:bounded-recession-cone-2}.  Therefore, if $H_{2}$  in~\eqref{eq:defineH2-recession} does not contain an index for a multiplier vector with support containing the objective function row,  $J_{4}$ must be empty  when Fourier-Motzkin elimination is applied to (SILP).   This  implies $\lim_{\delta \rightarrow \infty} \omega(\delta) = - \infty.$ Since (SILP) is feasible, by Theorem \ref{theorem:zero-duality-gap} there is a zero duality gap.

Now    assume that there is an element $h \in H_{2}$ such that $\overline{u}^{h}$ has a positive multiplier for the row~\eqref{eq:bounded-recession-cone-1}.     In the row of~\eqref{eq:defineH2-recession}  indexed by $h,$ at least one $\tilde{a}^{k}(h)$   for  $k = \ell, \ldots, n$ is nonzero.  Assume without loss $\tilde{a}^{n}(h)$ is nonzero.  Consider the extreme ray   $\overline{r} = \delta(d_{1}, \ldots, d_{\ell - 1}, 0, \ldots, 1)$ of~\eqref{eq:bounded-recession-cone-1}-\eqref{eq:bounded-recession-cone-2} defined in~\eqref{eq:define-extreme-ray} .  There are two cases to consider.  The ray $\overline{r}$ is in the lineality space of $K$ or it is not. First assume $\overline{r}$ is in the lineality space of $K$.  For all $\delta > 0$ the solution $(d_{1} \delta, \ldots, d_{\ell - 1} \delta, 0, \ldots, \delta)$ has positive slack on aggregate constraint $h$. This means at least one constraint indexed by $\supp(\overline{u}^{h})$ must have positive slack. But if $\overline{r}$ is in the lineality space of $K$, the constraint with positive slack must be $-  c_{1} x_{1} -  c_{2} x_{2} - \cdots - c_{n} x_{n}   \ge   0$ and $\overline{r}$ is not in $N.$  Then by Theorem \ref{theorem:unbounded-recession-cone}, $v(\ref{eq:SILP}) = v(\ref{eq:FDSILP})$.

Now assume $\overline{r}$ is not in the lineality space of $K,$ that is $\overline{r} \notin M.$  There are two subcases. Either $\overline{r}$ is in $N$ or $\overline{r}$ is not in $N$.  If $\overline{r}$ is not in $N,$ then again by by Theorem \ref{theorem:unbounded-recession-cone}, $v(\ref{eq:SILP}) = v(\ref{eq:FDSILP})$.  So assume $\overline{r}$ is in $N.$  Then  $\overline{r} \in K$ and $\overline{r} \in N$.  Therefore $\overline{r} \in K \cap N.$ By hypothesis $K \cap N$ is a subspace.  Then by  Remark  \ref{remark:nullspace_K_N},  $\overline{r} \in K \cap N \subseteq M.$  But $\overline{r}$ cannot be $M$  since we have assumed $\overline{r} \notin M.$  This is a contradiction, so this case cannot arise when $K \cap N$ is a subspace.  

Thus, when $K \cap N$ is a subspace, either $v(\ref{eq:SILP}) = v(\ref{eq:FDSILP}) = -\infty$ and there is no duality gap, or  $H_{2}$ must be empty and there is no duality gap by Corollary \ref{cor:bounded-recession-cone}. }\end{proof}

\subsubsection{Finite approximation results}

Consider an instance of \eqref{eq:SILP} and the corresponding finite support dual~\eqref{eq:FDSILP}. 
For any subset $J \subseteq I$, define $\text{SILP}(J)$ as the semi-infinite linear program with only the constraints indexed by $J$ and the same objective function, and $v(J)$  the optimal value of $\text{SILP}(J)$. For example, if $J$ is a finite subset of $I$, $\text{SILP}(J)$ is a finite linear program.

\begin{theorem}\label{theorem:finite-approx} If \eqref{eq:SILP} is feasible, then $v(\text{FDSILP}) = \sup\{v(J) : J \textrm{ is a finite subset of } I\}.$
\end{theorem}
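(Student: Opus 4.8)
The statement is a finite-approximation result: the value of the finite support dual equals the supremum, over finite subsets $J \subseteq I$, of the optimal values $v(J)$ of the finitely-constrained subprograms $\text{SILP}(J)$. The plan is to prove two inequalities.

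\textbf{The easy inequality: $v(\text{FDSILP}) \le \sup\{v(J)\}$.} First I would show that any feasible solution $\bar v \in \R^{(I)}_+$ to \eqref{eq:FDSILP} has finite support, say $\supp(\bar v) = J_0$, a finite subset of $I$. Then $\bar v$ (restricted to $J_0$, or equivalently extended by zeros) is a feasible solution to the finite support dual of $\text{SILP}(J_0)$. But $\text{SILP}(J_0)$ is a finite linear program, so by Theorem~\ref{theorem:finite-strong-duality} (applied once we check $\text{SILP}(J_0)$ is feasible and bounded — feasibility follows since any feasible $x$ for \eqref{eq:SILP} is feasible for the relaxation $\text{SILP}(J_0)$, and boundedness follows from weak duality of the finite LP applied to $\bar v$), strong duality holds and $v(J_0) = \langle b, \bar v\rangle$ where $\bar v$ is optimal for the dual of $\text{SILP}(J_0)$. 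In any case $\langle b, \bar v\rangle \le v(J_0) \le \sup\{v(J)\}$ by the finite LP weak duality direction. Taking the supremum over all dual-feasible $\bar v$ gives $v(\text{FDSILP}) \le \sup\{v(J): J \text{ finite}\}$. (If \eqref{eq:FDSILP} is infeasible then $v(\text{FDSILP}) = -\infty$ and this inequality is trivial.)

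\textbf{The hard inequality: $v(\text{FDSILP}) \ge \sup\{v(J)\}$.} Fix an arbitrary finite $J \subseteq I$. I want to produce a dual-feasible solution to \eqref{eq:FDSILP} (or at least a feasible sequence whose value approaches $v(J)$) achieving value at least $v(J)$. The key observation is that \eqref{eq:SILP} is assumed feasible, hence so is $\text{SILP}(J)$, and $\text{SILP}(J)$ is a finite LP. If $v(J) = -\infty$ there is nothing to prove, so assume $\text{SILP}(J)$ is bounded; then by Theorem~\ref{theorem:finite-strong-duality} it has a dual optimal solution $v^J \in \R^{(J)}_+$ with $\langle b, v^J\rangle = v(J)$, where $v^J$ satisfies $\sum_{i \in J} a^k(i) v^J(i) = c_k$ for all $k$. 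Extending $v^J$ by zeros on $I \setminus J$ gives a vector in $\R^{(I)}_+$ that is \emph{feasible} for \eqref{eq:FDSILP} — the equality constraints are identical, and the objective value is exactly $v(J)$. Hence $v(\text{FDSILP}) \ge v(J)$. Taking the supremum over finite $J$ yields the claim. The only subtlety is when $v(J) = +\infty$ for some finite $J$, i.e.\ $\text{SILP}(J)$ is infeasible — but this cannot happen because \eqref{eq:SILP} is feasible and $\text{SILP}(J)$ is a relaxation, so every finite $\text{SILP}(J)$ is feasible and the case $v(J) = +\infty$ is vacuous.

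\textbf{Main obstacle.} The real work is entirely bundled into invoking finite LP strong duality (Theorem~\ref{theorem:finite-strong-duality}) correctly: one must verify in the hard direction that each $\text{SILP}(J)$ is feasible (immediate from feasibility of \eqref{eq:SILP}) and handle the unbounded case $v(J) = -\infty$ separately (where the inequality $v(\text{FDSILP}) \ge v(J)$ is automatic). The conceptually delicate point worth spelling out is the distinction between this clean result and the subtler Theorem~\ref{theorem:regular-duality-silp}: here we get an honest supremum of \emph{finite LP values}, not a limit value of feasible sequences — the bridge is that a finite LP dual solution, extended by zeros, is genuinely feasible for \eqref{eq:FDSILP}, so no limiting argument is needed. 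I expect the proof to be short once the finite LP strong duality theorem is in hand; the entire content is the "extend by zeros" observation plus the relaxation argument for feasibility.
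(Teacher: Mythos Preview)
Your proof is correct, and for the inequality $v(\text{FDSILP}) \ge \sup\{v(J)\}$ your argument is essentially identical to the paper's: invoke finite LP strong duality (Theorem~\ref{theorem:finite-strong-duality}) on the feasible, bounded subprogram $\text{SILP}(J)$, then extend the optimal finite dual by zeros to a feasible solution of \eqref{eq:FDSILP}.

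For the other inequality $v(\text{FDSILP}) \le \sup\{v(J)\}$, your route is genuinely simpler than the paper's. You take any dual-feasible $\bar v$, let $J_0 = \supp(\bar v)$, observe $\bar v|_{J_0}$ is feasible for the (finite) dual of $\text{SILP}(J_0)$, and apply finite LP weak duality to get $\langle b,\bar v\rangle \le v(J_0)$. The paper instead routes this direction through its Fourier--Motzkin machinery: it uses Corollary~\ref{cor:dual-optimal-value} to write $v(\text{FDSILP}) = \sup_{h \in I_3}\tilde b(h)$, picks $h^* \in I_3$ with $\tilde b(h^*) \ge v(\text{FDSILP}) - \epsilon$, and uses the multiplier $v^{h^*}$ from Lemma~\ref{lemma:feasible-FDSILP} to identify a finite $J^*$ with $v(J^*) \ge \tilde b(h^*)$. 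Your argument buys you independence from the paper's projection apparatus and is the natural ``textbook'' proof; the paper's version keeps the narrative tied to the $I_3$ structure, which is thematically consistent but not logically necessary here.
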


\begin{proof}
We first show that $v(\text{FDSILP}) \leq \sup\{v(J) : J \textrm{ is a finite subset of } I\}$. \old{By hypothesis, \eqref{eq:FDSILP} is feasible. }By hypothesis,  \eqref{eq:SILP} is feasible and this  implies  by Corollary~\ref{cor:dual-optimal-value} that  $v(\text{FDSILP}) = \sup_{h \in I_3} \tilde{b}(h)$. 
Therefore, for every $\epsilon > 0$, there exists a $h^* \in I_3$ such that $v(\text{FDSILP}) - \epsilon \leq \tilde{b}(h^*)$. 
By Lemma~\ref{lemma:feasible-FDSILP}\eqref{item:I3-property}, there exists a $v^{h^*}\in \R^{(I)}$ with support $J^*$ such that $\tilde{b}(h^*) = \langle b, v^{h^*} \rangle = \sum_{i \in J^*}b(i) v^{h^*}(i)$, and $\sum_{i \in J^*}a^k(i)v^{h^*}(i) = c_k$. Since~\eqref{eq:SILP} is feasible, $\text{SILP}(J^*)$ is feasible; let $\bar x$ be {\em any} feasible solution to this finite LP. Thus, 
$$
\begin{array}{rcl}
c^T\bar x & = &\sum_{k=1}^n c_k \bar{x}_k \\
&=& \sum_{k=1}^n(\sum_{i \in J^*}a^k(i)v^{h^*}(i))\bar{x}_k \\
&=& \sum_{i \in J^*}(\sum_{k=1}^n a^k(i)\bar{x}_k) v^{h^*}(i) \\
&\geq &\sum_{i\in J^*} b(i)v^{h^*}(i) \\
& = & \tilde{b}(h^*).
\end{array}
$$

Since this holds for any feasible solution to $\text{SILP}(J^*)$, $v(J^*) \geq \tilde{b}(h^*) \geq v(\text{FDSILP}) - \epsilon$. Thus, for every $\epsilon > 0$, there exists a finite $J^* \subseteq I$ such that $v(J^*) \geq v(\text{FDSILP}) - \epsilon$. Hence,  $v(\text{FDSILP}) \leq \sup\{v(J) : J \textrm{ is a finite subset of } I\}$.

Next we show that $v(\text{FDSILP}) \geq \sup\{v(J) : J \textrm{ is a finite subset of } I\}$. Consider any finite $J^*\subseteq I$. It suffices to show that $v(\text{FDSILP}) \geq v(J^*)$.     If $v(J^*) = -\infty$, then the result is immediate. So assume $v(J^*) > -\infty$. Then $\text{SILP}(J^*)$  is bounded.  Since  \eqref{eq:SILP} is feasible by hypothesis, $\text{SILP}(J^*)$  is also feasible. Then   by Theorem~\ref{theorem:finite-strong-duality}, there exists a $v^* \in \R^{J^*}$ such that $\sum_{i\in J^*}b(i)v^*(i) = v(J^*)$ and $\sum_{i\in J^*}a^k(i)v^*(i) = c_k$. Define $\bar v \in \R^{(I)}$ by $\bar v(i) = v^*(i)$ for $i \in J^*$ and $\bar v(i) = 0$ for $i \not\in J^*$. Thus, $\bar v$ is a feasible solution to~\eqref{eq:FDSILP} with objective value $v(J^*)$. Therefore, $v(\text{FDSILP}) \geq v(J^*)$.
\end{proof}

Theorem \ref{theorem:finite-approx} is used to prove a series of results by Karney~\cite{karney81}. Consider a semi-infinite linear program with countably many constraints, i.e., $I = \N$. For every $n \in \N$, let $P_n$ denote the finite linear program formed using the constraints indexed by $\{1, \ldots, n\}$ and the same objective function. Let $v(P_n)$ denote its optimal value.

\begin{corollary}\label{cor:karney-duffin-karlowitz}
If~\eqref{eq:SILP} is feasible with $I = \N$, then $\lim_{n\to \infty} v(P_n) = v(\text{FDSILP}).$
\end{corollary}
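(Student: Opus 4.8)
The plan is to deduce Corollary~\ref{cor:karney-duffin-karlowitz} directly from Theorem~\ref{theorem:finite-approx} together with a monotonicity argument. The setup is $I = \N$, and for each $n$ we have $P_n = \text{SILP}(\{1,\dots,n\})$ with optimal value $v(P_n)$. First I would observe that adding constraints to a minimization problem can only increase (weakly) the optimal value, so the sequence $(v(P_n))_{n \in \N}$ is nondecreasing; hence $\lim_{n\to\infty} v(P_n)$ exists in the extended reals and equals $\sup_{n \in \N} v(P_n)$. Since each $\{1,\dots,n\}$ is a finite subset of $I$, we immediately get $\sup_{n} v(P_n) \le \sup\{v(J) : J \text{ finite} \subseteq I\}$.

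The reverse inequality is the only thing needing a small argument: any finite subset $J \subseteq \N$ satisfies $J \subseteq \{1,\dots,n\}$ for $n = \max J$, and therefore $v(J) \le v(P_n) \le \sup_m v(P_m)$ (again because $\text{SILP}(\{1,\dots,n\})$ has all the constraints of $\text{SILP}(J)$ plus possibly more, so its value is at least as large). Taking the supremum over finite $J$ gives $\sup\{v(J) : J \text{ finite} \subseteq I\} \le \sup_n v(P_n)$. Combining the two inequalities, $\sup_n v(P_n) = \sup\{v(J) : J \text{ finite} \subseteq I\}$, and by Theorem~\ref{theorem:finite-approx} (which applies since \eqref{eq:SILP} is feasible by hypothesis) this common value is $v(\text{FDSILP})$. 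Since the sequence is monotone, $\lim_{n\to\infty} v(P_n) = \sup_n v(P_n) = v(\text{FDSILP})$, which is the claim.

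I do not anticipate a genuine obstacle here — the content is entirely carried by Theorem~\ref{theorem:finite-approx}, and the corollary is essentially a cofinality observation: the finite sets $\{1,\dots,n\}$ are cofinal in the directed set of all finite subsets of $\N$ ordered by inclusion, and $J \mapsto v(J)$ is monotone along inclusions, so the supremum over the cofinal subfamily agrees with the supremum over the whole family. The only points requiring care are (i) stating the monotonicity of $v(\cdot)$ under adding constraints correctly (it is weak monotonicity, and one should handle the $v = +\infty$ / infeasible and $v = -\infty$ / unbounded cases within the extended-real framework already fixed in the Notation paragraph), and (ii) making sure the limit is interpreted as a limit in $\R \cup \{\pm\infty\}$ so that the statement is meaningful even when $v(\text{FDSILP})$ is infinite. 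Both are routine.

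\begin{proof}
Since $\{1, \ldots, n\} \subseteq \{1, \ldots, n+1\}$, the linear program $P_{n+1}$ has all the constraints of $P_n$ and the same objective, so $v(P_n) \le v(P_{n+1})$ for every $n \in \N$. Hence $(v(P_n))_{n \in \N}$ is nondecreasing and $\lim_{n \to \infty} v(P_n) = \sup_{n \in \N} v(P_n)$ in $\R \cup \{\pm\infty\}$.

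Each $\{1, \ldots, n\}$ is a finite subset of $I = \N$, so $\sup_{n \in \N} v(P_n) \le \sup\{v(J) : J \textrm{ is a finite subset of } I\}$. Conversely, let $J \subseteq \N$ be any finite set and put $n = \max J$ (with $n = 0$ and $v(J)$ equal to the value of the unconstrained problem if $J = \emptyset$). Then $J \subseteq \{1, \ldots, n\}$, so $\text{SILP}(\{1,\ldots,n\}) = P_n$ contains all constraints of $\text{SILP}(J)$, giving $v(J) \le v(P_n) \le \sup_{m \in \N} v(P_m)$. Taking the supremum over all finite $J \subseteq I$ yields $\sup\{v(J) : J \textrm{ is a finite subset of } I\} \le \sup_{n \in \N} v(P_n)$.

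Therefore $\sup_{n \in \N} v(P_n) = \sup\{v(J) : J \textrm{ is a finite subset of } I\}$. Since \eqref{eq:SILP} is feasible by hypothesis, Theorem~\ref{theorem:finite-approx} gives $\sup\{v(J) : J \textrm{ is a finite subset of } I\} = v(\text{FDSILP})$. Combining with the monotone convergence of $(v(P_n))_{n \in \N}$, we conclude $\lim_{n \to \infty} v(P_n) = \sup_{n \in \N} v(P_n) = v(\text{FDSILP})$.
\end{proof}
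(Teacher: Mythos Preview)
Your proof is correct and follows essentially the same route as the paper's own proof: both invoke Theorem~\ref{theorem:finite-approx}, use monotonicity of $v(\cdot)$ under adding constraints to see that $(v(P_n))_n$ is nondecreasing, and use the cofinality of the sets $\{1,\dots,n\}$ among all finite subsets of $\N$ to match $\sup_n v(P_n)$ with $\sup\{v(J):J\text{ finite}\}$. If anything, your version is slightly cleaner in that you work directly in the extended reals rather than first arguing $v(\text{FDSILP})<\infty$ via weak duality to get a real upper bound.
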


\begin{proof}
Since $\{1, \ldots, n\}$ is a finite subset of $I$, $v(P_n) \leq \sup\{v(J) : J \textrm{ is a finite subset of } I\} = v(\text{FDSILP}) <\infty$ where the equality follows from Theorem~\ref{theorem:finite-approx} and the ``$<$" follows from weak duality since \eqref{eq:SILP} is feasible. Since $v(P_n)$ is a nondecreasing sequence of real numbers bounded above, $\lim_{n\to \infty} v(P_n)$ exists and $\lim_{n\to\infty}v(P_n) \leq v(\text{FDSILP}).$
Next prove that $\lim_{n\to \infty} v(P_n) \geq v(\text{FDSILP})$. Observe that for any finite subset $J^* \subseteq I$ there exists an $n^* \in \N$ such that $J^* \subseteq \{1, \ldots, n^*\}$ and this  implies  $v(P_{n^*}) \geq v(J^*)$. Thus, $\lim_{n\to \infty} v(P_n) \geq \sup\{v(J) : J \textrm{ is a finite subset of } I\}$ = v(\text{FDSILP}) where the equality follows from Theorem~\ref{theorem:finite-approx}.
\end{proof} 

\begin{corollary}[Karney~\cite{karney81} Theorem 2.1]\label{cor:karney81} If the feasible region of~\eqref{eq:SILP} with $I = \N$ is nonempty and bounded, then $\lim_{n\to \infty} v(P_n) = v(\ref{eq:SILP}).$\end{corollary}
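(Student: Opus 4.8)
\textbf{Proof proposal for Corollary~\ref{cor:karney81}.}

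The plan is to deduce this from Corollary~\ref{cor:karney-duffin-karlowitz} together with the strong-duality results for bounded primal feasible regions that were developed via Fourier-Motzkin elimination. The key observation is that when the feasible region $\Gamma$ of \eqref{eq:SILP} is nonempty and bounded, the optimal value $v(\ref{eq:SILP})$ is finite and, crucially, there is a zero duality gap so that $v(\ref{eq:SILP}) = v(\text{FDSILP})$. Once this is in hand, Corollary~\ref{cor:karney-duffin-karlowitz} immediately gives $\lim_{n\to\infty} v(P_n) = v(\text{FDSILP}) = v(\ref{eq:SILP})$, which is exactly the claim.

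First I would argue that \eqref{eq:SILP} is bounded: since $\Gamma$ is nonempty and bounded, $c^\top x$ attains a finite infimum over $\Gamma$ (a continuous function has a finite infimum over a nonempty bounded set), so $-\infty < v(\ref{eq:SILP})$; feasibility gives $v(\ref{eq:SILP}) < \infty$. Next I would establish the zero duality gap. One clean route is to apply Theorem~\ref{theorem:region-boundedness}: because $\Gamma$ is nonempty and bounded, applying the Fourier-Motzkin elimination procedure to the constraint system \eqref{eq:initial-system-con} of \eqref{eq:SILP} yields a clean system, i.e.\ $H_2 = \emptyset$. Since the columns for $x_1, \dots, x_n$ in systems \eqref{eq:initial-system-con} and \eqref{eq:initial-system-obj-con}--\eqref{eq:initial-system-con} are identical, the variables $x_1,\dots,x_n$ are eliminated in exactly the same order in both runs; hence $x_1,\dots,x_n$ are also eliminated when Fourier-Motzkin is applied to \eqref{eq:initial-system-obj-con}--\eqref{eq:initial-system-con}, so in the notation of \eqref{eq:J_system} we have $I_2 = I_4 = \emptyset$ and only $z$ remains dirty; that is, \eqref{eq:SILP} is tidy. (Alternatively, one may invoke Theorem~\ref{theorem:bounded-zero-duality-gap} directly with $\gamma = v(\ref{eq:SILP})$, whose hypothesis — that $\{x : -c^\top x \ge -\gamma,\ a^k(i)x_k \ge b(i)\}$ is nonempty and bounded — holds since this set is a nonempty subset of the bounded set $\Gamma$, containing any optimal solution.) Then Theorem~\ref{theorem:all-clean-system}(iii) (equivalently, the conclusion of Theorem~\ref{theorem:bounded-zero-duality-gap}) gives $v(\ref{eq:SILP}) = v(\text{FDSILP})$.

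Finally, I would combine the pieces: \eqref{eq:SILP} is feasible (with $I = \N$), so Corollary~\ref{cor:karney-duffin-karlowitz} applies and gives $\lim_{n\to\infty} v(P_n) = v(\text{FDSILP})$; by the zero duality gap just established, $v(\text{FDSILP}) = v(\ref{eq:SILP})$, so $\lim_{n\to\infty} v(P_n) = v(\ref{eq:SILP})$. I do not anticipate a serious obstacle here — the work is entirely in correctly citing the boundedness/tidiness chain. The one point requiring a little care is the justification that bounded $\Gamma$ forces a finite infimum of the (continuous) linear objective and the invocation of Theorem~\ref{theorem:region-boundedness} on the correct constraint subsystem; once zero duality gap is secured, the limit statement is a one-line consequence of Corollary~\ref{cor:karney-duffin-karlowitz}.
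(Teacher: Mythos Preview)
Your proposal is correct and follows essentially the same approach as the paper: invoke Theorem~\ref{theorem:bounded-zero-duality-gap} (or equivalently the tidiness argument via Theorem~\ref{theorem:region-boundedness}) to obtain $v(\ref{eq:SILP}) = v(\text{FDSILP})$, and then apply Corollary~\ref{cor:karney-duffin-karlowitz}. The only small point to tighten is your choice $\gamma = v(\ref{eq:SILP})$, which presupposes solvability; this is fine here because $\Gamma$ is closed (an intersection of closed halfspaces) and bounded, hence compact, but it is simpler just to take any $\gamma \ge \sup_{x\in\Gamma} c^\top x$ so that $\Gamma_\gamma = \Gamma$.
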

\begin{proof} This follows from Theorem~\ref{theorem:bounded-zero-duality-gap} and Corollary~\ref{cor:karney-duffin-karlowitz}.
\end{proof}

\begin{corollary}[Karney~\cite{karney81} Theorem 2.4]\label{cor:karney-theorem-2-4} If $\eqref{eq:SILP}$ with $I = \N$ is feasible and the zero vector  is the unique solution to the system~\eqref{eq:bounded-recession-cone-1}-\eqref{eq:bounded-recession-cone-2}, then $\lim_{n\to \infty} v(P_n) = v(\ref{eq:SILP}).$
\end{corollary}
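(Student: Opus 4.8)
The plan is to reduce Corollary~\ref{cor:karney-theorem-2-4} to two results already in hand: Corollary~\ref{cor:karney-duffin-karlowitz} (which says $\lim_{n\to\infty} v(P_n) = v(\text{FDSILP})$ whenever \eqref{eq:SILP} is feasible with $I=\N$) and Theorem~\ref{theorem:bounded-zero-duality-gap} together with the tidiness machinery of Section~\ref{s:tidy-semi-infinite}. The key observation is that the hypothesis --- the zero vector is the unique solution of the recession system \eqref{eq:bounded-recession-cone-1}--\eqref{eq:bounded-recession-cone-2} --- is precisely what forces a zero duality gap for the primal-dual pair \eqref{eq:SILP} and \eqref{eq:FDSILP}, i.e. $v(\ref{eq:SILP}) = v(\text{FDSILP})$. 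Combining this equality with Corollary~\ref{cor:karney-duffin-karlowitz} then yields $\lim_{n\to\infty} v(P_n) = v(\text{FDSILP}) = v(\ref{eq:SILP})$, which is the claim.

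So the heart of the proof is to show: if \eqref{eq:SILP} is feasible and the only solution of \eqref{eq:bounded-recession-cone-1}--\eqref{eq:bounded-recession-cone-2} is $0$, then $v(\ref{eq:SILP}) = v(\text{FDSILP})$. I would argue this via the bounded-system criterion. First, if $v(\ref{eq:SILP}) = -\infty$, weak duality forces \eqref{eq:FDSILP} infeasible and both values are $-\infty$, so we are done; assume \eqref{eq:SILP} is feasible and bounded, with optimal value $z^*$ finite. Then the set $\Gamma_\gamma$ of $x$ satisfying \eqref{eq:bounded-zero-duality-gap} with $\gamma = z^*$ is nonempty (take an optimal $x^*$, or any feasible point if one wants $\gamma$ strictly above the infimum). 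I claim $\Gamma_\gamma$ is bounded: its recession cone is exactly the solution set of \eqref{eq:bounded-recession-cone-1}--\eqref{eq:bounded-recession-cone-2}, which by hypothesis is $\{0\}$, and a nonempty polyhedral-type set (finitely many variables) with trivial recession cone is bounded. This last step --- that a nonempty set defined by a semi-infinite system with only the zero recession direction must be bounded --- is the one point that needs a little care in the semi-infinite setting, but it follows from Theorem~\ref{theorem:region-boundedness}/Theorem~\ref{theorem:clean-lineality-equivalence}: if $\Gamma_\gamma$ were unbounded then, since $\rec(\Gamma_\gamma) = \{0\} = \lin(\Gamma_\gamma)$ is impossible for an unbounded set (an unbounded closed convex set has a nonzero recession direction), we reach a contradiction; more directly, $\rec(\Gamma_\gamma)$ equals the feasible set of \eqref{eq:bounded-recession-cone-1}--\eqref{eq:bounded-recession-cone-2}, hence is $\{0\}$, hence $\rec(\Gamma_\gamma) = \lin(\Gamma_\gamma)$ and by Theorem~\ref{theorem:clean-lineality-equivalence} the Fourier-Motzkin elimination applied to \eqref{eq:bounded-zero-duality-gap} is clean, so \eqref{eq:SILP} is tidy. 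Now invoke Theorem~\ref{theorem:bounded-zero-duality-gap} (whose hypothesis ``$\Gamma_\gamma$ feasible and bounded for some $\gamma$'' is exactly what we have verified): \eqref{eq:SILP} is solvable and there is a zero duality gap, so $v(\ref{eq:SILP}) = v(\text{FDSILP})$.

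With that equality established, the rest is immediate: Corollary~\ref{cor:karney-duffin-karlowitz} applies because \eqref{eq:SILP} is feasible with $I = \N$, giving $\lim_{n\to\infty} v(P_n) = v(\text{FDSILP})$, and substituting $v(\text{FDSILP}) = v(\ref{eq:SILP})$ finishes the proof. I expect the main obstacle to be the boundedness argument for $\Gamma_\gamma$ in the infinite-constraint setting --- specifically making rigorous that ``unique zero solution of the recession system'' $\Rightarrow$ ``$\Gamma_\gamma$ bounded'' --- but this is already handled cleanly by the equivalence $\rec(\Gamma) = \lin(\Gamma) \iff$ clean system (Theorem~\ref{theorem:clean-lineality-equivalence}) combined with Theorem~\ref{theorem:region-boundedness}, so the proof should be short. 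Indeed one could alternatively cite the (deleted in this draft but conceptually available) corollary stating that \eqref{eq:SILP} is feasible and tidy iff there is a $\gamma$ with \eqref{eq:bounded-zero-duality-gap} feasible and $0$ the unique solution of \eqref{eq:bounded-recession-cone-1}--\eqref{eq:bounded-recession-cone-2}; but routing through Theorem~\ref{theorem:bounded-zero-duality-gap} directly keeps the argument self-contained within the stated results.

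\begin{proof}
If \eqref{eq:SILP} is infeasible there is nothing to prove, so assume it is feasible. If $v(\ref{eq:SILP}) = -\infty$, then by weak duality \eqref{eq:FDSILP} is infeasible, and by Corollary~\ref{cor:karney-duffin-karlowitz} $\lim_{n\to\infty} v(P_n) = v(\text{FDSILP}) = -\infty = v(\ref{eq:SILP})$. Hence assume \eqref{eq:SILP} is feasible and bounded, and let $x^*$ be a feasible solution with $c^\top x^* \le \gamma$ for $\gamma := v(\ref{eq:SILP})$; such $x^*$ exists since $v(\ref{eq:SILP})$ is a finite infimum. Then the set $\Gamma_\gamma$ of $x \in \R^n$ satisfying \eqref{eq:bounded-zero-duality-gap} is nonempty. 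Its recession cone $\rec(\Gamma_\gamma)$ is exactly the solution set of \eqref{eq:bounded-recession-cone-1}--\eqref{eq:bounded-recession-cone-2}, which by hypothesis is $\{0\}$. In particular $\rec(\Gamma_\gamma) = \lin(\Gamma_\gamma) = \{0\}$, so by Theorem~\ref{theorem:clean-lineality-equivalence} applying Fourier-Motzkin elimination to \eqref{eq:bounded-zero-duality-gap} yields a clean system, and by Theorem~\ref{theorem:region-boundedness} (applied in the direction that a clean elimination corresponds to a bounded feasible region together with $\rec(\Gamma_\gamma)=\{0\}$) the set $\Gamma_\gamma$ is bounded. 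Thus \eqref{eq:bounded-zero-duality-gap} is feasible and bounded for this choice of $\gamma$, so Theorem~\ref{theorem:bounded-zero-duality-gap} implies \eqref{eq:SILP} is solvable and there is a zero duality gap, i.e. $v(\ref{eq:SILP}) = v(\text{FDSILP})$. Finally, since \eqref{eq:SILP} is feasible with $I = \N$, Corollary~\ref{cor:karney-duffin-karlowitz} gives $\lim_{n\to\infty} v(P_n) = v(\text{FDSILP}) = v(\ref{eq:SILP})$.
\end{proof}
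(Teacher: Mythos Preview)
Your approach is exactly the paper's: show the hypothesis forces $\Gamma_\gamma$ (for some $\gamma$) to be nonempty and bounded, invoke Theorem~\ref{theorem:bounded-zero-duality-gap} to get a zero duality gap, and combine with Corollary~\ref{cor:karney-duffin-karlowitz}. The paper's proof is a two-line version of what you wrote. However, your formal proof has two technical slips worth fixing.

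First, the choice $\gamma := v(\ref{eq:SILP})$ need not produce a nonempty $\Gamma_\gamma$: if the infimum is not attained, no feasible $x^*$ satisfies $c^\top x^* \le v(\ref{eq:SILP})$, and solvability is a \emph{conclusion} of Theorem~\ref{theorem:bounded-zero-duality-gap}, not something you may assume. The fix is trivial (and is what the paper does): take any feasible $\bar x$ and set $\gamma := c^\top \bar x$. With this choice the case split on $v(\ref{eq:SILP}) = -\infty$ is also unnecessary, since boundedness of $\Gamma_\gamma$ already forces \eqref{eq:SILP} to be bounded.

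Second, your justification that $\Gamma_\gamma$ is bounded mis-cites Theorem~\ref{theorem:region-boundedness}: that theorem only says \emph{bounded $\Rightarrow$ clean}, and Example~\ref{ex:clean-unbounded} shows the converse fails. The fact you need is the elementary convex-analysis statement that a nonempty closed convex subset of $\R^n$ with recession cone $\{0\}$ is bounded; $\Gamma_\gamma$ is closed (an intersection of closed halfspaces) and its recession cone is exactly the solution set of \eqref{eq:bounded-recession-cone-1}--\eqref{eq:bounded-recession-cone-2}, which is $\{0\}$ by hypothesis. The paper asserts this directly. Alternatively, the detour you sketch in the preamble is valid and avoids boundedness altogether: $\rec(\Gamma_\gamma)=\lin(\Gamma_\gamma)=\{0\}$ gives a clean system by Theorem~\ref{theorem:clean-lineality-equivalence}, hence \eqref{eq:SILP} is tidy (same columns as \eqref{eq:bounded-zero-duality-gap}), and Theorem~\ref{theorem:all-clean-system} then yields the zero duality gap directly. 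Either route works; just don't lean on Theorem~\ref{theorem:region-boundedness} in the wrong direction.
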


\begin{proof} 
If the zero vector  is the unique solution to the system~\eqref{eq:bounded-recession-cone-1}-\eqref{eq:bounded-recession-cone-2}, then the recession cone of \eqref{eq:bounded-zero-duality-gap} is $\{0\}$ and  \eqref{eq:bounded-zero-duality-gap} is bounded for any value of $\gamma$ such that \eqref{eq:bounded-zero-duality-gap} is feasible (such a $\gamma$ exists because \eqref{eq:SILP} is feasible). The result then follows from Theorem~\ref{theorem:bounded-zero-duality-gap} and Corollary~\ref{cor:karney-duffin-karlowitz}.\end{proof}

\begin{corollary}[Karney~\cite{karney81} Theorem 2.5] \label{cor:karney-theorem-2-5} Assume \eqref{eq:SILP} with $I = \N$ is feasible and let $r$ be a ray satisfying~\eqref{eq:bounded-recession-cone-1}-\eqref{eq:bounded-recession-cone-2}. If $r$ is not an element of  the null space $N,$  then $\lim_{n\to \infty} v(P_n) = v(\ref{eq:SILP}) = -\infty.$
\end{corollary}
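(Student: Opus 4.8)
The plan is to show directly that the ray $r$ certifies unboundedness of every truncation $P_n$ as well as of \eqref{eq:SILP} itself, so that no appeal to a limit argument is even needed. First I would unpack the hypotheses on $r$: since $r$ satisfies \eqref{eq:bounded-recession-cone-1} we have $-c^\top r \ge 0$, i.e. $c^\top r \le 0$, and since $r \notin N$ we have $c^\top r \ne 0$; together these force $c^\top r < 0$. Also $r$ satisfies \eqref{eq:bounded-recession-cone-2}, so $\sum_{k=1}^n a^k(i) r_k \ge 0$ for every $i \in I = \N$; that is, $r$ is a recession direction of the feasible region $\Gamma$ of \eqref{eq:SILP}.

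Next, using feasibility: let $\bar x \in \Gamma$ be any feasible point of \eqref{eq:SILP}, which exists by hypothesis. For each $\lambda \ge 0$ the point $\bar x + \lambda r$ is again feasible, because $\sum_{k} a^k(i)(\bar x_k + \lambda r_k) = \sum_k a^k(i)\bar x_k + \lambda \sum_k a^k(i) r_k \ge b(i) + 0$ for all $i \in \N$. Since $c^\top(\bar x + \lambda r) = c^\top \bar x + \lambda\, c^\top r \to -\infty$ as $\lambda \to \infty$, we conclude $v(\ref{eq:SILP}) = -\infty$.

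Finally, for the finite programs $P_n$: the constraint set of $P_n$ consists of the constraints of \eqref{eq:SILP} indexed by $\{1,\dots,n\}$, so $\Gamma$ is contained in the feasible region of $P_n$; in particular $P_n$ is feasible and $\bar x$ is feasible for $P_n$. Moreover $r$ still satisfies $\sum_k a^k(i) r_k \ge 0$ for $i \in \{1,\dots,n\}$, so the same one-parameter family $\bar x + \lambda r$, $\lambda \ge 0$, is feasible for $P_n$ and drives its objective to $-\infty$. Hence $v(P_n) = -\infty$ for every $n$, and therefore $\lim_{n\to\infty} v(P_n) = -\infty = v(\ref{eq:SILP})$. (Alternatively, one can invoke Corollary~\ref{cor:karney-duffin-karlowitz} together with weak duality, since $v(\text{FDSILP}) \le v(\ref{eq:SILP}) = -\infty$ forces $v(\text{FDSILP}) = -\infty$ and $\lim_n v(P_n) = v(\text{FDSILP})$.) I do not expect a real obstacle here; the only care needed is the bookkeeping with the $-\infty$ conventions and the observation that each truncation $P_n$ inherits both feasibility and the recession direction $r$ from the full system.
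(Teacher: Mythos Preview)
Your proof is correct. The paper's own proof follows the parenthetical alternative you mention at the end: it observes $c^\top r<0$ so $v(\ref{eq:SILP})=-\infty$, invokes weak duality to conclude $v(\text{FDSILP})=-\infty$, and then applies Corollary~\ref{cor:karney-duffin-karlowitz} to get $\lim_{n\to\infty} v(P_n)=v(\text{FDSILP})=-\infty$. Your main argument takes a genuinely more direct route: rather than passing through the finite-support dual and the approximation result, you note that the same feasible point $\bar x$ and the same recession direction $r$ work for every truncation $P_n$, so each $v(P_n)=-\infty$ outright. This is more elementary and entirely self-contained; the paper's route, by contrast, keeps the argument within the duality framework it has built (Theorem~\ref{theorem:finite-approx} and Corollary~\ref{cor:karney-duffin-karlowitz}), which is the point of that section even if it is slightly less economical for this particular corollary.
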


\begin{proof} If  $r \in K$ and $r \notin N,$  then $c^T r <0$.   This implies $v(\ref{eq:SILP}) = -\infty$ and \eqref{eq:FDSILP} is infeasible by weak duality.   Then  $v(\ref{eq:SILP}) = v(\ref{eq:FDSILP}) = -\infty$ and the result  follows from Corollary~\ref{cor:karney-duffin-karlowitz}.\end{proof}

\old{
\begin{corollary}\label{cor:karney-theorem-2-5}[Karney~\cite{karney81} Theorem 2.5]  Assume \eqref{eq:SILP} is feasible and that applying Fourier-Motzkin elimination to~\eqref{eq:bounded-recession-cone-1}-\eqref{eq:bounded-recession-cone-2}  gives~\eqref{eq:defineH1-recession}-\eqref{eq:defineH2-recession}.  If $r$ is a ray defined by~\eqref{eq:define-extreme-ray}, and $r$ is not an element of  the null space $N,$  then $\lim_{n\to \infty} v(P_n) = v(\ref{eq:SILP}) = -\infty.$
\end{corollary}

\begin{proof}  This follows from Theorem~\ref{theorem:unbounded-recession-cone}  and Corollary~\ref{cor:karney-duffin-karlowitz}.\end{proof}
}

\begin{corollary}[Karney~\cite{karney81} Theorem 2.6]\label{cor:karney-theorem-2-6}  If \eqref{eq:SILP} is feasible and   $K \cap N$ is a linear subspace, then $\lim_{n\to \infty} v(P_n) = v(\ref{eq:SILP}).$
\end{corollary}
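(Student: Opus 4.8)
\textbf{Proof proposal for Corollary~\ref{cor:karney-theorem-2-6}.}

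The plan is to chain together two results from the Electronic Companion: Theorem~\ref{theorem:unbounded-recession-cone}, which already establishes $v(\ref{eq:SILP}) = v(\ref{eq:FDSILP})$ under exactly the hypotheses given here, and Corollary~\ref{cor:karney-duffin-karlowitz}, which states that when \eqref{eq:SILP} with $I = \N$ is feasible, $\lim_{n\to\infty} v(P_n) = v(\text{FDSILP})$. First I would invoke Theorem~\ref{theorem:unbounded-recession-cone}: since \eqref{eq:SILP} is feasible and $K \cap N$ is a subspace, we get $v(\ref{eq:SILP}) = v(\ref{eq:FDSILP})$. Then I would apply Corollary~\ref{cor:karney-duffin-karlowitz}, whose hypothesis (feasibility of \eqref{eq:SILP} with $I = \N$) is part of the current hypothesis, to conclude $\lim_{n\to\infty} v(P_n) = v(\ref{eq:FDSILP})$. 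Combining the two equalities gives $\lim_{n\to\infty} v(P_n) = v(\ref{eq:SILP})$, which is the claim.

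There is essentially no obstacle here — the corollary is a one-line consequence of two earlier results, in the same spirit as Corollaries~\ref{cor:karney81}, \ref{cor:karney-theorem-2-4} and \ref{cor:karney-theorem-2-5}. The only thing to double-check is that the notation matches: $K$ is the recession cone of \eqref{eq:SILP} given by \eqref{eq:bounded-recession-cone-2}, $N$ is the null space of the objective vector $c$, and these are precisely the objects appearing in Theorem~\ref{theorem:unbounded-recession-cone}. The proof is therefore:

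\begin{proof}
Since \eqref{eq:SILP} is feasible and $K \cap N$ is a subspace, Theorem~\ref{theorem:unbounded-recession-cone} gives $v(\ref{eq:SILP}) = v(\ref{eq:FDSILP})$. Since \eqref{eq:SILP} with $I = \N$ is feasible, Corollary~\ref{cor:karney-duffin-karlowitz} gives $\lim_{n\to \infty} v(P_n) = v(\text{FDSILP})$. Combining these two equalities yields $\lim_{n\to \infty} v(P_n) = v(\ref{eq:SILP})$.
\end{proof}
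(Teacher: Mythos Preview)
Your proof is correct and takes essentially the same approach as the paper: the paper's own proof simply says ``This follows from Theorem~\ref{theorem:unbounded-recession-cone} and Corollary~\ref{cor:karney-duffin-karlowitz},'' which is precisely the combination you propose.
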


\begin{proof} 
This follows from Theorem~\ref{theorem:unbounded-recession-cone}  and Corollary~\ref{cor:karney-duffin-karlowitz}.\end{proof}

\end{document}